\theoremstyle{plain}
\newtheorem{thm}[equation]{Theorem}
\newtheorem{cor}[equation]{Corollary}
\newtheorem{prop}[equation]{Proposition}
\newtheorem{lem}[equation]{Lemma}
\theoremstyle{definition}
\newtheorem{defn}[equation]{Definition}
\theoremstyle{remark}
\newtheorem{quest}[equation]{Question}
\newtheorem{rem}[equation]{Remark}
\newtheorem{notation}[equation]{Notation}
\renewcommand{\subsection}{\@startsection{subsection}{2}{0pt}{-3ex
plus -1ex minus -0.2ex}{-2mm plus -0pt minus
-2pt}{\normalfont\bfseries}} \makeatother
\numberwithin{equation}{subsection}
\newcommand{\br}[1]{{\mbox{$\left(#1\right)$}}}
\newcommand{\Lmod}[1]{#1\text{-}{\mathsf{mod}}}
\newcommand{\hdot}{{\:\raisebox{2pt}{\text{\circle*{1.5}}}}}
\newcommand{\idot}{{\:\raisebox{2pt}{\text{\circle*{1.5}}}}}
\DeclareMathOperator{\res}{{\mathsf{res}}}
\DeclareMathOperator{\sym}{\mathrm{Sym}}
\DeclareMathOperator{\im}{\mathrm{Im}}
\DeclareMathOperator{\supp}{\mathrm{Supp}}
\DeclareMathOperator{\coh}{{\mathrm{Coh}}}
\DeclareMathOperator{\Ker}{\mathrm{Ker}}
\DeclareMathOperator{\coker}{\mathrm{Coker}}
\DeclareMathOperator{\End}{\mathrm{End}}
\DeclareMathOperator{\ssym}{{\widehat{\opp{S}}}}
\DeclareMathOperator{\gr}{\mathrm{gr}}
\DeclareMathOperator{\hilb}{{\mathrm{Hilb}}}
\DeclareMathOperator{\Lie}{\mathrm{Lie}}
\DeclareMathOperator{\ggr}{{\widetilde{\mathrm{gr}}}}
\DeclareMathOperator{\dgr}{{\widetilde{\mathrm{dgr}}}}
\DeclareMathOperator{\Ad}{\mathrm{Ad}}
\DeclareMathOperator{\ad}{\mathrm{ad}}
\DeclareMathOperator{\mmp}{{\mathrm{Map}}}
\DeclareMathOperator{\rad}{\mathsf{rad}}
\DeclareMathOperator{\mo}{{\,\mathrm{mod}\,[\b,\b]}}
\newcommand{\dis}{\displaystyle}
\newcommand{\oper}{\operatorname }
\newcommand{\beq}{\begin{equation}\label}
\newcommand{\eeq}{\end{equation}}
\DeclareMathOperator{\Spec}{\mathrm{Spec}}
\DeclareMathOperator{\pr}{pr}
\newcommand{\iso}{{\;\stackrel{_\sim}{\to}\;}}
\newcommand{\cd}{\!\cdot\!}
\DeclareMathOperator{\Hom}{\mathrm{Hom}}
\DeclareMathOperator{\hhom}{{{\scr H}\!{\mathit{om}}}}
\def\ccirc{{{}_{\,{}^{^\circ}}}}
\newcommand{\bo}{\mbox{$\bigotimes$}}
\renewcommand{\o}{\otimes }
\newcommand{\bplus}{\mbox{$\bigoplus$}}
\newcommand{\kap}{{\kappa}}
\newcommand{\kkap}{{\wt\kappa}}
\newcommand{\si}{\sigma }
\renewcommand{\L}{{\scr L}}
\renewcommand{\t}{{\mathfrak t}}
\newcommand{\del}{{\boldsymbol\delta}}
\newcommand{\rese}{\operatorname{\textit{res}_\bbe}}
\newcommand{\fz}{{\pmb g}}
\newcommand{\wt}{\widetilde }
\newcommand{\eu}{{\operatorname{\mathrm{eu}}}}
\newcommand{\eps}{{\epsilon}}
\newcommand{\ttr}{{{\mathfrak T}^r}}
\newcommand{\tts}{{\overset{_{_\circ}}{\mathfrak T}}}
\newcommand{\ts}{{\overset{_{_\circ}}{\mathfrak t}}}
\newcommand{\Id}{{\operatorname{Id}}}
\newcommand{\zz}{{\mathfrak C}}
\newcommand{\xx}{{\mathfrak X}}
\newcommand{\xc}{{\mathfrak X}^\circ}
\newcommand{\fl}{{\mathfrak l}}
\renewcommand{\lll}{{\pmb\langle}}
\newcommand{\rrr}{{\pmb\rangle}}
\newcommand{\ha}{{\mathfrak W}}
\newcommand{\yy}{{\mathfrak x}}
\newcommand{\yr}{{\mathfrak r}}
\newcommand{\tx}{{\wt{\mathfrak X}}}
\newcommand{\zr}{{\mathfrak C}^{rs}}
\newcommand{\sign}{{\,\textit{sign}}}
\newcommand{\xr}{{\mathfrak X}^{rs}}
\newcommand{\fo}{{}'\!F}
\newcommand{\ft}{{}''\!F}
\newcommand{\so}{{{{}'\!{\wh S}}\,}}
\newcommand{\st}{{{}''\!{\wh S}^{\,}}}
\newcommand{\GG}{{\boldsymbol{\Gamma}}}
\newcommand{\prsign}{\pr^\textit{sign}}
\newcommand{\reo}{{}'\rees}
\newcommand{\ret}{{}''\rees}
\newcommand{\gro}{{}'\!\!\gr}
\newcommand{\grt}{{}''\!\!\gr}
\newcommand{\opp}{\operatorname}
\newcommand{\rees}{{\mathsf{R}}}
\newcommand{\ups}{\Upsilon}
\newcommand{\ccong}{\ \cong\ }
\newcommand{\ra}{\rees\max\,\C[\wbh]}
\newcommand{\bd}{{\mathbf d}}
\newcommand{\bde}{\Delta_\bbe }
\newcommand{\Del}{{\nabla}}
\newcommand{\erem}{\hfill$\lozenge$\end{rem}\vskip 3pt }
\def\npb{\noindent$\bullet\quad$\parbox[t]{149mm}}
\newcommand{\scr}[1]{\mathscr{#1}}
\newcommand{\lo}{\stackrel{L}{\mbox{$\bigotimes$}}}
\newcommand{\mhm}{Hodge module }
\renewcommand{\th}{{\theta}}
\newcommand{\iim}{{\boldsymbol{\imath}}}
\newcommand{\vsi}{{\boldsymbol{\varsigma}}}
\newcommand{\vvpi}{{\boldsymbol{\varpi}}}
\newcommand{\vka}{{\varkappa}}
\renewcommand{\min}{^{\opp{min}\!}}
\renewcommand{\max}{^{\opp{max}\!}}
\newcommand{\alo}{\aleph }
\newcommand{\bbde}{{\mathbf E}}
\newcommand{\bbo}{{\boldsymbol{0}}}
\newcommand{\bbss}{{\mathbf S}}
\newcommand{\ta}{\tau}
\newcommand{\bbs}{{\mathbf s}}
\renewcommand{\AA}{{\mathsf{A}}}
\newcommand{\HH}{{\mathbb H}}
\newcommand{\VV}{{\scr V}}
\newcommand{\fa}{{\mathfrak a}}
\def\ccirc{{{}_{^{\,^\circ}}}}
\newcommand{\la}{\lambda}
\newcommand{\be}{\beta }
\newcommand{\ddd}{{\mathbf D}}
\newcommand{\DD}{{\boldsymbol{\mathcal D}}}
\newcommand{\ctt}{\C[\tt]}
\newcommand{\tat}{\ta_1,\ta_2 }
\newcommand{\bim}{_\natural }
\newcommand{\ssc}{{\mathcal{S}}}
\renewcommand{\ss}{{\mathcal{S}}}
\newcommand{\gl}{{\mathfrak g\mathfrak l}}
\newcommand{\ff}{{\mathcal F}}
\newcommand{\filt}{F^{\operatorname{ord}}}
\newcommand{\ord}{{\operatorname{ord}}}
\newcommand{\hodge}{{{\operatorname{Hodge}}}}
\newcommand{\dd}{{\mathscr{D}}}
\newcommand{\oo}{{\mathcal{O}}}
\newcommand{\kk}{{\mathcal{K}}}
\newcommand{\fy}{{\mathfrak Y}}
\newcommand{\wh}{\widehat }
\renewcommand{\aa}{{\mathscr{A}}}
\newcommand{\mh}{{\mathsf{HM}}}
\renewcommand{\tt}{{\mathfrak T}}
\newcommand{\h}{{\mathfrak t}}
\newcommand{\uh}{{\mathfrak h}}
\newcommand{\hh}{{{\mathfrak T}}}
\newcommand{\dcoh}{D^b_{\operatorname{coh}}}
\newcommand{\pa}{\partial }
\renewcommand{\H}{{\scr H}}
\newcommand{\ZZ}{{\mathcal G}}
\newcommand{\bbe}{{\mathbf e}}
\newcommand{\bx}{{\mathbf x}}
\newcommand{\by}{{\mathbf y}}
\newcommand{\bh}{{\mathbf h}}
\newcommand{\bj}{{\mathbf j}}
\newcommand{\CC}{{\mathbb{C}^\times\times\mathbb{C}^\times}}
\newcommand{\CF}{{\mathcal F}}
\newcommand{\NN}{{\mathcal N}}
\newcommand{\tg}{{\wt\g}}
\newcommand{\nil}{{\operatorname{nil}}}
\newcommand{\al}{\alpha }
\renewcommand{\gg}{{\mathfrak G}}
\newcommand{\wbh}{{W\cd\bh}}
\newcommand{\rk}{{\mathbf{r}}}
\newcommand{\hc}{{Harish-Chandra }}
\newcommand{\TT}{{T}}
\newcommand{\I}{{{\boldsymbol{\mathcal I}}}}
\newcommand{\J}{{\mathcal J}}
\newcommand{\C}{\mathbb{C}}
\newcommand{\g}{\mathfrak{g}}
\renewcommand{\b}{\mathfrak{b}}
\newcommand{\norm}{{\operatorname{norm}}}
\newcommand{\red}{{\operatorname{red}}}
\newcommand{\La}{\Lambda }
\newcommand{\pp}{{\scr P}}
\newcommand{\rr}{{\scr R}}
\newcommand{\bb}{{\scr B}}
\newcommand{\mm}{{\mathcal M}}
\newcommand{\T}{{\mathcal T}}
\newcommand{\inv}{^{-1}}
\newcommand{\Z}{{\mathbb Z}}
\newcommand{\en}{{\enspace}}
\newcommand{\vi}{${\en\sf {(i)}}\;$}
\newcommand{\vii}{${\;\sf {(ii)}}\;$}
\newcommand{\viii}{${\sf {(iii)}}\;$}
\newcommand{\iv}{${\sf {(iv)}}\;$}
\newcommand{\sset}{\subset}
\newcommand{\sminus}{\smallsetminus}
\newcommand{\into}{\,\hookrightarrow\,}
\newcommand{\too}{\,\,\longrightarrow\,\,}
\newcommand{\mto}{\mapsto}
\newcommand{\onto}{\,\twoheadrightarrow\,}
\newcommand{\N}{{\mathcal{N}}}
\newcommand{\X}{{\mathbb X}}
\newcommand{\lel}{\prec}
\newcommand{\ccl}{\curlyeqprec }
\newcommand{\tgg}{{\wt{\mathfrak G}}}
\newcommand{\mmu}{{\boldsymbol{\mu}}}
\newcommand{\nnu}{{\boldsymbol{\nu}}}
\newcommand{\ppi}{{\boldsymbol{\pi}}}
\newcommand{\Ga}{\Gamma }
\newcommand{\om}{\omega }
\renewcommand{\sl}{{\mathfrak s\mathfrak l}}
\begin{document}
\title{{\large{\textsf{Isospectral  commuting variety,\,
 the Harish-Chandra}}}\\
\vskip4pt
{\large{\textsf{{\large$\boldsymbol{\mathcal D}$}-module,
and principal nilpotent pairs}}}}

\author{Victor Ginzburg}\address{
Department of Mathematics, University of Chicago,  Chicago, IL 
60637, USA.}
\email{ginzburg@math.uchicago.edu}

\begin{abstract} Let  $\g$ be  a complex reductive Lie algebra 
with Cartan algebra $\h.\ $ 
Hotta and Kashiwara defined a
holonomic $\dd$-module $\mm$, on $\g\times\h$, called
Harish-Chandra module. We relate
$\gr\mm$, an associated
graded module with respect to
 a canonical  {\em Hodge filtration} on $\mm$,
to
the {\em isospectral
commuting variety}, a subvariety of $\g\times\g\times\h\times\h$
which is a ramified cover of the variety of pairs of commuting
elements of $\g$.
Our main result establishes an isomorphism
of $\gr\mm$ with  the structure
sheaf of $\xx_\norm$, the normalization of the isospectral
commuting variety.
We deduce, using Saito's theory of 
Hodge $\dd$-modules, that the scheme $\xx_\norm$
is    Cohen-Macaulay
and Gorenstein. This confirms a conjecture of M. Haiman.

Associated with any
{\em principal nilpotent pair} in $\g$, there is
a finite subscheme  of $\xx_\norm$.
The corresponding coordinate ring is a bigraded 
 finite  dimensional Gorenstein algebra that
affords the regular representation of the Weyl
group. The socle of that algebra is a 1-dimensional space
generated by a remarkable $W$-harmonic polynomial on $\t\times \t$.
In the special case where $\g=\gl_n$
the above algebras are closely related to 
the $n!$-theorem of Haiman and  our  $W$-harmonic
polynomial reduces to the Garsia-Haiman polynomial.
Furthermore,  in the $\gl_n$ case, the sheaf $\gr\mm$
gives rise  to a vector bundle on the Hilbert scheme
of $n$ points in $\C^2$ that turns out to be isomorphic to the 
{\em Procesi bundle}.
Our results were used by I. Gordon to obtain
a new proof of
 positivity of the Kostka-Macdonald polynomials established
earlier by Haiman.
\end{abstract}
\maketitle
{\small
\tableofcontents
}

\section{Introduction}

\subsection{Notation}\label{not} We work over the ground
field  $\C$ of complex numbers and we write
$\o=\o_\C.$\footnote{A more complete Index of Notation is
given at the end of the paper.}

By a  scheme $X$ we  mean a scheme of
finite type over $\C$. We write $X_\red$ for the corresponding
 reduced scheme and $\psi:\ X_\norm\to X_\red$
for the normalization map
(if $X_\red$ is irreducible).
Let  $\oo_X$ denote the structure
sheaf of $X$, resp.  $\kk_X$ the canonical sheaf
(if $X$ is Cohen-Macaulay), and
  $\dd_X$  the sheaf of 
 algebraic differential
operators on $X$ (if $X$ is smooth). 
Write $\C[X]=\Ga(X,\oo_X)$, resp. $\dd(X)=\Ga(X,\dd_X)$,
for the  algebra of
global sections.
Let $T^*X$ denote (the total space of) the 
cotangent bundle
on a smooth variety $X$.

Given an algebraic group $K$ and a $K$-action on $E$,
we write $E^K$ for the set of $K$-fixed points.
In particular, for a $K$-variety $X$,
one has the subalgebra $\C[X]^K\sset \C[X],$ resp.
$\dd(X)^K\sset \dd(X)$, of $K$-invariants.

Throughout the paper,  we fix
a connected complex reductive group $G$ with
Lie algebra $\g$. 
Let $T\sset G$ be a maximal torus,
 $\t=\Lie T$  the corresponding Cartan subalgebra 
of $\g$, and  $\rk=\dim\t$ the rank of $\g$.
Write  $N(T)$ for the normalizer of $T$ in $G$ so, $W=N(T)/T$ 
is the Weyl group.
The group $W$ acts   on $\t$ via the reflection representation
and it acts on  $\wedge^\rk\t$
by the sign character  $w\mto \sign(w)$.
We   write
$E^{\sign}$ for the $\sign$-isotypic component of 
  a $W$-module $E$.

\subsection{Definition of the \hc module}\label{r_sec}
We will use
a special notation $\DD:=\dd_{\g\times\t}$
for the sheaf of  differential operators
on  $\g\times\t$.
We have $\Ga(\g\times\t, \DD)=\dd(\g\times\t)=\dd(\g)\o\dd(\t)$
where  $\dd(\g)$, resp. $\dd(\t)$, is the algebra of polynomial differential operators
on the vector space $\g$, resp. on $\t$.
The  subalgebra of $\dd(\g)$, resp. of $\dd(\t)$, formed 
by the   differential operators  with
constant coefficients may be identified with
$\sym\g$, resp. with $\sym\t,$ the corresponding
symmetric algebra.

Let
the group $G$ act
on $\g$ by the adjoint action.
\hc  \cite{HC} defined a `radial part' map
$
\rad:\ \dd(\g)^G\to\dd(\t)^W.$
This is
an algebra homomorphism such  that its restriction to 
the subalgebra of $G$-invariant polynomials,
resp.  of $G$-invariant
constant coefficient  differential operators,
reduces to
the Chevalley isomorphism
$\C[\g]^G\iso \C[\t]^W,$ 
resp. $(\sym\g)^G\iso (\sym\t)^W.$


Given $a\in \g$, one may view
the  map $\ad a:\ \g\to\g,\ x\mto [a,x]$ as a (linear)  vector
field on $\g$, that is, 
 as a first order differential operator on $\g$.
The assignment $a\mto\ad a$ gives
a linear map $\ad: \g\to\dd(\g)$ with image $\ad\g$.
Thus, one can form a left ideal
 $\DD\,(\ad\g\o 1)\ \sset\ \DD$.

\begin{defn}\label{mm_def} 
The \hc module is a left $\DD$-module defined as follows
\beq{M}
\mm\
:=\ \DD\big/\br{\DD\,(\ad\g\o 1)+\DD\,\{u\o1-1\o \rad(u),\ u\in \dd(\g)^G\}}.
\eeq
\end{defn}

\begin{rem} For a useful interpretation of this formula see also
\eqref{mmbimod}.\erem

According to an important result of
 Hotta and Kashiwara \cite{HK1}, 
 the \hc module is a simple holonomic $\DD$-module
of `{\em geometric origin}', cf. Lemma \ref{supp}(ii) below.
This implies that  $\mm$
comes equipped with a natural structure of
{\em Hodge module} in the sense
of M. Saito \cite{Sa}. In particular,
there is a canonical Hodge filtration on $\mm$,
see \S\ref{hc_hodge}.
Taking an associated graded sheaf with respect to
the  Hodge filtration produces a coherent sheaf
$\ggr^\hodge\mm$ on $T^*(\g\times\t)$.

The support of the sheaf $\ggr^\hodge\mm$  turns out to be
closely related to the commuting scheme of the Lie algebra $\g$, see 
Theorem \ref{mthm} below.
The main idea of the paper is to exploit the powerful
theory of  Hodge modules to deduce
new results  concerning commuting schemes using
information about the  sheaf $\ggr^\hodge\mm$.

\begin{rem}
Definition \ref{mm_def} was motivated by,
but is not identical to, the definition 
of  Hotta and Kashiwara, see \cite{HK1},
formula (4.5.1). The equivalence of the two definitions
follows from Remark \ref{I=I}, of \S\ref{hk} below.
\end{rem}

\subsection{Main results}\label{main_subsec}
Put  $\gg=\g\times\g$ and  let $G$ act diagonally on $\gg$.
The {\em commuting scheme} $\zz$ is defined as the {\em scheme-theoretic}
zero fiber of the commutator map $\kap:\ \gg\to\g,$ $(x,y)\mto[x,y]$.
Thus, $\zz$ is  a $G$-stable closed  subscheme of 
$\gg$; set-theoretically, one has
$\zz=\{(x,y)\in\gg\mid [x,y]=0\}$.
The scheme 
$\zz$ is known to be generically reduced and irreducible,
cf. Proposition \ref{zzbasic} below.
It is a long standing open problem whether or not
this scheme is reduced.

Let  $\tt:=\t\times\t\sset \gg$.
It is clear that
$\tt$ is an $N(T)$-stable closed subscheme of $\zz$ and
the resulting $N(T)$-action  on $\tt$ factors through the diagonal
action of the Weyl group  $W=N(T)/T$. Therefore,
  restriction of polynomial functions
gives algebra maps 
\beq{Jo} 
\res:\ \C[\gg]^G\, \onto\,\C[\zz]^G\,\to\,\C[\tt]^W.
\eeq

The 
{\em isospectral commuting
variety} is defined to be the algebraic set:
\beq{xxDef} \xx=\{(x_1,x_2,t_1,t_2)\in \zz\times\tt\mid
P(x_1,x_2)=(\res P)(t_1,t_2),\en \forall P\in\C[\zz]^G\}.
\eeq
We
view $\xx$ as a  {\em reduced} closed  subscheme of
$\gg\times\tt$, cf. also  Definition \ref{xxdef}.

To proceed further, we fix   an invariant
bilinear form $\langle-,-\rangle:\ \g\times\g\to\C$.
This gives  an isomorphism 
$\g\iso\g^*,\ x\mto \langle x,-\rangle$,
 resp. $\t\iso\t^*,\ t\mto -\langle t,-\rangle\ $
(the minus sign in the last formula is related to the
minus sign that appears in the anti-involution $v\mto v^\top$
considered in \S\ref{hc_filt}).
Thus, one gets an identification $T^*(\g\times\t)=\gg\times\tt$
so one may view
$\ggr^\hodge\mm$ as a coherent sheaf on $\gg\times\tt$.

One of the main results of the paper,
whose proof will be completed in \S\ref{xyzsec}, reads

\begin{thm}\label{mthm} There is a natural
$\oo_{\gg\times\tt}$-module isomorphism
$\psi_*\oo_{\xx_\norm}\iso \ggr^\hodge\mm.$
\end{thm}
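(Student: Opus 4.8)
The plan is to compare the two sheaves by first controlling their supports and their generic behavior, then promoting a generically-defined isomorphism to a global one using the good properties of Hodge modules. First I would analyze the support of $\ggr^\hodge\mm$: since $\mm$ is holonomic, its characteristic variety $\ch(\mm)\sset T^*(\g\times\t)=\gg\times\tt$ is Lagrangian, and using the defining relations in \eqref{M} one checks that it is contained in the closed set cut out by $[x_1,x_2]=0$ together with the isospectrality conditions $P(x_1,x_2)=(\res P)(t_1,t_2)$ for $P\in\C[\gg]^G$. In other words $\supp\ggr^\hodge\mm\sset\xx$; conversely, dimension count (the isospectral commuting variety has the expected dimension $\dim\g+\rk$, matching the holonomicity bound) forces equality on the top-dimensional component. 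Since $\zz$ is irreducible and generically reduced (Proposition \ref{zzbasic}), $\xx$ is irreducible, so $\supp\ggr^\hodge\mm=\xx$ set-theoretically, and $\psi_*\oo_{\xx_\norm}$ is also supported on $\xx$.

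Next I would produce the map. Because $\mm$ is generated over $\DD$ by the image of $1$, there is a canonical surjection $\oo_{\gg\times\tt}\onto\ggr^\hodge\mm$ in low filtration degree; composing with the $\C[\gg\times\tt]$-algebra structure coming from the fact that $\mm$ is a cyclic module over the commutative algebra $\dd(\g)^G\o\dd(\t)$ modulo the relations, one gets that $\ggr^\hodge\mm$ is a sheaf of algebras, i.e.\ $\ggr^\hodge\mm=\oo_Z$ for some closed subscheme $Z$ of $\gg\times\tt$ with $Z_\red=\xx$. The key point is then to identify $Z$ with $\xx_\norm$. On the smooth (equivalently, the $G$-regular semisimple) locus $\xr\sset\xx$, which is dense, everything is transparent: there the radial-part relations are exactly the equations defining the graph of the Chevalley map, the $\DD$-module $\mm$ restricts to a vector bundle with connection whose associated graded is $\oo_{\xr}$, and $\xr$ is already normal. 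So the natural map $\oo_Z\to\psi_*\oo_{\xx_\norm}$ is an isomorphism over $\xr$, hence over a dense open set.

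Finally I would upgrade this to a global isomorphism. For the surjectivity/normality half: $\ggr^\hodge\mm$ is, by Saito's theory, a Cohen-Macaulay sheaf (the associated graded of a pure Hodge module is maximal Cohen-Macaulay over its support) and is torsion-free over $\oo_\xx$; a finite birational map from a Cohen-Macaulay, $S_2$ scheme that is generically normal, onto $\xx$, must factor through the normalization and in fact equal it — concretely, $Z$ is $S_2$ and $R_1$ (regular in codimension one, because $\xr$ already contains the smooth locus and one checks codimension-one points of $\xx$ are smooth), hence $Z$ is normal by Serre's criterion, and a normal scheme finite and birational over $\xx$ is $\xx_\norm$. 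For the injectivity half: the canonical map $\oo_Z\to\psi_*\oo_{\xx_\norm}$ has kernel and cokernel supported in codimension $\geq 1$, but both sheaves are $S_2$ (torsion-free plus $S_2$ on an $S_2$ base), so a map of $S_2$ sheaves that is an isomorphism in codimension zero and whose source and target are both $S_2$ is an isomorphism. The main obstacle is establishing that $\xx$ (equivalently $Z$) is regular in codimension one — i.e., understanding the codimension-one degenerations of commuting pairs and checking the isospectral cover stays smooth there; this is where one must use detailed structure theory of $\zz$ near subregular points, together with the Hotta–Kashiwara description of $\ch(\mm)$, rather than soft arguments. Granting that, the combination "finite birational + Cohen-Macaulay + $R_1$ $\Rightarrow$ normalization" closes the proof.
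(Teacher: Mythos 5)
The high-level scaffold is right and matches the paper: establish Cohen--Macaulayness of $\ggr^\hodge\mm$ from Saito's theory, pin down the support, and promote a generic identification to a global isomorphism via a Serre-type $S_2$/$R_1$ argument (this is exactly the paper's Lemma~\ref{ei}). But the details contain a misstep and a misdiagnosis of where the difficulty lies.

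First, the claim that $\ggr^\hodge\mm = \oo_Z$ for a closed subscheme $Z\sset\gg\times\tt$ is not justified, and if the theorem is true and $\xx$ happens to be non-normal, it is false. The cyclic generator only yields a $\oo_{\gg\times\tt}$-module map from $\gr_0^\hodge$; the Hodge filtration differs from the order filtration (the paper only proves an inclusion $F^\ord_k\sset F^\hodge_k$ in Proposition~\ref{ordhodge}, not equality), so there is no reason $\ggr^\hodge\mm$ should be a quotient algebra of $\oo_{\gg\times\tt}$. Indeed $\psi_*\oo_{\xx_\norm}$ is a sheaf of $\oo_\xx$-\emph{algebras} but not the structure sheaf of a closed subscheme of $\gg\times\tt$ unless $\xx$ is already normal, which the paper leaves open. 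The paper never asserts a ring structure on $\ggr^\hodge\mm$ until \emph{after} the theorem is proved (Theorem~\ref{dima}).

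Second and more seriously: you establish the local identification of $\ggr^\hodge\mm$ with a structure sheaf only over $\xr=\xx^{rs}$, whose complement in $\xx$ has codimension \emph{one} (the strata attached to minimal Levi subalgebras). Your statement that ``$\xr$ already contains the smooth locus'' is backwards — the smooth locus is the larger open set $\xx^{rr}=\xx_1\cup\xx_2$, and the whole point is that $\xr\subsetneq\xx^{rr}$ has codimension-one complement. A map of $S_2$ sheaves that is an isomorphism only in codimension zero need not be an isomorphism, so the soft argument does not close. What Lemma~\ref{ei} actually requires is an isomorphism $\jmath^*(\ggr^\hodge\mm)\cong\oo_{\xx^{rr}}$ over the codimension-$\geq2$ complement $\xx^{rr}$; that is Proposition~\ref{res_lem}, and it is the technical heart of the paper. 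Proving it is not a matter of ``understanding subregular points of $\zz$'' (showing $\xx$ is $R_1$ is Lemma~\ref{irr}, a relatively soft stratification count): it requires the Hotta--Kashiwara realization of $\mm$ as $\H^0(\int_{\mu\times\nu}\oo_\tg)$, Saito's compatibility of $\ggr$ with projective direct image of Hodge modules (Theorem~\ref{saito}), the Koszul DG algebra $\aa$ on the double Grothendieck--Springer resolution, and the transversality statement (Corollary~\ref{transv}) that makes $\mmu\times\nnu$ an isomorphism onto $\xx^{rr}$. That chain is where the real work is, and it is precisely the part your proposal leaves as ``granting that.''
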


This theorem combined with some deep results of
Saito \cite{Sa}, to be reviewed in \S\ref{hodged}, yields the following 
theorem that confirms a conjecture of M. Haiman, \cite[Conjecture
7.2.3]{Ha3}.

\begin{thm}\label{main_thm}  $\ \xx_\norm$ is a Cohen-Macaulay
and Gorenstein variety with trivial
canonical sheaf.
\end{thm}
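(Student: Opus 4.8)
The plan is to derive Theorem \ref{main_thm} from Theorem \ref{mthm} by transporting homological properties across the isomorphism $\psi_*\oo_{\xx_\norm}\iso\ggr^\hodge\mm$ and invoking Saito's theory. First I would recall the key structural facts about Hodge modules that will be reviewed in \S\ref{hodged}: since $\mm$ is the minimal (intermediate) extension of a variation of Hodge structure on a smooth locus — it is of geometric origin by Hotta--Kashiwara — it underlies a pure polarizable Hodge module, and Saito's results guarantee that the associated graded $\ggr^\hodge\mm$, taken with respect to the Hodge filtration, is a \emph{Cohen--Macaulay} $\oo_{T^*(\g\times\t)}$-module whose support is the characteristic variety. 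Moreover, for a Hodge module coming from the trivial variation (a constant sheaf on the smooth locus, shifted), Saito's duality identifies the Grothendieck--Serre dual of $\ggr^\hodge\mm$ with $\ggr^\hodge$ of the dual Hodge module, which in this self-dual situation yields that $\ggr^\hodge\mm$ is in fact a \emph{Gorenstein} (self-dual up to shift and twist) sheaf, with dualizing complex concentrated in a single degree and free of rank one over itself on the smooth locus. The precise statement I would cite is Saito's theorem that for a pure Hodge module $M$ with strict support an irreducible variety, $\gr^F M$ is Cohen--Macaulay, and when $M$ is moreover Verdier self-dual the dualizing sheaf of $\gr^F M$ is trivial.

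Next I would combine this with Theorem \ref{mthm}. The isomorphism $\psi_*\oo_{\xx_\norm}\iso\ggr^\hodge\mm$ is an isomorphism of $\oo_{\gg\times\tt}$-modules, where $\gg\times\tt=T^*(\g\times\t)$ is smooth. Since $\psi:\xx_\norm\to\xx_{\red}$ is finite, the functor $\psi_*$ is exact and reflects Cohen--Macaulayness, Gorensteinness, and the triviality of the canonical sheaf: for a finite morphism $\psi$ with $\xx_\norm$ equidimensional, $\psi_*\oo_{\xx_\norm}$ is a Cohen--Macaulay $\oo_{\gg\times\tt}$-module of the appropriate codimension if and only if $\xx_\norm$ is Cohen--Macaulay, and $\psi_!\kk_{\gg\times\tt}=\kk_{\xx_\norm}$ computes the canonical sheaf of $\xx_\norm$ as $\hhom_{\oo_{\gg\times\tt}}(\psi_*\oo_{\xx_\norm},\kk_{\gg\times\tt})$ placed in the correct cohomological degree. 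Therefore the Cohen--Macaulayness of $\ggr^\hodge\mm$ as an $\oo_{\gg\times\tt}$-module gives Cohen--Macaulayness of $\xx_\norm$, and the Gorenstein self-duality of $\ggr^\hodge\mm$ furnished by Saito's theory translates, under $\hhom_{\oo_{\gg\times\tt}}(-,\kk_{\gg\times\tt})$, into $\kk_{\xx_\norm}\cong\oo_{\xx_\norm}$, i.e. $\xx_\norm$ is Gorenstein with trivial canonical sheaf. One still has to check that the Hodge filtration normalization matches the cohomological shift so that the dual is genuinely concentrated in one degree with no twist by a line bundle; this is where the bilinear form identification $T^*(\g\times\t)=\gg\times\tt$ and the bookkeeping of the filtration index (the shift by $\dim\g+\rk$, say) must be done carefully, but it is a matter of matching conventions rather than a substantive difficulty.

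The main obstacle, and the only place where real input beyond Theorem \ref{mthm} is needed, is establishing that the relevant Hodge module is \emph{self-dual as a polarizable Hodge module}, so that Saito's duality delivers Gorenstein rather than merely Cohen--Macaulay. Concretely, one must identify $\mm$ — or rather the variation of Hodge structure it extends on the smooth locus of the characteristic variety — with its own dual; this should follow from the fact that $\mm$ is the minimal extension of the \emph{trivial} rank-one local system (suitably twisted by the half-density / orientation data coming from the Chevalley map being a quotient by $W$), together with the simplicity of $\mm$ proved by Hotta--Kashiwara, which forces the polarization to be essentially unique. Once self-duality is in hand, the Cohen--Macaulay statement of Saito upgrades automatically to the Gorenstein statement with trivial canonical sheaf, and the proof is complete. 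I would therefore structure the argument as: (1) cite Saito: $\gr^F$ of a pure Hodge module is CM; (2) verify self-duality of $\mm$ and cite Saito's duality: $\gr^F$ of a self-dual pure Hodge module is Gorenstein with trivial dualizing sheaf (up to the shift); (3) apply Theorem \ref{mthm} and descend these properties through the finite map $\psi$ to conclude $\xx_\norm$ is CM and Gorenstein with $\kk_{\xx_\norm}\cong\oo_{\xx_\norm}$.
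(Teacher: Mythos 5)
Your proposal is correct and follows essentially the same route as the paper: establish via Saito's theory that $\ggr^\hodge\mm$ is Cohen--Macaulay and Grothendieck--Serre self-dual up to shift (this is Corollary \ref{serre}), then apply Theorem \ref{mthm} and push these properties through the finite morphism $\psi$. One small simplification: the Verdier self-duality of $\mm$ requires no twist or polarization-uniqueness argument — since $\mm\cong\bj_{!*}\oo_\yr$ by Lemma \ref{supp}(ii) and $\oo_\yr$ is the structure $\dd$-module of the smooth variety $\yr$, hence manifestly self-dual, its minimal extension $\bj_{!*}\oo_\yr$ is self-dual as well, which is exactly what \S\ref{hc_hodge} uses.
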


Theorem \ref{main_thm} will
 be deduced from Therem \ref{mthm} in \S\ref{main_res}.

\begin{cor}[\S\ref{smooth_pf}]\label{zzcm} The scheme $\zz_\norm$ is  Cohen-Macaulay.
\end{cor}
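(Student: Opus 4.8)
The plan is to deduce Corollary~\ref{zzcm} from Theorem~\ref{main_thm} by relating the normalizations $\xx_\norm$ and $\zz_\norm$ via the projection $\pi\colon\gg\times\tt\to\gg$. First I would observe that $\xx$ is a ramified cover of $\zz_\red$: the map $\xx\to\zz$ induced by $\pi$ is finite (its fibers are controlled by the defining equations $P(x_1,x_2)=(\res P)(t_1,t_2)$, which pin $(t_1,t_2)$ down to a point of $\tt/W$ and hence to a finite set in $\tt$), surjective onto $\zz_\red$, and generically a bijection onto the dense reduced locus of $\zz$ since over a regular semisimple commuting pair there is a unique choice of simultaneous eigenvalues up to $W$. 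Therefore $\xx$ and $\zz_\red$ have the same function field, so their normalizations coincide canonically: $\xx_\norm\cong\zz_\norm$ as schemes over $\zz_\red$. (Here one uses that $\zz$ is irreducible, as recorded in Proposition~\ref{zzbasic}, so the normalization is well defined.)

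Granting that identification, Corollary~\ref{zzcm} is immediate: Theorem~\ref{main_thm} asserts $\xx_\norm$ is Cohen--Macaulay, and Cohen--Macaulayness is an intrinsic property of the scheme, so $\zz_\norm\cong\xx_\norm$ is Cohen--Macaulay as well. No Gorenstein or canonical-sheaf statement is claimed for $\zz_\norm$, which is consistent with the fact that the ramification of $\xx\to\zz$ means the two normalizations, while abstractly isomorphic as schemes, carry different natural maps to $\gg$ and one should not expect the triviality of the canonical sheaf to be visible in the same way.

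The one point that genuinely needs care — and which I expect to be the main obstacle — is verifying that the finite birational morphism $\xx\to\zz_\red$ really does induce an \emph{isomorphism} of normalizations, i.e. that $\xx$ is not just a cover but a \emph{birational} one. This amounts to checking that over the generic point of $\zz$ the extra coordinates $(t_1,t_2)$ are rational functions of $(x_1,x_2)$. On the regular semisimple locus $(x_1,x_2)$ generates a torus conjugate to $T$, and the unordered pair of eigenvalue-systems is $W$-invariant; one must argue that, along a suitable étale (or at least generically étale) neighborhood, a consistent ordering can be chosen rationally, so that the degree of $\xx\to\zz$ is $1$ rather than $|W|$. This is a standard but not entirely trivial argument about the Chevalley quotient $\t\to\t/W$ and the structure of the regular semisimple commuting variety; once it is in place, the normalization of $\xx$ is identified with that of $\zz$ and the Corollary follows from Theorem~\ref{main_thm} with no further work. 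It is also worth remarking that this gives a clean conceptual statement even though $\zz$ itself is only known to be generically reduced, since normalization only sees $\zz_\red$.
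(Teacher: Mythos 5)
Your proposed argument has a fatal flaw at its foundation: the projection $p\colon\xx\to\zz_\red$ is \emph{not} birational, and the degree of this map is $|W|$, not $1$. The paper states this explicitly in Lemma~\ref{xxbasic}(ii): the first projection $\xr\to\zr$ is a Galois covering with Galois group $W$. The step you flag as ``the one point that genuinely needs care'' — choosing a consistent rational ordering of eigenvalues over the generic point of $\zz$ — is precisely the thing that cannot be done. Over a regular semisimple commuting pair $(x,y)$, the fiber of $p$ consists of the full $W$-orbit $\{(w t_1, w t_2)\}_{w\in W}$ of the eigenvalue data, and there is no $G$-invariant way to single out one point of this orbit; this is the entire reason the isospectral variety is interesting and why Haiman's constructions (the isospectral Hilbert scheme in type $A$) are nontrivial. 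Consequently $\xx_\norm$ and $\zz_\norm$ are \emph{not} isomorphic, and your deduction of Corollary~\ref{zzcm} collapses.

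The correct argument, carried out in the paper's \S\ref{smooth_pf}, is to pass to the $W$-quotient. Since $\xx_\norm$ is normal and $W$ acts on it with $p_\norm$ generically a $W$-Galois cover onto $\zz_\norm$, the induced map $\xx_\norm/W\to\zz_\norm$ is a finite birational morphism of normal varieties, hence an isomorphism. This identifies $\oo_{\zz_\norm}$ with $\rr^W=\bigl((p_\norm)_*\oo_{\xx_\norm}\bigr)^W$. Now one uses that the Cohen--Macaulay property is preserved under finite direct images and under passage to direct summands: Theorem~\ref{main_thm} gives that $\oo_{\xx_\norm}$ is CM, hence $\rr=(p_\norm)_*\oo_{\xx_\norm}$ is CM, hence its direct summand $\rr^W\cong\oo_{\zz_\norm}$ is CM. You need the representation-theoretic splitting of $\rr$ into $W$-isotypic components — not an identification of the two normalizations — to transfer the CM property from $\xx_\norm$ to $\zz_\norm$.
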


\begin{cor}[\S\ref{LSsec}]\label{LS}  The  $\dd_\g$-module
$\dd_\g/\dd_\g\cd\ad\g$ comes equipped with
a canonical filtration $F$ such that one has
an isomorphism
$\ggr^F(\dd_\g/\dd_\g\cd\ad\g)\cong \psi_*\oo_{\zz_\norm}$,
of $\oo_\gg$-modules.
\end{cor}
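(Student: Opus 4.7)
My plan is to deduce Corollary \ref{LS} from Theorem \ref{mthm} by transferring the Hodge filtration on $\mm$ to a canonical filtration on $\dd_\g/\dd_\g\cd\ad\g$ via a natural $\dd$-module construction, and then computing the associated graded geometrically.

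The first and main step is to realize $\dd_\g/\dd_\g\cd\ad\g$ naturally in terms of $\mm$. Using the bimodule interpretation of $\mm$ indicated in \eqref{mmbimod}, which presents $\mm$ essentially as $(\dd_\g/\dd_\g\cd\ad\g)\otimes_{\dd(\g)^G}\dd(\t)$ via the Harish-Chandra map $\rad$, together with the Levasseur--Stafford theorem that $\rad:\dd(\g)^G\iso\dd(\t)^W$ is an isomorphism, I would take the $\dd$-module direct image of $\mm$ along $p:\g\times\t\to\g$. Exploiting the freeness properties of $\dd(\t)$ over $\dd(\t)^W$, the relative de Rham complex of $\mm$ along $\t$ should collapse to a single cohomological degree, and passing to $W$-invariants with respect to the natural $W$-action on the $\t$-factor should yield a canonical isomorphism of $\dd_\g$-modules of the form
\[
\dd_\g/\dd_\g\cd\ad\g \;\cong\; (p_+\mm)^W.
\]

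Second, since $p$ is smooth, Saito's theorem on smooth direct images of Hodge modules equips $p_+\mm$ with a canonical Hodge filtration inherited from that of $\mm$, whose associated graded coincides with the coherent pushforward of $\ggr^\hodge\mm$ along the corresponding cotangent correspondence. The induced filtration on $(p_+\mm)^W$, transported via the above isomorphism, is the canonical filtration $F$ on $\dd_\g/\dd_\g\cd\ad\g$. Combining with Theorem \ref{mthm} ($\ggr^\hodge\mm\cong\psi_*\oo_{\xx_\norm}$), $\ggr^F(\dd_\g/\dd_\g\cd\ad\g)$ is identified with the $W$-invariants of the coherent pushforward of $\psi_*\oo_{\xx_\norm}$ along the projection $\gg\times\tt\to\gg$. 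The final piece is a purely algebro-geometric fact: since $\xx\to\zz$ is generically a finite Galois cover with group $W$, the normalization $\xx_\norm\to\zz_\norm$ is likewise a Galois $W$-cover and $\xx_\norm/W\cong\zz_\norm$; taking $W$-invariants of the pushed-forward structure sheaf of a Galois cover then recovers the structure sheaf of the quotient, yielding $\psi_*\oo_{\zz_\norm}$ as required.

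The main obstacle is Step 1, the isomorphism $\dd_\g/\dd_\g\cd\ad\g\cong(p_+\mm)^W$. This requires the bimodule presentation \eqref{mmbimod}, the Levasseur--Stafford theorem, and in particular a concentration argument showing that $p_+\mm$ has no derived pathologies (so that the Hodge filtration on the single surviving cohomology is unambiguous). Once this identification is secured, Saito's theory together with the Galois-cover computation delivers the identification of $\ggr^F$ with $\psi_*\oo_{\zz_\norm}$ in a clean and formal manner.
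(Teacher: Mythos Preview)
Your approach has a genuine gap: the isomorphism $\dd_\g/\dd_\g\cd\ad\g \cong (p_+\mm)^W$ cannot hold. The $\dd$-module direct image $p_+$ along a smooth morphism preserves holonomicity, so $(p_+\mm)^W$ is a holonomic $\dd_\g$-module. But $\dd_\g/\dd_\g\cd\ad\g$ is \emph{not} holonomic---its characteristic variety is $\zz$, which has dimension $\dim\g+\rk>\dim\g$ (this non-holonomicity is even stated explicitly in Corollary~\ref{ls_cor}(i)). So no amount of concentration or collapsing of the relative de Rham complex can produce the desired identification; you are trying to match objects of different sizes.

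The paper's route avoids $p_+$ entirely and is much more elementary. From the bimodule presentation \eqref{mmbimod} one takes $W$-invariants of \emph{global sections} (not $\dd$-module pushforward):
\[
\Ga(\g\times\t,\mm)^W\;\cong\;\big([\dd(\g)/\dd(\g)\cd\ad\g]\,\bo_{\dd(\t)^W}\,\dd(\t)\big)^W\;=\;\dd(\g)/\dd(\g)\cd\ad\g,
\]
using that $\dd(\t)^W$ are exactly the $W$-invariants in $\dd(\t)$. The canonical filtration $F$ is then simply the Hodge filtration on $\mm$ restricted to $W$-invariants. Since both $\Ga(\g\times\t,-)$ (the variety is affine) and $(-)^W$ (finite group, characteristic zero) are exact functors, they commute with taking associated graded, giving
\[
\gr^F[\dd(\g)/\dd(\g)\cd\ad\g]\;=\;\Ga(\gg\times\tt,\,\ggr^\hodge\mm)^W\;=\;\C[\xx_\norm]^W\;=\;\C[\zz_\norm],
\]
where the last two equalities use Theorem~\ref{mthm} and the isomorphism $\xx_\norm/W\cong\zz_\norm$ from Corollary~\ref{vectbun}(i). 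Your final geometric step (the Galois quotient computation) is correct and is exactly what the paper uses; the problem is solely the mechanism by which you connect $\mm$ to $\dd_\g/\dd_\g\cd\ad\g$.
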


The last corollary implies (see \S\ref{LSsec}) the following result
that has been proved earlier by
Levasseur and Stafford  \cite[Theorem 1.2]{LS2}
in a totally  different way.
\begin{cor}\label{ls_cor} \vi
$\dd(\g)/\dd(\g)\ad\g$ is a Cohen-Macaulay (non-holonomic) left
$\dd(\g)$-module.

\noindent
\vii  The natural {\em right} action
of the algebra $\C[\g]^G$  makes $\dd(\g)/\dd(\g)\ad\g\ $ a {\em flat}
 right $\C[\g]^G$-module.
\end{cor}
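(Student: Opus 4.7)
The plan is to deduce both parts from Corollary~\ref{LS}, which equips $M := \dd_\g/\dd_\g\cd\ad\g$ with a canonical filtration $F$ whose associated graded is isomorphic to $\psi_*\oo_{\zz_\norm}$ as an $\oo_\gg$-module. Homological properties of $M$ over $\dd(\g)$, and flatness properties over $\C[\g]^G$, will be lifted from the corresponding properties of the commutative sheaf $\psi_*\oo_{\zz_\norm}$.

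For (i), a finitely generated filtered $\dd(\g)$-module is Cohen-Macaulay (in the standard $\dd$-module sense of equality between grade and codimension of the characteristic variety) precisely when its symbol module is Cohen-Macaulay as a module over the polynomial ring $\gr\dd(\g) = \C[T^*\g] = \C[\gg]$. Since the normalization map $\psi: \zz_\norm \to \zz$ is finite and $\zz_\norm$ is Cohen-Macaulay by Corollary~\ref{zzcm}, the pushforward $\psi_*\oo_{\zz_\norm}$ is a Cohen-Macaulay $\oo_\gg$-module; hence $M$ is Cohen-Macaulay as a $\dd(\g)$-module. Non-holonomicity follows from $\dim\zz = \dim\g + \rk > \dim\g$, so that the characteristic variety strictly exceeds half the dimension of $T^*\g$.

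For (ii), the right action of $\C[\g]^G$ on $M$ is well-defined because $[\ad\g,\C[\g]^G]=0$; it preserves the filtration $F$, and the induced action on $\gr^F M = \C[\zz_\norm]$ is the pullback of functions along the composition $\zz_\norm \to \g \to \g/\!\!/G = \Spec\C[\g]^G$. A standard Rees module argument reduces flatness of $M$ over $\C[\g]^G$ to flatness of $\C[\zz_\norm]$ over $\C[\g]^G$. By the Chevalley isomorphism, $\C[\g]^G$ is a polynomial algebra in $\rk$ variables, hence regular. Combined with the Cohen-Macaulay property of $\zz_\norm$ and Hironaka's miracle flatness theorem, the claim reduces to showing that every fiber of $\zz_\norm \to \g/\!\!/G$ has the expected dimension $\dim\zz - \rk = \dim\g$.

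The main obstacle is this last equidimensionality. On the open locus of regular semisimple classes the fibers visibly have dimension $(\dim\g - \rk) + \rk = \dim\g$; for general points one uses Jordan decomposition to identify fibers with (shifts of) nilpotent commuting varieties of centralizer subalgebras, and then invokes the classical theorem of Richardson and Premet that the nilpotent commuting variety of any reductive Lie algebra has dimension equal to that of the algebra. Equidimensionality in hand, miracle flatness delivers (ii).
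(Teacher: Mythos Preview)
Your argument is correct and tracks the paper's proof closely: both deduce the Cohen--Macaulay and flatness properties at the level of $\gr^F M\cong\C[\zz_\norm]$ (via Corollaries~\ref{LS} and~\ref{zzcm}) and then lift to the filtered module. Two points of comparison are worth noting. First, the paper's proof actually establishes flatness over $(\sym\g)^G$ rather than $\C[\g]^G$: the relevant projection in Proposition~\ref{flat} is $\th_\zz(x,y)=\gamma(y)$, i.e.\ the \emph{second} coordinate, which on the $\dd$-module side corresponds to constant-coefficient invariant operators. Your version uses the first coordinate, matching the literal statement; the two are equivalent by the obvious involution $(x,y)\leftrightarrow(y,x)$ of $\zz$. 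Second, your fiber-dimension argument via Jordan decomposition and Premet's theorem for Levi subalgebras is correct but heavier than necessary: the paper (proof of Proposition~\ref{flat}) instead observes that the contracting $\C^\times$-action on $\t/W$ forces, by semicontinuity, every fiber of $\wt\th_\zz$ to have dimension at most that of the zero fiber $\zz^\nil$, which equals $\dim\g$; miracle flatness then applies exactly as you say. For the passage from $\gr^F M$ back to $M$, the paper simply invokes a result of Bj\"ork~\cite{Bj} in lieu of your explicit Rees/grade arguments.
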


\subsection{Group actions}\label{actions}
One has a natural  $G\times W$-action
on $\g\times\t$,
 resp. on $\gg\times\tt$, 
where the group   $G$ acts  on the first factor and 
 the group $W$ acts on the second factor.
There is also a $\C^\times$-action on
$\g$, resp. on $\g\times\t$,
by dilations. So, we obtain
a $\C^\times\times\C^\times$-action on $\gg=T^*\g$,
resp.
on $\gg\times\tt=(\g\times\t)\times(\g\times\t)=
T^*(\g\times\t)$,
such that the standard $\C^\times$-action by dilations along the fibers of 
the cotangent bundle corresponds, via the
above identification,
to the action 
of the subgroup $\{1\}\times\C^\times
\sset\C^\times\times\C^\times$.
Thus, we have made the  space $\g\times\t$   a $G\times
W\times\C^\times$-variety, resp. the  space  $\gg\times\tt$ a
$G\times
W\times\C^\times\times\C^\times$-variety.

The scheme $\xx$ is clearly  $G\times
W\times\C^\times\times\C^\times$-stable. The resulting $G\times
W\times\C^\times\times\C^\times$-action on $\xx$ 
induces one on $\xx_\norm$
since a  reductive group action  can always 
be lifted canonically to
the normalization, \cite{Kr}, \S4.4.

On the other hand, the 
group $G\times W\times\C^\times$ acts on $\g\times\t$ and
the  \hc module $\mm$ has the natural structure of
a $G\times W\times\C^\times$-equivariant
$\DD$-module.
The  Hodge filtration on $\mm$ is canonical,
therefore, this $G\times W\times\C^\times$-action
respects the filtration.
Hence, the
group $G\times W\times\C^\times$ acts naturally
on $\ggr^\hodge\mm$. There is also an additional
$\C^\times$-action on $\ggr^\hodge\mm$ that comes
from the grading. Thus, combining all these actions together,
one may view $\ggr^\hodge\mm$ as a
 $G\times W \times\C^\times\times\C^\times$-equivariant
coherent sheaf on $\gg\times\tt$.

The isomorphism of Theorem \ref{mthm}
respects the  $G\times W \times\C^\times\times\C^\times$-equivariant
structures.
The equivariant
structure on the sheaf $\oo_{\xx_\norm}$
makes the coordinate ring $\C[\xx_\norm]$
 a bigraded locally
finite $G\times W$-module.

In \S\S\ref{dima_sec},\ref{xyzsec}, we will construct
a "DG resolution" of 
$\xx_\norm$, a (derived) "double analogue" of
the Grothendieck-Springer resolution,
cf. Remark \ref{dgres}. We show that
 the DG algebra
of global sections of our resolution
is acyclic in nonzero degrees.
Using this, a standard
application of  the
 Atiyah-Bott-Lefschetz fixed
point theorem yields the  following result.
\begin{thm}[see {\cite[\S2.4]{BG}}]\label{character_cor} The bigraded $T$-character
of  $\C[\xx_\norm]$ is  given by the formula
$$
\chi^{\CC\times T}(\oo_{\xx_\norm})=
\frac{1}{(1-q )^\rk(1-t )^\rk}\cdot
\sum_{w\in W}\ 
w\left(\prod_{\al\in R^+}\frac{1-qt\,
e^\al}{(1-e^{-\al})(1-q\,e^\al)(1-t\,e^\al)}\right).
$$
\end{thm}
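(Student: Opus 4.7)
My plan is to compute the character by applying the Atiyah--Bott--Lefschetz fixed point formula to a ``DG resolution'' of $\xx_\norm$ constructed in \S\ref{dima_sec}--\S\ref{xyzsec}. The DG resolution is a ``doubled'' version of the Grothendieck--Springer resolution. Its underlying ordinary scheme is the vector bundle $\tgg := G \times^B (\b \times \b)$ over the flag variety $\B = G/B$, equipped with the natural map to $\gg \times \tt$ sending $[g, x_1, x_2]$ to $(\Ad g\cd x_1,\, \Ad g\cd x_2,\, \bar x_1,\, \bar x_2)$, where $\bar x_i \in \b/\n \cong \t$ denotes the ``semisimple part''. Since $[\b,\b] \sset \n$, the commuting condition $[x_1, x_2] = 0$ is imposed as a \emph{derived} condition by endowing $\oo_{\tgg}$ with the Koszul DG algebra structure
\[
\oo_{\tx}\; :=\; \sym_{\oo_{\tgg}}\!\bigl(\n^*[1]\bigr), \qquad d\;=\;\iota_{[x_1, x_2]}.
\]
This produces a DG scheme $\tx$ with a morphism $\pi\colon \tx \to \gg \times \tt$ that factors through $\xx$.

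The first step is to invoke the key input (see \S\ref{xyzsec} and \cite[\S2.4]{BG}): the derived pushforward $R\pi_*\,\oo_{\tx}$ is concentrated in cohomological degree zero and is naturally isomorphic to $\psi_*\,\oo_{\xx_\norm}$. Equivalently, the DG algebra $R\Ga(\tx, \oo_{\tx})$ has vanishing higher cohomology and $H^0 = \C[\xx_\norm]$. Granting this, the $\CC\times T$-equivariant character of $\C[\xx_\norm]$ equals the alternating-sum character of the DG sheaf $\oo_{\tx}$, which can be computed on the ambient smooth vector bundle $\tgg$ by Atiyah--Bott--Lefschetz.

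The second step is the fixed-point calculation. The $\CC\times T$-fixed locus of $\tgg$ is discrete: the two $\C^\times$-factors dilate the two $\b$-fibers, forcing $x_1 = x_2 = 0$ at a fixed point, while the $T$-fixed points of $\B$ are the $W$-translates of the identity coset. At the fixed point indexed by $w \in W$, the tangent weights are $\{-w\al\}_{\al \in R^+}$ from $\g/\b$; $\{q\}^\rk$ together with $\{q\,e^{w\al}\}_{\al \in R^+}$ from the first $\b$-fiber; and analogously $\{t\}^\rk$ together with $\{t\,e^{w\al}\}_{\al \in R^+}$ from the second $\b$-fiber. The Koszul generators $\n^*[1]$ resolve the $\n$-valued section $[x_1, x_2]$ of bidegree $(1,1)$ in $(q,t)$; at the fixed point $w$ they carry weights $\{qt\,e^{w\al}\}_{\al \in R^+}$, so their alternating-sum contribution is $\prod_{\al \in R^+}(1 - qt\,e^{w\al})$. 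Substituting into the Atiyah--Bott--Lefschetz formula yields, at each $w\in W$, the summand
\[
\frac{1}{(1-q)^\rk(1-t)^\rk}\;\cd\;\prod_{\al \in R^+} \frac{1 - qt\,e^{w\al}}{(1 - e^{-w\al})(1 - q\,e^{w\al})(1 - t\,e^{w\al})},
\]
and summing over $W$ reproduces the right-hand side of the theorem.

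The main obstacle is the first step: proving that $R\pi_*\oo_{\tx}$ is concentrated in degree zero and equals $\psi_*\oo_{\xx_\norm}$. This acyclicity is a nontrivial geometric input that ultimately rests on the normality and Cohen--Macaulayness of $\xx_\norm$ supplied by Theorem \ref{main_thm}, together with the Koszul regularity of the section $[x_1, x_2]$ of the $\n$-bundle on a dense open in $\tgg$. Once this vanishing is granted, the Atiyah--Bott--Lefschetz calculation above is routine bookkeeping.
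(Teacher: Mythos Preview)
Your approach is essentially the paper's: construct the DG algebra $\aa=\wedge^\bullet\mathfrak{q}^*\T_\bb$ on $\tgg$ (your $\oo_{\tx}$), invoke the acyclicity $\HH^{k\neq 0}(\tgg,\aa)=0$ together with $\HH^0(\tgg,\aa)\cong\C[\xx_\norm]$ (Corollary~\ref{dima3}), and then apply the Atiyah--Bott--Lefschetz fixed point formula to the $\CC\times T$-fixed points of $\tgg$. The paper sketches exactly this route and defers the detailed fixed-point computation to \cite{BG}.

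Two corrections are worth flagging. First, your account of where the acyclicity comes from is off: it does \emph{not} rest on the Cohen--Macaulayness of $\xx_\norm$ (Theorem~\ref{main_thm}) nor on Koszul regularity of the section $[x_1,x_2]$. That section is not a regular section on a dense open of $\tgg$, and the scheme $\tx$ is in general reducible (cf.\ the remark following Lemma~\ref{codim1}). The vanishing comes instead from Hodge theory: Corollary~\ref{dima22} identifies $R(\mmu\times\nnu)_*\aa$ with $\ggr^\hodge\mm$, concentrated in degree~$0$, via Saito's strictness theorem applied to the Hotta--Kashiwara direct image $\int_{\mu\times\nu}\oo_{\tg}$ (Theorems~\ref{cohiso} and~\ref{pimm}). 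Theorem~\ref{main_thm} is a \emph{consequence} of this machinery, not an input to it.

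Second, your weight table is internally inconsistent: you assign the tangent space $\g/\Ad_w\b$ the $T$-weights $\{-w\al\}_{\al\in R^+}$, yet assign the Koszul generators $\n^*\cong\g/\b$, which is the fiber of the \emph{same} bundle $\T_\bb$, the $T$-weights $\{+w\al\}$. One of these two signs must flip; with the paper's convention (\S\ref{canfilt}) that $R^+$ denotes the weights of $\g/\b$, it is the first. This does not affect the method, only the bookkeeping needed to land exactly on the stated product.
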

Here, $R^+$ denotes the set of positive roots of $\g$ and
the product in
the right hand side of the formula is understood as a formal power series of the form
$\sum_{m,n\geq0}\ a_{m,n}\cdot q^mt^n$ where
the coefficients $a_{m,n}$ are viewed as elements
of the representation ring of the torus $T$.

We refer to
\cite{BG} for the proof and some combinatorial applications of the above
theorem.
\subsection{A coherent sheaf on the commuting scheme}\label{dsec}
The first projection $\gg\times\tt\to\gg$
restricts to a
map $p:\ \xx\to\zz$. Therefore, the
composite $\xx_\norm\to\xx
\to\zz$
factors through  a  morphism $p_\norm:\ \xx_\norm\to\zz_\norm.$
It is immediate to see that $p_\norm$ is a
 {\em finite} $G
\times\C^\times\times\C^\times$-equivariant
morphism and that the group $W$ acts along the fibers of this
morphism.

Let $\rr:=(p_\norm)_*\oo_{\xx_\norm}$. This is  a  
$G\times\C^\times\times\C^\times$-equivariant
coherent sheaf of $\oo_{\zz_\norm}$-algebras.
The action of 
the group $W$ along the fibers of the map $p_\norm$ 
 gives a $W$-action on $\rr$
by $\oo_{\zz_\norm}$-algebra automorphisms.
Therefore, for any finite dimensional
$W$-representation $E$, 
 one has  a coherent  sheaf $\rr^E:=(E\o\rr)^W$,
 the $E$-isotypic component of $\rr$.
Equivalently, in terms of  the contragredient
representation  $E^*$, we have $\rr^E=\Hom_W(E^*,\rr).$

The sheaf $\rr$
enjoys the following  properties, see \S\ref{pfvb}.

\begin{cor}\label{vectbun}  \vi   The sheaf
$\rr$ is   Cohen-Macaulay and we
 have 
$$\ \oo_{\zz_\norm}\cong\rr^W,\quad\opp{resp.}\quad
 \kk_{\zz_\norm} \cong\rr^{\sign}.
$$

\noindent
\vii  There  is a
 $G\times W\times \CC$-equivariant isomorphism
$\rr\cong \hhom_{\oo_{\zz_\norm}}(\rr,\, \kk_{\zz_\norm})$.
Furthermore, for any  finite dimensional
$W$-module $E$, this gives an isomorphism
$$
\rr^{E^*\o\sign}\ \cong\  \hhom_{\oo_{\zz_\norm}}(\rr^E,\, \kk_{\zz_\norm}).
$$
\end{cor}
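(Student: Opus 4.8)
The plan is to deduce everything from Theorem \ref{mthm}, which identifies $\rr=(p_\norm)_*\oo_{\xx_\norm}$ with $\ggr^\hodge\mm$ as a $G\times W\times\CC$-equivariant sheaf, and from Theorem \ref{main_thm}, which says $\xx_\norm$ is Cohen-Macaulay and Gorenstein with trivial canonical sheaf. First I would observe that since $p_\norm:\xx_\norm\to\zz_\norm$ is finite, the sheaf $\rr$ is Cohen-Macaulay as an $\oo_{\zz_\norm}$-module precisely because $\oo_{\xx_\norm}$ is Cohen-Macaulay on $\xx_\norm$ (finite pushforward preserves the Cohen-Macaulay property for modules, using that $\dim\xx_\norm=\dim\zz_\norm$). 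For the identification $\rr^W\cong\oo_{\zz_\norm}$: the map $p:\xx\to\zz$ is, generically over the regular semisimple locus, a $W$-Galois cover — this is built into the definition \eqref{xxDef} of $\xx$ via the Chevalley map, since over a regular semisimple $(x_1,x_2)\in\zz$ the fiber consists of the $W$-orbit of eigenvalue data. Hence $\rr^W$ and $\oo_{\zz_\norm}$ agree on a dense open set; both are torsion-free (indeed $\rr^W\subset\rr$ is a summand, hence torsion-free, and $\zz_\norm$ is normal), and a torsion-free sheaf that is reflexive — which $\oo_{\zz_\norm}$ is on the normal variety $\zz_\norm$ — is determined by its restriction to the complement of a codimension-$\geq 2$ set; after checking $\rr^W$ is reflexive (as a summand of the Cohen-Macaulay, hence $S_2$, sheaf $\rr$ it satisfies $S_2$, and $S_2$ plus torsion-free on a normal base gives reflexive) the equality $\rr^W=\oo_{\zz_\norm}$ follows.

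Next, for the canonical sheaf statement $\kk_{\zz_\norm}\cong\rr^{\sign}$ and, more fundamentally, the self-duality in part (ii): the key input is Grothendieck duality for the finite morphism $p_\norm$, which gives
\[
\hhom_{\oo_{\zz_\norm}}(\rr,\kk_{\zz_\norm})\;\cong\;(p_\norm)_*\,\hhom_{\oo_{\xx_\norm}}(\oo_{\xx_\norm},\,p_\norm^!\kk_{\zz_\norm})\;=\;(p_\norm)_*\,\kk_{\xx_\norm}.
\]
By Theorem \ref{main_thm} the canonical sheaf $\kk_{\xx_\norm}$ is trivial, i.e. $\kk_{\xx_\norm}\cong\oo_{\xx_\norm}$, so the right-hand side is $(p_\norm)_*\oo_{\xx_\norm}=\rr$. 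This yields the isomorphism $\rr\cong\hhom_{\oo_{\zz_\norm}}(\rr,\kk_{\zz_\norm})$ asserted in (ii), and I must check it is $G\times W\times\CC$-equivariant — for the $G$ and $\CC$ factors this is automatic from functoriality of duality, while for $W$ one must track how $W$ acts on the duality isomorphism: the $W$-action on $\rr$ is by $\oo_{\zz_\norm}$-algebra automorphisms along the fibers, and the trivialization $\kk_{\xx_\norm}\cong\oo_{\xx_\norm}$ a priori is $W$-equivariant only up to the sign character, because $W$ acts on the top exterior power of the relative cotangent directions (the $\tt$-directions) through $\sign$. Taking this sign twist into account, duality intertwines the $E$-isotypic and $(E^*\otimes\sign)$-isotypic components, giving $\rr^{E^*\otimes\sign}\cong\hhom_{\oo_{\zz_\norm}}(\rr^E,\kk_{\zz_\norm})$; specializing $E=\triv$ recovers $\rr^{\sign}\cong\hhom_{\oo_{\zz_\norm}}(\oo_{\zz_\norm},\kk_{\zz_\norm})=\kk_{\zz_\norm}$, completing (i).

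The main obstacle, I expect, is pinning down the precise $W$-equivariant structure on the duality isomorphism — in particular verifying that the trivialization of $\kk_{\xx_\norm}$ furnished by Saito's theory (via the Hodge-module structure on $\mm$ and Theorem \ref{mthm}) carries exactly the sign twist claimed, rather than some other character of $W$. This requires comparing the $W$-action coming from $N(T)/T$ on the $\tt=\t\times\t$ factor with the action on the canonical bundle, i.e. on $\wedge^{\mathrm{top}}(\t\times\t)^*=(\wedge^\rk\t^*)^{\otimes 2}$; since $W$ acts on $\wedge^\rk\t$ by $\sign$, it acts on this square by $\sign^2=\triv$ — so in fact the naive count suggests \emph{no} twist from the $\tt$-directions alone, and the genuine $\sign$ must instead enter through the identification of $\kk_{\xx_\norm}$ with a \emph{relative} dualizing sheaf, i.e. through $\wedge^\rk\t^*$ appearing once (the relative dimension of $p$ over the generic point being zero but the Hodge-theoretic normalization of $\gr^\hodge\mm$ carrying a half-density-type twist). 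Sorting out this bookkeeping — most cleanly by working over the regular semisimple locus where $p_\norm$ is étale $W$-Galois and the dualizing sheaf of an étale cover is canonically trivial-with-$W$-action, then extending by normality/reflexivity — is where the real care is needed; once the character is correctly identified as $\sign$, the rest is formal manipulation of isotypic components.
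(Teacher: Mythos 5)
Your argument for $\rr^W\cong\oo_{\zz_\norm}$ and for the self-duality $\rr\cong\hhom_{\oo_{\zz_\norm}}(\rr,\kk_{\zz_\norm})$ via Grothendieck duality for the finite morphism $p_\norm$ and the triviality of $\kk_{\xx_\norm}$ is essentially the paper's, and both steps are correct (the paper's route to $\rr^W\cong\oo_{\zz_\norm}$ is a touch cleaner — $\xx_\norm/W$ is normal and $\xx_\norm/W\to\zz_\norm$ is finite birational between normal varieties, hence an isomorphism — but your reflexivity argument also works). The genuine gap is in the identification $\kk_{\zz_\norm}\cong\rr^\sign$, equivalently in pinning down the character of $W$ that twists the duality isomorphism, and you yourself flag it as the sticking point without resolving it.

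Your proposed repair — work over the regular semisimple locus where $p_\norm$ is $W$-Galois \'etale, identify the trivialization character there, and extend — does not work, for a concrete reason: over $\zz^{rs}$ the sheaf $\rr$ is the pushforward of an \'etale $W$-Galois cover, so \emph{every} rank-one isotypic line subsheaf $\rr^\chi$ is canonically trivial there (trivialized by the unit for $\chi=\triv$, by a discriminant-type section for $\chi=\sign$, etc.). The restrictions $\rr^W|_{\zz^{rs}}$ and $\rr^\sign|_{\zz^{rs}}$ are therefore indistinguishable, and $\kk_{\zz^{rs}}$ matches both. Worse, the complement of $\zz^{rs}$ in $\zz^r$ has codimension \emph{one} (regular non-semisimple pairs), so you cannot extend an identification from $\zz^{rs}$ to $\zz^r$ by any reflexivity/Cohen--Macaulay purity argument. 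This is precisely why your naive count yielding $\sign^2=\triv$ was inconclusive: the rs locus simply does not see the sign.

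The paper avoids the issue by producing a direct isomorphism $\kk_{\zz^r}\cong\wedge^\rk\fz_{\zz^r}$ \emph{on all of $\zz^r$}, obtained from the cotangent exact sequence of the commutator map $\kap:\gg\to\g$ (dualizing $0\to T_{x,y}\zz\to\g\oplus\g\stackrel{\kap_*}\to\g\to\coker\to 0$ and identifying $\Ker\kap_*^\top=\g_{x,y}$ gives $\det T^*_{x,y}\zz\cong\det\g_{x,y}$ up to constants). It then cites Theorem~\ref{rrsmooth}(ii) — proved earlier via the Beilinson--Kazhdan machinery of \S\ref{dima_sasha} and \S\ref{txr} — to get $\wedge^\rk\fz|_{\zz^r}\cong\rr^{\wedge^\rk\t}|_{\zz^r}=\rr^\sign|_{\zz^r}$. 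The extension from $\zz^r$ to $\zz_\norm$ then goes through because both $\rr^\sign$ and $\kk_{\zz_\norm}$ are Cohen--Macaulay (the latter because it is a direct summand of $\hhom_{\oo_{\zz_\norm}}(\rr,\kk_{\zz_\norm})\cong\rr$, a point your write-up should also record), and the complement of $\zz^r$ has codimension $\geq 2$. Only after $\kk_{\zz_\norm}\cong\rr^\sign$ is in hand does the paper read off the isotypic-component formula in part~(ii) from the self-duality isomorphism; you are running this in the opposite order, which is what forces you into the unresolved sign bookkeeping. To complete your proof you would need to import the identification $\kk_{\zz^r}\cong\wedge^\rk\fz_{\zz^r}$ and the content of Theorem~\ref{rrsmooth}(ii), at which point the argument converges with the paper's.
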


Given $x\in \g$, let
 $\g_x$ denote the centralizer of $x$ in $\g$.
Similarly, write
 $\g_{x,y}=\g_x\cap\g_y$ for
 the centralizer of a pair  $(x,y)\in\zz$ in $\g$.
We call an element $x\in\g$, resp.  a pair   $(x,y)\in\zz$,
{\em regular} if we have $\dim\g_x=\rk$,
resp.  $\dim\g_{x,y}=\rk.$
Let $\g^r$, resp. $\zz^r$, be the set of regular elements
of $\g$, resp. of $\zz$.  
One shows that the set $\zz^r$ is a Zariski open
and dense subset of $\zz$ which is equal
to the smooth
locus of the scheme $\zz$,  cf. Proposition \ref{zzbasic} below.

There is a coherent sheaf $\fz$ on $\zz_\norm$,
the "{\em universal stabilizer sheaf}", such that the
geometric fiber of $\fz$ at  each point 
 is the Lie algebra
of the isotropy group of that point under the
$G$-action,
cf. \S\ref{bei} for a more rigorous definition.
Any pair $(x,y)\in \zz^r$ may be
  viewed  as a point
of $\zz_\norm$. The sheaf
$\fz|_{\zz^r}$ is 
  locally free; its
  fiber at any
 point $(x,y)\in\zz^r$ equals, by definition, the  vector space $\g_{x,y}$.

Part (i) of the following theorem 
says that the sheaf $\rr$ gives an algebraic vector
bundle on  $\zz^r$ that has very interesting structures.
Part (ii) of the theorem 
provides a description of  the 
isotypic components 
 $\rr^{\wedge^{\!^s\!}\t}=(\wedge^{\!^s\!}\t\o\rr)^W$,
coresponding to the wedge powers $\wedge^{\!^s\!}\t,\
s \geq 0,$ of the reflection
representation of $W$, in terms of the sheaf $\fz$.

\begin{thm}[\S\S\ref{smooth_pf},\ref{txr}]\label{rrsmooth}
\vi The restriction
of the sheaf $\rr$ to $\zz^r$
 is a locally free sheaf.
Each fiber of the corresponding algebraic vector bundle
is a finite dimensional algebra that affords
the regular representation of 
the group ~$W$.

\vii For any $s\geq0$, there is a natural $G\times\CC$-equivariant  isomorphism
$\rr^{\wedge^s\t}|_{\zz^r}\ \cong\ \wedge^s\fz|_{\zz^r}.$
\end{thm}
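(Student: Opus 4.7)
\emph{Part (i).} By Theorem~\ref{main_thm}, $\xx_\norm$ is Cohen-Macaulay, so the finite pushforward $\rr = (p_\norm)_*\oo_{\xx_\norm}$ is a Cohen-Macaulay $\oo_{\zz_\norm}$-module of depth equal to $\dim\zz_\norm$. Since $\zz^r$ is smooth (Proposition~\ref{zzbasic}), the Auslander-Buchsbaum formula forces $\rr|_{\zz^r}$ to have projective dimension zero, hence to be locally free. The rank and the fiberwise $W$-module structure are read off on the regular semisimple locus $\zr\subset\zz^r$: every pair there is $G$-conjugate to a pair in $\t\times\t$ with free $W$-orbit, the restriction of $p_\norm$ is a finite étale $W$-torsor, and each fiber of $\rr|_{\zr}$ is the regular representation $\C[W]$. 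Since the multiplicity of each irreducible $W$-module in the fibers is a locally constant $\Z$-valued function on the connected smooth variety $\zz^r$, these multiplicities coincide with their values on $\zr$ throughout $\zz^r$, proving that every fiber is isomorphic to $\C[W]$ as a $W$-module.

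\emph{Part (ii).} The strategy is to construct a canonical $W$-equivariant isomorphism on $\xx_\norm^r := p_\norm^{-1}(\zz^r)$,
\[
\Phi:\ \pi^*\fz|_{\xx_\norm^r}\ \iso\ \t\otimes\oo_{\xx_\norm^r},
\]
with $\pi := p_\norm|_{\xx_\norm^r}$, $W$ acting on $\t$ by the reflection representation and on $\pi^*\fz$ via its action on $\xx_\norm^r$. Granted such $\Phi$, taking $s$-th exterior powers, pushing forward by $\pi$, and extracting $W$-invariants yields, via the projection formula and $\pi_*\oo_{\xx_\norm^r} = \rr|_{\zz^r}$, the desired natural $G\times\CC$-equivariant isomorphism
\[
\wedge^s\fz|_{\zz^r}\ =\ (\wedge^s\fz\otimes \rr)^W|_{\zz^r}\ \iso\ (\wedge^s\t\otimes \rr)^W|_{\zz^r}\ =\ \rr^{\wedge^s\t}|_{\zz^r}.
\]

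The map $\Phi$ is the commuting analogue of the tautological trivialization of the centralizer bundle by the Grothendieck-Springer resolution. Over $\zr$ its construction is straightforward: a point $(x,y,t_1,t_2)\in\xx_\norm^r$ determines a unique $G_{x,y}$-conjugacy class of $g\in G$ with $g\cdot(t_1,t_2) = (x,y)$, and $\Ad(g)\inv:\g_{x,y}\iso\t$ provides $\Phi|_{p_\norm^{-1}(\zr)}$, $W$-equivariantly. To extend $\Phi$ over the full regular locus I would use the Kostant-style identification $\g_x/([\g,x]\cap\g_x)\iso\t$ at a regular (possibly non-semisimple) $x\in\g$ furnished by the differential of the Chevalley map, combined with the lift of the torsor datum $(t_1,t_2)$ along the $W$-cover $\pi$. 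Once $\Phi$ is in place globally, a fiberwise rank check using part~(i) confirms it is an isomorphism.

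The main obstacle is precisely this extension of $\Phi$ to the regular but non-semisimple points: the complement $\zz^r\setminus\zr$ has codimension only one (for example along the locus of regular principal nilpotent pairs), so Hartogs-type extension is unavailable and one must give an explicit algebraic description of the identification $\g_{x,y}\iso\t$ at those points and verify its $G\times W\times\CC$-equivariance. Part~(i) is the essential input that turns this into a fiberwise verification on the locus $\zr$ where the identification is transparent.
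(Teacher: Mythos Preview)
Your Part~(i) is correct and is exactly the paper's argument in \S\ref{smooth_pf}: Cohen--Macaulayness of $\rr$ plus smoothness of $\zz^r$ gives local freeness, and the $W$-module structure is determined on the dense open $\zr$.

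For Part~(ii) you have correctly located the obstacle but not resolved it. The paper's key move is to interpose the open set $\zz^{rr}=\zz_1\cup\zz_2$ of Definition~\ref{U} between $\zr$ and $\zz^r$. On $\zz_1$ the first coordinate $x$ is regular in $\g$, so $\g_x$ is abelian and $\g_{x,y}=\g_x$; hence $\fz|_{\zz_1}=q_1^*\fz_{\g^r}$ where $q_1:(x,y)\mapsto x$. This reduces the problem to the \emph{one-variable} statement $\wedge^s\fz_{\g^r}\cong(\wedge^s\t\otimes pr_*\oo_{\yr})^W$ of Theorem~\ref{BBprop}(i), which is proved via the Grothendieck--Springer resolution and Solomon's theorem on $W$-invariant differential forms. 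Pulling that isomorphism back along $q_1$ and using the cartesian square of Lemma~\ref{carsq} yields $\wedge^s\fz|_{\zz_1}\cong\rr^{\wedge^s\t}|_{\zz_1}$; likewise on $\zz_2$. The paper then writes down a single canonical morphism $\la^s_{rr}$ on all of $\zz^{rr}$ (using that for $(\b,x,y)\in\tx^{rr}$ one has $\g_{x,y}\subset\b$, so the composite $\g_{x,y}\hookrightarrow\b\twoheadrightarrow\b/[\b,\b]=\uh$ is defined) and checks it restricts to the above isomorphisms on each $\zz_i$. Finally, since $\zz^r\smallsetminus\zz^{rr}$ has codimension~$\geq 2$ (Lemma~\ref{irr}) and both $\rr^{\wedge^s\t}|_{\zz^r}$ and $\wedge^s\fz|_{\zz^r}$ are locally free, the isomorphism extends to $\zz^r$ by Hartogs.

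So the missing idea in your sketch is precisely this: rather than trying to extend the explicit conjugation map from $\zr$ across a codimension-one wall, one should pass to $\zz^{rr}$, where the trivialization $\g_{x,y}\iso\uh$ is furnished not by conjugation but by the quotient $\b\to\b/[\b,\b]$ for the unique Borel $\b\ni x$ (or $\b\ni y$), and only then use codimension~$\geq 2$ extension. Your proposed ``Kostant-style identification via $df_i$'' is in fact equivalent to this on $\zz_1$, but making it $W$-equivariant requires the lift to $\xx$, which is exactly what the Springer picture provides.
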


\subsection{Small representations}\label{small_intro}
Let $L$ be a finite dimensional  rational $G$-representation.
Given a Lie subalgebra ${\mathfrak a}\sset\g$,
we put $L^{\mathfrak a}:=\{v\in L\mid av=0,\ \forall a\in
{\mathfrak a}\}$. In particular, we have that $L^\t$ is the zero weight space of $L$.
In \S\ref{unistab}, we introduce a coherent sheaf
$L^\fz$ on $\zz_\norm$ such that the geometric fiber of
$L^\fz$ at any sufficiently general point
$(x,y)\in \zz_\norm$ is  the vector space
$L^{\g_{x,y}}$.
 
Following A. Broer  \cite{Br}, we call
$L$ {\em small} if  the set of weights of $L$
is contained in the root lattice of $\g$ and,
moreover, $2\al$  is not a weight of $L$, for any
root $\al$. Part (i) of our next theorem 
provides a description of  $W$-isotypic
components of the sheaf $\rr$ which correspond to the
$W$-representation in 
 the zero weight space of a small $G$-representation.

\begin{thm}\label{charb}Let   $E$
be a $W$-representation, resp. $L$ be a
{\em small} $G$-representation. Then,  we have 
\vskip 2pt

\vi There is a canonical  isomorphism
$\dis L^\fz\cong \rr^{L^\t}$, of  $G\times\CC$-equivariant 
$\oo_{\zz_\norm}$-sheaves.

\vii
 Restriction to
$\tt\sset\zz_\norm$
induces bigraded $\dis\C[\tt]^W$-module isomorphisms:
$$
\Ga(\zz_\norm,\ \rr^E)^G\ \iso\ (E\o\C[\tt])^W,
\quad\oper{resp.}\quad
(L\o \C[\zz_\norm])^G\ \iso\ (L^\t\o\C[\tt])^W.
$$

\viii For any  ~$s\geq0$, one has an isomorphism
$\Ga(\zz^r,\ \wedge^s\fz)^G\ccong 
(\wedge^s\t\o\C[\tt])^W$.
\end{thm}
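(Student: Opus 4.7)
The plan is to reduce the three claims to two main ingredients: an identification $\C[\xx_\norm]^G\cong\C[\tt]$ as bigraded $G\times W$-modules, which yields \vii(a) formally; and Theorem~\ref{rrsmooth} together with Broer's restriction theorem for small representations, which handles \vi and, combined with the first ingredient, gives \vii(b) and \viii.

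To establish $\C[\xx_\norm]^G\cong\C[\tt]$, I would take $G$-invariants in the defining presentation $\C[\xx]=\C[\zz\times\tt]/(P\otimes 1-1\otimes\res P\mid P\in\C[\zz]^G)$. Reductivity of $G$ commutes $G$-invariants with the quotient, and combining with the Chevalley restriction $\C[\zz]^G\iso\C[\tt]^W$ from Proposition~\ref{zzbasic} identifies $\C[\xx]^G$ with $\C[\tt]^W\otimes_{\C[\tt]^W}\C[\tt]=\C[\tt]$. Since $\C[\tt]$ is already normal and $\xx_\norm\to\xx$ is finite and birational, passing to the normalization leaves the ring of $G$-invariants unchanged. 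Statement \vii(a) then follows from the chain
$$
\Ga(\zz_\norm,\rr^E)^G=(E\otimes\C[\xx_\norm])^{G\times W}=(E\otimes\C[\xx_\norm]^G)^W=(E\otimes\C[\tt])^W.
$$

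For part \vi I would construct a natural morphism $\Phi:L^\fz\to\rr^{L^\t}$ by pulling $L^\fz$ back to $\xx_\norm$ via $p_\norm$: over each point of the generic stratum the stabilizer subalgebra is $W$-conjugate to $\t$, so sections of $(p_\norm)^*L^\fz$ take fiber values in a canonical copy of $L^\t$, and their pushforward lands in the $L^\t$-isotypic part of $\rr$. Over $\zz^r$, Theorem~\ref{rrsmooth}(i) identifies the fibers of $\rr$ with the regular representation of $W$, so both source and target of $\Phi$ are locally free of rank $\dim L^\t$; smallness of $L$ enters through Broer's theorem, which promotes the pointwise matching to a $W$-equivariant isomorphism of sheaves on $\zz^r$. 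To extend $\Phi$ from $\zz^r$ to all of $\zz_\norm$, I would use that $\rr^{L^\t}$ is Cohen-Macaulay as a summand of the Cohen-Macaulay sheaf $\rr$ from Corollary~\ref{vectbun}(i) and that $L^\fz$ is reflexive by construction; since $\codim_{\zz_\norm}(\zz_\norm\setminus\zz^r)\geq 2$ by Proposition~\ref{zzbasic}, the local isomorphism extends uniquely.

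Parts \vii(b) and \viii are then formal. For \vii(b), $G$-invariance of a section of $L\otimes\oo_{\zz_\norm}$ forces pointwise $\fz$-invariance, so $(L\otimes\C[\zz_\norm])^G=\Ga(\zz_\norm,L^\fz)^G$; combining with \vi and with \vii(a) for $E=L^\t$ gives $(L^\t\otimes\C[\tt])^W$. For \viii, I would take $G$-invariant global sections of the isomorphism $\wedge^s\fz|_{\zz^r}\cong\rr^{\wedge^s\t}|_{\zz^r}$ of Theorem~\ref{rrsmooth}(ii); the Cohen-Macaulayness of $\rr^{\wedge^s\t}$ and the codimension bound identify $\Ga(\zz^r,\rr^{\wedge^s\t})^G$ with $\Ga(\zz_\norm,\rr^{\wedge^s\t})^G$, which equals $(\wedge^s\t\otimes\C[\tt])^W$ by \vii(a). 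The main obstacle is the construction of $\Phi$ in \vi and the verification that smallness of $L$ is precisely the condition needed to upgrade a fiberwise $W$-equivariant matching on $\zz^r$ to a genuine isomorphism of coherent sheaves that then extends across the non-regular locus.
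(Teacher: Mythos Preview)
Your overall architecture for parts \vii\ and \viii\ is essentially the paper's: establish $\C[\xx_\norm]^G\cong\C[\tt]$, then deduce the $E$-isotypic statement formally, and use Theorem~\ref{rrsmooth}(ii) together with Cohen--Macaulayness and the codimension~$\geq2$ bound for \viii. Two small corrections here: the isomorphism $\C[\zz_\red]^G\iso\C[\tt]^W$ is \emph{not} in Proposition~\ref{zzbasic}; it requires Joseph's surjectivity theorem together with irreducibility of $\zz_\red$ (injectivity follows since $G\cdot\tt$ is dense). And the paper's argument for $\C[\xx_\norm]^G\cong\C[\tt]$ goes via fraction fields and integral closure rather than a direct presentation, which sidesteps the unresolved question of whether $\zz/\!/G$ is reduced.

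The genuine gap is in part \vi. Your description ``sections of $(p_\norm)^*L^\fz$ take fiber values in a canonical copy of $L^\t$'' only makes sense over $\zz^{rs}$, where $\g_{x,y}$ is $G$-conjugate to $\t$. Over a regular but non-semisimple pair there is no such conjugacy, and no evident map $L^{\g_{x,y}}\to L^\t$. The paper supplies this missing ingredient via the canonical Borel filtration: for $(\b,x,y)\in\tx^{rr}$ one has $L^{\g_{x,y}}\subset L_\b^{\ccl 0}$ (Lemma~\ref{centb}), and projecting $L_\b^{\ccl 0}\onto L^\uh$ gives a well-defined morphism $p^*(L^\fz)\to L^\uh\otimes\oo_{\xx^{rr}}$; adjunction then produces $\la^L_{rr}:L^\fz\to\rr^{L^\uh}$ on $\zz^{rr}$.

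A second difficulty you gloss over is how Broer's theorem, which concerns $\g^r$, gets transported to $\zz$. The paper's mechanism is to work on $\zz^{rr}=\zz_1\cup\zz_2$ rather than all of $\zz^r$: on $\zz_1$ one has $\g_{x,y}=\g_x$ (since $\g_x$ is abelian for $x$ regular), so $L^{\fz_{\zz_1}}=q_1^*L^{\fz_{\g^r}}$ where $q_1(x,y)=x$; likewise $\rr|_{\zz_1}=q_1^*(pr_*\oo_\yr)$ by base change (Lemma~\ref{carsq}). Broer's theorem enters as Theorem~\ref{BBprop}(ii), an isomorphism of sheaves on $\g^r$, which one then pulls back via $q_1^*$. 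Your claim that $L^\fz$ is locally free on all of $\zz^r$ is not justified a priori; the paper only establishes it on $\zz^{rr}$, and extends to $\zz_\norm$ using Lemma~\ref{tors}(ii) (a specific torsion-freeness argument for sheaves of the form $L^{\fz_X}$) rather than a general reflexivity claim.
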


Part (i) of the theorem will be proved in \S\ref{txr}
and parts (ii)-(iii)  will be proved in \S\ref{pfvb2}.

\begin{rem}\label{remb}
\textsf{(a)}\en  The adjoint representation $L=\g$ is small.
In this case, one has $L^\fz=\fz$.
So, the above theorem yields a sheaf isomorphism
$\fz\cong\rr^\t$ and a bigraded $\C[\tt]^W$-module isomorphism
$(\g\o \C[\zz_\norm])^G\cong(\t\o\C[\tt])^W$.

\textsf{(b)} Let $G=PGL(V)$ and let $n=\dim V$. 
The natural $GL(V)$-action on   $(V^*)^{\o
n}\o \wedge^n V$ descends to $G$ and the resulting
$G$-representation $L$ is known to be  small. Furthermore, 
the zero weight space of that representation
is isomorphic, as a $W$-module,
to  the regular representation of the Symmetric
group $W=S_n$.  Hence,  for $L=(V^*)^{\o
n}\o \wedge^n V$, we have $\rr\cong\rr^{L^\t}.$
Therefore,  Theorem \ref{charb}(i) yields an isomorphism 
$\rr\cong ((V^*)^{\o n}\o \wedge^n V)^\fz.$

Using  the tautological injective morphism $u:\ L^\fz\into L\o
\oo_{\zz_\norm}$
and writing $u^*$ for the dual morphism 
one  obtains the following {\em selfdual} diagram of sheaves on $\zz_\norm$:
$$
\xymatrix{
V^{\o n}\o \wedge^n V^*\o\kk_{\zz_\norm}\
\ar[r]^<>(0.5){u^*}&
\ \hhom_{\oo_{\zz_\norm}}(\rr, \kk_{\zz_\norm})\ \cong\ \rr\
\ar@{^{(}->}[r]^<>(0.5){u}&\
(V^*)^{\o n}\o \wedge^n V\o\oo_{\zz_\norm},
}
$$
where the isomorphism in the middle is due to Corollary \ref{vectbun}(ii).

The above diagram is closely related to formula (40) in
\cite[Proposition 3.7.2]{Ha1}.
\end{rem}

\subsection{Principal nilpotent pairs}\label{pnp_sec}
Given  a regular point  $\bx=(x_1, x_2)\in \zz$,
let  $\rr_{\bx}$ be  the
  fiber at $\bx$ of (the algebraic vector
bundle on  $\zz^r$ corresponding to)
 the locally free sheaf $\rr$. 
 By definition, one has 
$\rr_{\bx}=\C[p\inv_\norm({\bx})]$ where
$p\inv_\norm({\bx})$, 
the  scheme theoretic fiber of the morphism $p_\norm$
over  $\bx$, is a $W$-stable (not necessarily reduced) finite
 subscheme of $\xx_\norm$.
Thus, $\rr_{\bx}$  is a finite dimensional algebra 
equipped  with a $W$-action.

The  $W$-module $\rr_\bx$ is isomorphic
 to the regular representation of $W,$
by Theorem \ref{rrsmooth}(i).
In particular, one has
 ${\dim\rr_\bx^W=\dim\rr_\bx^{\sign}=1}$.
The line $\rr_\bx^W$ is clearly spanned by the unit of the algebra
$\rr_\bx$. Further, one has a canonical  map
 $\rr_\bx\to \rr_\bx^{\sign},\ r\mto r^\sign,$
 the $W$-equivariant projection to  the  isotypic component of the sign
 representation.
This map
gives, thanks to the  isomorphism 
$\kk_{\zz_\norm}\cong\rr^{\sign}$
of Corollary \ref{vectbun}(i),
 a nondegenerate  trace on the algebra $\rr_\bx$.
In other words,  the assignment $r_1\times r_2\mto$ $(r_1\cdot r_2)^\sign$
provides a nondegenerate symmetric bilinear pairing on $\rr_\bx$.

The most interesting fibers of the sheaf $\rr$
are, in a sense, the fibers over {\em principal nilpotent pairs}.
Following \cite{pnp}, we call
a regular pair $\bbe=(e_1,e_2)\in\zz^r$   a principal nilpotent pair
if there exists
a rational  homorphism $\CC\to G,\ (\tau_1,\tau_2)\mto g(\tau_1,\tau_2)$
such that one has
\beq{pnpdef}
\tau_i\cdot e_i\ =\ \Ad g(\tau_1,\tau_2)(e_i)\quad i=1,2,
\qquad \forall \tau_1,\tau_2\in\C^\times.
\eeq

Given a  principal nilpotent pair $\bbe$, 
we introduce 
 a `twisted'
$\CC$-action 
 on $\gg$ given, for  $(\tau_1,\tau_2)\in \CC$, by the formula
$$
(x,y)\mto
 (\tau_1,\tau_2)\bullet_\bbe(x,y)=\big(\tau_1\cdot\Ad g(\tau_1,\tau_2)\inv(x),\
\tau_2\cdot\Ad g(\tau_1,\tau_2)\inv(y)\big).
$$

The   $\bullet_\bbe$-action on $\gg$, combined
with the usual $\CC$-action on $\tt$ by
dilations of the two factors $\t$, gives a  $\CC$-action on $\gg\times\tt$.
The subvarieties $\zz$ and $\xx$ are clearly  $\bullet_\bbe$-stable.
Lifting the actions to normalizations, one gets a
$\CC$-action (to be referred to as a {$\bullet_\bbe\text{-action}$} again)
 on $\zz_\norm$, resp. on
$\xx_\norm$.
The map $p_\norm:\ \xx_\norm\to\zz_\norm$ is   
$\bullet_\bbe$-equivariant. 

Equations \eqref{pnpdef} force  $e_1,e_2$ be nilpotent elements of $\g$.
The scheme  $p\inv_\norm(\bbe)$ is   nonreduced and it has a single closed point, the 
element
$(\bbe,0)\in \gg\times\tt$.
The point $\bbe\in\zz^r$  is, by construction,
 a fixed point of the  $\bullet_\bbe$-action
on $\zz_\norm$.
Hence,  $p\inv_\norm(\bbe)$ is a $\bullet_\bbe$-stable subscheme of
$\xx_\norm$. We conclude that
the coordinate ring $\rr_\bbe=\C[p\inv_\norm(\bbe)]$ is
a local algebra and the $\CC$-action on $p\inv_\norm(\bbe)$
gives a   $\Z^2$-grading $\rr_\bbe=\bigoplus_{m,n\in\Z}\
\rr_\bbe^{m,n}$ on that algebra.

Associated with the nilpotent pair $\bbe=(e_1,e_2)$
there is a pair of commuting semisimple
 elements of $\g$ defined by
 $h_s:=\frac{\partial g(\tau_1,\tau_2)}{\partial \tau_s}\big|_{\tau_1=\tau_2=1},\
s=1,2.$ 
Let $\g^1=\g_{h_2},$ resp. $\g^2=\g_{h_1}$
(note that the indices are "flipped").
Thus, $\g^s,\ s=1,2,$ is a Levi subalgebra of $\g$ 
such that $e_s\in \g^s.$
The Lie algebra
 $\g_{h_1, h_2}=\g^1\cap\g^2$ is known to be a Cartan subalegebra of $\g$.
So,  we may (and will) put $\t:=\g_{h_1, h_2}$.
Let $R^+_s$  be the set of positive roots,
resp.  $W_s$
the Weyl group, of the reductive Lie algebra $\g^s$.

The pair $\bh=(h_1,h_2)$ is regular, \cite[Theorem 1.2]{pnp}; furthermore,
the fiber $p\inv_\norm(\bh)$ is a reduced finite subscheme of $\gg\times\tt$.
Specifically, writing $\wbh$ for the $W$-orbit of the
 element $\bh\in\tt,$ one has a bijection
$\wbh\iso p\inv_\norm(\bh),\
w(\bh)\mto (\bh, w(\bh)).$
Thus, the algebra $\rr_\bh=\C[p\inv_\norm(\bh)]$ is  a semisimple algebra
isomorphic to
$\C[\wbh]$, the coordinate ring of the set ~$\wbh$.

Let $\C^{\leq m}[\t],\ m=0,1,2,\ldots,$ be the 
space of polynomials on $\t$ of degree $\leq m$.
We introduce a pair of ascending filtrations  on
the algebra $\C[\tt]=\C[\t]\o\C[\t]$
defined by $\fo_{m}\C[\tt]=\C^{\leq m}[\t]\o\C[\t]$, resp.  
$\ft_{m}\C[\tt]=\C[\t]\o\C^{\leq m}[\t].$
The algebra $\C[\wbh]$ is a quotient of
the algebra $\ctt$ hence it inherits from $\ctt$ a
 pair  of  quotient filtrations $\fo_\hdot\C[\wbh]$
and $\ft_\hdot\C[\wbh]$, respectively.
We  further define   bifiltrations
$$F_{m,n}\ctt:=\fo_m\ctt\cap\ft_n\ctt,\quad\,
F_{m,n}\C[\wbh]:=\fo_m\C[\wbh]\cap\ft_n\C[\wbh],\en
m,n\geq 0,$$
 on the algebras $\ctt$ and $\C[\wbh]$, respectively.
Let $\gr^F\ctt$, resp. $\gr^F\C[\wbh]$,
be  an associated {\em bi}graded
algebra, 
see \S\ref{pnpsec} for more details.

One of the  central results
of the paper is the following theorem
motivated, in part, by \cite[\S 4.1]{Ha3}. 
Part (i) of the theorem describes how  $\C[\wbh]=\rr_\bh$, a
semisimple   Gorenstein algebra,
 degenerates to the bigraded Gorenstein algebra $\rr_\bbe$.

\begin{thm}\label{n!} \vi There is a $W$-equivariant
$\Z^2$-graded  algebra isomorphism
$\rr_\bbe \ccong \gr^F\C[\wbh].
$
\vskip2pt

\vii We have
$\rr^{i,j}_\bbe=0$ unless
$\ 0\leq i\leq \bd_1\en \&\en 0\leq j\leq\bd_2,\ $
where $\bd_s:=\# R^+_s,\ s=1,2.$
\end{thm}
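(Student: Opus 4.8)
The strategy is to deduce both parts from the structural results already available: Theorem \ref{rrsmooth}, Corollary \ref{vectbun}, Theorem \ref{charb}(ii) (and its Remark \ref{remb}(a)), together with the geometry of the $\bullet_\bbe$-action. The key point is that the semisimple fiber $\rr_\bh=\C[\wbh]$ and the local fiber $\rr_\bbe$ are the two extreme fibers of the same coherent sheaf $\rr$ on $\zz^r$, related by a one-parameter degeneration coming from the $\bullet_\bbe$-action.

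For part (i), first I would set up the degeneration. Fix a generic one-parameter subgroup $\lambda\colon \C^\times\to\CC$ of the twisted torus so that $\lim_{\tau\to 0}\lambda(\tau)\bullet_\bbe\bh=\bbe$ in $\zz_\norm$; such a $\lambda$ exists because $\bbe$ is the unique $\bullet_\bbe$-fixed point in the closure of the orbit of the regular semisimple-type pair $\bh$, and $\bh$ itself is regular by \cite[Theorem 1.2]{pnp}. Pulling the locally free sheaf $\rr|_{\zz^r}$ back along the orbit map $\C^\times\to\zz^r,\ \tau\mapsto\lambda(\tau)\bullet_\bbe\bh$, and taking the fiberwise-constant bundle over $\C$ (using that $\rr$ is locally free on $\zz^r$ and that the orbit closure meets $\zz^r$ — here one needs that $\bbe\in\zz^r$, which is part of the definition of a principal nilpotent pair), produces a flat family of $W$-algebras over $\C$ whose fiber at $\tau\neq 0$ is $\rr_\bh\cong\C[\wbh]$ and whose special fiber at $\tau=0$ is $\rr_\bbe$. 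Standard Rees-algebra/degeneration formalism (as in \S\ref{pnpsec}) then identifies the special fiber of such a $\C^\times$-equivariant flat family with the associated graded algebra of $\C[\wbh]$ for the filtration induced by the $\lambda$-weights. The remaining task is to check that the $\lambda$-weight filtration on $\C[\wbh]$ coincides with the bifiltration $F_{\hdot,\hdot}\C[\wbh]$ defined via the $\fo$- and $\ft$-filtrations by polynomial degree on the two $\t$-factors. This is a computation with the definition of $\bullet_\bbe$: the twist by $\Ad g(\tau_1,\tau_2)\inv$ acts trivially on the $\t=\g_{h_1,h_2}$ directions, so on the $\tt$-factor the $\bullet_\bbe$-action is just dilation of the two copies of $\t$, and hence the induced grading on functions on the orbit of $\bh$ inside $\tt$ is exactly the bidegree grading; the $\Z^2$-grading refines to the two separate $\C^\times$-actions giving the two filtrations $\fo,\ft$. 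Matching $W$-equivariance is automatic since $W$ acts along the fibers of $p_\norm$ and commutes with $\bullet_\bbe$.

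For part (ii), the vanishing range, I would combine two inputs. First, the $\Z^2$-grading on $\rr_\bbe$ is by nonnegative integers (both components), since $\rr_\bbe=\C[p\inv_\norm(\bbe)]$ is a quotient of a coordinate ring on which the $\bullet_\bbe$-action has nonnegative weights; this gives the lower bounds $i,j\geq 0$. For the upper bounds $i\leq\bd_1=\#R^+_1$ and $j\leq\bd_2=\#R^+_2$, the cleanest route is via part (i): it suffices to show that the filtration $\fo_\hdot\C[\wbh]$ (degree in the first $\t$-factor, restricted to the finite set $\wbh$) stabilizes at $\fo_{\bd_1}$, and symmetrically for $\ft$. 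Here I would use Theorem \ref{charb}(ii), or rather its avatar for the fiber at $\bbe$: the bigraded $\C[\tt]^W$-module $(E\o\C[\tt])^W$ controls $\Ga(\zz_\norm,\rr^E)^G$, and passing to the fiber at $\bbe$ bounds the bidegrees of $\rr_\bbe$ by the bidegrees occurring in the relevant harmonic/coinvariant space for the pair of parabolic subgroups $W_1,W_2$ attached to the pair $(h_1,h_2)$. Concretely, the top $\fo$-degree of $\C[\wbh]$ is the degree of the product of positive roots of $\g^1$ — which is $\#R^+_1=\bd_1$ — because $\wbh$ is a $W$-orbit whose defining equations in the first factor are governed by $\g^1=\g_{h_2}$, and the analogous statement holds for the second factor.

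The main obstacle I anticipate is the first one: rigorously producing the flat degeneration of $\rr_\bh$ to $\rr_\bbe$ and identifying its special fiber with $\gr^F\C[\wbh]$, i.e. verifying that the sheaf $\rr$ behaves well (flatly, with constant fiber dimension $|W|$) along the whole $\bullet_\bbe$-orbit closure through $\bh$, including that this closure stays inside the locus $\zz^r$ where $\rr$ is locally free. This needs the non-obvious fact (from \cite{pnp}) that $\bbe$ is a regular pair, plus a check that no intermediate point of the degeneration leaves $\zz^r$; once that is in hand, flatness of $(p_\norm)_*\oo_{\xx_\norm}$ over $\zz^r$ from Theorem \ref{rrsmooth}(i) makes the Rees-algebra argument routine, and the precise matching of gradings is then a bookkeeping exercise with the $\bullet_\bbe$-weights.
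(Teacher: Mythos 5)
Your core idea — degenerate the semisimple fiber $\rr_\bh\cong\C[\wbh]$ to the nilpotent fiber $\rr_\bbe$ along a family and read off the associated graded — is exactly the idea of the paper's proof in \S\S\ref{rees_ctt}--\ref{npf}. But your proposal has two genuine gaps, and both of them are addressed in the paper precisely by using a \emph{two}-parameter deformation rather than a one-parameter one.

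The first gap is the most important: a one-parameter degeneration cannot see the $\Z^2$-grading. Given a $\C^\times$-equivariant locally free sheaf of algebras on $\C$ with generic fiber $A$, the standard Rees formalism identifies the special fiber with $\gr^G A$ for a single $\Z$-filtration $G_\hdot$ (the $\lambda$-weight filtration). But the theorem asserts a $\Z^2$-graded algebra isomorphism with $\gr^F\C[\wbh]$, where $F$ is the \emph{bi}filtration $F_{m,n}=\fo_m\cap\ft_n$. Even granting that $G_\hdot$ is the coarsening of $F$ along $\lambda$ (which holds for $F=F\max$ by the Bruhat argument of Corollary \ref{drin}), a $\Z$-graded isomorphism $\rr_\bbe\to\gr^G\C[\wbh]$ does \emph{not} automatically respect the $\Z^2$-refinements on the two sides: the bigradings give extra structure that a degree-preserving map for a coarsened grading can freely mix. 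Your sentence ``the $\Z^2$-grading refines to the two separate $\C^\times$-actions'' is where this is swept under the rug — the $\CC$-action does act on each individual fiber $\rr_\bbe$ and $\gr^F\C[\wbh]$, but the one-parameter \emph{family} you build only carries a $\lambda(\C^\times)$-equivariance, so the isomorphism it produces is only $\Z$-graded. The paper avoids this by constructing the bona fide $\C^2$-family $\fy=\Spec\rees\max\,\C[\wbh]\to\C^2$ (Lemma \ref{rees}) and the $\C^2$-family $\vka^*\rr$ (via the map $\vka(\tau_1,\tau_2)=(e_1+\tau_1 h_1,\,e_2+\tau_2 h_2)$ of \eqref{vka}); both are $\CC$-equivariant free $\C[\tau_1,\tau_2]$-modules, so comparing their fibers at the origin gives a $\Z^2$-graded isomorphism directly.

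The second gap concerns how the two families are matched. In the paper this is Proposition \ref{cruc}(iii): the isomorphism between $\fy$ and $\C^2\times_\zz\xx$ is established only over $\C^2\sminus\{\bbo\}$ (and this already requires a nontrivial fiberwise analysis along the partial-degeneration ``axes'', via the $GL_2$-action and the identification with $\C[\vartheta\inv(W\cdot h_2)]$), and is then extended across the origin using the elementary fact that an isomorphism of two locally free sheaves over the complement of a codimension-$\geq 2$ subset of a smooth surface extends. Over a curve there is no such extension principle: two free $\C[\tau]$-modules isomorphic over $\C\sminus\{0\}$ need not be isomorphic over $\C$. Your phrasing sidesteps this because you identify the pulled-back family tautologically with its own Rees algebra rather than with an independently constructed one — but then you are back to the first gap, since that Rees algebra is for a single $\Z$-filtration.

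A smaller issue: you propose to degenerate along the $\bullet_\bbe$-orbit of $\bh$, but that orbit map need not extend regularly to $\tau=0$ (the curve $\tau\mapsto\tau\bullet_\bbe\bh$ differs from $\vka$ by a $\tau$-dependent conjugation involving $g(\tau)\inv$, which blows up as $\tau\to 0$). The paper's $\vka$ is chosen precisely because it is polynomial in $(\tau_1,\tau_2)$ with $\vka(0,0)=\bbe$, and Proposition \ref{cruc}(i) then checks that its image lies in $\zz^r$. Your sketch of part (ii) is essentially correct in spirit and parallels Lemma \ref{rees}(ii) (concentration of the $W_1$-coinvariant algebra in degrees $\leq\bd_1$), but of course it depends on the $\Z^2$-graded identification from part (i).
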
	

The proof of this theorem occupies \S\S\ref{rees_ctt}-\ref{npf}.
The main idea of the proof is to use the semisimple pair
$\bh$ to produce a 2-parameter deformation of the
point $\bbe$ inside $\zz^r$. 
One may  pull-back  the
sheaf $\rr|_{\zz^r}$ to the parameter space of the deformation.
This way, we obtain a
 locally free sheaf  on $\C^2$. On the other hand,
a construction based on Rees algebras
gives another  locally  
free sheaf on $\C^2$ such that its fiber over
the origin is the vector space $\gr^F\C[\wbh]$.
Using a careful analysis (Lemma \ref{rees}) based on the theory
of $W$-invariant polynomials on $\t$
we obtain an isomorphism between the restrictions
of the two 
sheaves in question to the complement of the origin in $\C^2$
(Proposition \ref{cruc}).
We then exploit the fact that
an isomorphism between
the restrictions to the punctured plane of two {\em locally  free} sheaves
automatically extends across the puncture.

From Theorem \ref{n!} we deduce, cf.
\S\ref{pnpappl},

\begin{cor}\label{kostka} 
There is a natural
$W$-module isomorphism (cf. \cite[Proposition 4.1.2]{Ha3}):
$$ \bplus_{m\geq 0}\ \rr_\bbe^{0,m}\ccong
\C[W/W_1],
\quad\opp{resp.}\quad
\bplus_{n\geq 0}\ \rr_\bbe^{n,0}\ccong
\C[W/W_2].
$$
\end{cor}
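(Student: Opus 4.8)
The plan is to deduce Corollary \ref{kostka} from Theorem \ref{n!}(i) together with Theorem \ref{charb}(ii) applied to the adjoint-type situation, by a graded-degeneration argument. Concretely, Theorem \ref{n!}(i) identifies the graded algebra $\rr_\bbe$ with $\gr^F\C[\wbh]$, and the $(0,m)$-part $\rr_\bbe^{0,m}$ corresponds to $\gr$ with respect to the single filtration $\ft_\hdot$ after restricting to $\fo_0 = \C[\tt]$; that is, the direct sum $\bigoplus_{m\ge0}\rr_\bbe^{0,m}$ is the associated graded of the image of $\C[\t]$ (constant in the first factor) inside $\C[\wbh]$ under the $\ft_\hdot$-filtration. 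So the first step is to identify this subquotient of $\C[\wbh]$ intrinsically.

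\smallskip

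The key observation is that $\fo_0\C[\wbh]$ is the image of the first-factor-constant functions, i.e.\ functions on $\wbh$ pulled back along $\wbh\hookrightarrow\tt$ from the \emph{second} $\t$; concretely $\fo_0\C[\wbh]\cong\C[W\bh_2]$ where $\bh_2$ is the second coordinate of $\bh$ (one must check that $W$ acts on $\wbh$ diagonally and that projection to the second factor is injective on $\wbh$, which follows from $\bh$ being regular and from the structure of $\wbh$ described before Theorem \ref{n!}). Since $\bh_2 = h_2$ has centralizer $\g^1 = \g_{h_2}$ with Weyl group $W_1$, the stabilizer of $h_2$ in $W$ is exactly $W_1$, so $W\cdot h_2 \cong W/W_1$ as a $W$-set and $\C[W\bh_2]\cong\C[W/W_1]$ as a $W$-module. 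Taking $\ft_\hdot$-associated graded does not change the underlying $W$-module (passing to $\gr$ of a filtered $W$-module over $\C$ preserves the isomorphism class of the $W$-representation, since $\C$ has characteristic zero and $W$ is finite, so the filtration is $W$-split). Hence $\bigoplus_{m\ge0}\rr_\bbe^{0,m}\cong\C[W/W_1]$ as $W$-modules. The second isomorphism follows by the symmetric argument with the roles of the two factors interchanged, giving $\C[W/W_2]$.

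\smallskip

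The step I expect to be the main obstacle is the bookkeeping in the first paragraph: pinning down precisely why $\rr_\bbe^{0,m}$ is the $m$-th graded piece of $\fo_0\C[\wbh]$ with respect to $\ft_\hdot$, rather than some twisted or shifted version, and confirming that the bigrading on $\rr_\bbe$ arising from the $\bullet_\bbe$-action matches the bigrading $\gr^F\C[\wbh]$ with the conventions fixed in \S\ref{pnpsec}. Once the graded identification $\rr_\bbe^{0,\hdot} = \gr^{\ft}\!\bigl(\fo_0\C[\wbh]\bigr)$ is secured, the remaining steps — identifying $\fo_0\C[\wbh]$ with $\C[W\cdot h_2]$, then with $\C[W/W_1]$, and noting $\gr$ preserves the $W$-module type — are routine. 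It is worth remarking that this also matches \cite[Proposition 4.1.2]{Ha3} in the $\gl_n$ case, where $W/W_1$ and $W/W_2$ become the cosets $S_n/(S_{a}\times\cdots)$ governing the relevant Kostka numbers.
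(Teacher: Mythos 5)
Your overall route agrees with the paper's: use Theorem \ref{n!}(i) to identify $\bigoplus_{m\ge0} \rr_\bbe^{0,m}$ with the $\ft_\hdot$-associated graded of $\fo_0\C[\wbh]$, show $\fo_0\C[\wbh]\cong\C[W\cdot h_2]\cong\C[W/W_1]$ as $W$-modules, and observe that passing to associated graded does not change the $W$-module isomorphism class in characteristic zero. However, the parenthetical ``one must check \dots that projection to the second factor is injective on $\wbh$, which follows from $\bh$ being regular'' is false, and if you tried to verify it the argument would stall. The map $\wbh\to W\cdot h_2$, $w(\bh)\mto w(h_2)$, has fibers of cardinality $|W_1|$, since the stabilizer of $h_2$ in $W$ is precisely $W_1$; meanwhile $\bh$ being a regular pair forces the stabilizer of $\bh$ in $W$ to be trivial, so $|\wbh|=|W|$ while $|W\cdot h_2|=|W|/|W_1|$, and $W_1\neq\{1\}$ unless $e_1=0$. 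Regularity of the pair $\bh$ controls the joint centralizer, not regularity of $h_2$ alone.

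Fortunately the injectivity is not what the identification needs. The isomorphism $\fo_0\C[\wbh]\cong\C[W\cdot h_2]$ is produced as follows: $\fo_0\C[\wbh]$ is the image of $1\o\C[\t]$ under the quotient map $\C[\tt]\onto\C[\wbh]$, and the kernel $(1\o\C[\t])\cap I_\bh$ is, under the identification $1\o\C[\t]=\C[\t]$, precisely the ideal of the image of $\wbh$ under the second projection $\tt\to\t$, namely $W\cdot h_2$; hence $\fo_0\C[\wbh]\cong\C[\t]/I_{W\cdot h_2}=\C[W\cdot h_2]$. This is exactly the computation in the paper's proof. Once you replace the injectivity claim by this, the remaining steps of your write-up --- the $W$-set identification $W\cdot h_2\cong W/W_1$ via the stabilizer, and the Maschke-type observation that $\gr$ of a $W$-stable filtration is $W$-equivariantly isomorphic to the filtered space --- are correct, and the symmetric argument in the other variable yields $\C[W/W_2]$. (Also, your opening reference to Theorem \ref{charb}(ii) plays no role in the argument you actually give.)
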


One has the following criteria for the variety $\xx$ be normal
(hence, by Theorem \ref{main_thm},
 also Cohen-Macaulay and Gorenstein)
at the  point $(\bbe,0)$, see \S\ref{pnpappl}:

\begin{cor}\label{normlem} The following properties
of a principal nilpotent pair $\bbe$ are equivalent:

\vi The restriction map $\ctt=\Ga(\zz_\norm,\rr)^G\to \rr_\bbe$  is surjective;

\vii The natural projection  $\gr^F\ctt\to\gr^F\C[\wbh]$
 is surjective;

\viii The variety $\xx$ is normal at the point $(\bbe,0)$.
\end{cor}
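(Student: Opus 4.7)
My plan is to reduce (i)$\Leftrightarrow$(ii) to Theorem~\ref{n!}(i) by unwinding definitions, and to establish (i)$\Leftrightarrow$(iii) by combining the defining equations of $\xx$ with Nakayama's lemma.

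For (i)$\Leftrightarrow$(ii): The bifiltration $F_{m,n}\ctt=\C^{\leq m}[\t]\otimes\C^{\leq n}[\t]$ is the filtration associated to the bigrading of $\ctt$, giving a canonical identification $\gr^F\ctt=\ctt$. Combined with the isomorphism $\gr^F\C[\wbh]\cong\rr_\bbe$ of Theorem~\ref{n!}(i), I expect the bigraded map $\gr^F\ctt\to\gr^F\C[\wbh]$ induced by the filtered surjection $\ctt\onto\C[\wbh]$ (restriction of polynomials to~$\wbh$) to coincide, under these canonical identifications, with the evaluation map $\ctt\to\rr_\bbe$ of~(i). Granting this, (i) and (ii) are the same surjectivity assertion.

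For (i)$\Leftrightarrow$(iii): Put $B:=\C[\xx]$ and $B^\norm:=\C[\xx_\norm]$, so that the normalization $\pi:\xx_\norm\to\xx$ is the integral extension $B\hookrightarrow B^\norm$. Pullback by the second projection $\xx\to\tt$ embeds $\ctt\hookrightarrow B$. Since $\bbe\in\zz^r$ is a smooth point of~$\zz$ (Proposition~\ref{zzbasic}), the local rings of $\zz$ and $\zz_\norm$ at~$\bbe$ coincide, and evaluation at~$\bbe$ fits into a commutative triangle
\[
\xymatrix@R=14pt{
\ctt \ar@{->>}[r] \ar[dr] & B\otimes_{\C[\zz]}k(\bbe)=\C[p\inv(\bbe)] \ar[d]^{\phi} \\
& B^\norm\otimes_A k(\bbe)=\rr_\bbe,
}
\]
where $A=\C[\zz_\norm]$. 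The top arrow is surjective: setting $(x_1,x_2)=\bbe$ in the defining relations $P(x_1,x_2)=(\res P)(t_1,t_2)$ of~$\xx$ turns them into polynomial conditions on the $t$-variables alone, exhibiting $\C[p\inv(\bbe)]$ as a quotient of~$\ctt$. Hence (i) is equivalent to surjectivity of~$\phi$.

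Nilpotency of $\bbe$ forces any $(\bbe,t_1,t_2)\in p\inv(\bbe)$ to have vanishing spectrum $(t_1,t_2)=(0,0)$; thus $p\inv(\bbe)$ is set-theoretically the single point $(\bbe,0)$, and localization at $\m_\bbe\sset\C[\zz]$ agrees with localization at the maximal ideal of~$(\bbe,0)$. Since both $p$ and $\pi$ are finite, $B^\norm/B$ is a finitely generated $\C[\zz]$-module, and Nakayama's lemma converts surjectivity of~$\phi$ — i.e.\ vanishing of $(B^\norm/B)\otimes_{\C[\zz]}k(\bbe)$ — into the vanishing of the stalk of $B^\norm/B$ at~$(\bbe,0)$. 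The latter is the statement $\oo_{\xx,(\bbe,0)}=\oo_{\xx_\norm,(\bbe,0)}$, i.e.\ that $\xx$ is normal at $(\bbe,0)$, which is precisely~(iii).

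The main anticipated obstacle is the compatibility asserted in step~(i)$\Leftrightarrow$(ii): verifying that the isomorphism of Theorem~\ref{n!}(i) genuinely intertwines the evaluation-at-$\bbe$ map with the associated-graded of the filtered restriction $\ctt\onto\C[\wbh]$ requires a careful trace through the Rees-algebra degeneration of $\bh$ to $\bbe$ used to construct that isomorphism.
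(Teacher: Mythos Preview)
Your proposal is correct and follows essentially the same approach as the paper: the equivalence (i)$\Leftrightarrow$(iii) via Nakayama applied to the finite module $\C[\xx_\norm]/\C[\xx]$ and the observation that $\ctt\onto\C[p^{-1}(\bbe)]$ is surjective, and the equivalence (i)$\Leftrightarrow$(ii) via Theorem~\ref{n!}. The compatibility you flag as the main obstacle is exactly the commutativity of the left square in diagram~\eqref{ares}, which the paper verifies (in the proof of Corollary~\ref{section}) by tracing through the Rees-algebra construction \eqref{hh1}--\eqref{hh2}; so your anticipated difficulty is real but is resolved in the paper precisely as you outline.
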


In the special case of the group
$G=GL_n$, 
Theorem \ref{n!} follows from the work of M. Haiman, \cite[\S 4.1]{Ha3}.
Moreover, in this case, thanks to Haiman's result on
the normality of the isospectral Hilbert scheme (\cite{Ha1}, Proposition 3.8.4)
 and to the classification
of principal nilpotent pairs in the Lie algebra $\gl_n$ 
(see \cite[Theorem 5.6]{pnp}), one knows that   the  equivalent
properties of
Corollary \ref{normlem} hold true for
{\em any} principal nilpotent pair.
We remark also that  Haiman shows
that  the validity of the Cohen-Macaulay property
of the scheme $\xx$ at each principal nilpotent pair
in the Lie algebra $\gl_n$
is actually equivalent
to the validity of the $n!$-theorem, see \cite[Proposition 3.7.3]{Ha1}.

On the other hand,
 Haiman produced
an example, see \cite[\S 7.2.1]{Ha3}, of a principal nilpotent
pair $\bbe$ for the  Lie algebra
$\g={\mathfrak s\mathfrak p}_6$
where the analogue of the $n!$-theorem  fails,
hence   the 
corresponding homomorphism
 $\gr\ctt\to\gr\C[\wbh]$ is  not surjective
and the scheme $\xx$ is  not normal
at the point $(\bbe,0)$. Another example is
provided by the {\em exceptional}  principal nilpotent pair
in the simple Lie algebra of type  $\mathbf E_7$ discussed below.

\subsection{The polynomial $\bde$}\label{bde_sec}
From 
the isomorphism $\rr_\bbe=\gr^F\C[\wbh]$, of Theorem \ref{n!},
we see
 that  $\gr^F\C[\wbh]$ is a Gorenstein algebra and
 that the line
$(\gr^F\C[\wbh])^\sign$ is  the socle
of that algebra.
In most cases, one can actually obtain a more explicit
description of the socle.

To explain the meaning of the words "most cases", we recall that
the  simple Lie algebra of type  $\mathbf E_7$
has one `exceptional'  conjugacy class
of  principal nilpotent pairs  $\bbe=(e_1,e_2)$ such that each of the 
nilpotent elements
$e_1$ and $e_2$ has Dynkin labels 
$\, \left[{{\hphantom{10}0\hphantom{010}}\atop{101010}}\right].\, $
Following \cite[Definition 4.1]{pnp},
we call a
 principal nilpotent pair $\bbe=(e_1,e_2)$,
of an arbitrary reductive Lie algebra $\g$,
 {\em non-exceptional}
provided none of the components of $\bbe$ corresponding
to the simple factors of $\g$ of
type $\mathbf E_7$ belong to the
exceptional  conjugacy class of  principal nilpotent pairs.

Let $\check h_s=\langle h_s,-\rangle,\ s=1,2,$
be a linear function on $\t$ that corresponds to
the element $h_s$ via the  invariant form.

The following result provides a simple
description of the socle of the algebra $\gr^F\C[\wbh]$
in the case of nonexceptional nilpotent  pairs.

\begin{thm}[see \S\ref{pftr}]\label{trace}
For any
  non-exceptional   principal nilpotent pair  $\bbe$,
 we have 
$$
(\gr^F\C[\wbh])^\sign\ =\ \gr_{\bd_1,\bd_2}\C[\wbh].$$

Morover, a base vector of the  1-dimensional vector space on the right
is provided by
the image under the map
$\gr^F\C[\tt]\to\gr^F\C[\wbh]$ of the
class of the following  bihomogeneous   polynomial:
\beq{bde}
\bde:=
\sum\nolimits_{w\in W}\
\sign(w)\cdot w(\check h_1^{\bd_1}\o \check h_2^{\bd_2})\
\in\ \C[\tt]^\sign.
\eeq
\end{thm}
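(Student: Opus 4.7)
The plan is to identify the socle of the bigraded Gorenstein algebra $\rr_\bbe$ as residing in the top bidegree $(\bd_1, \bd_2)$ and to exhibit $\bde$ as its generator. By combining Theorem \ref{n!} with Theorem \ref{main_thm} and Corollary \ref{vectbun}(i), the algebra $\rr_\bbe \cong \gr^F\C[\wbh]$ is a $W$-equivariant, bigraded, local Artinian Gorenstein $\C$-algebra whose one-dimensional socle coincides with its sign isotypic component $\rr_\bbe^\sign$. Bigraded Gorenstein duality then yields a symmetry $\dim \rr_\bbe^{i,j} = \dim \rr_\bbe^{a-i,\,b-j}$ in the (unique) socle bidegree $(a,b)$. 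Since $\rr_\bbe^{0,0} = \C$ (the image of constants) and Theorem \ref{n!}(ii) forces all nonzero bidegrees into $[0, \bd_1] \times [0, \bd_2]$, one has $(a,b) \le (\bd_1, \bd_2)$, with equality if and only if $\rr_\bbe^{\bd_1, \bd_2} \ne 0$.

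Next, the polynomial $\bde$ is $W$-antisymmetric by construction, of pure bidegree $(\bd_1, \bd_2)$, and nonzero as an element of $\C[\tt]$: no reflection in $W$ fixes both $h_1$ and $h_2$, since the stabilizer $W_\bh = W_1 \cap W_2$ is trivial (indeed $\g_{h_1, h_2} = \t$, whose Weyl group is trivial). Consequently, its class in $\gr_{\bd_1, \bd_2}\C[\wbh]$ lies in the sign isotypic. If that class is nonzero, then the one-dimensional sign component of $\gr^F\C[\wbh]$ is forced to sit in bidegree $(\bd_1, \bd_2)$, so $(a,b) = (\bd_1, \bd_2)$ and $\bde$ spans the socle. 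Both assertions of the theorem thus reduce to showing that the class of $\bde$ in $\gr_{\bd_1, \bd_2}\C[\wbh]$ is nonzero.

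This final nonvanishing is proved via the $\bullet_\bbe$-deformation. Evaluating $\bde$ at a point $(u \tau_1 h_1, u \tau_2 h_2)$ of the generic $W$-orbit through $\bh_\tau := (\tau_1 h_1, \tau_2 h_2)$ yields $\sign(u) \cdot \tau_1^{\bd_1} \tau_2^{\bd_2} \cdot S$, where
\[
S \ :=\ \bde(\bh)\ =\ \sum_{w \in W} \sign(w)\, \langle h_1, w^{-1} h_1\rangle^{\bd_1}\, \langle h_2, w^{-1} h_2\rangle^{\bd_2}.
\]
In the Rees-algebra framework underlying the proof of Theorem \ref{n!}, this exact scaling behaviour identifies $\bde$ with an element of $\rees\C[\wbh]$ of pure Rees bidegree $(\bd_1, \bd_2)$, whose restriction to the central fiber $\rr_\bbe$ is nonzero precisely when $S \ne 0$. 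The main obstacle is therefore to establish $S \ne 0$ for every non-exceptional principal nilpotent pair; this is the essential content of the non-exceptional hypothesis, to be proved by invoking the structural/classification results for principal nilpotent pairs from \cite{pnp}, the exceptional class in $\mathbf E_7$ being precisely the case for which $S$ vanishes.
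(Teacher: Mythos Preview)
Your reduction in the first two paragraphs is correct: everything comes down to showing that the class of $\bde$ in $\gr\max_{\bd_1,\bd_2}\C[\wbh]$ is nonzero. But the final step is where the argument breaks. Knowing $S=\bde(\bh)\ne 0$ only says that $\bde|_{\wbh}$ is a nonzero element of the one-dimensional space $\C[\wbh]^\sign$; it does \emph{not} tell you its image in the central fiber is nonzero. Concretely, the sign isotypic $(\rees\max\C[\wbh])^\sign$ is a free bigraded $\C[\tau_1,\tau_2]$-module of rank one, generated in some bidegree $(a,b)\le(\bd_1,\bd_2)$. Your Rees element $\tau_1^{\bd_1}\tau_2^{\bd_2}\cdot\bde|_{\wbh}$ then equals $\tau_1^{\bd_1-a}\tau_2^{\bd_2-b}$ times the generator (up to the nonzero scalar $S$), and this dies in the central fiber whenever $(a,b)\ne(\bd_1,\bd_2)$. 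So $S\ne 0$ is necessary but not sufficient; you must still rule out $a<\bd_1$ and $b<\bd_2$, i.e.\ show that $\C[\wbh]^\sign$ is not already contained in $\fo_{\bd_1-1}\C[\wbh]$ or in $\ft_{\bd_2-1}\C[\wbh]$.

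The paper supplies precisely this missing ingredient by working on the dual side. One introduces the functional $\Del=\sum_{w}\sign(w)\,e^{w(\bh)}\in(\C[\tt])^*$, which annihilates $I_\bh$ and hence descends to $\C[\wbh]$. The key input from \cite{pnp} is not merely $S\ne 0$ but \cite[Lemma~4.3]{pnp}: the bihomogeneous components $\Del_{i,j}$ vanish whenever $i<\bd_1$ or $j<\bd_2$. This forces $\Del$ to kill $\fo_{\bd_1-1}\C[\tt]+\ft_{\bd_2-1}\C[\tt]+I_\bh$, so $\Del$ descends to a linear functional on $\gr\max_{\bd_1,\bd_2}\C[\wbh]$. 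Only then does \cite[Theorem~4.4]{pnp} (the non-exceptional hypothesis) give $\Del_{\bd_1,\bd_2}(\bde)(0)\ne 0$, which now legitimately forces the class $a(\bde)$ to be nonzero in the graded piece. (Incidentally, your claim that $\bde\ne 0$ in $\C[\tt]$ follows from triviality of the stabilizer $W_\bh$ is also wrong: for the exceptional pair in $\mathbf E_7$ the stabilizer is still trivial yet $\bde=0$.)
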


\begin{rem}
The polynomial $\bde$ was first introduced  in \cite{pnp}.
It is a $W$-harmonic polynomial on $\tt$ that provides a natural
generalization 
to the case of arbitrary reductive Lie algebras
of the Garsia-Haiman polynomial on $\C^n\times\C^n$.
The proof of Theorem \ref{trace} is based
on the properties of  $\bde$ established in \cite[\S4]{pnp}.
The relevance of the 
notion of non-exceptional pair is due 
to \cite[Theorem 4.4]{pnp}, one of
the main results of {\em loc cit},  which says that
$\bde\neq 0$ holds if and only if the principal nilpotent pair $\bbe$ is
non-exceptional.
\erem

One may   pull-back the function $\bde$ 
via the projection $\xx_\norm\to\tt$.
The resulting $W$-alternating function on $\xx_\norm$ 
gives a $G$-invariant section, $\bbs_\bbe$,
of the  sheaf $\rr^\sign.$ Let
$\bbs_\bbe(\bbe)\in \rr^\sign_\bbe$ denote
the value of the section  $\bbs_\bbe$
at the point $\bbe$.

\begin{cor}[see \S\ref{pnpappl}]\label{section} For a
 non-exceptional principal nilpotent pair $\bbe$,
one has: $\bde(\bh)\neq0$  and $\bbs_\bbe(\bbe)\neq0$.
Moreover, we have that
$\rr_\bbe^\sign=\rr^{\bd_1,\bd_2}_\bbe=\C\cdot\bbs_\bbe(\bbe)$
is the socle of the algebra $\rr_\bbe$.
\end{cor}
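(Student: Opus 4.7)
The plan is to extract the three claims from Theorems \ref{n!} and \ref{trace} combined with a direct unpacking of the construction of the section $\bbs_\bbe$. First, the nonvanishing $\bde(\bh)\neq 0$ will be deduced from Theorem \ref{trace}: that theorem tells us that the image of $\bde$ under the composite surjection $\C[\tt]\onto \C[\wbh]\onto \gr^F\C[\wbh]$ is a nonzero basis vector of the one-dimensional space $\gr_{\bd_1,\bd_2}\C[\wbh]$, so in particular the restriction $\bde|_{\wbh}\in\C[\wbh]$ is nonzero. Since $\bde$ is $W$-alternating, its values on the $W$-orbit $\wbh$ are $\pm\bde(\bh)$, whence $\bde(\bh)\neq 0$.

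Second, I will identify $\bbs_\bbe(\bbe)\in\rr_\bbe^\sign$ with the class of $\bde$ in $\gr^F\C[\wbh]$ under the isomorphism $\rr_\bbe\iso\gr^F\C[\wbh]$ of Theorem \ref{n!}. By construction, $\bbs_\bbe$ is the $G$-invariant section of $\rr^\sign$ obtained by pulling back $\bde\in\C[\tt]^\sign$ along the projection $\xx_\norm\to\tt$, so its value at $\bbe$ is the class in $\rr_\bbe=\C[p_\norm^{-1}(\bbe)]$ of that pulled-back function. In the two-parameter Rees-algebra family of \S\S\ref{rees_ctt}-\ref{npf} used to prove Theorem \ref{n!}, whose generic fiber is $\rr_\bh=\C[\wbh]$ and whose special fiber at the origin is $\rr_\bbe$, the section $\bbs_\bbe$ restricts to $\bde$ on the generic fiber and, by the defining property of a Rees-algebra degeneration, specializes at the origin to the $\gr^F$-class of $\bde$ in $\gr^F\C[\wbh]$. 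Theorem \ref{trace} then yields $\bbs_\bbe(\bbe)\neq 0$.

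Third, for the socle statement I combine the previous steps with the Gorenstein property of $\rr_\bbe$. By Theorems \ref{n!}(i) and \ref{trace}, $\rr_\bbe^\sign=\rr_\bbe^{\bd_1,\bd_2}$ is one-dimensional, and by the previous paragraph it is spanned by $\bbs_\bbe(\bbe)$. On the other hand, $\rr_\bbe$ is a local bigraded Artinian $\C$-algebra, supported in $[0,\bd_1]\times[0,\bd_2]$ by Theorem \ref{n!}(ii) with degree-zero part $\C$, and it is Gorenstein since the self-duality $\rr\cong\hhom_{\oo_{\zz_\norm}}(\rr,\kk_{\zz_\norm})$ of Corollary \ref{vectbun}(ii), restricted to the smooth locus $\zz^r\ni\bbe$, makes each fiber a Frobenius $\C$-algebra. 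Its socle is therefore one-dimensional. Any element of the top bidegree $(\bd_1,\bd_2)$ is killed by positive bidegree elements, since no higher bidegree exists, so $\rr_\bbe^{\bd_1,\bd_2}$ lies in the socle; both being one-dimensional, they coincide.

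The main obstacle lies in the compatibility underlying the second paragraph: one must verify that the isomorphism of Theorem \ref{n!}, constructed by comparing two locally free sheaves on $\C^2$ via Rees algebras and a punctured-plane extension, is compatible with the restriction of the global section $\bbs_\bbe\in\Ga(\zz_\norm,\rr^\sign)$ to the special fiber. This compatibility is natural but requires tracking the functoriality of $\bbs_\bbe$ through the explicit flat family built in \S\S\ref{rees_ctt}-\ref{npf}.
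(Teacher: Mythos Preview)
Your proposal is correct and follows essentially the same route as the paper: Lemma~\ref{vanish} gives $\bde(\bh)\neq0$ exactly as you argue, and the paper makes your ``main obstacle'' explicit via the commutative diagram~\eqref{ares}, which yields the formula $\rese(f)=f(\bh)\cdot u$ for $f\in(\C^{\bd_1,\bd_2}[\tt])^\sign$ and hence $\bbs_\bbe(\bbe)=\bde(\bh)\cdot u\neq0$. The socle identification is treated in the paper as already established (from the Gorenstein/trace discussion in \S\ref{pnp_sec}--\S\ref{bde_sec} combined with Theorem~\ref{trace}), and your bidegree argument is an equally valid way to see it.
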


Let
$\lll -,-\rrr:\ \tt\times\tt\to\C$
be the natural $W\times W$-invariant  bilinear form
given 
by the formula $\lll \bx,\by\rrr :=\langle
x_1,y_1\rangle+\langle x_2,y_2\rangle$,
for  any $\bx=(x_1,x_2),\, \by=(y_1,y_2)\in\tt$.

We define a holomorphic
function $\bbde$ on $\tt\times\tt$ as follows
$$\bbde(\bx,\by):=\sum\nolimits_{w\in W}\ \sign(w)\cdot
e^{\lll \bx,\, w(\by)\rrr},
\qquad \bx,\by\in\tt.
$$

It is clear that, for any $\bx,\by$, we have $\bbde(\bx,\by)=
\bbde(\by,\bx)$.
Observe also that the function $\bbde$ is $W$-invariant
with respect to the diagonal action on $\tt\times\tt$
 and, for any fixed $\bx\in\gg$, the function
$\bbde(\bx,-)=\bbde(-,\bx)$ is a $W$-alternating holomorphic
function on $\tt$.

Similarly to the above, one can pull-back the function $\bbde$ 
via the projection $\xx_\norm\times\xx_\norm\to\tt\times\tt$.
The resulting function on $\xx_\norm\times\xx_\norm$
gives a $G\times G$-invariant
{\em holomorphic} section, $\bbss$, of the coherent
sheaf $\rr^\sign\boxtimes\rr^\sign$ on $\zz\times\zz$.

The following result shows that the section  $\bbss$ 
provides a 
canonical {\em holomorphic} interpolation between 
the {\em algebraic} sections
$\bbs_\bbe$ associated with
 various, not necessarily $G$-conjugate, principal nilpotent pairs $\bbe$
of the Lie algebra $\g$.

\begin{prop}[\S\ref{pnpappl}] \label{exp}
For any non-exceptional principal nilpotent
pair $\bbe$, 
 in $\rr^\sign\boxtimes\rr^\sign_\bbe$,
resp. in $\rr^\sign_\bbe\boxtimes\rr^\sign$,
 one has
$$
\bbss|_{\zz\times\{\bbe\}}\ =\ 
c\cdot\bbs_\bbe\boxtimes
\bbs_\bbe(\bbe),\en\text{resp.}\en
\bbss|_{\{\bbe\}\times\zz}\ =\ 
c\cdot\bbs_\bbe(\bbe) \boxtimes
\bbs_\bbe\quad\text{where}\en
c=\frac{1}{\bd_1!\cd\bd_2!\cd\bde(\bh)}.
$$
\end{prop}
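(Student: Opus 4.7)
The plan is to Taylor-expand the exponential defining $\bbde$ in the first variable about $\bt=0$ and to use the $\bullet_\bbe$-grading on $\rr_\bbe$ supplied by Theorem \ref{n!} to collapse the resulting series, after sign-projection over the fiber $p_\norm^{-1}(\bbe)$, to a single surviving term. Expanding the exponential gives $\bbde(\bt,\bs)=\sum_{m,n\geq 0}\frac{1}{m!\,n!}P_{m,n}(\bt,\bs)$ where $P_{m,n}(\bt,\bs):=\sum_{w\in W}\sign(w)\langle t_1,ws_1\rangle^m\langle t_2,ws_2\rangle^n$ is bihomogeneous of bidegree $(m,n)$ separately in $\bt$ and in $\bs$, and $W$-alternating in each; a brief manipulation using $W$-invariance of $\langle-,-\rangle$ yields both the symmetry $P_{m,n}(\bt,\bs)=P_{m,n}(\bs,\bt)$ and the identity $P_{\bd_1,\bd_2}(\bt,\bh)=\bde(\bt)$, which is immediate from \eqref{bde}.

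Next I would feed this Taylor series through the composite $\C[\tt]\to\C[\xx_\norm]\to\rr_\bbe$. Since $\bbe$ is $\bullet_\bbe$-fixed and $p_\norm$ is $\bullet_\bbe$-equivariant, the composite is $\CC$-equivariant for the $\CC$-action on $\rr_\bbe$ realizing the bigrading $\rr_\bbe=\bigoplus_{i,j}\rr_\bbe^{i,j}$ of Theorem \ref{n!}. A bihomogeneous polynomial on $\tt$ of bidegree $(m,n)$ therefore lands in $\rr_\bbe^{m,n}$, which vanishes outside $[0,\bd_1]\times[0,\bd_2]$ by Theorem \ref{n!}(ii) and meets $\rr_\bbe^\sign=\rr_\bbe^{\bd_1,\bd_2}=\C\cdot\bbs_\bbe(\bbe)$ only in the extremal case $(m,n)=(\bd_1,\bd_2)$ by Corollary \ref{section}. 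After sign-projection in the $\bt$-variable the Taylor series hence collapses to the single term $\frac{1}{\bd_1!\bd_2!}Q(\bs)\cdot\bbs_\bbe(\bbe)$, where the scalar $Q(\bs)\in\C$ is determined by $P_{\bd_1,\bd_2}(\cdot,\bs)|_{p_\norm^{-1}(\bbe)}=Q(\bs)\cdot\bbs_\bbe(\bbe)$ inside $\rr_\bbe^\sign$.

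To compute $Q(\bs)$ I would invoke Theorem \ref{n!}(i): because $F_{<(\bd_1,\bd_2)}\C[\wbh]^\sign=0$, its isomorphism restricts at the top bidegree piece to a canonical identification $\rr_\bbe^{\bd_1,\bd_2}\cong\C[\wbh]^\sign$ sending $\bbs_\bbe(\bbe)$ to the alternating function $w\bh\mapsto\sign(w)\bde(\bh)$ and intertwining the map $\C[\tt]\to\rr_\bbe$ at top bidegree with literal restriction to $\wbh$. Since $P_{\bd_1,\bd_2}$ is $W$-alternating in $\bt$, its restriction to $\wbh$ equals $w\bh\mapsto\sign(w)P_{\bd_1,\bd_2}(\bh,\bs)$, so combining the symmetry of $P$ with the identity from step one gives $Q(\bs)=P_{\bd_1,\bd_2}(\bh,\bs)/\bde(\bh)=P_{\bd_1,\bd_2}(\bs,\bh)/\bde(\bh)=\bde(\bs)/\bde(\bh)$. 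Since the pullback of $\bde$ to $\xx_\norm$ is, by definition, the section $\bbs_\bbe$, the polynomial $Q=\bde/\bde(\bh)$ defines the section $\bbs_\bbe/\bde(\bh)$ of $\rr^\sign$, and the first identity of the proposition with $c=1/(\bd_1!\bd_2!\bde(\bh))$ drops out; the companion identity for $\bbss|_{\zz\times\{\bbe\}}$ follows from the manifest symmetry $\bbde(\bx,\by)=\bbde(\by,\bx)$.

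The main obstacle I anticipate is the compatibility claim of step three --- namely, that the isomorphism of Theorem \ref{n!}(i), built through a Rees-type degeneration, intertwines literal polynomial restriction $\C[\tt]\to\rr_\bbe$ with literal restriction $\C[\tt]\to\C[\wbh]$ at top bidegree, and in particular sends $\bde$ to $\bde|_{\wbh}$. Granting this, together with the non-vanishing $\bde(\bh)\neq 0$ supplied by Theorem \ref{trace}, the rest of the argument is purely formal.
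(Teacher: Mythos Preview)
Your proposal is correct and follows essentially the same route as the paper. Both arguments Taylor-expand $\bbde$ into bihomogeneous $W$-alternating pieces, use the bigrading on $\rr_\bbe$ together with the identification $\rr_\bbe^\sign=\rr_\bbe^{\bd_1,\bd_2}$ (Corollary \ref{section}) to kill all but the $(\bd_1,\bd_2)$ term, and then evaluate that surviving term via the identity $P_{\bd_1,\bd_2}(\cdot,\bh)=\bde$. The obstacle you flag --- that the isomorphism of Theorem \ref{n!}(i) intertwines the map $\C[\tt]\to\rr_\bbe$ with literal restriction $\C[\tt]\to\C[\wbh]$ at top bidegree --- is exactly what the paper packages as the commutativity of diagram \eqref{ares}, established in the proof of Corollary \ref{section} by tracing through the Rees construction (formulas \eqref{hh1} and \eqref{hh2}); this yields the key formula $\rese(f)=f(\bh)\cdot u$ for $f\in(\C^{\bd_1,\bd_2}[\tt])^\sign$, which is your compatibility claim in disguise.
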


\subsection{Relation to work of M. Haiman}\label{mhai}
Let $\hilb^n(\C^2)$ be  the Hilbert scheme
 of $n$ points in $\C^2$.
In his work on the $n!$-theorem, Haiman introduced
a certain {\em isospectral Hilbert scheme}
$\wt\hilb^n(\C^2)$,
a reduced finite scheme over 
$\hilb^n(\C^2)$, see
\cite{Ha1}. The main
result of {\em loc cit} says
 that  $\wt\hilb^n(\C^2)$ is
a normal,  Cohen-Macaulay and  Gorenstein scheme.

Now, let $G=GL_n$.
It turns out that there is a Zariski open dense subset
$\zz^\circ\sset\zz^r$ such that
the projection $p_\norm :\
p_\norm\inv(\zz^\circ)\ \to\ \zz^\circ$
is closely related to the
 projection $\wt\hilb^n(\C^2)\to
\hilb^n(\C^2)$, see \S\ref{hai_sec}. Using this relation, we are able to
deduce from our Theorem \ref{main_thm}
that the {\em normalization}
of the isospectral Hilbert scheme
 is  Cohen-Macaulay and  Gorenstein,
see  Proposition \ref{haim}.
 Unfortunately, 
our approach does not seem to yield an independent
proof of  normality of  the 
isospectral Hilbert scheme, while the proof of  normality given
in \cite[Proposition 3.8.4]{Ha1} is based on the `polygraph theorem'
\cite[Theorem 4.1]{Ha1}, a key
technical result of \cite{Ha1}.

Nonetheless, we are able to use the locally free sheaf $\rr|_{\zz^r}$ 
to construct a rank $n!$ algebraic vector bundle $\pp$
on $\hilb^n(\C^2)$ whose fibers afford
the regular representation of the Symmetric group.
The results  of Haiman  \cite{Ha1} then insure
that the vector bundle
$\pp$ is isomorphic, {\em a posteriori}, to the
{\em Procesi bundle}, cf. Corollary \ref{procesi}.

We note that the properties of the
vector bundle  $\pp$ that we construct 
are often sufficient (without the knowledge of
the isomorphism with the  Procesi bundle) for applications.
This is so, for instance, in the  new proof
of the  positivity of Kostka-Macdonald polynomials found
recently by I. Gordon \cite{Go}.
Another situation of this kind is provided
by Theorem \ref{rres_thm} below. See also
 various combinatorial results established in \cite{BG}.
\medskip
 
Let
$V=\C^n$ be the fundamental representation of $G=GL_n$
and  let $\VV$ be the tautological rank $n$ vector
bundle on $\hilb^n(\C^2)$, see \S\ref{VT}.

Given an integer $m\geq 0$, let
$\C^m[V]$ be
the space of degree $m$ homogeneous polynomials
on $V$, and let $S_m$
be the symmetric group on $m$ letters.
The group $S_m$  acts
naturally on $V^{\o m}$, resp. on
$\VV^{\o m}$, hence also
on $\C[\xx_\norm]\o\C^m[V]\o V^{\o m}$, resp. on
$\pp\o\VV^{\o m}$, via the action on the last tensor
factor.

We also have the group $W=S_n$ and a natural
$W\times\C^\times\times\C^\times$ action on $\xx_\norm$, resp. on
$\pp$, hence 
on $\C[\xx_\norm]\o\C^m[V]\o V^{\o m}$, resp. on
$\pp\o\VV^{\o m}$, via the action on the first
factor. Finally, we let the subgroup  $SL_n\sset GL_n$
 act diagonally
on $\C[\xx_\norm]\o\C^m[V]\o V^{\o m}$.
The  $SL_n$-action
commutes with the previously defined actions  of other groups.
Thus,  the space $\big(\C[\xx_\norm]\o\C^m[V]\o V^{\o m}\big)^{SL_n}$,
of $SL_n$-invariants, acquires the natural structure of
 an $S_m\times W$-equivariant bigraded
$\C[\tt]$-module.

\begin{thm}[see \S\ref{VT}]\label{rres_thm}
There  is a 
$W\times S_m$-equivariant
bigraded $\C[\tt]$-module isomorphism
$$
\big(\C[\xx_\norm]\o \C^m[V]\o  V^{\o m}\big)^{SL_n}\
\iso\
\Ga(\hilb^n(\C^2),\ \pp\o\VV^{\o m}),\quad \forall m\geq 0.
$$
\end{thm}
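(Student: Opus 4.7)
The strategy is to reduce the statement to a calculation on the GIT model of the Hilbert scheme, using the comparison between $p_\norm:\xx_\norm\to\zz_\norm$ and the projection $\pi:\wt\hilb^n(\C^2)\to\hilb^n(\C^2)$ outlined in \S\ref{mhai}.

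First, I would realize $\hilb^n(\C^2)$ via Nakajima's construction as the geometric quotient $\wt\U/GL_n$, where
$$\wt\U\ :=\ \{(x,y,v)\in\zz\times V\mmid v\text{ is a cyclic vector for the commuting pair }(x,y)\}$$
and $GL_n$ acts by $g\cd(x,y,v)=(gxg\inv,gyg\inv,gv)$; then $\VV=\wt\U\times^{GL_n}\!V$ is the tautological bundle. Letting $\zz^\circ$ denote the image of $\wt\U$ in $\zz$ (the open set of \S\ref{mhai}), I would invoke Proposition \ref{haim} to identify $\wt\hilb^n(\C^2)$ with the $GL_n$-quotient of the finite base change $\wt\U\times_{\zz^\circ}p\inv_\norm(\zz^\circ)$, which is an open subscheme of $\xx_\norm\times V$. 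Under this identification $\pp=\pi_*\oo_{\wt\hilb^n(\C^2)}$.

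Using the projection formula for the finite map $\pi$, together with equivariant descent along the principal $GL_n$-bundle $\wt\U\times_\zz p\inv_\norm(\zz^\circ)\to\wt\hilb^n(\C^2)$, one obtains
$$\Gamma\bigl(\hilb^n(\C^2),\ \pp\o\VV^{\o m}\bigr)\ \iso\ \Gamma\bigl(\wt\hilb^n(\C^2),\ \pi^*\VV^{\o m}\bigr)\ \iso\ \bigl(\C[\wt\U\times_\zz p\inv_\norm(\zz^\circ)]\o V^{\o m}\bigr)^{GL_n}.$$
I would then run a codimension-two extension argument. The non-cyclic locus in $\zz\times V$ has codimension at least two (cyclic vectors form a Zariski open dense subset of $V$ over every regular pair in $\gl_n$), and the same is true of its preimage in $\xx_\norm\times V$. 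Since $\xx_\norm\times V$ is normal by Theorem \ref{main_thm}, this forces
$$\C[\wt\U\times_\zz p\inv_\norm(\zz^\circ)]\ =\ \C[\xx_\norm\times V]\ =\ \C[\xx_\norm]\o\C[V].$$

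Finally I would separate the central torus $\C^\times\sset GL_n$ from $SL_n$ by weight bookkeeping. This center acts trivially on $\zz$ and on $\tt$, hence on $\xx_\norm$; it acts by weight $+m$ on $V^{\o m}$ and by weight $-d$ on the summand $\C^d[V]=\sym^d V^*\sset\C[V]$. Central invariance therefore singles out the summand $d=m$, and one concludes
$$\bigl(\C[\xx_\norm]\o\C[V]\o V^{\o m}\bigr)^{GL_n}\ =\ \bigl(\C[\xx_\norm]\o\C^m[V]\o V^{\o m}\bigr)^{SL_n}.$$
Every identification in the chain is $W\times S_m$-equivariant and respects the $\CC$-induced bigradings, so the resulting isomorphism automatically carries the required structure.

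\textbf{The main obstacle} is the first step: matching the $GL_n$-quotient of $\wt\U\times_\zz p\inv_\norm(\zz^\circ)$ with the isospectral Hilbert scheme \emph{on the nose} rather than up to normalization. This rests on Proposition \ref{haim}, which in turn combines Haiman's normality theorem for $\wt\hilb^n(\C^2)$ with our Theorem \ref{main_thm}. Once that geometric comparison is in hand, the remaining steps (Hartogs-style extension across the codimension-two non-cyclic locus, and the weight decomposition passing from $GL_n$- to $SL_n$-invariants in a prescribed bidegree) are routine.
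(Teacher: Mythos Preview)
Your overall architecture matches the paper's: realize $\hilb$ as $\ssc/GL_n$ where $\ssc$ is the cyclic locus in $\zz\times V$, use projection formula and descent to rewrite $\Ga(\hilb,\pp\o\VV^{\o m})$ as $GL_n$-equivariant maps $\xx^\circ\times_{\zz^\circ}\ssc\to V^{\o m}$, and separate the central torus to pass from $GL_n$ to $SL_n$. The last of these is exactly the paper's isomorphism $\Phi_2$, and the descent/projection argument is exactly $\Phi_1$.

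The gap is the extension step. Your Hartogs argument rests on the claim that the non-cyclic locus in $\zz\times V$ (equivalently, the complement of $\ssc\times_\zz p_\norm\inv(\zz^\circ)$ inside $\xx_\norm\times V$) has codimension $\geq 2$. This is false: it is a \emph{divisor}. Over a regular semisimple pair $(x,y)$ diagonalized as $x=\operatorname{diag}(z_1,\dots,z_n)$, the non-cyclic vectors are exactly those with some coordinate zero, cut out by the vanishing of the single function $\det[v,xv,\dots,x^{n-1}v]$. Your parenthetical justification (``cyclic vectors form a Zariski open dense subset of $V$'') only gives codimension $\geq 1$. Consequently $\C[\ssc\times_\zz p_\norm\inv(\zz^\circ)]$ is strictly larger than $\C[\xx_\norm\times V]$ --- for instance $1/\det[v,xv,\dots,x^{n-1}v]$ lies in the former but not the latter --- and your displayed equality fails.

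The paper (Lemmas \ref{lem1} and \ref{lem2}) circumvents this by never attempting to extend arbitrary functions. It extends only \emph{$G$-equivariant} maps $f:\xx^\circ\times_{\zz^\circ}\ssc\to V^{\o m}$, and uses the equivariance to show that such $f$ cannot acquire poles along the codimension-one non-cyclic divisor. Concretely: after a codimension-$2$ reduction from $\zz^\circ$ to $\zz^{rr}$ via Lemma \ref{irr} (which you omit), one localizes at a regular semisimple point $(x,y)\in\tt$, expands $f$ in a $T$-weight basis $\{u_\gamma\}$ of $V^{\o m}$, and observes that $T$-equivariance forces the coefficient of $u_\gamma$ to be a monomial $v_1^{m(\gamma,1)}\cdots v_n^{m(\gamma,n)}$ with \emph{nonnegative} exponents $m(\gamma,i)\geq 0$ (since all $T$-weights of $V^{\o m}$ are nonnegative). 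This is precisely what rules out poles along $\cup_i\{v_i=0\}$. The weight argument is the missing idea in your proposal; without it the extension does not go through.

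A secondary remark: you do not need Proposition \ref{haim} or Haiman's normality theorem here. The paper defines $\pp$ directly from $\ha=(\xx^\circ\times_{\zz^\circ}\ssc)/G$, and the proof of Theorem \ref{rres_thm} uses only this internal construction, not the comparison with Haiman's isospectral Hilbert scheme.
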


We remark that the
space of global sections that appears on the right hand side of the
isomorphism  above  is
the "polygraph space" that played
a key role in the work of Haiman.
In \cite[Theorem 3.1]{Ha4}, Haiman
gives a  bigraded character formula
for the polygraph space
in terms of  Macdonald polynomials.  On the other hand,
our Theorem  \ref{character_cor}
may be used to find the bigraded character
of the vector space $\big(\C[\xx_\norm]\o \C^m[V]\o  V^{\o m}\big)^{SL_n}$.
Combining these results with Theorem \ref{rres_thm}, one
obtains
a certain explicit identity that
relates the  formal power series in the right hand side
of the formula of Theorem  \ref{character_cor}
to Macdonald polynomials, see ~\cite{BG} for more details.

A particularly nice special case occurs in the
situation  where the integer $m$, in Theorem \ref{rres_thm},
 is a multiple of $n$,
i.e., such that $m=kn$ for some integer $k\geq 1$.
Let $A^k\sset\C[\tt]$ be the vector space
spanned by the products of $k$-tuples of elements of $\C[\tt]^{\text{sign}},$
the space of $W$-alternating polynomials.
The bigrading on $\ctt$ induces one on $A^k$ and we have

\begin{prop}\label{aa} For any $k\geq 1$, there
is a natural
bigraded 
isomorphism (see  \S\ref{gg}):
$$
\big(\C[\zz_\red]\o \C^{kn}[V]\big)^{SL_n}\ 
\ \iso\  A^k.$$
\end{prop}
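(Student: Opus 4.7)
The plan is to deduce Proposition~\ref{aa} from Theorem~\ref{rres_thm} by extracting a suitable isotypic component under the $S_{kn}$ and $W$ actions. Applying Theorem~\ref{rres_thm} with $m=kn$ yields an $S_{kn}\times W$-equivariant isomorphism
$$
(\C[\xx_\norm]\otimes\C^{kn}[V]\otimes V^{\otimes kn})^{SL_n}\ \iso\ \Ga(\hilb^n(\C^2),\ \pp\otimes\VV^{\otimes kn}).
$$
The classical Schur--Weyl decomposition $V^{\otimes kn}=\bplus_\lambda S^\lambda V\otimes M^\lambda$ singles out the rectangular shape $\lambda=(k^n)$ as the unique summand for which $S^\lambda V\cong\det^k$ is $SL_n$-trivial. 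Extracting the $M^{(k^n)}$-isotypic component on both sides, and using $S^{(k^n)}\VV\cong(\det\VV)^{\otimes k}=:\oo(k)$, one obtains after cancelling a common factor of $M^{(k^n)}\otimes\det^k$ the $W$-equivariant isomorphism
$$
(\C[\xx_\norm]\otimes\C^{kn}[V])^{SL_n}\ \iso\ \Ga(\hilb^n(\C^2),\ \pp\otimes\oo(k)).
$$

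Next I would take $W$-invariants on both sides. On the left, $\C[\xx_\norm]^W=\C[\zz_\norm]$, since $\rr=(p_\norm)_*\oo_{\xx_\norm}$ together with $\rr^W\cong\oo_{\zz_\norm}$ from Corollary~\ref{vectbun}(i). On the right, $\pp^W=\oo_{\hilb^n(\C^2)}$ because $\pp$ carries the regular representation of $W=S_n$ fiberwise. Therefore
$$
(\C[\zz_\norm]\otimes\C^{kn}[V])^{SL_n}\ \iso\ \Ga(\hilb^n(\C^2),\ \oo(k)).
$$
By Haiman's description of the Hilbert scheme as $\hilb^n(\C^2)=\Proj(\bplus_{k\geq 0} A^k)$, one has $\Ga(\hilb^n(\C^2),\oo(k))=A^k$, so $(\C[\zz_\norm]\otimes\C^{kn}[V])^{SL_n}\cong A^k$.

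The remaining task, which I expect to be the main obstacle, is to replace $\C[\zz_\norm]$ by $\C[\zz_\red]$. Both $(\C[\zz_\red]\otimes\C^{kn}[V])^{SL_n}$ and $(\C[\zz_\norm]\otimes\C^{kn}[V])^{SL_n}$ embed into $\C[\tt]^{\sign^k}$ via restriction to $\tt\hookrightarrow\zz$ followed by projection onto the one-dimensional $(T\cap SL_n)$-invariant subspace of $\sym^{kn}V^*$; this map is injective because $SL_n\cdot\tt=GL_n\cdot\tt$ is Zariski dense in $\zz$. The strategy is multiplicative. For $k=1$, the $GL_n$-representation $\sym^n V^*\otimes\det$ is small in Broer's sense, with 1-dimensional zero-weight space on which $W$ acts by the sign character; Theorem~\ref{charb}(ii) combined with a Chevalley-type restriction argument on the reduced commuting variety then identifies $(\C[\zz_\red]\otimes\C^n[V])^{SL_n}\cong\C[\tt]^\sign=A^1$. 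For $k\geq 2$, multiplying $k$ such $\zz_\red$-covariants yields elements of $(\C[\zz_\red]\otimes\C^{kn}[V])^{SL_n}$ whose restriction images span $A^k$; combined with the inclusion $\C[\zz_\red]\hookrightarrow\C[\zz_\norm]$ and the already established identification for $\zz_\norm$, this forces equality of the two spaces, yielding $(\C[\zz_\red]\otimes\C^{kn}[V])^{SL_n}\cong A^k$ as claimed.
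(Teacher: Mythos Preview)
Your approach is quite different from the paper's, and the gap you yourself identify is real and not resolved.

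The paper's proof is short and direct: it introduces the almost-commuting variety $\bar\ss=\{(x,y,v,v^*)\mid [x,y]+v\otimes v^*=0\}$ and the closed embedding $\iota\colon\zz_\red\times V\hookrightarrow\bar\ss$, $(x,y,v)\mapsto(x,y,v,0)$. Together with the restriction $\zeta^*$ to $\tt\times\{v_o\}$ (where $v_o=(1,\dots,1)$) this gives a chain
\[
\C[\bar\ss]^{\det^k}\ \xrightarrow{\iota^*}\ \big(\C[\zz_\red]\otimes\C^{kn}[V]\big)^{SL_n}\ \xrightarrow{\zeta^*}\ \C[\tt].
\]
Proposition~A2 of \cite{GG} says the composite is an isomorphism onto $A^k$. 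Since $G$ is reductive, $\iota^*$ is surjective, so both maps must be isomorphisms. No normalization enters; indeed the whole point, as the paper remarks after the statement, is that this is an instance of Theorem~\ref{rres_thm}-type behavior where normalization can be avoided.

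Your route through Theorem~\ref{rres_thm}, Schur--Weyl, $W$-invariants, and Haiman's identification $\Gamma(\hilb,\oo(k))=A^k$ correctly produces the isomorphism for $\zz_\norm$. The difficulty is exactly the step you flag: passing from $\zz_\norm$ to $\zz_\red$. For the $k=1$ base case you invoke Theorem~\ref{charb}(ii), but that theorem is stated and proved for $\C[\zz_\norm]$, not for $\C[\zz_\red]$; the phrase ``a Chevalley-type restriction argument on the reduced commuting variety'' is not an argument. One \emph{can} complete the base case by exhibiting explicit determinantal invariants $f_{p_1,\dots,p_n}(x,y,v)=\det\big(p_i(x,y)v\big)_{i}$ in $(\C[\zz_\red]\otimes\C^n[V])^{SL_n}$ and checking that their restrictions $\det\big(p_i(x_j,y_j)\big)_{i,j}$ span $\C[\tt]^{\sign}$; but that is essentially the content of the \cite{GG} result the paper cites, so you would be reconstructing the paper's proof inside a much longer argument. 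As written, the proposal does not close this gap.
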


This result may be viewed as an example of
 a situation involving the isomorphism of  Theorem \ref{rres_thm}
where taking normalizations is unnecessary.

\subsection{Layout of the paper} In section \ref{main_sec}
we begin with  basic properties of commuting 
and isospectral  varieties. We then review
the material involving holonomic $\dd$-modules
and (polarizable) Hodge modules that will
be used in the paper.
We show that the Harish-Chandra module $\mm$
is a simple holonomic $\dd$-module
and give $\mm$ the structure of a   Hodge module.
We explain that Saito's results on  Hodge modules
imply easily that the coherent sheaf $\gr^\hodge\mm$
is Cohen-Macaulay and Gorenstein.
This allows us to reduce  the
proof of Theorem \ref{mthm} 
to an isomorphism $\gr^\hodge\mm|_{\xx^{rr}}\cong
\oo_{\xx^{rr}}$, where ${\xx^{rr}}$ is a certain
open subset of $\xx$, and to a  codimension estimate
involving ${\xx^{rr}}$.
We 
finish the section by showing
how  Theorem \ref{mthm} implies  Theorem \ref{main_thm}.

In section \ref{int}, we recall the defininition of
the Grothendieck-Springer resolution
and construct
its `double analogue'. We  introduce a DG algebra $\aa$ such that its
homogeneous components are locally free sheaves
on the double analogue of the Grothendieck-Springer resolution.
The spectrum of the DG algebra $\aa$
may be thought of as a DG resolution of the isospectral
commuting variety.

In section \ref{sympl_sec}, we recall the Hotta-Kashiwara construction
of  the Harish-Chandra module $\mm$ as a direct image of the
structure sheaf  of the Grothendieck-Springer resolution.
We use the  Hotta-Kashiwara construction to obtain
a description of the sheaf $\gr^\hodge\mm$
in terms of the  DG algebra $\aa$. From that  description,
we deduce the above mentioned
 isomorphism $\gr^\hodge\mm|_{\xx^{rr}}\cong
\oo_{\xx^{rr}}$.

In section \ref{dima_sasha}, we give the definition of the
`universal stabilizer' sheaf. Further, we associate
with any finite dimensionsal representation $L$
of the Lie algebra $\g$
a coherent sheaf $L^{\fz}$, on $\g^r$.
 In the case where the representation $L$ is
small, we obtain, using a refinement
of an idea due to Beilinson and Kazhdan,
a description of the sheaf $L^{\fz}$ 
in terms the `universal 
zero weight space' of $L$. 
At the end of the section we indicate how to modify
the construction in order to obtain a similar description of
 $L^{\fz}$ for a not necessarily small representation
$L$.

In section \ref{END}, we introduce a  stratification
of the commutating variety and use it to prove
the above mentioned dimension estimate involving the set
$\xx^{rr}$ which is required
for our proof of  Theorem \ref{mthm}.
We then prove Theorems \ref{rrsmooth} and \ref{charb}
by adapting the construction of \S\ref{dima_sasha}
to a `double' setting. We also prove Corollary \ref{vectbun}.

Section \ref{pnpsec} is devoted to the proofs of all the
results stated in \S\S\ref{pnp_sec}-\ref{bde_sec} of the
Introduction.

In section \ref{hai_sec}, we consider the case of the
group $GL_n$. We apply our results about commuting
varieties to the geometry of the isospectral Hilbert scheme.
We discuss relations with the work of M. Haiman and
prove the results stated in \S\ref{mhai}.

Finally, in section \ref{some}, we prove Corollary \ref{LS}
and  Corollary \ref{ls_cor}.
\medskip

We remark that the proof of the main results, Theorem
\ref{mthm} and Theorem \ref{main_thm}, is rather long. The argument
involves several ingredients of different nature.
The discussion of these  igredients is spread over a number of
sections of the paper. Our   division into sections
has been made according to the nature of the material rather
than to the order in which that  material is actually
used in the proof. 

To make the logical structure 
of our arguments more clear, below is an  outline of
 the main steps of the
proof of  Theorem
\ref{mthm} and Theorem \ref{main_thm}, listed
in  the {\em logical} order:
\medskip

{\small
\begin{description}
\item[\S\S\ref{main_res}, \ref{ss3}] Define a Zariski open smooth
subset $\xx^{rr}\sset\xx$ and show that the set $\xx\sminus\xx^{rr}$
has codimension $\geq 2$ in $\xx$ (Corollary \ref{xxcor}).

\item[\S\S\ref{tx}, \ref{tx_sec}] Introduce the `double analogue' of  
 the Grothendieck-Springer resolution and show that it is 
an isomorphism over $\xx^{rr}$ (Corollary \ref{transv}).

\item[\S\S\ref{hc_filt}, \ref{hc_hodge}] Show that $\mm$ is a simple
$\DD$-module with the natural structure of a Hodge module.

\item[\S\S\ref{filt_sec}, \ref{homsec}] 
Use the  Hotta-Kashiwara
construction  of
$\mm$ to
 get a description (see Corollary \ref{dima22}) of $\ggr^\hodge\mm$ in terms of
the double analogue of   the Grothendieck-Springer resolution.

\item[\S\ref{xyzsec}] Establish
an isomorphism: $\
\ggr^\hodge\mm|_{\xx^{rr}}\ \cong\ \oo_{\xx^{rr}}\ $ (see Proposition
\ref{res_lem}
and \S\ref{xyzsec}).

\item[\S\ref{hc_hodge}]
Deduce from Saito's theory   that the sheaf $\ggr^\hodge\mm$
is  Cohen-Macaulay and selfdual (Corollary \ref{serre}).

\item[\S\ref{main_res}] Deduce  Theorem \ref{mthm} from
Lemma \ref{ei}, using  Proposition \ref{res_lem}
and Corollary \ref{xxcor}\ (which were proved at an earlier stage).
 Deduce  Theorem \ref{main_thm} from Theorem \ref{mthm}
and Corollary ~\ref{serre}.

\end{description}
}
\subsection{Acknowledgements}
{\small I am  grateful
to Dima Arinkin and Gwyn Bellamy for  useful discussions.
Special thanks are due to  Iain Gordon for showing
me a draft of \cite{Go} well before that paper
was finished, for many  stimulating discussions, and for a carefull
reading of (many) preliminary versions of the present paper.
I thank Eric Vasserot for pointing out a connection
of Theorem \ref{character_cor} with a result in
\cite{SV}, cf. also \S\ref{dima_sec}. Vladimir Drinfeld
and Mark Haiman have helped me with some questions
of algebraic geometry.

This work was supported in part by an NSF award DMS-1001677.}

\section{Analysis of the Harish-Chandra module}\label{main_sec}
\subsection{Isospectral varieties}\label{ysec}
Let  $\g^{rs}\sset \g^r$ be the set
of regular  semisimple elements, resp.  $\zz^{rs}\sset\zz^r$ be  the set
of pairs $(x,y)\in \zz^r$ such that both $x$ and $y$ are
 semisimple.

Basic properties of the commuting scheme $\zz$
 may be summarized
as follows.
\begin{prop}\label{zzbasic} \vi The set $\zz^{rs}$ is 
 Zariski open and dense in $\zz$. Thus, $\zz$ is a generically reduced and 
irreducible scheme; moreover, we have $\dim\zz=\dim\g+\rk.$

\vii The smooth
locus of the scheme $\zz$ equals $\zz^r;\ $
for any $(x,y)\in\zz$ one has $\dim\g_{x,y}\geq\rk$.
\end{prop}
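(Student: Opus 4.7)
The plan is to establish (i) and (ii) in parallel, by first analyzing the differential of the commutator map $\kap:\gg\to\g$, $(x,y)\mto[x,y]$, at an arbitrary point of $\zz$, and only then separating the dimension and density issues. For any $(x,y)\in \zz$ the Zariski tangent space $T_{(x,y)}\zz$ is the kernel of $d\kap_{(x,y)}(a,b)=[a,y]+[x,b]$. Using the invariant form $\langle-,-\rangle$ to identify $\g\cong\g^*$, one checks that $c$ annihilates the image of $d\kap_{(x,y)}$ if and only if $[y,c]=0=[x,c]$, i.e.\ $c\in\g_{x,y}$. Hence
\[
\dim T_{(x,y)}\zz\ =\ \dim\g+\dim\g_{x,y}.
\]

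Next I would check that $\zz^{rs}$ is open in $\zz$ (both regularity and semisimplicity being open conditions) and irreducible of dimension $\dim\g+\rk$. The irreducibility comes from presenting $\zz^{rs}$ as the image of the natural map
\[
G\times_{N(T)}(\treg\times\t)\ \to\ \zz^{rs},\qquad (g,t_1,t_2)\mto(\Ad(g)\,t_1,\,\Ad(g)\,t_2),
\]
which is moreover bijective, since a regular semisimple element lies in a unique Cartan subalgebra and this forces $y$ to lie in that same Cartan once $x$ is fixed. A dimension count on the source yields $\dim\zz^{rs}=\dim G-\dim T+2\rk=\dim\g+\rk$.

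The nontrivial step is the density of $\zz^{rs}$ in $\zz$, which I would invoke from Richardson's classical irreducibility theorem for the commuting variety. The standard argument shows that the locus $\{(x,y)\in\zz\mid x\in\g^r\}$ is an irreducible open subset of the expected dimension $\dim\g+\rk$, being a vector bundle over $\g^r$ with fibers $\g_x$; any commuting pair can then be deformed into this locus by a Jordan-decomposition plus perturbation argument. Once density is granted, $\zz$ is irreducible of dimension $\dim\g+\rk$, and the general inequality $\dim T_{(x,y)}\zz\geq\dim\zz$ combined with the formula above forces $\dim\g_{x,y}\geq\rk$ for every $(x,y)\in\zz$, with equality (hence smoothness) precisely when $(x,y)\in\zz^r$. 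Generic reducedness is then automatic, since $\zz^{rs}\subseteq\zz^r$ provides a dense smooth open subset.

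The main obstacle is Richardson's density theorem itself; everything else reduces to linear algebra with the invariant form and the standard fact that the Zariski tangent space dimension bounds the local dimension from below along an irreducible scheme. Since the density assertion is classical and not part of the paper's new content, the natural approach is to cite it rather than to reprove it, and to devote the (brief) writeup to the tangent-space computation and to the surjection from $G\times_{N(T)}(\treg\times\t)$.
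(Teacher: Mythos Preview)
Your proposal is correct and matches the paper's treatment: part (i) is attributed to Richardson, and part (ii) is proved (in Remark~\ref{zzregpf}) by exactly the tangent/cotangent space computation you describe, yielding $\dim T_{(x,y)}\zz=\dim\g+\dim\g_{x,y}$ and hence the inequality $\dim\g_{x,y}\ge\rk$ with equality precisely on the smooth locus. The only cosmetic difference is that the paper dualizes and works with $\kap_*^\top$ and the cotangent space, which is equivalent to your kernel computation.
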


Here, part (i) is due  to Richardson \cite{Ri1}.
For the proof of part (ii) see e.g. \cite[Lemma 2.3]{Po},
or Remark \ref{zzregpf} of section \ref{END}  below. 
\medskip

Let $\g/\!/G:=\Spec \C[\g]^G$, resp. $\zz/\!/G:=\Spec \C[\zz]^G$,
be the
categorical quotient. The natural restriction homomorphism
$\C[\g]^G\to\C[\t]^W,$ resp. $\C[\zz]^G\to\C[\tt]^W$,
cf. \eqref{Jo}, gives  morphisms of schemes
$\t\to\t/W\to\g/\!/G,$ resp. $\tt\to\tt/W\to\zz/\!/G$.
\begin{rem} The morphism
$\t/W\to\g/\!/G$ is an isomorphism by the Chevalley
restriction theorem. One can show, using that the 
map $\C[\zz]^G\to\C[\tt]^W$ is surjective by a  theorem  of Joseph \cite{Jo},
that the morphism $\tt/W\to\zz/\!/G$
induces  an isomorphism $\tt/W\iso[\zz/\!/G]_\red$.
It is expected that the   scheme $\zz/\!/G$ is in fact reduced.
This is known to be so in the special case of the group $G=GL_n$, 
see \cite[Theorem ~1.3]{GG}.
\erem


Next, we form a fiber product $\yy:=\g\times_{\g/\!/G}\h,$ resp. 
 $\zz\times_{\zz/\!/G}\tt$, 
a closed $G\times W$-stable
subscheme of $\g\times\t,$ resp.  of $\gg\times\tt$.
It is clear that the set $\yy^{rs}:=\g^{rs}\times_{\g/\!/G}\h$
 is    a smooth Zariski open
subset of $\yy$, resp. $\xr:=\zz^{rs}\times_{\zz/\!/G}\tt$
 is  a smooth Zariski open
subset of $\zz\times_{\zz/\!/G}\tt$.
The first projection $\yy\to\g,$ resp.
$\zz\times_{\zz/\!/G}\tt\to\zz$,
is a $G$-equivariant {\em finite} morphism.
The group
$W$ acts along the fibers of this morphism.

Part (i)  of the following lemma is  Corollary \ref{xdense}
of \S\ref{END} below, and part (ii)  is clear.

\begin{lem}\label{xxbasic}
 \vi The set
$\yy^{rs}$,
resp.
$\xr$,
 is an irreducible dense subset of $\yy$, resp. of $\ \zz\times_{\zz/\!/G}\tt$.

\vii The first projection  $\yy^{rs}\to\g^{rs},$ resp.
$\xr\onto \zr$, is
a Galois covering
with Galois group ~$W$.
\end{lem}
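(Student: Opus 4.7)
The plan is to treat both parts of the lemma in parallel for the ``single'' setting $\yy^{rs}\sset\yy$ and the ``double'' setting $\xr\sset\zz\times_{\zz/\!/G}\tt$, handling \vii first since it requires no new ideas.

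For \vii, the Chevalley restriction isomorphism $\t/W\iso\g/\!/G$ together with the standard fact that the quotient map $\t\to\t/W$ is a finite Galois cover with group $W$, étale over the image of the regular locus $\treg$, yields by base change along $\g^{rs}\to\g/\!/G$ that $\yy^{rs}\to\g^{rs}$ is an étale $W$-cover. Indeed, the fibre over $x\in\g^{rs}$ consists of all $t\in\t$ having the same $W$-invariants as $x$, which by regularity of $x$ is a single free $W$-orbit of cardinality $|W|$. The identical argument handles $\xr\to\zr$ once one recalls that a regular commuting pair $(x,y)\in\zz^{rs}$ of semisimple elements lies in a \emph{unique} Cartan subalgebra, namely $\g_{x,y}$, so the fibre in $\tt$ is again a free $W$-orbit.

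For \vi in the case of $\yy$, I would invoke Kostant's theorem that $\C[\g]$ is a free $\C[\g]^G$-module: the Chevalley map $\g\to\g/\!/G$ is therefore flat, and so the base change $\yy\to\t$ is flat as well. Its generic fibre is the single conjugacy class $G/T$, which is geometrically integral. A flat morphism over an integral base with geometrically integral generic fibre has integral total space (the usual argument via torsion-freeness over a domain), so $\yy$ is irreducible; since $\yy^{rs}$ is nonempty and open, it is dense. Irreducibility and density of $\yy^{rs}$ itself follow, as the second projection $\yy^{rs}\to\treg$ has irreducible fibres $G/T$ over the irreducible base $\treg$.

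The double case is the main obstacle, and my plan is to defer the detailed argument to the stratification developed in \S\ref{END}. The previous strategy breaks down because $\zz$ is not known to be reduced, so no Kostant-style flatness is available for $\zz\to\zz/\!/G$. Instead, I would realise $\xr$ as a free $W$-quotient of a suitable open subset of $G\times_T\tt$, which yields irreducibility. Density of $\xr$ in $\zz\times_{\zz/\!/G}\tt$ then reduces to ruling out irreducible components of the fibre product that lie entirely over $\zz\sminus\zz^{rs}$; this requires a stratification of $\zz$ by Jordan decomposition type combined with dimension bounds on each stratum, which are exactly the ingredients of Corollary \ref{xdense}. The hard part will therefore be this codimension estimate in the double setting, as it must be extracted from the subtle geometry of the (possibly non-reduced) commuting scheme rather than from any flatness property of its categorical quotient map.
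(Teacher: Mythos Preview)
Your proposal is essentially correct and matches the paper's approach: part \vii\ is declared ``clear'' in the paper, and part \vi\ for the double setting is deferred to Corollary \ref{xdense}, exactly as you indicate. Your Kostant-flatness argument for $\yy$ is a valid alternative to the paper's bare citation of \cite{BB} (and in fact cleaner, since the generic-fibre argument via torsion-freeness over $\C[\t]$ gives integrality of $\yy$ directly).

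One correction worth flagging: you describe the ingredients of Corollary \ref{xdense} as ``stratification \ldots\ combined with dimension bounds on each stratum''. Dimension bounds alone would not suffice --- they would only rule out extra components of top dimension, not prove that $\overline{\xr}$ contains every stratum. The actual argument in \S\ref{ss2} uses the stratification of Lemma \ref{codim} to reduce, for each standard Levi $\fl$, to showing that the nilpotent commuting variety ${\mathfrak N}(\fl)$ lies in the closure of $\zz^{ss}(\fl)$; this in turn follows from Richardson's result (Proposition \ref{zzbasic}\vi, i.e.\ \cite[Cor.~4.7]{Ri1}) that $\zz^{rs}(\fl)$ is dense in $\zz(\fl)$. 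So the decisive input is Richardson's density theorem applied Levi-by-Levi, not a codimension estimate.
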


The scheme $\yy$ is known to be a reduced normal
complete intersection in $\g\times\t$,
cf. eg \cite{BB}.
On the contrary, the scheme $\zz\times_{\zz/\!/G}\tt$
is not  reduced
already in the case $\g=\sl_2$.
This motivated  M. Haiman, \cite[\S8]{Ha2}, \cite[\S 7.2]{Ha3},
to introduce the following definition, which is equivalent
to formula \eqref{xxDef} of the Introduction.

\begin{defn}\label{xxdef} The
{\em isospectral commuting variety}
is defined as
$\dis\xx:=[\zz\times_{\zz/\!/G}\tt]_\red,$
a  {\em reduced} fiber product. Let
 $p :\ \xx\to\zz$, resp. $p_{_\tt}:\ \xx\to\tt$, 
denote the first, resp. second, projection.
\end{defn}

Lemma \ref{xxbasic}(i)
shows that $\xx$ is an irreducible variety and that
we may (and will) identify
the set $\xr$ with a smooth Zariski open and dense subset of $\xx$.

Let $\yr:=\{(x,t)\in \yy\mid x\in \g^r\}$.
This is  a Zariski open and dense subset
of $\yy$ which is
contained in the smooth locus of $\yy$
(since the differential of the adjoint quotient
map $\g\to\g/\!/G$ is known to have maximal rank at any point of
$\g^r$).
Let $N_{\yr}$ be  the total space of the conormal
bundle on $\yr$ in $\g\times\t$ and let
$\overline{N_{\yr}}$ be the closure of $N_{\yr}$  in $T^*(\g\times\t)$.

In general, let  $X$ be a smooth variety. An
irreducible reduced subvariety $\Lambda \sset T^*X$
is said to be {\em Lagrangian} if 
the tangent space to $\La$ at any smooth
point of $\La$
is a Lagrangian 
subspace of the tangent space to $T^*X$  with respect to the
canonical symplectic 2-form on ~$T^*X.$

\begin{lem}\label{conormal} In $T^*(\g\times\t)=\gg\times\tt$,
we have $\overline{N_{\yr}}=\xx$.
In particular, $\xx$ is a $\C^\times\times\C^\times$-stable Lagrangian subvariety
and  $N_{\yr}$ is a smooth Zariski open and dense subset of $\xx$.
\end{lem}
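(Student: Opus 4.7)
The plan is to identify $\xx$ with the Zariski closure of the conormal bundle $N_{\yr}$ by making the conormal explicit on the dense open part $\yy^{rs}\sset\yr$ and then comparing dimensions. The main obstacle will be the set-theoretic inclusion $N_{\yy^{rs}}\sset\xr$, i.e.\ the verification that conormal vectors over $\yy^{rs}$ satisfy the isospectral condition in the definition of $\xx$; once this is established, the rest is formal.

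First, by Chevalley's theorem $\C[\g]^G=\C[P_1,\ldots,P_\rk]$ is polynomial, so $\yy=\g\times_{\g/\!/G}\t$ is cut out in $\g\times\t$ by the $\rk$ equations $P_i(x)-P_i(t)=0$. At any $(x,t)\in\yr$ the gradients $\nabla P_i(x)\in\g_x$ are linearly independent (they span the rank-$\rk$ centralizer $\g_x$), so the Jacobian has maximal rank and $\yr$ is smooth of codimension $\rk$. Unwinding the identifications $\g\cong\g^*$ and $\t\cong\t^*,\ t\mto-\langle t,-\rangle$, the conormal fiber at $(x,t)$ is the $\rk$-dimensional span of $\{(\nabla P_i(x),\nabla P_i(t))\}_i$ inside $\g\times\t$, with no relative sign between the two entries (which is exactly the purpose of the minus sign in the $\t$-identification).

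Next, I would establish the inclusion $N_{\yy^{rs}}\sset\xr$. Fix $(x,t)\in\yy^{rs}$ and pick $g\in G$ with $\Ad(g)(t)=x$; by $G$-equivariance, $\nabla P_i(x)=\Ad(g)(\nabla P_i(t))$. Any conormal vector $(y,s)=\sum c_i(\nabla P_i(x),\nabla P_i(t))$ therefore satisfies $y=\Ad(g)(s)$, so the pairs $(x,y)$ and $(t,s)$ lie in a single $G$-orbit of $\gg$. In particular $[x,y]=0$, so $(x,y)\in\zz$, and for every $P\in\C[\zz]^G$ one has $P(x,y)=P(t,s)=(\res P)(t,s)$. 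This proves $N_{\yy^{rs}}\sset\xr$; since $\xx$ is closed in $T^*(\g\times\t)$, passing to Zariski closures yields $\overline{N_{\yr}}=\overline{N_{\yy^{rs}}}\sset\xx$.

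Finally, a dimension count closes the argument: by Lemma \ref{xxbasic}(i), $\yy$ and $\xx$ are both irreducible, hence so are $\yr$ and $N_{\yr}$, and the latter has dimension $\dim\yr+\rk=\dim\g+\rk=\dim\zz=\dim\xx$, forcing $\overline{N_{\yr}}=\xx$. The remaining assertions are then automatic: $N_{\yr}$ is smooth (as a vector bundle over smooth $\yr$) and open dense in $\xx$; the Lagrangian property of $\xx$ follows from the standard fact that the conormal bundle of a smooth subvariety is Lagrangian in the cotangent bundle; and $\C^\times\times\C^\times$-stability reduces to invariance of $N_{\yr}$, which is automatic since the defining equations of $\yy$ are homogeneous (so $\yr$ is preserved by the base-dilation $\C^\times$) and conormal bundles are always preserved by the fiber-scaling $\C^\times$.
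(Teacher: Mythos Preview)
Your proof is correct. It differs from the paper's in the direction of the key inclusion and in the technique used to establish it. The paper proves $\xx'\sset N_{\yr}$ (where $\xx'$ is the locus in $\xx$ with $x\in\g^{rs}$) by writing down the tangent space $T_{(t_1,t_1)}\yr=\{([a,t_1]+t,\,t)\}$ and verifying directly that the covector attached to $(t_1,t_2,t_1,t_2)$ pairs to zero with it; you instead prove $N_{\yy^{rs}}\sset\xr$ by identifying the conormal fiber as the span of $(\nabla P_i(x),\nabla P_i(t))$ and using $G$-equivariance of gradients to show the resulting point satisfies the isospectral condition. Both arguments then finish identically via irreducibility (Lemma~\ref{xxbasic}(i)) and the dimension count $\dim N_{\yr}=\dim\g+\rk=\dim\xx$. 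Your route has the small bonus of making the conormal bundle explicit through Kostant's description $\g_x=\operatorname{span}\{\nabla P_i(x)\}$; the paper's route is more bare-hands and avoids invoking Chevalley generators. Your handling of the sign convention in the identification $\t\cong\t^*,\ t\mapsto-\langle t,-\rangle$ is correct and exactly the point of that sign.
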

\begin{proof} Let $\t':=\t\cap \g^r$
and $\xx':=\{(x,y,t_1,t_2)\in\xx\mid
x\in\g^{rs}\}$. We claim that
$\xx'\sset N_\yr$. To see this we
observe  that the assignment
$gT \times (t_1,t_2) \mto \big(\Ad g(t_1),\, \Ad
g(t_2),\,t_1,\,t_2\big)$
yields  an isomorphism
$(G/T)\times \t'\times\t \iso \xx',$ see
Lemma \ref{codim}.
Thus, by $G$-equivariance, it suffices
to show that every point of $\xx'$
of the form $(t_1,t_2,t_1,t_2)$
is contained in $N_\yr$.
Any  tangent vector to $\yr$ at the point
$(t_1,t_1)\in\g\times\t$ has the form $\xi=([a,t_1]+t, t)\in \g\times\t$,
for some $a\in\g$ and some $t\in\t$.
Further, the covector that corresponds to the
 element $(t_1,t_2,t_1,t_2)$ via our
identification $\gg\times\tt\iso T^*\g\times T^*\t=T^*(\g\times\t)$
is the linear function $\xi^*:\ \g\times\t:\
(u,v)\mto \langle  t_2, u\rangle-\langle t_2,v\rangle.$
Therefore, using the invariance of the
bilinear form $\langle-,-\rangle$, we find
$$\langle\xi^*,\xi\rangle=
\langle  t_2, [a,t_1]+t\rangle-\langle t_2,t\rangle=
\langle  t_2, [a,t_1]\rangle=\langle a,[t_1,t_2]\rangle=0.$$

Thus, we have proved that $\xx'\sset N_\yr$.
Moreover, it is clear that $\xx'$ is an open subset of
$\xx\cap N_\yr$ and, we have  $\dim\xx'=
\dim (G/T\times\t'\times\t)=\dim(\g\times\t)
=\frac{1}{2}\dim T^*(\g\times\t)=\dim N_\yr$. 
Now, Lemma
\ref{xxbasic}(i) implies that $\xx$ is 
reduced and irreducible. Hence, $\xx'$ is
a dense subset  both in $\xx$ and in 
$N_\yr$. It follows that
$\xx=\overline{\xx'}=\overline{N_\yr}$.
\end{proof}

\subsection{}\label{ddfilt}
Let $X$ be a smooth variety and $q:\ T^*X\to X$ the cotangent bundle.
The  sheaf $\dd_X$ comes equipped with
an ascending filtration $\filt_\idot\dd_X$  by the order of
 differential operator. 
For the associated graded sheaf,
one has a canonical isomorphism $\gr^\ord\dd_X\cong
q_*\oo_{T^*X}.$

Let  $M$ be a $\dd_X$-module.
An
ascending filtration
 $F_\idot M$ such that $\filt_i\dd_X\cd F_j M\sset
F_{i+j}M,$ $\forall i,j,$
is said to be {\em good} if
$\gr^F M$, an associated graded module,
is a coherent $q_* \oo_{T^*X}$-module.
In that case, there is a canonically defined
 coherent sheaf $\ggr^F M$ 
on $T^*X$  such that one has an
isomorphism $\gr^F M=q_*\ggr^F M$ 
of   $q_* \oo_{T^*X}$-modules.
We write $[\supp(\ggr^F M)]$ for the {\em support cycle}
of the sheaf $\ggr^F M$, a linear
combination of the irreducible components
of  the support of $\ggr^F M$ counted with multiplicites.
This  is  an algebraic
cycle in $T^*X$ which  is known to be independent
of the choice of a good filtration on $M$, cf.
\cite{Bo}.

Recall that  $M$ is called {\em holonomic}
if one has $\dim \supp(\ggr^F M)=\dim X$.
In such a case,  each irreducible
component of $\supp(\ggr^F M)$, viewed as a reduced
variety, is a Lagrangian
subvariety of $T^*X$.
Holonomic $\dd_X$-modules form an abelian category
and the assignment $M\mto [\supp(\ggr^F M)]$
is additive on short exact sequences of holonomic modules,
cf. eg. \cite{Bo},  \cite{HTT}.
From this, one obtains

\begin{lem}\label{CC}  If $M$ is a 
$\dd_X$-module such that
the cycle $[\supp(\ggr^F M)]$ equals the fundamental cycle of
a Lagrangian subvariety, taken with multiplicity 1,
then $M$ is a {\em simple} holonomic $\dd_X$-module.
\end{lem}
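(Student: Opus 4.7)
The plan is to combine two facts that are essentially already stated in the excerpt: that the condition $\dim\supp(\ggr^F M)=\dim X$ is exactly the holonomicity condition, and that the assignment $M\mto[\supp(\ggr^F M)]$ is additive on short exact sequences of holonomic modules. Since a Lagrangian subvariety of $T^*X$ has dimension $\dim X$, the hypothesis on the cycle immediately forces $M$ to be holonomic. So the real content is the simplicity claim.

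For simplicity, I would argue by contradiction. Suppose $0\to M'\to M\to M''\to 0$ is a nontrivial short exact sequence of $\dd_X$-modules. Since submodules and quotients of holonomic modules are holonomic (a standard fact from the general theory, cf. \cite{HTT}), both $M'$ and $M''$ are themselves holonomic. Applying the additivity recalled just before the statement of the lemma yields
\[
[\supp(\ggr^F M)]\ =\ [\supp(\ggr^F M')]\ +\ [\supp(\ggr^F M'')],
\]
an identity of effective algebraic cycles. By hypothesis the left hand side is the fundamental cycle $[\Lambda]$ of a single irreducible Lagrangian subvariety $\Lambda$ with coefficient $1$; hence on the right one of the two summands must vanish as a cycle and the other must equal $[\Lambda]$.

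To finish, I would invoke the fact that a nonzero holonomic $\dd_X$-module has a nonzero characteristic cycle (this is immediate from the definition: the characteristic cycle of a nonzero coherent $\dd_X$-module always has support of dimension at least $\dim X$, and the support of $\ggr^F N$ is empty only when $N=0$). Therefore the vanishing summand forces $M'=0$ or $M''=0$, showing that the sequence was trivial after all. The main (very minor) obstacle is simply making sure that the two cited facts — preservation of holonomicity under subquotients, and the "nonzero holonomic module has nonzero characteristic cycle" — are both taken from the references already mentioned (\cite{Bo}, \cite{HTT}); both are standard and require no further argument here.
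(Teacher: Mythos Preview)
Your proposal is correct and is precisely the argument the paper has in mind: the lemma is stated immediately after the remarks that holonomic modules form an abelian category and that $M\mapsto[\supp(\ggr^F M)]$ is additive on short exact sequences, with the phrase ``From this, one obtains'' in lieu of a proof. You have simply spelled out the intended deduction.
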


Given  a  morphism $f:\ X\to Y$  of
smooth varieties, let
$\dd_{X\to Y}:=
\oo_X\bo_{f\inv\oo_Y}f\inv\dd_Y$.
Assuming $f$ is proper, there is a 
direct image functor 
$\int^R_f=Rf_*(-\lo_{\dd_X}\dd_{X\to Y})$
between  bounded derived categories of
coherent {\em right}
$\dd$-modules on $X$ and $Y$, respectively.
The corresponding functor on left $\dd$-modules
is then defined by
$
\int_f M= \kk\inv_Y\bo_{\oo_Y}\int_f^R (\kk_X\bo_{\oo_X}M)$, see 
 \cite[pp.22-23, 69]{HTT} for more details.

In the special case where  $f: X\into Y$
is a closed immersion,
one has  $\oo_X\bo_{f\inv\oo_Y}f\inv\dd_Y=\dd_Y|_X$
so, we get $\int^R_f\kk_X=f_*(\kk_X\o_{\oo_X} (\dd_Y|_X))$.
This $\dd_Y$-module
comes equipped with a  natural order filtration
 defined  by the formula
$F^\ord_m\big(\int^R_f\kk_X\big)=f_*\big(\kk_X\o_{\oo_X} (F^\ord_m\dd_Y)|_X\big),\
m\geq0$.
Thus, writing  $q:\ T^*Y\to Y$ for the
cotangent bundle, in this case one obtains
\beq{shiftgr}\ggr^\ord\br{\int^R_f\kk_Y}=
q^*(f_*\kk_X).
\eeq

\subsection{Filtered $\dd$-modules and duality}\label{hodged}
Below, we will use rudiments of the formalism of filtered derived categories.
Let
$$
E:\ 
\xymatrix{
\ldots\ \ar[r]^<>(0.5){d_{k-2}}&\
 E^{k-1}\ \ar[r]^<>(0.5){d_{k-1}}&
\ E^k\ \ar[r]^<>(0.5){d_{k}}&
\ E^{k+1}\ \ar[r]^<>(0.5){d_{k+1}}&\ldots,
}$$
be a filtered complex in an abelian category.
One has the following useful 

\begin{defn}\label{strict_def}The filtered complex $E$ is said to be {\em strict}
if the  morphism
$d_k: F_jE^k\to \im d_{k}\cap F_j E^{k+1}$ is surjective,
for any $k,j\in\Z$. 
\end{defn}
This notion only depends on the quasi-isomorphism
class of $E$ in an appropriate filtered derived category.

Given a filtered complex $E$, there is an induced
filtration on each cohomology group
$H^k(E),\ k\in\Z$.
It is immediate from the construction
of the standard spectral sequence of a filtered complex
that the  filtered complex $E$ is strict if and only if
the spectral sequence
$H^\hdot(\gr E)\ \Rightarrow\ \gr  H^\hdot(E)$  degenerates
at the first page,
cf. \cite[Lemma 3.3.5]{La}. In that case,
one has a canonical isomorphism
$\gr  H^\hdot(E)\cong H^\hdot(\gr E)$.

Following G. Laumon and M. Saito, for any smooth algebraic variety $X$,
one has
an exact (not abelian) category of
filtered left  $\dd_X$-modules and also
the corresponding  derived category.
Thus, let $\Lmod{{\mathsf F}\dd_X}$ be
an additive category whose objects are $\dd_X$-modules $M$ 
 equipped with a good filtration $F$. Further, abusing
the notation slightly, we let 
 $D^b_{coh}({{\mathsf F}\dd_X})$ be
the  triangulated category 
whose objects are isomorphic to
bounded complexes $(M,F),$ of filtered $\dd_X$-modules, 
such that each cohomology group $\H^i(M,F)$ is
an object of $\Lmod{{\mathsf F}\dd_X}$, cf. \cite{La}, \cite{Sa}.

In his work \cite{Sa}, M. Saito defines
a semisimple abelian category $\mh(X)$ of polarizable
 Hodge modules, see \cite[\S5.2.10]{Sa}. The data of a polarizable  \mhm includes,
in particular,  a holonomic $\dd_X$-module
 $M$ with regular singularities
and a good filtration $F$ on $M$
called {\em Hodge filtration}.\footnote{Part of the data 
involving
 "polarization"
will play no role in the present work.
For this reason, 
from now on, we will refer to  polarizable
 Hodge modules as `Hodge modules', for short.}
 Thus,
$(M,F)$ is a filtered $\dd_X$-module;
abusing 
notation, we write $(M,F)\in \mh(X)$
and let $\ggr(M,F)$ denote the corresponding
coherent sheaf on $T^*X.$

Let
${\mathbb D}(-)=R\hhom_{\dd_X}(-,\dd_X\o_{\oo_X} K_X\inv)[\dim X]\ $
be the standard Verdier duality functor
on $\dd$-modules, cf. \cite[\S2.6]{HTT}.
Laumon and  Saito have upgraded Verdier duality
to a triangulated contravariant duality functor
 ${\mathsf F}{\mathbb D}$ on the category $D^b_{coh}({{\mathsf
 F}\dd_X})$,
cf. \cite[\S4]{La}, \cite[\S2]{Sa}.
Furthermore, Saito showed that, for any $(M,F)\in \mh(X)$,
the filtered complex ${\mathsf F}{\mathbb D}(M,F)\in D^b_{coh}({{\mathsf F}\dd_X})$
is strict,
 see \cite{Sa}, Lemma 5.1.13.

Let $\coh Z$ denote the abelian category
of coherent sheaves on  a scheme $Z$
and let  $\dcoh(Z)$ denote
 the corresponding
bounded derived category.

The assignment
$(M,F)\mto \ggr^F M$ gives a functor
$\Lmod{{\mathsf F}\dd_X}$ $\to \coh T^*X$
that can be extended to a triangulated functor
$\dgr:\ 
D^b_{coh}({{\mathsf F}\dd_X})\to \dcoh(T^*X),$
cf. \cite{La}, (4.0.6).
It is essentially built into the construction of 
duality on  filtered derived categories that
 the functor $\dgr$
intertwines the duality  ${\mathsf F}{\mathbb D}$ on  
$D^b_{coh}({{\mathsf F}\dd_X})$
with the  Grothendieck-Serre duality on $\dcoh(T^*X)$,
 \cite[\S4]{La}, \cite[\S2]{Sa}.

Now let  $(M,F)\in \mh(X)$. Then $M$ is a holonomic
$\dd$-module. Hence ${\mathbb D}(M)$, viewed as an object
of the derived category,
is quasi-isomorphic to its 0th cohomology.
Therefore,  the strictness of the
filtered complex ${\mathsf F}{\mathbb D}(M,F)$
implies that the complex $\dgr({\mathsf F}{\mathbb D}(M,F))$
has nonvanishing cohomology in at most one degree.
It follows, in view of the above,
 that  the
 Grothendieck-Serre dual of the object $\ggr(M,F)\in \dcoh(T^*X)$
has  nonvanishing cohomology in at most one degree
again.
 Moreover, the  cohomology sheaf
in that degree is isomorphic
to $\ggr({\mathbb D}(M))$.
In particular,   one concludes  that
$\ggr(M,F)$ is a Cohen-Macaulay sheaf on $T^*X$, for any $(M,F)\in
\mh(X)$,
see 
\cite{Sa}, Lemma 5.1.13.

\subsection{The order filtration on the Harish-Chandra module}\label{hc_filt}
Observe that $\dd(\g)\cdot\ad\g$, a left ideal of  the algebra
$\dd(\g)$,
is stable under the  $G$-action on $\dd(\g)$
induced by the adjoint action on $\g$.
Multiplication in the algebra  $\dd(\g)$
gives  the
quotient  $\AA:=
\dd(\g)^G/[\dd(\g)\cdot\ad\g\big]^G$ 
a natural  algebra structure called
{\em quantum Hamiltonian reduction},
cf. eg. \cite[\S 7.1]{GG}. Furthermore, it
 gives $\dd(\g)/\dd(\g)\cdot\ad\g$
 the natural structure of an
$(\dd(\g),\AA)$-bimodule,  \cite{LS3}, \cite[\S 7.1]{GG}.

It is immediate from definitions
that
the radial part map $\rad$ considered in \S\ref{r_sec}
descends to a well defined  map
$\AA\to \dd(\t)^W$.
According to an  important result, due to 
 Levasseur and Stafford \cite{LS1}-\cite{LS2}
and Wallach  \cite{Wa}, the latter map
is
an algebra isomorphism. Thus,
one may view  $\dd(\g)/\dd(\g)\cdot\ad\g$
as  an
$(\dd(\g),\dd(\t)^W)$-bimodule.
It is easy to see that, taking global sections
on each  side of formula
 \eqref{M}, yields
an isomorphism, cf.
\cite[p. 1109]{LS3},
\beq{mmbimod}
\xymatrix{
\Ga(\g\times\t,\mm)\ \ar@{=}[r]^<>(0.5){\Xi}&\ [\dd(\g)/\dd(\g)\cd\ad\g]\,
\bo_{\dd(\t)^W}\, \dd(\t).
}
\eeq

Here, the object on the
left has the structure of a {\em left} $\dd(\g)\o\dd(\t)$-module and
the object on the right  has the structure of  a
$(\dd(\g),\dd(\t))$-bimodule.
These structures  are related  via the isomorphism
$\Xi$ as follows
$\Xi[(u\o v)m]=u\,\Xi(m)\,v^\top,$ 
for any $m\in \Ga(\g\times\t,\mm)$ and any
$u\o v
\in \dd(\g)\o\dd(\t)$ 
where $v\mto v^\top$ is an
 {\em anti}-involution of the algebra
$\dd(\t)$ given by  $t\mto t,\,\frac{\pa}{\pa t}\mto
-\frac{\pa}{\pa t}$.

According to formula \eqref{M} 
 the \hc module has the form $\mm=\DD/\I$
where  $\I$ is a left ideal of $\DD$.
The order filtration on  $\DD$ restricts to a filtration
on $\I$ and it also induces a quotient filtration 
$\filt_\idot\mm$ on $\DD/\I$.
Using the identifications $T^*(\g\times\t)=
\gg\times\tt$ and
 $\ggr^\ord\DD=\oo_{\gg\times\tt},$ we get
$\ggr^\ord\mm=\oo_{\gg\times\tt}/\ggr^\ord\I$
where $\ggr^\ord\I$, the associated graded
ideal, is a subsheaf of ideals of $\oo_{\gg\times\tt}$,
not necessarily reduced, in general.
A relation between
$\ggr^\ord\I$ and $\J\sset \oo_{\gg\times\tt}$,
the ideal sheaf
of the (non reduced) subscheme $\zz\times_{\zz/\!/G}\tt\sset
\gg\times\tt$, is  provided by
part (i) of the lemma below.

\begin{notation}\label{dx}
We put
$\del_\t:=\prod_{\al\in R^+}\, \al$ and $\t^r:=\t\cap \g^r$.
Let $dx\in \kk_\g$, resp. $dt\in \kk_\t$, be a constant
volume form on ~$\g$, resp. on $\t$. Thus, $dx\,dt$
is a section of $\kk_{\g\times\t}$.

Write  $\bj:\ \yr\into\g\times\t$
for the  locally closed imbedding and let
 $\bj_{!*}\oo_\yr$ be the minimal extension, see \cite{Bo}, of
 the structure sheaf $\oo_\yr$ viewed  as a $\dd_\yr$-module.
\end{notation}

\begin{lem}\label{supp} \vi One has inclusions
$\J\sset\ggr^\ord \I\sset\sqrt{\J}$.

\vii The \hc module $\mm$  is a  simple holonomic $\DD$-module,
specifically, one has an isomorphism
$\mm\cong \bj_{!*}\oo_\yr$ and an equality $[\supp(\ggr^\ord\mm)]=[\xx]$
of algebraic cycles in $\gg\times\tt$.
\end{lem}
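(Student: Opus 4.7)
My plan is to prove part (i) by a direct symbol computation, then to deduce part (ii) by combining part (i) with the Hotta--Kashiwara geometric realization of $\mm$ that will be recalled in \S\ref{sympl_sec}.

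For the lower inclusion $\J \sset \ggr^\ord \I$ in part (i): for $a\in\g$, the principal symbol of the vector field $\ad a$ at $(x,\xi)\in\gg = T^*\g$ equals $\langle a, [x,\xi]\rangle$; as $a$ varies, these cut out the ideal of $\zz\times\tt$ in $\gg\times\tt$. For $u\in\dd(\g)^G$ of order $k$, compatibility of the Chevalley restriction with top symbols gives $\sigma_k(\rad u) = \sigma_k(u)|_\tt$, so the symbol of $u\o 1 - 1\o\rad u$ is the function $P(x,\xi) - P(t_1,t_2)$ on $\gg\times\tt$ with $P = \sigma_k(u)\in\C[\gg]^G$. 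Together these symbols generate $\J$, giving $\J\sset\ggr^\ord\I$.

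The upper inclusion $\ggr^\ord\I\sset\sqrt{\J}$ is equivalent to the set-theoretic equality $\supp(\ggr^\ord\mm)=V(\J)=\xx$. Lemma \ref{conormal} identifies $\xx=\overline{N_{\yr}}$ as an irreducible Lagrangian of dimension $\dim(\g\times\t)$; the lower inclusion already gives $\supp(\ggr^\ord\mm)\sset\xx$; and Bernstein's inequality guarantees $\dim\supp(\ggr^\ord\mm)\geq\dim(\g\times\t)$ whenever $\mm\neq 0$. Irreducibility of $\xx$ then forces $\supp(\ggr^\ord\mm)=\xx$ as soon as $\mm$ is nonzero.

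For part (ii), and to supply the nonvanishing of $\mm$, I will invoke the Hotta--Kashiwara description (\S\ref{sympl_sec}) of $\mm$ as the direct image of $\oo_{\tg}$ along the coupled Grothendieck--Springer map $(\pi,\nu)\colon\tg\to\g\times\t$. Over $\g^{rs}$ this map restricts to an isomorphism onto $\yy^{rs}$, so $\mm|_{\g^{rs}\times\t}\cong\oo_{\yy^{rs}}$. This is nonzero, closing part (i), and the characteristic cycle of the $\oo$-coherent $\dd$-module $\oo_{\yy^{rs}}$ is $[N_{\yy^{rs}}]$; by irreducibility of $\xx=\overline{N_\yr}$ the multiplicity of $\xx$ in $[\supp(\ggr^\ord\mm)]$ equals one. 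Lemma \ref{CC} then makes $\mm$ simple holonomic. Finally, $\bj_{!*}\oo_\yr$ is characterized as the unique simple holonomic $\DD$-module restricting to $\oo_\yr$ over $\yr$; since $\mm$ is simple and its restriction extends $\oo_{\yy^{rs}}$, a $G$-equivariance argument identifies $\mm|_\yr$ with $\oo_\yr$ and yields $\mm\cong\bj_{!*}\oo_\yr$.

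The main obstacle will be the multiplicity-one assertion, which is invisible in the presentation $\mm=\DD/\I$ and genuinely requires the geometric realization of $\mm$ via Grothendieck--Springer together with the fact that this realization is an isomorphism over the regular semisimple locus.
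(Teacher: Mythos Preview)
Your symbol computation for the lower inclusion $\J\subset\ggr^\ord\I$ is correct and essentially identical to the paper's (the paper routes it through the bimodule isomorphism \eqref{mmbimod}, but the content is the same). Your reduction of the upper inclusion to $\supp(\ggr^\ord\mm)=\xx$, via Bernstein's inequality and irreducibility of $\xx$, is also what the paper does.

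The gap is in how you establish $\mm\neq 0$. You write that you will ``invoke the Hotta--Kashiwara description (\S\ref{sympl_sec}) of $\mm$ as the direct image of $\oo_\tg$''. But in the paper's logic that identification is Remark~\ref{I=I}, and it \emph{depends on} Lemma~\ref{supp}: Hotta--Kashiwara work with the smaller ideal $\I'$ (only the generators coming from $\C[\g]^G$ and $(\sym\g)^G$), prove $\DD/\I'$ is simple and isomorphic to $\int_{\mu\times\nu}\oo_\tg$, and then one concludes $\I'=\I$ only after knowing $\DD/\I\neq 0$. So invoking ``$\mm\cong\int_{\mu\times\nu}\oo_\tg$'' to prove $\mm\neq 0$ is circular.

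The paper avoids this by a direct check on the open set $\g^{rs}\times\t^r$: using the nowhere-vanishing section $\omega\in\kk_{\yy^{rs}}$ of Proposition~\ref{tb}(iii) together with the radial-part identity \eqref{resdel}, one verifies that the section $(dx\,dt)^{-1}\otimes(\bj^{rs})_*\omega$ of $\int_{\bj^{rs}}\oo_{\yy^{rs}}$ is annihilated by \emph{all} of $\I$ (not merely by $\I'$). This produces a nonzero $\DD$-module map \eqref{h}, hence $\mm\neq 0$. Once $\mm$ is known to be simple, this same map is automatically an isomorphism, which gives $\mm\cong\bj_{!*}\oo_\yr$ with no further equivariance argument needed.

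A secondary point: your diagnosis that ``the main obstacle is the multiplicity-one assertion, which genuinely requires the Grothendieck--Springer realization'' is off. The paper obtains multiplicity one without any geometric realization: since $\xx^{rs}$ is a smooth open dense subset of the scheme $\zz\times_{\zz/\!/G}\tt$, the ideal $\J$ is generically reduced; as $\ggr^\ord\mm$ is a quotient of $\oo_{\gg\times\tt}/\J$ (from $\J\subset\ggr^\ord\I$), its multiplicity along $\xx$ is at most $1$. The genuine obstacle is exactly the nonvanishing of $\mm$, and that is what the explicit map \eqref{h} supplies.
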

\begin{proof} 
To simplify notation, it will be convenient below
to  work with spaces of global
sections rather than with sheaves.
Thus, we put
$I=\Ga(\g\times \t, \I)\sset \dd(\g\times\t),$
resp. $J=\Ga(\gg\times\tt, \J)\sset \C[\gg\times\tt]$.
The  vector space $\g\times\t$ being affine, 
it is sufficient to prove an analogue 
of the lemma for the ideals $\gr I$ and $J$
where throughout the proof we put $\gr:=\gr^\ord$.

Equip the space $\dd(\g)/\dd(\g)\cd\ad\g$ with the
quotient filtration. Then, taking associated graded
spaces on each side of isomorphism \eqref{mmbimod} yields the following
chain of
graded maps, where $pr$ stands for the natural projection
$$
\left(\frac{\gr\dd(\g)}{\gr[\dd(\g)\cd\ad\g]}\right)
\underset{\gr\dd(\t)^W}\bo\ \gr\dd(\t)
\!\xymatrix{\ar@{->>}[r]^<>(0.5){pr}&}\!
\gr\left[\left(\frac{\dd(\g)}{\dd(\g)\cd\ad\g}\right)\,
\underset{\dd(\t)^W}\bo\ \dd(\t)\right]
\!\xymatrix{\ar@{=}[r]^<>(0.5){\gr\Xi}&}\!
\gr\Ga(\g\times\t,\mm).
$$

We have $\gr\dd(\g)=\C[\gg]$.
Observe that the map $\gr(\ad):\ \g\to \gr\dd(\g)$
may be identified with the map $\kap^*: \g=\g^*\to\C[\gg]$,
 the pull-back morphism
induced by the commutator map $\kap$. 
By definition, one has
$\C[\gg]/\C[\gg]\cd\kap^*(\g)=\C[\zz]$.
Therefore, using the inclusion $\gr\dd(\g)\cd\gr(\ad)(\g)\sset \gr[\dd(\g)\cd\ad\g]$
we obtain a chain of
graded maps
\beq{a}
\C[\zz]=\C[\gg]/\C[\gg]\cd\kap^*(\g)
=\gr\dd(\g)/\gr\dd(\g)\cd\gr(\ad)(\g)
\onto \gr\dd(\g)/\gr[\dd(\g)\cd\ad\g].
\eeq
We let $a$ denote the composite  map in \eqref{a}.

Now, the radial part map $\rad: \dd(\g)^G\to\dd(\t)^W$ 
is known to respect the order filtrations, \cite[\S3]{Wa}. Moreover,
$\gr(\rad): \gr\dd(\g)^G\to\gr\dd(\t)^W,$ the associated graded map, is nothing but
the algebra map $\res: \C[\gg]^G\to\C[\tt]^W$ in 
~\eqref{Jo}. Thus, combining the maps in the two displayed formulas
above and
writing $b: \C[\tt]\iso\gr\dd(\t)$ for the standard isomorphism,
we obtain  the following graded surjective   maps

$$\xymatrix{
\C[\zz]\,\bo_{{\C[\tt]^W}}\,\C[\tt]\
\ar@{->>}[r]^<>(0.5){a\o b}&
\ {\Large\mbox{$\left(\frac{\gr\dd(\g)}{\gr[\dd(\g)\cdot\ad\g]}\right)$}}
\underset{\gr\dd(\t)^W}\bo\ \gr\dd(\t)\
\ar@{->>}[rr]^<>(0.5){\gr\Xi\ \ccirc\, pr}&&
\ \gr\Ga(\g\times\t,\mm).
}
$$

Further, by definition, we have
$\C[\gg\times\tt]/J=\C[\zz\times_{\zz/\!/G}\ \tt]=
\C[\zz]\bo_{\C[\zz]^G}\C[\tt].$ Thus,
the composite map in the last formula gives a
graded  surjective morphism 
$$
\xymatrix{
\C[\gg\times\tt]/J=\C[\zz]\bo_{\C[\zz]^G}\C[\tt]\
\ar@{->>}[rrr]^<>(0.5){\gr\Xi\, \ccirc\, pr\ccirc(a\o b)}&&&\ \gr\Ga(\g\times\t,\mm)
=\gr\dd(\g\times\t)/\gr I.
}
$$

This proves the inclusion
 $J\sset \gr I$. Hence, set theoretically, we have
$\supp(\ggr^\ord\mm)\sset~\xx$.

We know that
 $\xx^{rs}$ is an  open dense
{\em smooth}  subset of
 $\zz\times_{\zz/\!/G}\tt$, by Lemma \ref{xxbasic}(i).
Hence, the ideal $J$ is
generically reduced. 
Further, we know that $\xx$ is a Lagrangian subvariety (Lemma
\ref{conormal}) and that the dimension of 
any irreducible component
of the support of the sheaf  $\ggr\mm$ 
is $\geq\dim\xx.$ We conclude that
either $\mm=0$ or else we have that
 $[\supp(\ggr\mm)]=[\xx]$, so $\xx$
is the only irreducible component
of $\supp(\ggr\mm)$ and 
this component occurs with multiplicity 1.
In the latter case,
 $\mm$ must be a simple holonomic
$\dd$-module, by Lemma \ref{CC}.
Below, we mimic the proof of \cite[Proposition 4.7.1]{HK1}
to show that $\mm\neq0.$

Let 
$pr_\g$ and $pr_\t$ denote the natural
projections of $\yy^{rs}$
to $\g^{rs}$ and to $\t^r$, respectively.
The restriction of the imbedding
$\bj$ to the open dense subset $\yy^{rs}\sset\yy$
gives a {\em closed}  imbedding $\bj^{rs}: \yy^{rs}\into
\g^{rs}\times\t^r$.
According to parts (i) and (iii) of
Proposition \ref{tb} (of \S\ref{tx} below),
there is a nowhere vanishing $G$-invariant section
$\om\in\kk_{\yy^{rs}}$ of the canonical bundle.
The element  $(dx\, dt)\inv\o (\bj^{rs})_*\om$
is therefore a nonzero  $G$-invariant
section of the $\dd$-module
$\int_{\bj^{rs}}\oo_{\yy^{rs}}$
on $\g^{rs}\times\t^r$, see \S\ref{ddfilt} and \cite[p.22]{HTT}.

The radial part map $\rad$ is known to
  have the following property, \cite{HC}:
\beq{resdel}
\del_\t\cdot u(f)|_\t=(\rad u)(\del_\t\cdot f|_\t),
\quad \forall u\in \dd(\g)^G,\ f\in\C[\g]^G.
\eeq

Using the above formula and 
an equation  $pr_\g^*(dx)=(pr_\t^*\del_\t)\cdot\om$,
see Proposition \ref{tb}(iii) of \S\ref{int} below,
one verifies easily 
that the  section $(dx\, dt)\inv\o (\bj^{rs})_*\om$
 is annihilated by the ideal
$\I|_{\g^{rs}\times\t^r}$, 
Therefore, similarly to \cite[\S4.7]{HK1},
we conclude that
 the assignment $\dd_{\g^{rs}\times\t^r}\to
\int_{\bj^{rs}}\oo_{\yy^{rs}},\
1\mto (dx\, dt)\inv\o (\bj^{rs})_*\om,\ $
descends to a well defined nonzero $\dd$-module morphism
\beq{h} \mm|_{\g^{rs}\times\t^r}\ =\
(\DD/\I)|_{\g^{rs}\times\t^r}
\ \too\ \mbox{$\int_{\bj^{rs}}$}\oo_{\yy^{rs}}.\eeq

We conclude that $\mm\neq0$. It follows,
as we have shown above, that $\mm$ is a simple $\dd$-module.
Hence, the map \eqref{h} must be an isomorphism.
Moreover, we have
$\mm\cong
\bj^{rs}_{!*}\oo_{\yy^{rs}}= \bj_{!*}\oo_{\yr}$.
Part (ii) of Lemma \ref{supp} follows.

To complete the proof of part (i), we observe
that the section $\om$ provides
a trivialization of the
canonical bundle $\kk_{\yy^{rs}}.$
This implies, thanks to the isomorphism in \eqref{h}
and formula \eqref{shiftgr}, that
we have
$(\ggr\mm)|_{\xx^{rs}}\cong\oo_{N_{\yy^{rs}}}$.
Hence,  any function
$f\in \gr I$
viewed as a function
on $\gg\times\tt$
vanishes on the set $\xx^{rs}.$
The set $\xx^{rs}$
being Zariski dense in $\zz\times_{\zz/\!/G}\ \tt$,
by Lemma \ref{xxbasic}(i),
we deduce that the function $f$ vanishes
on the zero set of  the ideal $J$.
Hence, $f\in \sqrt{J}$
by Hilbert's Nullstellensatz.
The inclusion $\gr I\sset\sqrt{J}$ follows.
\end{proof}

\subsection{Hodge filtration on the Harish-Chandra module}\label{hc_hodge}
The minimal extension  $\bj_{!*}\oo_\yr$ has a canonical
 structure
of    Hodge  $\DD$-module, \cite{Sa}, p.857, Corollary 2.
This makes  the \hc module $\mm$ a    Hodge module
 via the isomorphism of Lemma \ref{supp}(ii).
Let $F^\hodge\mm$ be the
Hodge filtration on $\mm$ and $\ggr^\hodge\mm$ be an 
associated graded sheaf.
Observe further that the $\DD$-module $\bj_{!*}\oo_\yr$ is
isomorphic to its Verdier dual
so, we have ${\mathbb D}(\mm)\cong\mm$.

Thus, from the discussion of \S\ref{hodged} and  Lemma \ref{supp}(ii), we conclude

\begin{cor}\label{serre} The
sheaf $\ggr^\hodge\mm$ is a 
Cohen-Macaulay coherent $\oo_{\gg\times\tt}$-module
which is isomorphic to its
 Grothendieck-Serre dual, up to a shift.
In addition,  we have $\supp(\ggr^\hodge\mm)=\xx$.
\end{cor}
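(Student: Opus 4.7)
The proof is essentially an application of the machinery recalled in \S\ref{hodged} to the specific description of $\mm$ obtained in Lemma \ref{supp}(ii). The plan is to first put a Hodge module structure on $\mm$, then read off Cohen-Macaulayness and self-duality from Saito's strictness theorem, and finally identify the support.

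First, by Lemma \ref{supp}(ii) there is an isomorphism $\mm\cong\bj_{!*}\oo_\yr$ of $\DD$-modules. The sheaf $\oo_\yr$, viewed as a $\dd_{\yr}$-module, underlies the trivial variation of Hodge structure on the smooth locus $\yr$, so by Saito's theorem (\cite{Sa}, the quoted Corollary on p.857) its minimal extension $\bj_{!*}\oo_\yr$ carries a canonical structure of polarizable Hodge module on $\g\times\t$. Transporting this structure via the isomorphism above endows $\mm$ with a canonical Hodge filtration $F^\hodge$, and we set $(\mm,F^\hodge)\in\mh(\g\times\t)$.

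For the Cohen-Macaulay and duality statements, I would apply directly the discussion at the end of \S\ref{hodged}. Saito's strictness result (\cite{Sa}, Lemma 5.1.13) asserts that ${\mathsf F}{\mathbb D}(\mm,F^\hodge)$ is strict in $D^b_{coh}({\mathsf F}\DD)$. Since $\mm$ is holonomic (Lemma \ref{supp}(ii)), ${\mathbb D}(\mm)$ is concentrated in degree zero, so strictness forces $\dgr\bigl({\mathsf F}{\mathbb D}(\mm,F^\hodge)\bigr)$ to have cohomology in a single degree. Because the functor $\dgr$ intertwines ${\mathsf F}{\mathbb D}$ with Grothendieck-Serre duality on $\dcoh(T^*(\g\times\t))$, the Grothendieck-Serre dual of $\ggr^\hodge\mm$ is represented by a single coherent sheaf, namely $\ggr({\mathbb D}(\mm))$, up to a homological shift. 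This is precisely the Cohen-Macaulay property. Finally, the minimal extension of the trivial variation is Verdier self-dual, so ${\mathbb D}(\mm)\cong\mm$ as Hodge modules (up to an appropriate Tate twist, which contributes only the shift), giving the asserted self-duality $\ggr^\hodge\mm\cong{\mathbb D}^{GS}(\ggr^\hodge\mm)[\text{shift}]$.

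For the support statement, recall that for any holonomic $\DD$-module $M$ the support cycle $[\supp(\ggr^F M)]$ is independent of the choice of good filtration $F$. Applying this to $\mm$ with its two good filtrations $F^\ord$ and $F^\hodge$, we obtain $[\supp(\ggr^\hodge\mm)]=[\supp(\ggr^\ord\mm)]=[\xx]$, where the last equality is Lemma \ref{supp}(ii). In particular $\supp(\ggr^\hodge\mm)=\xx$ as sets. The main conceptual content is thus entirely packaged in Saito's strictness result; no additional computation is needed once Lemma \ref{supp}(ii) is in hand, and the only point requiring minor care is to check that the self-duality of $\bj_{!*}\oo_\yr$ is compatible with the Hodge filtration up to a Tate twist, which is standard.
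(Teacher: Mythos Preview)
Your proposal is correct and follows essentially the same approach as the paper. The paper derives the corollary in one sentence from ``the discussion of \S\ref{hodged} and Lemma \ref{supp}(ii)'' together with the observation that $\bj_{!*}\oo_\yr$ is Verdier self-dual; you have simply unpacked that sentence into its constituent steps (Hodge structure via minimal extension, Saito's strictness for Cohen--Macaulayness and self-duality, independence of the support cycle from the filtration for the support claim).
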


\begin{rem}\label{shift} The normalization of the Hodge filtration  that we
use in this paper  differs by a degree shift from the
one used by Saito \cite{Sa}.

Our normalization is determined
by the requirement that, for any closed immersion $f: X\into Y$,
of smooth varieties,
the Hodge filtration on the
right $\dd_Y$-module $\int^R_f\kk_X$
be equal to the order filtration introduced
at the end of section  \ref{ddfilt}.
Thus, we have $F^\hodge_{-1}\big(\int^R_f\kk_X\big)=0$,
which is not the case
 in  Saito's normalization,
see \cite[formula (3.2.2.3) and Lemma 5.1.9]{Sa} and 
\cite[p.\,222]{HTT}.

Degree shifts clearly do not
affect the validity of  Corollary \ref{serre}.
\erem

In the previous subsection,
we have considered the order filtration 
on the \hc module $\mm$. 
The isomorphism in
\eqref{h} implies, in view of Remark \ref{shift}, that the order and
the Hodge filtrations agree
on the open dense subset $\g^{rs}\times\t^r$.
We do not know if these two  filtrations agree on the
whole of $\g\times\t$.

The following result, to be proved in \S\ref{pf},  provides a partial answer.

\begin{prop}\label{ordhodge} With our normalization of
the Hodge filtration on $\mm$, for any $ k\geq 0$,
one has an inclusion
$F^\ord_k\mm\sset F^\hodge_k\mm.$
\end{prop}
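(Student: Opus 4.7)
The plan is to reduce the statement to a single inclusion at the $0$th filtration step and establish that step via the Hotta-Kashiwara realization of $\mm$.  Write $\bar{1}\in\mm$ for the image of $1\in\DD$; since $\mm=\DD/\I$ is cyclically generated by $\bar{1}$, one has $F^\ord_k\mm=F^\ord_k\DD\cdot\bar{1}$ for every $k\geq 0$, and in particular $F^\ord_0\mm=\oo_{\g\times\t}\cdot\bar{1}$.  The Hodge filtration is a good filtration, hence satisfies Griffiths transversality $F^\ord_i\DD\cdot F^\hodge_j\mm\sset F^\hodge_{i+j}\mm$.  Consequently, the whole proposition is implied by the single inclusion $\bar{1}\in F^\hodge_0\mm$, via the chain
$$
F^\ord_k\mm\ =\ F^\ord_k\DD\cdot\bar{1}\ \sset\ F^\ord_k\DD\cdot F^\hodge_0\mm\ \sset\ F^\hodge_k\mm.
$$

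First I verify that $\bar{1}$ lies in $F^\hodge_0\mm$ on the smooth open locus $\g^{rs}\times\t^r$.  By the isomorphism \eqref{h} in the proof of Lemma \ref{supp}, the restriction of $\mm$ to this open set is $\int_{\bj^{rs}}\oo_{\yy^{rs}}$, and $\bar{1}$ maps to the cyclic generator $(dx\,dt)\inv\otimes(\bj^{rs})_*\om$.  As $\bj^{rs}$ is a closed immersion of smooth varieties, the normalization fixed in Remark \ref{shift} identifies the Hodge filtration on $\int^R_{\bj^{rs}}\kk_{\yy^{rs}}$ with the order filtration at every level; after left/right conversion this gives $F^\hodge_0\mm|_{\g^{rs}\times\t^r}=\oo_{\g^{rs}\times\t^r}\cdot\bar{1}|_{\g^{rs}\times\t^r}=F^\ord_0\mm|_{\g^{rs}\times\t^r}$, whence $\bar{1}|_{\g^{rs}\times\t^r}\in F^\hodge_0\mm|_{\g^{rs}\times\t^r}$.

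To propagate this inclusion to all of $\g\times\t$, I would invoke the Hotta-Kashiwara realization of $\mm$ that will be developed in Section \ref{sympl_sec}: a smooth variety $\wt Y$ together with a proper, $G\times W$-equivariant, generically finite map $\pi:\wt Y\to\g\times\t$ for which $\mm\cong\pi_+\oo_{\wt Y}$ as a Hodge $\DD$-module, and under which the cyclic generator $\bar{1}\in\mm$ is the image of $1\in\oo_{\wt Y}$ via the canonical arrow $\oo_{\wt Y}\to\pi_+\oo_{\wt Y}$.  The structure sheaf $\oo_{\wt Y}$ on a smooth variety carries the trivial Hodge filtration (with our normalization, $F^\hodge_0\oo_{\wt Y}=\oo_{\wt Y}$ and $F^\hodge_{-1}\oo_{\wt Y}=0$), so Saito's direct-image theorem for Hodge modules yields the inclusion $R^0\pi_*\oo_{\wt Y}\sset F^\hodge_0\mm$ inside $\pi_+\oo_{\wt Y}=\mm$.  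Since $\bar{1}$ lies in the image of the natural map $R^0\pi_*\oo_{\wt Y}\to\mm$, we obtain $\bar{1}\in F^\hodge_0\mm$ globally, and the reduction above finishes the proof.

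The main obstacle is the precise matching, under the Hotta-Kashiwara isomorphism $\mm\cong\pi_+\oo_{\wt Y}$, of the cyclic generator $\bar{1}$ of the quotient presentation $\mm=\DD/\I$ with the natural section of $R^0\pi_*\oo_{\wt Y}$, together with the verification that our (Ginzburg-shifted) Hodge-filtration convention from Remark \ref{shift} is compatible with Saito's direct-image formula for the particular $\pi$ furnished by Hotta-Kashiwara.
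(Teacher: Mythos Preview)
Your outline is correct and follows essentially the same route as the paper: both hinge on the Hotta--Kashiwara realization $\mm\cong\H^0(\int_{\mu\times\nu}\oo_{\tg})$ to locate the cyclic generator $\bar 1$ inside $F^\hodge_0\mm$. Your reduction to this single inclusion via the good-filtration axiom is valid (the term ``Griffiths transversality'' is a slight misnomer---what you use is just compatibility of a good filtration with $F^{\ord}_\hdot\DD$). The paper's proof in \S\ref{pf} is precisely the careful execution of the two obstacles you flag at the end.

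Rather than working only at degree $0$, the paper factors $\mu\times\nu=f\circ\eps$ (closed immersion $\eps:\tg\hookrightarrow\bb\times\g\times\t$, then projection $f:\bb\times\g\times\t\to\g\times\t$), starts from the filtered surjection $\gamma:\dd_{\bb\times\g\times\t}\twoheadrightarrow\int^R_\eps\kk_{\tg}$ sending $1\mapsto\eps_*(\omega\otimes 1)$ (whose quotient filtration equals the Hodge filtration, by the normalization in Remark~\ref{shift}), and applies $\int^R_f$ as a morphism of filtered complexes. A Koszul computation (Lemma~\ref{intd}, using $R\Gamma(\bb,\oo_\bb)=\C$) identifies $\H^0(\int^R_f\dd_{\bb\times\g\times\t})\cong\dd_{\g\times\t}$ with its order filtration; the resulting composite $\bar\gamma:\dd_{\g\times\t}\to\mm$ is then checked, via the explicit formula $u\mapsto u[(dx\,dt)^{-1}\otimes(\mu\times\nu)_*\omega]$ from \S\ref{hk}, to coincide with the quotient map. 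This yields $\bar\gamma(F^\ord_k\dd_{\g\times\t})\subset F^\hodge_k\mm$ for every $k$ at once---your degree-$0$ statement included. One small point: your verification on the open locus $\g^{rs}\times\t^r$ is not used in, and does not feed into, the global argument; a filtration inclusion on a dense open set does not propagate by itself, so that step is a sanity check rather than an ingredient.
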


According to Lemma \ref{supp}, the isomorphism  of Theorem 
\ref{mthm}
fits into the following chain of  morphisms of
coherent sheaves on $\gg\times\tt$:
$$
\xymatrix{
\oo_{\zz\times_{\zz/\!/G}\ \tt}\ 
\ar@{->>}[r]&\ \ggr^\ord\!\mm\ \ar@{->>}[r]&
\ \oo_\xx\ \ar@{^{(}->}[r]&
\ \psi_*\oo_{\xx_\norm}\ \ar@{=}[rr]^<>(0.5){\text{Theorem \ref{mthm}}}&&
\ \ggr^\hodge\mm.
}
$$

\subsection{Outline of proof of Theorems \ref{mthm}
and \ref{main_thm}}\label{main_res} 
We begin with the following
standard result.
\begin{lem}\label{ei} Let $X$ be 
 an irreducible  scheme  and $\psi:\ X_\norm\to
X_\red$ the normalization map.
Let $j:\ U\into X$
be a Zariski open imbedding such that $U$ is smooth
and the set $X\sminus U$ has
codimension $\geq 2$ in $X$.
Let $\CF$ be a  Cohen-Macaulay
sheaf on $X$ such that $j^*\CF\cong \oo_U$.
 
 Then, there are natural isomorphisms of $\oo_X$-modules
\beq{dr}\CF\ \cong\  j_* \oo_U\ \cong\  \psi_*\oo_{X_\norm}.
\eeq
\end{lem}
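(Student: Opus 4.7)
The plan is to establish the two isomorphisms of \eqref{dr} separately, each via a standard codimension-$2$ extension argument, one using the Cohen-Macaulay hypothesis on $\CF$ and the other using the normality of $X_\norm$.

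First I would prove the isomorphism $\CF \iso j_*\oo_U$. The adjunction $(j^*,j_*)$ supplies a canonical map $\CF \to j_*j^*\CF = j_*\oo_U$, and the task is to show this is an isomorphism. Let $Z := X \sminus U$, a closed subset of codimension $\geq 2$. Since $\CF$ is Cohen-Macaulay (with full support, as $j^*\CF \cong \oo_U$ is nonzero and $X$ is irreducible), a standard result on local cohomology, see e.g. Grothendieck's local duality, yields $H^i_Z(\CF)=0$ for $i < 2$. Feeding this into the canonical long exact sequence
\begin{equation*}
0\,\too\,H^0_Z(\CF)\,\too\,\CF\,\too\,j_*j^*\CF\,\too\, H^1_Z(\CF)\,\too\,\cdots
\end{equation*}
gives the desired isomorphism $\CF\iso j_*\oo_U$.

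Next I would prove $j_*\oo_U \cong \psi_*\oo_{X_\norm}$. Since $U$ is smooth, it is in particular reduced and normal, so the open immersion $j$ factors through $X_\red$, and $\psi$ restricts to an isomorphism $\psi^{-1}(U) \iso U$ (the normalization is an isomorphism over the normal locus). Let $\iota\colon V:=\psi^{-1}(U)\into X_\norm$ denote the inclusion. The map $\psi$ is finite, hence preserves codimension, so $X_\norm\sminus V$ has codimension $\geq 2$ in $X_\norm$. As $X_\norm$ is a normal (hence Serre $(S_2)$) irreducible scheme, algebraic Hartogs yields $\oo_{X_\norm}\iso\iota_*\oo_V$. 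Applying the finite, hence exact, functor $\psi_*$ and using $\psi\ccirc\iota = j\ccirc(\psi|_V)$ with $\psi|_V$ an isomorphism, we obtain
\begin{equation*}
\psi_*\oo_{X_\norm}\ \cong\ \psi_*\iota_*\oo_V\ \cong\ j_*(\psi|_V)_*\oo_V\ \cong\ j_*\oo_U,
\end{equation*}
which combined with the first isomorphism proves \eqref{dr}.

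The only nontrivial input is the local cohomology vanishing $H^i_Z(\CF)=0$ for $i<2$ from the Cohen-Macaulay hypothesis; I do not expect a real obstacle here, as this is the customary avatar of the ``Hartogs phenomenon'' for CM sheaves and exactly matches the codimension bound $\mathrm{codim}_X Z\geq 2$ assumed in the statement. The only subtlety worth checking is that $\CF$ has full-dimensional support so that ``Cohen-Macaulay'' of $\CF$ translates to depth$_Z(\CF)\geq\mathrm{codim}\,Z$, but this is automatic from $j^*\CF\cong\oo_U\neq 0$ together with irreducibility of $X$.
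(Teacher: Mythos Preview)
Your proof is correct. The first isomorphism $\CF\cong j_*\oo_U$ matches the paper's argument exactly: both use the long exact sequence of local cohomology together with the vanishing $\H^i_Z(\CF)=0$ for $i<\codim Z$ coming from the Cohen--Macaulay hypothesis.

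For the second isomorphism $j_*\oo_U\cong\psi_*\oo_{X_\norm}$, you take a genuinely different route. The paper argues \emph{forward}: it sets $Y:=\Spec(j_*\oo_U)$, uses the already-established isomorphism $\CF\cong j_*\oo_U$ to conclude that $Y$ is Cohen--Macaulay, checks that $Y$ is smooth in codimension~1, and then invokes Serre's criterion to see that $Y$ is normal; since $Y\to X_\red$ is finite and birational, $Y$ must be the normalization. You instead argue \emph{backward} from $X_\norm$: pull $U$ back to $V=\psi^{-1}(U)\subset X_\norm$, observe that its complement still has codimension $\geq 2$ because $\psi$ is finite, apply Hartogs on the normal variety $X_\norm$ to get $\oo_{X_\norm}\cong\iota_*\oo_V$, and push forward. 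Your approach is a bit more direct and, notably, establishes $j_*\oo_U\cong\psi_*\oo_{X_\norm}$ without appealing to the Cohen--Macaulay hypothesis on $\CF$ at all; the paper's approach, by contrast, leans on the first isomorphism to supply coherence and Cohen--Macaulayness of $j_*\oo_U$, which is what makes Serre's criterion applicable.
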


\begin{proof} Let $Z=X\sminus U$ and let $\H^k_Z(\CF)$
 denote the $k$th cohomology sheaf of $\CF$ with support in $Z$. 
One has a standard long exact sequence of
cohomology with support
$$0\too \H_Z^0(\CF)\too \CF \stackrel{a}\too j_* j^*\CF
 \too \H^1_Z(\CF)\too\ldots$$
where $j_*$ stands for
a (nonderived) sheaf theoretic push-forward
and $a$ is the canonical adjunction morphism.

A maximal Cohen-Macaulay
sheaf has no nonzero torsion subsheaves, \cite[\S 21.4]{E}.
Therefore, the sheaf $\CF$ is actually an $\oo_{X_\red}$-module.
In addition, we have $\H^0_Z(\CF)=0$.
Further,
a well known general result says that,  for any
maximal  Cohen-Macaulay sheaf $\CF$ on $X$ and any closed subscheme $Z\sset X$,
one has a vanishing
 $\H^k_Z(\CF)=0$ for all $k<\dim X-\dim Z$
(the result can be reduced to the
case of a local ring where it follows e.g.
from \cite[Theorem A.4.3]{E}). Applying this result
in our case  and using
the codimension $\geq 2$ assumption of the
lemma, we get that $\H^1_Z(\CF)=0$.
Thus, the morphism $a$ in the   long exact sequence
above  is an isomorphism. We deduce  that
$\CF\cong j_* j^*\CF\cong j_* \oo_U,$ the first
isomorphism in \eqref{dr}.

Observe next that the algebra structure on $\oo_U$
makes $j_*\oo_U$ an $\oo_{X_\red}$-algebra.
Let $Y:=\Spec(j_*\oo_U)$ be the relative
spectrum of that algebra. Thus, $Y$ is a scheme equipped with a 
morphism $f: Y\to {X_\red}$  that restricts to an isomorphism
$f:\ f\inv(U)\iso U;$ by definition,
we have $j_* \oo_U=f_*\oo_Y$.
The scheme  $Y$ is   reduced and irreducible,
since the algebra $j_*\oo_U$ clearly
has no zero divisors. The codimension  $\geq 2$ assumption
implies that $j_*\oo_U$ is a coherent $\oo_{X_\red}$-module
hence $f$ is a finite birational morphism.
Hence, the set $Y\sminus f\inv(U)$
has codimension  $\geq 2$ in $Y$.
We conclude that the  scheme $Y$ is smooth in codimension
1 and that it is  Cohen-Macaulay,
thanks to the
isomorphism $\CF\cong j_* \oo_U.$ 
It follows, by Serre's criterion, that
$Y$  is a normal variety; moreover,  $f=\psi$ is the normalization
map
so, we have $j_* \oo_U=f_*\oo_Y=\psi_*\oo_{X_\norm}$.
\end{proof}

We need the following definition,  motivated in
part by \cite[Lemma 3.6.2]{Ha1}. 

\begin{defn}\label{U}
Let $\zz_i,\ i=1,2,$ be the set of pairs $(x_1,x_2)\in\zz$
such that  $x_i$ 
is a regular element of $\g$.
 Put $\zz^{rr}=\zz_1\cup \zz_2$.
\end{defn}

Clearly, each of the sets $\zz_i,\ i=1,2,$ is an open subset of $\zz^r$.
Thus, $\zz^{rr}$ is an open subset of $\zz$ which is contained in the smooth locus of
$\zz$.
Furthermore, in
\S\ref{ss3} we  will  prove

\begin{lem}\label{irr} The set 
$\zz\sminus \zz^{rr}$  has codimension
$\geq 2$ in $\zz$.
\end{lem}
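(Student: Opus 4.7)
The plan is to show that $\zz\setminus\zz^{rr}=(\zz\setminus\zz_1)\cap(\zz\setminus\zz_2)$ has codimension $\ge 2$ in $\zz$ by identifying the codimension-one irreducible components of $\zz\setminus\zz_1$ and showing that each meets $\zz_2$ on a dense open subset, so that its intersection with $\zz\setminus\zz_2$ is a proper closed subvariety.

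First I would stratify $\zz$ via the first projection $\pi_1\colon\zz\to\g$, $(x,y)\mapsto x$.  For each sheet $\Sigma$ of $\g$ -- that is, a maximal irreducible locus of constant centralizer dimension -- the preimage $\zz_\Sigma:=\pi_1^{-1}(\Sigma)$ is an irreducible vector bundle over $\Sigma$ with fibers $\g_x$.  Using the Bala--Carter parametrization, write $\Sigma$ as corresponding to a pair $(L,O_L)$, where $L\supseteq T$ is a Levi subalgebra and $O_L$ a rigid nilpotent orbit in $L$; for a generic $x=z+u$ in $\Sigma$ (with $z\in Z(L)^\circ$ and $u\in O_L$), the centralizer $\g_x$ equals $L_u$.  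The orbit-stabilizer formula combined with the $\dim Z(L)$-parameter family of semisimple perturbations gives $\dim\Sigma=\dim\g-\dim L_u+\dim Z(L)$, whence $\dim\zz_\Sigma=\dim\Sigma+\dim\g_x=\dim\g+\dim Z(L)$.  The regular sheet ($L=T$, $O_L=\{0\}$) gives $\zz_{\Sigma_0}=\zz_1$ of full dimension $\dim\zz$; every other sheet has $L\supsetneq T$, so $\dim Z(L)\le\rk-1$ and $\dim\zz_\Sigma\le\dim\zz-1$.  Equality -- and hence a potential codimension-one component of $\zz\setminus\zz_1$ -- arises only when $L_{ss}$ has rank $1$; in that case the only rigid nilpotent orbit in $L$ is the zero orbit, so a generic $x\in\Sigma$ is semisimple with $\g_x=L$ of dimension $\rk+2$.

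For such a codimension-one component $\zz_\Sigma$, I claim the intersection $\zz_\Sigma\cap\zz_2$ is dense in $\zz_\Sigma$.  Since $\g_x=L$ contains $\t$, and every element of $\t$ outside finitely many root hyperplanes is regular semisimple in $\g$, the open subset $L\cap\g^r\subset L$ is non-empty and hence dense.  Thus the locus of $(x,y)\in\zz_\Sigma$ with $y\in\g^r$ is dense, so that $\zz_\Sigma\cap(\zz\setminus\zz_2)$ is a proper closed subvariety of $\zz_\Sigma$, of codimension $\ge 1$ in $\zz_\Sigma$ and hence $\ge 2$ in $\zz$.  Since every irreducible component of $\zz\setminus\zz^{rr}$ is contained in $\zz_\Sigma\cap(\zz\setminus\zz_2)$ for some non-regular sheet $\Sigma$, and since the strata $\zz_\Sigma$ with $L_{ss}$ of rank $\ge 2$ already satisfy $\dim\zz_\Sigma\le\dim\zz-2$ automatically, the lemma follows.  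I anticipate the main obstacle to be the careful verification of the sheet dimension formula via the Bala--Carter classification -- both classical but technical -- while the geometric heart of the argument is the observation that the codimension-one strata of $\zz\setminus\zz_1$ generically meet $\zz_2$ because $\g_x=L$ automatically contains regular semisimple elements of $\g$.
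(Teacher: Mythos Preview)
Your argument is correct and takes a genuinely different route from the paper's.

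The paper stratifies $\zz$ symmetrically, via the map $\zz\to\tt/W$ sending $(x,y)$ to the class of the semisimple pair $(h_x,h_y)$; the pieces $\zz_\fl$ are indexed by standard Levi subalgebras $\fl$ and their dimensions are computed from Lemma~\ref{codim}, which in turn rests on Premet's formula $\dim{\mathfrak N}(\fl)=\dim[\fl,\fl]$. Codimension~$1$ occurs only for minimal Levis, and there the paper exhibits an explicit point $(h+n,\,h+n)$ (with $h\in\ts_\fl$ and $n$ a nonzero nilpotent in $\sl_2$) lying in $\zz^{rr}$. Your approach instead breaks the symmetry: you stratify $\zz\sminus\zz_1$ by pulling back the sheet decomposition of $\g$ under $\pi_1$, and your dimension formula $\dim\zz_\Sigma=\dim\g+\dim Z(L)$ comes from sheet theory rather than from Premet's result. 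Your codimension-$1$ endgame is arguably cleaner---once you know the relevant sheet is Dixmier with $\g_x=L\supset\t$, the density of $\zz_\Sigma\cap\zz_2$ is immediate from $\t\cap\g^r\neq\emptyset$---but you pay for this by invoking the Bala--Carter parametrization and the sheet dimension formula, whereas the paper reuses a stratification (Lemma~\ref{codim}) already needed for Corollary~\ref{xdense}.

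One small phrasing issue: the sentence ``every irreducible component of $\zz\sminus\zz^{rr}$ is contained in $\zz_\Sigma\cap(\zz\sminus\zz_2)$ for some non-regular sheet $\Sigma$'' is not literally true, since a component can meet several strata. What you actually need, and what your argument proves, is the dimension bound: $\zz\sminus\zz^{rr}=\bigsqcup_{\Sigma\neq\Sigma_0}(\zz_\Sigma\sminus\zz_2)$ is a finite union of constructible sets each of dimension $\leq\dim\zz-2$, hence $\dim(\zz\sminus\zz^{rr})\leq\dim\zz-2$. Just rephrase accordingly.
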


\begin{cor}\label{xxcor} The set $ \xx^{rr}:=p \inv(\zz^{rr})$
 is a  smooth
Zariski open  subset of  $\xx$; furthermore, the set 
$\xx\sminus \xx^{rr}$  has codimension
$\geq 2$ in $\xx$.
\end{cor}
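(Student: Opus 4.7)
The openness of $\xx^{rr}$ will be immediate: $\zz^{rr}=\zz_1\cup\zz_2$ is open in $\zz$, and $p$ is a morphism of schemes, so $\xx^{rr}=p^{-1}(\zz^{rr})$ is open in $\xx$. For the codimension bound, I would first observe that $p\colon\xx\to\zz$ is a finite morphism, factoring as the closed immersion $\xx\hookrightarrow\zz\times_{\zz/\!/G}\tt$ followed by the projection $\zz\times_{\zz/\!/G}\tt\to\zz$, which is finite as a base change of the finite map $\tt\to\zz/\!/G$. Finiteness gives $\dim p^{-1}(Y)\le\dim Y$ for any closed $Y\sset\zz$. Using $\dim\xx=\dim\zz=\dim\g+\rk$ (from Proposition \ref{zzbasic}(i) and Lemma \ref{xxbasic}(i)) together with Lemma \ref{irr}, I would then conclude
\[
\dim(\xx\sminus\xx^{rr})=\dim p^{-1}(\zz\sminus\zz^{rr})\le\dim(\zz\sminus\zz^{rr})\le\dim\zz-2=\dim\xx-2.
\]

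For the smoothness of $\xx^{rr}$, my plan is to exploit Lemma \ref{conormal}, which identifies $\xx$ with $\overline{N_{\yr}}$, the closure of the total space of the conormal bundle to the smooth open subvariety $\yr\sset\g\times\t$. Since $N_{\yr}$ is the total space of a rank-$\rk$ vector bundle over the smooth base $\yr$, it is itself smooth of dimension $\dim\yr+\rk=\dim\g+\rk=\dim\xx$, and thus a smooth open dense subscheme of $\xx$. Writing $\tau$ for the natural involution on $\gg\times\tt$ exchanging the two $\g$-components (and the two $\t$-components), the smoothness claim will reduce to showing the inclusion $\xx^{rr}\sset N_{\yr}\cup\tau(N_{\yr})$.

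By $\tau$-symmetry it suffices to show that any $\xi=(x_1,x_2,t_1,t_2)\in\xx$ with $x_1\in\g^r$ belongs to $N_{\yr}$. Applying the defining conditions of $\xx$ to invariants $P\in\C[\g]^G\sset\C[\zz]^G$ (which depend only on $x_1$) yields $(x_1,t_1)\in\yy$, and hence $(x_1,t_1)\in\yr$. To see that $(x_2,t_2)$ lies in the conormal fiber over $(x_1,t_1)$, I would use a density argument: since $\xx=\overline{N_{\yr}}$, the point $\xi$ is a limit of points $\xi^{(n)}\in N_{\yr}$, and as the condition $x_1^{(n)}\in\g^r$ is open and persists for $n$ large, each such $\xi^{(n)}$ lies in the conormal fiber over $(x_1^{(n)},t_1^{(n)})\in\yr$; the fibers of the conormal bundle vary continuously on the smooth base $\yr$, so the limit $(x_2,t_2)$ lies in the conormal at $(x_1,t_1)$, as required.

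The main obstacle, I expect, is the codimension estimate of Lemma \ref{irr}, whose proof is carried out separately in \S\ref{ss3} and which is genuinely combinatorial in nature (involving a stratification of $\zz$ and dimension counts); once that is granted together with Lemma \ref{conormal}, the smoothness argument above runs without further difficulty.
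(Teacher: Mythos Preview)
Your proof is correct and follows essentially the same route as the paper: smoothness via Lemma~\ref{conormal} together with the factor-swapping involution (the paper's $\sigma$, your $\tau$), and the codimension bound via finiteness of $p$ combined with Lemma~\ref{irr}. Your limit argument for $p^{-1}(\zz_1)\subset N_{\yr}$ supplies the detail the paper leaves implicit in its bare assertion ``$\xx_1=N_{\yr}$''; if you want a purely algebraic phrasing, note that $N_{\yr}$ is already closed in the open set $T^*(\g\times\t)|_{\g^r\times\t}$, so $\xx_1=\overline{N_{\yr}}\cap T^*(\g\times\t)|_{\g^r\times\t}=N_{\yr}$.
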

\begin{proof}[Proof of Corollary]
Let $\xx_i:=p \inv(\zz_i),\ i=1,2$.
According  to  Lemma \ref{conormal},
we have $\xx_1=N_\yr$, a Zariski open subset
contained in the smooth locus of $\xx$.
By symmetry, the set $\xx_2$ is
contained in the smooth locus of $\xx$ as well.
Therefore, $ \xx^{rr}=\xx_1\cup \xx_2$
 is a  smooth
Zariski open  subset of  $\xx$. Furthermore,
the map $p$ being finite, it follows from
Lemma \ref{irr} that the set 
$\xx\sminus \xx^{rr}$  has codimension
$\geq 2$ in $\xx$.
\end{proof}

Write $j:\ \xx^{rr}\ \into\ \xx$
for the   open
imbedding. A key role in the proof of
Theorem   \ref{mthm}
is played by the following result 

\begin{prop}\label{res_lem}
There is a natural isomorphism $j^*(\ggr^\hodge\mm)\cong\oo_{\xx^{rr}}$.
\end{prop}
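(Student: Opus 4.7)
The plan is to compute $\ggr^\hodge\mm$ geometrically, using the Hotta--Kashiwara realization $\mm\cong\bj_{!*}\oo_\yr$ of Lemma \ref{supp}(ii) and lifting it to the cotangent bundle. This will express $\ggr^\hodge\mm$ as the (derived) pushforward of a structure sheaf from the ``double analogue'' $\tx$ of the Grothendieck--Springer resolution, after which the conclusion follows from the fact that $\tx\to\xx$ is an isomorphism over $\xx^{rr}$.

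Concretely, first I would realize $\mm$ as the image of $H^0\!\int_\mu\oo_\tg$, where $\mu\colon\tg\to\g\times\t$ is the enhanced Grothendieck--Springer map resolving $\yy$. Equipping $\oo_\tg$ with its tautological Hodge structure and applying Saito's strict compatibility of the Hodge filtration with proper direct image, one transports the Hodge filtration to $\mm$ and computes its associated graded as a derived pushforward along the cotangent lift of $\mu$. That cotangent lift is precisely the DG scheme $\tx$ of \S\ref{int}, so one arrives at an identification $\ggr^\hodge\mm\cong R\pi_*\oo_\tx$ where $\pi\colon\tx\to\xx$ is the natural projection (this is the content of Corollary \ref{dima22} according to the Layout of the paper).

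Next, I would restrict to $\xx^{rr}$ and invoke Corollary \ref{transv}: the map $\pi$ is an isomorphism over $\xx^{rr}$. Over $\xx_1=N_\yr$ this is visible from the regularity of $\yr$: for $x_1\in\g^r$ the Borel subalgebra containing $x_1$ is unique, so $\mu$ is an isomorphism over $\yr$, and the cotangent lift is an isomorphism over the conormal bundle $N_\yr$ in view of Lemma \ref{conormal}. Over $\xx_2$ the regular variable sits in the cotangent direction and one needs a transversality computation on the DG side: for $x_2\in\g^r$ the centralizer $\g_{x_2}$ is abelian of dimension $\rk$, which forces the derived fiber product defining $\tx$ to be concentrated in degree zero and to map isomorphically onto $\xx_2$. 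Once $\pi$ is an isomorphism over $\xx^{rr}$, all higher derived pushforwards vanish and $R\pi_*\oo_\tx|_{\xx^{rr}}=\oo_{\xx^{rr}}$, which gives the claim.

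The main obstacle is the first step, the identification $\ggr^\hodge\mm\cong R\pi_*\oo_\tx$. On the Hodge-theoretic side this demands Saito's strictness theorem together with a careful matching between the Hodge filtration on $\bj_{!*}\oo_\yr$ and the filtration induced by Hotta--Kashiwara from $\int_\mu\oo_\tg$, in the degree normalization of Remark \ref{shift}; one expects Proposition \ref{ordhodge} to play a role here, since it already compares the order and Hodge filtrations in one direction. On the geometric side one needs a concrete model for the cotangent lift of $\mu$, which is provided by the DG algebra $\aa$ of \S\ref{int}, together with an acyclicity result (vanishing of $H^i(\aa)$ for $i\neq0$) ensuring that the derived pushforward really reduces to the single sheaf $\oo_\tx$ over the relevant locus.
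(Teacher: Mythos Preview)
Your outline is essentially the paper's approach: realize $\mm$ via Hotta--Kashiwara (Theorem~\ref{pimm}), apply Saito's strictness (Theorem~\ref{saito}) together with Laumon's formula (Theorem~\ref{fgr}) to compute $\ggr^\hodge\mm$ as a derived pushforward, identify that pushforward with $R(\mmu\times\nnu)_*\aa$ (Corollary~\ref{dima22}), and then restrict to $\xx^{rr}$ where Lemma~\ref{codim1} gives acyclicity of $\aa$ and Corollary~\ref{transv}(ii) gives the isomorphism $\wt\xx^{rr}\iso\xx^{rr}$.

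Two small corrections. First, the global identification is $\ggr^\hodge\mm\cong R(\mmu\times\nnu)_*\aa$, not $R\pi_*\oo_{\tx}$: the scheme $\tx$ is not irreducible in general (Remark after Lemma~\ref{codim1}), so $\aa$ and $\oo_\tx$ differ away from $\wt\xx^{rr}$; only over $\wt\xx^{rr}$ does the transversality of Lemma~\ref{codim1} force $\aa$ to be quasi-isomorphic to $\oo_{\wt\xx^{rr}}$. You do say this correctly in your last paragraph, but the formula in the middle is misleading. Second, Proposition~\ref{ordhodge} plays no role in matching the Hodge filtrations. The paper argues instead (end of the proof of Theorem~\ref{cohiso}) that since $\mm\cong\bj_{!*}\oo_\yr$ is simple, its Hodge structure is determined by the restriction to any dense open set, and $\mu\times\nu$ is generically an isomorphism; hence the isomorphism $\mm\cong\H^0(\int_{\mu\times\nu}\oo_\tg)$ automatically respects Hodge structures.
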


The proof of this proposition will occupy most of sections \ref{int}
and \ref{sympl_sec}. 
Our approach is based on the Hotta-Kashiwara construction of
$\mm$ via the Springer resolution, see \S\ref{xyzsec}.

\begin{rem}\label{fourier} We observe that the
isomorphism
$\mm=\bj_{!*}\oo_\yr$ of Lemma \ref{supp}
and the fact that the  canonical bundle
$\kk_\yr$ is trivial (see Proposition \ref{tb}(i), (iii))
imply
an isomorphism $(\ggr^\hodge\mm)|_{N_\yr}\cong \oo_{N_\yr}$ 
of coherent sheaves, see \eqref{shiftgr}. 
Note further that we have $\xx_2=\sigma(\xx_1)$
where $\sigma$ is the following involution
\beq{inv}
\sigma:\en \gg\times\tt\ \leftrightarrow\ \gg\times\tt,
\quad
(x_1,x_2) \times (t_1,t_2)
\ \leftrightarrow\ (x_2,x_1) \times (t_2,t_1).
\eeq

Therefore, it would be tempting to try to deduce 
 Proposition  \ref{res_lem} from  the 
 isomorphism $(\ggr^\hodge\mm)|_{N_\yr}\cong \oo_{N_\yr}$
by proving an isomorphism
  $\sigma^*(\ggr^\hodge\mm)\cong \ggr^\hodge\mm$.
Such an approach is motivated by the observation
that  the $\DD$-module $\mm$ is isomorphic to its
 Fourier transform in the sense of $\dd$-modules,
which is an immediate consequence of Definition \ref{mm_def}.
The functor  $\sigma^*:\ \coh T^*(\g\times\t)\to \coh T^*(\g\times\t)$
may be viewed as a
 `classical analogue' of the  Fourier transform  of $\dd$-modules
on $\g\times\t$.
Thus, 
 Proposition  \ref{res_lem}
would follow from the
invariance of $\mm$ under
 the  Fourier transform, had we known a general result saying that
the functor $\ggr^\hodge(-)$ 
on ($\C^\times$-monodromic) Hodge $\dd$-modules
on a vector space, 
commutes  with Fourier transform.
Unfortunately,  such a result is not
available at the moment of writing of this paper.
Indeed, this seems to be a  very difficult question 
(we are grateful to C. Sabbah for information on this subject).
\erem

\begin{proof}[Proposition \ref{res_lem} and Lemma \ref{irr} imply Theorem \ref{mthm}]
Let $\J^\hodge\sset \oo_{\gg\times\tt}$ be the annihilator
of  $\ggr^\hodge\mm$ viewed as an   $\oo_{\gg\times\tt}$-module
and  let $X\sset \gg\times\tt$
be a closed subscheme defined by the ideal $\J^\hodge.$
By Corollary \ref{serre}, the sheaf $\ggr^\hodge\mm$ is  Cohen-Macaulay
and, set theoretically, one has $X=\xx$.
Further, the isomorphism
$\mm\cong\bj_{!*}\oo_\yr$ of Lemma \ref{supp}
implies that the ideal $\J^\hodge$ is generically reduced.
Hence, $X$ is reduced and we have  $X=\xx$, since
$\xx$ is reduced by definition.

Thus, thanks to Corollary \ref{xxcor}, we are in the setting  of
Lemma \ref{ei}, where we take $\CF=\ggr^\hodge\mm$
and let $j:\ U=\xx^{rr}\ \into\ X=\xx$
be the   open
imbedding. 
We see that
Theorem \ref{mthm} is a direct consequence of
Lemma \ref{ei} combined with Proposition 
\ref{res_lem}.
\end{proof}

\begin{proof}[Theorem \ref{mthm} implies Theorem \ref{main_thm}]
 By Theorem \ref{mthm}, we have $\ggr^\hodge\mm\cong
 \psi_*\oo_{\xx_\norm}$.
 Further, thanks to Corollary \ref{serre},
we know that the sheaf $\ggr^\hodge\mm$ is Cohen-Macaulay
and, moreover, it is isomorphic to its
Grothendieck-Serre dual, up to a shift.
Thus, the sheaf $\psi_*\oo_{\xx_\norm}$ has
similar properties.
Since, Grothendieck's duality commutes with finite
morphisms, we deduce that $\oo_{\xx_\norm}$
is a  Cohen-Macaulay sheaf which is, moreover,
isomorphic to its Grothendieck-Serre dual,
that is, to the dualizing sheaf $\kk_{\xx_\norm}$, up to a shift.
Therefore, we have $\oo_{\xx_\norm}\cong\kk_{\xx_\norm}$,
completing the proof.
\end{proof}

\section{Springer resolutions}\label{int}

\subsection{An analogue of the Grothendieck-Springer
resolution}\label{tx} 
Let $\bb$ be the flag variety, the variety
of all Borel subalgebras $\b\sset\g$.
Motivated by Grothendieck and Springer,
we introduce the following incidence varieties
$$
\tg:=\{(\b,x)\in\bb\times\g\mid x\in\b\},\en\text{resp.}\en
\tgg:=\{(\b,x,y)\in\bb\times\g\times\g\mid
x,y\in\b\}.$$

The first projection
  makes $\tg$, resp. $\tgg$,
a sub vector bundle of the trivial vector bundle
$\bb\times\g\to\bb$, resp. $\bb\times\gg\to\bb$. 
Given a  Borel subgroup  $B\sset G$ with Lie algebra $\b=\Lie B$,
we have $\bb\cong G/B$. That gives
 a $G$-equivariant vector bundle isomorphism 
$\tg\cong G\times_{B}\b,$ resp. $\tgg\cong G\times_{B}(\b\times\b)$.
Thus, $\tg$ and $\tgg$ are smooth connected varieties.

Recall that, for any pair $\b,\b'$ of  Borel
subalgebras
of $\g$, there
is a canonical isomorphism $\b/[\b,\b]\cong\b'/[\b',\b']$,
cf. eg. \cite[Lemma 3.1.26]{CG}.
Given a  Cartan subalgebra $\t\sset\b$,
the composite  $\t\into\b\onto \b/[\b,\b]$
yields an isomorphism $\t\iso\b/[\b,\b]$.
Therefore, the  assignment $(\b',x)\mto x{\,\mathrm{mod}\,[\b',\b']}\in\b'/[\b',\b'],$
resp. $(\b',x,y)\mto  (x{\,\mathrm{mod}\,[\b',\b']},\
y{\,\mathrm{mod}\,[\b',\b']})
\in\b'/[\b',\b']\times\b'/[\b',\b'],$
gives
a well defined smooth morphism $\nu:\ \tg\to\h$,
resp. $\nnu:\ \tgg\to\hh$.

Finally, we introduce a projective $G$-equivariant morphism
 $\mu:\ \tg\to\g,\ 
(\b,x)\mto x$,
resp. $\mmu:\ \tgg\to \gg,\ (\b,x,y)\mto(x,y)$.
The map $\mu$ is known as  the
{\em Grothendieck-Springer resolution}.


\begin{prop}\label{tb}
\vi The image of the map $\mu\times\nu:\tg\to\g\times\t$ is
contained in $\yy=\g\times_{\g/\!/G}\t$.
The resulting morphism
$\pi:\tg\to\yy$ is a  resolution 
of singularities, so $\pi\inv(\yr)\iso\yr$
is an isomorphism.

\vii The image of the map $\mmu\times\nnu:
\tgg\to\gg\times\tt$ is
contained in $\gg\times_{\gg/\!/G}\tt$.
The resulting map 
 $\tgg\onto[\gg\times_{\gg/\!/G}\tt]_\red$
is a proper birational morphism.

\viii The canonical bundle on $\tg$ has a natural trivialization
by a nowhere vanishing $G$-invariant section $\omega\in \kk_\tg$ such that one has
$\mu^*(dx)=(\nu^*\del_\t)\cdot
\omega$, cf. Notation \ref{dx}.

\iv We have $\kk_{\tgg}=\wedge^{\dim\bb}\ {\mathfrak q}^* \T_\bb$
(here $\T_\bb$ is the tangent sheaf
and  ${\mathfrak q}:\ \tgg\to\bb$ is the  projection).
\end{prop}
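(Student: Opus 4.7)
For the image containment I plan to use the Kostant-style limit argument: for $x\in\b$ write $x=t+n$ with $t\in\t$ and $n\in\n:=[\b,\b]$; choosing a cocharacter $\la:\C^\times\to T$ with strictly positive pairing against every positive root, one has $\lim_{s\to 0}\Ad\la(s)(x)=t$, so continuity and $G$-invariance give $P(x)=P(t)$ for every $P\in\C[\g]^G$. The same argument applied componentwise to a pair $(x,y)\in\b\oplus\b$ yields the containment in $\gg\times_{\gg/\!/G}\tt$ required for (ii). For (i), properness of $\pi$ follows since $\mu\times\nu$ factors through the closed embedding $\tg\into\bb\times\g\times\t$ followed by projection along the projective factor $\bb$. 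Both $\tg=G\times_B\b$ and $\yy$ have dimension $\dim\g$ (the latter being a complete intersection cut out by the $\rk$ equations identifying $\C[\g]^G\iso\C[\t]^W$), $\tg$ is smooth, and over $\yr$ the fiber $\pi\inv(x,t)$ is a single reduced point: among the finitely many Borels containing a regular $x$, only one has $\nu(\b,x)=t$. Hence $\pi$ is a resolution. For (ii) the dimension of $\tgg=\dim G+\dim\b$ agrees with that of the reduced fiber product (a straightforward fiber-dimension check), and generic injectivity of $\mmu\times\nnu$ holds over the locus where the first component is regular semisimple.

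\textbf{Parts (iii) and (iv).} Using $\tg=G\times_B\b$ and $\tgg=G\times_B(\b\oplus\b)$, I plan to analyse the tangent bundles via the short exact sequence
\[
0\ \to\ \mathfrak{q}^*(G\times_B V)\ \to\ T\tg\text{ (resp.\ }T\tgg\text{)}\ \to\ \mathfrak{q}^*T\bb\ \to\ 0,
\]
with $V=\b$, resp.\ $V=\b\oplus\b$. As $B$-characters, $\det\b=2\rho$ and $\det(\g/\b)=-2\rho$; so for $\tg$ the two outside determinants cancel, yielding a canonical trivialization $\kk_\tg\iso\oo_\tg$, while for $\tgg$ one gets $\kk_{\tgg}\cong\mathfrak{q}^*\det(\g/\b)\cong\wedge^{\dim\bb}\mathfrak{q}^*T\bb$, which is (iv). The distinguished trivializing section $\omega$ is the one induced from the $B$-invariant top form $dt\wedge dn$ on $\b=\t\oplus\n$, where any nonzero top form $dn$ on $\n$ is automatically $N$-invariant since every character of a unipotent group is trivial.

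\textbf{The formula and main obstacle.} Both sides of $\mu^*(dx)=\nu^*(\del_\t)\omega$ are regular sections of $\kk_\tg$, so by irreducibility it suffices to verify the identity on the dense open $\yr$. At a point $(\b,t)\in\tg$ with $t\in\t^r$, I decompose $T_{(\b,t)}\tg\cong(\g/\b)\oplus\b=\n^-\oplus\t\oplus\n$ and $T_t\g=\n^-\oplus\t\oplus\n$; the differential of $\mu$ sends $(\dot g,\dot x)$ to $-\ad(t)\dot g+\dot x$, block-diagonal with blocks $-\ad(t)|_{\n^-}$, $\mathrm{id}_\t$, $\mathrm{id}_\n$ of determinants $\prod_{\al>0}\al(t)=\del_\t(t)$, $1$, $1$. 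Choosing $dx=dn^-\wedge dt\wedge dn$ on $\g$ and $\omega=dn^-\wedge dt\wedge dn$ on $\tg$ via this decomposition then gives $\mu^*(dx)=\del_\t(t)\cdot\omega=\nu^*(\del_\t)\omega$, and the identity extends to all of $\tg$ since both sides are regular. The chief technical point is precisely this Jacobian identification and the consistent normalization of $\omega$; every other ingredient reduces to standard bookkeeping with $B$-characters and dimension counts.
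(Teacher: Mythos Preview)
Your treatment of (i), (iii), (iv) is correct and is essentially what the paper intends: it declares (i) well-known (citing \cite{BB}, \cite{CG}), calls the canonical-bundle descriptions ``straightforward,'' and identifies the formula $\mu^*(dx)=(\nu^*\del_\t)\cdot\omega$ with Weyl's integration formula --- exactly the Jacobian computation you carry out.

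For (ii), your route to generic injectivity differs from the paper's and is in fact simpler. The paper defers (ii) entirely to Proposition~\ref{nonormal}, whose proof invokes a nontrivial result of Bolsinov (via \cite{CM}): for $(x,y)$ in the dense open set $\gg^\heartsuit\subset\gg$ where every nonzero element of $\C x+\C y$ is regular, there is a \emph{unique} Borel containing both, so that $\mmu$ itself is generically injective. You bypass this by observing that injectivity of $\mmu\times\nnu$ --- which is all that is needed here --- follows already from part~(i): when $x\in\g^{rs}$, the pair $(x,\nu(\b,x))$ alone determines~$\b$.

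Your sketch does, however, gloss over the global structure of the target in (ii). A ``straightforward fiber-dimension check'' does not establish that $[\gg\times_{\gg/\!/G}\tt]_\red$ is irreducible (indeed, computing $\dim[\gg\times_{\gg/\!/G}\tt]_\red$ already requires identifying the image of $\tt\to\gg/\!/G$ and the preimage of that image in $\gg$), so it does not show that the closed irreducible image of $\mmu\times\nnu$ is all of it. The paper supplies this through Lemma~\ref{lss}: a Hilbert--Mumford argument shows that $(x,y)$ lies in a common Borel if and only if $f(x,y)=f(h_1,h_2)$ for some $(h_1,h_2)\in\tt$ and all $f\in\C[\gg]^G$, which identifies $\gg^\b$ with the image of the first projection from the fiber product; combined with the Bolsinov input in Proposition~\ref{nonormal}, this pins down the global picture. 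Your argument needs either that ingredient or a direct proof that every component of the reduced fiber product meets the locus where the first coordinate is regular semisimple.
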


Part (i) of Proposition \ref{tb}  is well-known,
cf. \cite{BB}, \cite{CG}. The 
descriptions of canonical bundles in 
parts (iii)-(iv) are straightforward.
The equation of part (iii) that involves
the section $\omega$
appears eg. in  \cite{HK1}, formula (4.1.4).
This equation is, in essence, nothing but Weyl's 
classical integration formula.

Part (ii)  of Proposition \ref{tb} is an immediate consequence
of  Proposition
\ref{nonormal}, of \S\ref{ss1} below.
Lemma \ref{lss} implies, in particular, that the image of the
map $\mmu$ equals the set of pairs $(x,y)\in\gg$ such that
$x$ and $y$ generate a solvable Lie subalgebra of $\g$.
We remark also that the statement of Proposition \ref{tb}(ii) is a variation of
 results concerning the
null-fiber of the adjoint
quotient map $\gg\to\gg/\!/G$, see
\cite{Ri2}, \cite{KW}.
\qed

\begin{quest} Is  the variety $[\gg\times_{\gg/\!/G}\tt]_\red$
normal, resp. Cohen-Macaulay ?
\end{quest}

\subsection{Symplectic geometry interpretation}\label{sympl}
The map $(\b,x)\mto (\b, x, \nu(\b, x))$ gives
 a closed immersion $\eps:\ \tg\into \bb\times\g\times\t$.
The image of this immersion  is a smooth subvariety 
$\eps(\tg)\sset \bb\times\g\times\t$. 
 Let  $\La$
be the total space of the
conormal bundle of that subvariety.
 Thus,
 $\La$
is a smooth $\C^\times$-stable Lagrangian subvariety
in $T^*(\bb\times\g\times\t)=T^*\bb\times(\gg\times\tt)$.
Let $\pr_{\La\to T^*\bb}:\ \La\to
T^*\bb$, resp. $\pr_{\La\to\gg\times\tt} 
:\ \La\to\gg\times\tt,$
denote the restriction to $\La$ of
the projection of $T^*\bb\times\gg\times\tt$
to the first, resp. along the first, factor.

Let $\N$ be  the variety of nilpotent elements
of $\g$ and put
$\wt\N:=
\{(\b,x)\in\bb\times\g\mid x\in[\b,\b]\}$.
There is
 a natural isomorphism $T^*\bb\cong\wt\N$
of $G$-equivariant vector bundles on $\bb$
that identifies the cotangent space
 at a point $\b\in\bb$ with the vector space
$(\g/\b)^*\cong [\b,\b]$,
cf. \cite[ch. 3]{CG}. Let $\Phi:\ \wt\N\iso T^*\bb$ be an
isomorphism obtained by composing the above isomorphism
 with the sign involution
along the fibers of the vector bundle ~$T^*\bb$.

Restricting the commutator map $\kap$
to each Borel subalgebra $\b\sset\g$ yields
 a morphism
$\kkap:\ \tgg\to \wt\N,\
(\b,x,y)\mto (\b,\,[x,y])$.
Let $\Psi$ denote the map
$(\Phi\ccirc\kkap)\times\mmu\times\nnu:\ \tgg\to
T^*\bb\times\gg\times\tt$.

\begin{prop}\label{conorm} The map $\Psi$ yields
an isomorphism $\tgg\iso \La$ that fits into a
 commutative diagram
\beq{psi}
\xymatrix{
\gg\times\tt\ \ar@{=}[d]^<>(0.5){\Id}&&
\ \tgg\ \ar@{=}[d]^<>(0.5){\Psi}
\ar[rr]^<>(0.5){\kkap}\ar[ll]_<>(0.5){\mmu\times\nnu}
&&\ \wt\N\ \ar@{=}[d]^<>(0.5){\Phi}\\
\gg\times\tt\ &&\ \La\ \ar[rr]^<>(0.5){\pr_{\La\to T^*\bb}}
\ar[ll]_<>(0.5){\pr_{\La\to\gg\times\tt}}&&
\ T^*\bb
}
\eeq
\end{prop}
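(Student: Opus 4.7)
The plan is to identify both $\tgg$ and $\La$ as vector bundles over the flag variety $\bb$ and check that $\Psi$ is a fibrewise linear isomorphism, with the diagram \eqref{psi} being a direct consequence of the definition of $\Psi$.

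First I would handle the bookkeeping. The map $\Psi$ covers $\eps:\tg\to\bb\times\g\times\t$ via the projection $(\b,x,y)\mto(\b,x)$, so it is automatic that the base point of $\Psi(\b,x,y)$ lies in $\eps(\tg)$. A dimension count gives $\dim\tgg=\dim\bb+2\dim\b$, while $\La\to\eps(\tg)\to\bb$ has total fibre dimension $\dim\b+\codim\eps(\tg)=\dim\b+\dim\b$, so the two are vector bundles of the same rank over $\bb$.

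The heart of the proof is the explicit description of the conormal fibre. Fix $(\b,x)\in\tg$ and identify $T_{(\b,x)}\tg$ with the set of pairs $(\xi,\eta)\in\g\times\b$ modulo the equivalence $(\xi,\eta)\sim(\xi+b,\eta-[b,x])$ for $b\in\b$, mapped to the tangent vector $(\xi\mod\b,\,[\xi,x]+\eta)\in\g/\b\times\g$; then $d\nu(\xi,\eta)=\eta\mod[\b,\b]$. Writing a covector as $(\Phi(\b,\alpha),u,v)$ with $\alpha\in[\b,\b]$, $u\in\g$, $v\in\t$, and pairing it against the tangent vector, associativity of the invariant form together with the identities $\t\perp[\b,\b]$ and $\b^\perp=[\b,\b]$ reduce the pairing to
\[
\langle\xi,\,[x,u]-\alpha\rangle+\langle u-v,\,\eta\rangle.
\]
Setting this to zero for all $(\xi,\eta)$, one deduces successively that $u\in\b$, that $v$ must equal the $\t$-component $u_\t$ of $u$ under $\b=\t\oplus[\b,\b]$, and that $\alpha=[x,u]$. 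Hence the conormal fibre at $\eps(\b,x)$ is parametrised bijectively by $u\in\b$.

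Having this, I would read off the conclusion. For $y\in\b$, the covector $\Psi(\b,x,y)$ has exactly $\alpha=[x,y]$, $u=y$, $v=\nu(\b,y)=y_\t$, which is precisely the conormal vector corresponding to $u=y$. So $\Psi$ lands in $\La$ and is a fibrewise linear isomorphism over $\bb$, sending the fibre $\b\times\b$ of $\tgg$ onto the fibre of $\La$ via $(x,y)\mto(\eps(\b,x);\text{covector with }u=y)$. The commutativity of \eqref{psi} is then tautological: the $\gg\times\tt$-projection of $\Psi$ is $\mmu\times\nnu$ and its $T^*\bb$-projection is $\Phi\circ\kkap$ by construction of $\Psi$. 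The main thing to be careful about is the sign conventions --- the sign involution built into $\Phi$ and the minus sign in the identification $\t\cong\t^*$ via $t\mto-\langle t,-\rangle$ --- since these are precisely what ensure that the conormal equation produces $\alpha=[x,y]$ (rather than its negative) and $v=\nu(\b,y)$, matching $\Psi$ on the nose.
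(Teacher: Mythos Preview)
Your proposal is correct and follows essentially the same route as the paper: both fix a point $(\b,x)\in\tg$, write down the tangent space to $\eps(\tg)$ at $\eps(\b,x)$, pair a general covector against it, and solve the resulting linear conditions to find that the conormal fibre is parametrised by $y\in\b$ via exactly the data $(\Phi(\b,[x,y]),y,\nu(\b,y))$ appearing in $\Psi$. Your explicit tracking of the sign conventions in $\Phi$ and in $\t\cong\t^*$ is a useful addition, since the paper's argument leaves these implicit.
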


\begin{proof}
Fix $\b\in\bb$ and $x\in\b$.
So,
$(\b,x)\in\tg$
and the corresponding point in $\bb\times\g\times\h$
is given by the triple $ u=(\b,\,x,\,x\mo)$.
The fiber of the tangent
bundle $\TT(\bb\times\g\times\h)$
 at the point
$ u$ may be identified with the vector
space $\TT_u(\bb\times\g\times\h)=(\g/\b)\times\g\times\h$.
Hence,
the tangent space to the submanifold
$\eps(\tg)$ equals
$$\TT_u(\eps(\tg))=
\big\{(\alpha\,\operatorname{mod}\b,\,[\alpha,x]+\beta,\,\beta\mo)\
\in\ 
(\g/\b)\times\g\times\h\enspace\big|\en \alpha\in\g,\ \beta\in\b\big\}.
$$

Now, write $\langle-,-\rangle$ for an invariant bilinear form on $\g$
and use it to identify the fiber  of the cotangent
bundle  $\TT^*(\bb\times\g\times\h)$ at 
$ u$  with the vector
space $[\b,\b]\times\g\times\h$.
Let $( n, y,h)\in [\b,\b]\times\g\times\h$
be a point of that vector space.
Such a point belongs to $\La_u$,
 the fiber at $u$ of the conormal bundle on the subvariety  $\eps(\tg)$,
if and only if the following equation holds
\beq{eq}
\langle\alpha, n\rangle+\langle[\alpha,x]+\beta,\, y\rangle+\langle\beta\mo,\ h\rangle=0
\quad\forall \alpha\in\g,\,\beta\in\b.
\eeq

Taking $\alpha=0$ and applying 
equation \eqref{eq}  we get $\langle[\b,\b],\, y\rangle=0$.
Hence,
$ y\in\b$ and  $h=-y\mo$.
Next, for any $\alpha\in\g$,
we have $\langle[\alpha,x],\, y\rangle=\langle\alpha,\,[x, y]\rangle$.
Hence, for $\beta=0$ and any $ n\in[\b,\b],\  y\in\b$,
equation \eqref{eq} gives
$$0=\langle\alpha, n\rangle+\langle[\alpha,x], y\rangle=\langle
\alpha,\, n+[x, y]\rangle\quad\forall\alpha\in\g.$$
It follows that $ n+[x, y]=0$. We conclude
 that 
\beq{qq}
\La_u=\{(-[x,y],\,y,\,y\mo)\in[\b,\b]\times\g\times\h,\quad y\in\b\}.
\eeq

We have a projection
$\varpi:\ \tgg\to\tg,\ (\b,x,y)\to (\b,x)$ along the last factor.
The vector space in right hand side of \eqref{qq} is equal to
the image of the set $\varpi\inv(\b,x)$ under the map
$\Psi=
(\Phi\ccirc\kkap)\times\mmu\times\nnu$.
We conclude that the map $\Psi$ gives
an isomorphism of vector bundles $\tgg\to \tg$
and $\La\to \eps(\tg),$ respectively.
\end{proof}

\subsection{The scheme $\tx$}\label{tx_sec}
We use the notation
$\iim:\ X\into T^*X$ for the zero section
of the cotangent bundle on a variety $X$.

We consider the following commutative diagram
where the vertical map $\mu|_{_{\wt\N}}$ is known as the Springer resolution,
\beq{kk}
\xymatrix{
\bb=\mu\inv(0)\ \ar[d]^<>(0.5){\mu} \ar@{^{(}->}[rr]^<>(0.5){\iim:\
\b\,\mto(\b,0)}&&\ 
\wt\N\
\ar@{->>}[d]^<>(0.4){\mu|_{_{\wt\N}}}&&\ \tgg\ \ar[ll]_<>(0.5){\kkap}
\ar[d]^<>(0.5){\mmu}\ar[rr]^<>(0.5){\nnu}&&\ \tt \ar[d]\\
\{0\}\  \ar@{^{(}->}[rr]&&
\ \NN\ \ar@{^{(}->}[r]&\ \g\ &\ \gg\ \ar[l]_<>(0.5){\kap}\ar[rr]&&\
\gg\times_{\gg/\!/G}\tt
}
\eeq

Let $\tx :=\kkap\inv\big(\iim(\bb)\big)\sset \tgg$, a scheme theoretic
preimage
of the zero section. Set theoretically, one has
\beq{txform}\tx =
\{(\b,x,y)\in\bb\times\g\times\g\mid
x,y\in \b,\en [x,y]=0\}.
\eeq
Diagram \eqref{kk} shows that the morphism $\mmu$ maps $\tx$ to $\zz$,
resp.
 $\mmu\times\nnu$ maps $\tx$ to
$\zz\times_{\zz/\!/G}\hh.$
It follows from Proposition \ref{conorm}
 that the map $\Psi$ induces
an isomorphism of schemes
$$\tx=\kkap\inv\big(\iim(\bb)\big)\en
\stackrel{\Psi}\cong\en
\pr_{\La\to T^*\bb}\inv\big(\iim(\bb)\big)=\La\,\cap\,(\bb\times
\gg\times\tt)$$
where the scheme structure on each side is that of a
scheme theoretic preimage.

Let $\wt\xx^{rr}:=\mmu\inv(\zz^{rr})$, a Zariski 
open subset of the scheme $\tx=\kkap\inv\big(\iim(\bb)\big)$.

\begin{lem}\label{codim1}  The  differential
of the morphism $\kkap: \tgg\to \wt\N$
is surjective at any point
 $(\b,x,y)\in\wt\xx^{rr};$ in particular,
the set $\wt\xx^{rr}$ is contained in the
smooth locus of the scheme $\tx$.
\end{lem}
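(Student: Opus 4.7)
My plan is to compute the differential $d\kkap$ at an arbitrary point $(\b,x,y)\in\wt\xx^{rr}$ and verify that it is surjective. Once this is established, the second statement of the lemma will follow by standard transversality: $\iim(\bb)$ is a smooth closed subscheme of the smooth scheme $\wt\N$, and $\tx=\kkap\inv(\iim(\bb))$, so surjectivity of $d\kkap$ at a point of $\tx$ will force $\tx$ to be smooth of the correct dimension at that point.

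For the calculation I will exploit the fact that both $\tgg$ and $\wt\N$ are $G$-equivariant vector bundles over $\bb$ via the first projection, and $\kkap$ covers the identity on $\bb$. Writing $\tgg\cong G\times_B(\b\times\b)$ and using the canonical splitting of the tangent bundle at a point of the zero section, I will identify
$$T_{(\b,x,y)}\tgg\cong(\g/\b)\oplus\b\oplus\b,\qquad T_{(\b,0)}\wt\N\cong(\g/\b)\oplus[\b,\b].$$
A direct computation using $G$-equivariance of the commutator and the equality $[x,y]=0$ then yields the explicit formula
$$d\kkap:\ (\bar\xi,\alpha,\beta)\ \longmapsto\ (\bar\xi,\ [\alpha,y]+[x,\beta]),$$
so surjectivity of $d\kkap$ reduces to surjectivity of the vertical map $\b\times\b\to[\b,\b]$, $(\alpha,\beta)\mapsto[\alpha,y]+[x,\beta]$.

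By definition of $\wt\xx^{rr}=\mmu\inv(\zz^{rr})$, at least one of $x,y$ is a regular element of $\g$. The argument being symmetric, I may assume $x$ is regular; I will then prove the stronger statement $[x,\b]=[\b,\b]$, which already gives surjectivity on the $\beta$-factor alone. Granting the inclusion $\g_x\subset\b$ discussed below, the operator $\ad x:\b\to[\b,\b]$ will have kernel $\g_x\cap\b=\g_x$ of dimension $\rk$, hence image of dimension $\dim\b-\rk=\#R^+=\dim[\b,\b]$, so the inclusion $(\ad x)(\b)\subset[\b,\b]$ becomes an equality.

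The main obstacle is therefore the inclusion $\g_x\subset\b$ for regular $x\in\b$. I will establish this by Jordan decomposition: write $x=s+n$ with $s,n\in\b$ (Borel subalgebras being stable under Jordan decomposition). Then $s$ is semisimple, $n\in\g_s$ is regular nilpotent in the reductive subalgebra $\g_s$, and $\b\cap\g_s$ is a Borel subalgebra of $\g_s$ containing $n$. By Kostant's description of the centralizer of a regular nilpotent element, $\g_x=(\g_s)_n$ is contained in the nilradical of $\b\cap\g_s$, hence in $\b$, which completes the argument.
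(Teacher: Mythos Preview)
Your approach is essentially the same as the paper's: reduce surjectivity of $d\kkap$ to surjectivity of the fiber map $\b\oplus\b\to[\b,\b]$, $(\alpha,\beta)\mapsto[\alpha,y]+[x,\beta]$, and then use regularity of $x$ (say) to show $[x,\b]=[\b,\b]$ already.

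There is one slip in your argument for $\g_x\subset\b$. You write that $\g_x=(\g_s)_n$ is contained in the \emph{nilradical} of $\b\cap\g_s$. This is false whenever $s\neq 0$: the center $\mathfrak z(\g_s)$ (which contains $s$) lies in $(\g_s)_n$ but not in the nilradical. Kostant's result applies to the semisimple part $[\g_s,\g_s]$, giving $([\g_s,\g_s])_n\subset[\b\cap\g_s,\b\cap\g_s]$; together with $\mathfrak z(\g_s)\subset\b\cap\g_s$ this still yields $\g_x\subset\b\cap\g_s\subset\b$, so your conclusion survives with this correction.

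The paper sidesteps this detour entirely by a pure dimension count: for any $x\in\b$ one has $\Ker(\ad x|_\b)=\g_x\cap\b\subset\g_x$, hence
\[
\dim\b-\dim\g_x\ \le\ \dim\b-\dim\Ker(\ad x|_\b)\ =\ \dim\,[x,\b]\ \le\ \dim[\b,\b].
\]
When $x$ is regular the two ends coincide (both equal $\#R^+$), forcing $[x,\b]=[\b,\b]$ and, as a free byproduct, $\g_x\subset\b$. This avoids Jordan decomposition and Kostant's theorem altogether.
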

\begin{proof}
Let $x\in\b$ and write $\ad_\b x$ for the map
$\b\mto [\b,\b], u\mto
[x,u]$. One has $\dim\b-\dim\g_x\leq\dim\b-\dim\Ker(\ad_\b x)=
\dim\im(\ad_\b x)\leq\dim[\b,\b]$.
For $x\in\b\cap\g^r,$ 
we have $\dim\g_x=\dim\t$ and  the above inequalities
yield $\dim[\b,\b]=\dim\b-\dim\g_x\leq\dim(\im\ad_\b x)
\leq\dim[\b,\b]$. Thus, in this case, 
$\im(\ad_\b x)=[\b,\b]$ i.e. the map $\ad_\b x$ is surjective.

Now, let $(\b,x,y)\in\wt\xx^{rr}.$ 
Without loss of generality,
we may assume that  $x$ is a regular element of $\g$.
The differential
of the commutator map $\kap:\
\b\times\b\to[\b,\b]$ at the point $(x,y)\in \b\times\b$
is a linear map $d_{\b,x,y}\kap:\ \b\oplus\b\to[\b,\b]$
given by the formula
$d_{\b,x,y}\kap:\ (u,v)\mto \ad_\b x(u)-\ad_\b y(v)$.
We see that
$\im(\ad_\b x)\subseteq \im(d_{\b,x,y}\kap).$
By the preceding paragraph,
we deduce that the
map $d_{\b,x,y}\kap$ is surjective.
The lemma follows from this by
$G$-equivariance.
\end{proof}

\begin{rem} The scheme $\tx$ is not
irreducible, in general, cf. \cite{Ba}. So, the open set $\tx^{rr}$
 is not necessarily dense in $\tx$.
\erem

We use Proposition \ref{conorm}
to identify the set $\wt\xx^{rr}\sset\tx\sset\tgg$
with a  subset of  $\La\cap(\bb\times\gg\times\tt)$.
Recall the notation  $\xx^{rr}=p\inv(\zz^{rr})$.

\begin{cor}\label{transv} \vi The varieties
$\La$ and $\bb\times\gg\times\tt$ meet transversely
at any point of  $\wt\xx^{rr}$,
so  $\wt\xx^{rr}$ is a smooth  Zariski open subset
of $\La\cap(\bb\times\gg\times\tt)$.

\vii  The  morphism
$\ppi:\wt\xx^{rr}\to \xx^{rr},$  the restriction of the
 morphism $\mmu\times\nnu$ to the
set $\wt\xx^{rr},$
is an isomorphism of algebraic varieties.
\end{cor}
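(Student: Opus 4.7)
My plan for (i) is to translate the transversality of $\La$ and $\bb\times\gg\times\tt$ inside $T^*\bb\times\gg\times\tt$ into a statement about the projection $\pr_{\La\to T^*\bb}$. Since $\bb\times\gg\times\tt$ is the preimage of the zero section $\iim(\bb)\subset T^*\bb$ under the first projection, $\La$ meets it transversely at a point exactly when the restricted projection $\pr_{\La\to T^*\bb}$ is transverse there to $\iim(\bb)$. The commutative diagram \eqref{psi} identifies $\pr_{\La\to T^*\bb}\circ\Psi$ with $\Phi\circ\kkap$, so after passing through the isomorphism $\Phi$ the required transversality reduces to surjectivity of $d\kkap$ at every point of $\wt\xx^{rr}$; this is exactly Lemma \ref{codim1}. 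Smoothness of $\wt\xx^{rr}$ as an open subset of $\La\cap(\bb\times\gg\times\tt)$ follows, and a dimension count (using that $\La$ is Lagrangian) gives $\dim\wt\xx^{rr}=\dim\g+\rk=\dim\xx^{rr}$.

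For (ii) my approach is to show $\ppi$ is a proper, bijective, birational morphism to a smooth (hence normal) variety and to invoke Zariski's main theorem. Properness of $\ppi$ is inherited from $\mmu\times\nnu:\tgg\to\gg\times\tt$, which is proper since $\bb$ is projective. Birationality follows from the observation that $\ppi$ restricts to an isomorphism over the dense open subset $\xr\subset\xx^{rr}$, a direct regular-semisimple analog of Proposition \ref{tb}(i). The substantive point is bijectivity on closed points. Given $(x,y,t_1,t_2)\in\xx^{rr}$, assume without loss of generality that $x\in\g^r$. Applying the defining relation of $\xx$ to polynomials depending only on the first variable forces $(x,t_1)\in\yr$, so Proposition \ref{tb}(i) yields a unique Borel $\b$ with $x\in\b$ and $\nu(\b,x)=t_1$. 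Since the Springer fiber $\mu\inv(x)$ is $0$-dimensional for regular $x$, the connected centralizer $Z_G(x)^0$ acts trivially on it and therefore lies in $B$; in particular $\g_x\subset\b$, so $y\in\b$. Setting $t_2':=\nu(\b,y)$, contracting $y$ along a generic one-parameter subgroup of $T\subset B$ yields $P(x,y)=P(t_1,t_2')$ for every $P\in\C[\gg]^G$; comparing with the defining relation $P(x,y)=P(t_1,t_2)$ of $\xx$ and applying Joseph's surjection $\C[\zz]^G\twoheadrightarrow\C[\tt]^W$ places $(t_1,t_2')$ and $(t_1,t_2)$ in the same $W$-orbit.

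The main remaining step, which I expect to be the only real technical obstacle, is to show $t_2'\in\t^{W_{t_1}}$, because then the only $W$-translate of $(t_1,t_2')$ with first coordinate $t_1$ is $(t_1,t_2')$ itself, forcing $t_2=t_2'$ and hence uniqueness of the preimage Borel. To handle this, I would conjugate within $B$ to arrange $x_s\in\t$; then $x_n$ is a regular nilpotent in the Levi $\g_{x_s}$ and $\g_x=(\g_{x_s})_{x_n}$. The image of $\g_x$ under the abelianization map $\b\to\b/[\b,\b]=\t$ coincides with the image of a regular-nilpotent centralizer in its Borel, which equals the center $Z(\g_{x_s})=\t^{W_{x_s}}=\t^{W_{t_1}}$. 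This completes bijectivity, hence (ii).
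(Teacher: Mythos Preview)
Your proof is correct. Part (i) matches the paper exactly: the paper simply states that (i) ``is equivalent to the statement of Lemma~\ref{codim1}'', which is precisely the translation you carry out via diagram~\eqref{psi}.

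For part (ii) your argument works, but it is noticeably more laborious than the paper's. Both proofs end with Zariski's main theorem applied to a bijection between smooth varieties, but the paper establishes bijectivity without ever needing to verify $\nu(\b,y)=t_2$. It separates the two directions: for \emph{injectivity}, it factors $(\b,x,y)\mapsto (x,y,t_1,t_2)$ through $(\b,x,y)\mapsto ((\b,x),y)\mapsto (\pi(\b,x),y)$, and since $\pi:\tg^r\iso\yr$ is an isomorphism (Proposition~\ref{tb}(i)), the pair $(x,t_1)$ alone already determines $\b$ when $x\in\g^r$. For \emph{surjectivity}, the paper simply notes that $\ppi$ is proper and its image contains the dense open subset $\xr$, hence is all of $\xx^{rr}$. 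This completely bypasses your construction of an explicit preimage and the attendant verification that $t_2'=t_2$, so the $W$-orbit argument, Joseph's surjectivity, and the computation that the image of $\g_x$ in $\b/[\b,\b]$ equals $\t^{W_{t_1}}$ are all unnecessary. Your route proves a bit more (an explicit description of the inverse), but at the cost of the technical Levi/regular-nilpotent-centralizer analysis that you correctly flagged as the main obstacle.
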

\begin{proof} Part (i) is equivalent to the statement of 
Lemma \ref{codim1} . 
To prove (ii), let $X$ denote the
preimage of $\zz^{rr}$ under
the first projection $\zz\times_{\zz/\!/G} \tt\to\zz$,
so we have $X_\red=\xx^{rr}$.
It is clear
that the  morphism
$\mmu\times\nnu$  maps $\wt\xx^{rr}$
to $X$.
We claim
that 
the resulting map 
$\ppi:\ \wt\xx^{rr}\to X$
is a set theoretic
bijection.
Indeed, it 
is surjective, by Lemma \ref{xxbasic}(i),
since the image of this map
contains the set $\zz^{rs}\times_{\zz/\!/G} \tt$
and $\mmu\times\nnu$, hence also $\ppi,$ is a proper morphism.
To prove injectivity, we interpret the
map $(\b,x,y)\mto (x,y)$ as a composition
of the imbedding $\tx\into\tg\times\g,\ (\b,x,y)\mto (\b,x)\times y$
and the map $\pi\times\Id_\g:\
\tg\times\g\to\yy\times\g$.
This last map gives a bijection
between the set $\yr\times\g$ and its preimage
in $\tg\times\g$, thanks to Proposition 
\ref{tb}(i). Our claim follows.

Recall that $\wt\xx^{rr}$ is a smooth scheme by 
 Lemma \ref{codim1}. 
Hence, the  scheme theoretic image of $\wt\xx^{rr}$
under the morphism $\ppi$ 
is actually contained 
in $\xx^{rr}=X_\red$.
The reduced scheme $\xx^{rr}$ is
smooth by Corollary \ref{xxcor}.
Thus, 
$\ppi: \wt\xx^{rr}\to \xx^{rr}$ is a   morphism
of smooth varieties,
which is a set theoretic bijection.
Such a morphism is necessarily an isomorphism,
by  Zariski's main theorem,
and part (ii) follows.

There is an alternative proof that the
morphism $\ppi: \wt\xx^{rr}\to \xx^{rr}$
is \'etale 
based on symplectic geometry. In more detail,
put $X=\bb$ and $Y=\g\times\t$
and let $\eps: \tg\into X\times Y$ be 
the imbedding, cf. \S\ref{sympl}.
We have smooth locally closed subvarieties
$\yr\sset \g\times\t,\ \pi\inv(\yr)\sset\tg$
and
$Z:=\eps(\pi\inv(\yr))\sset X\times Y$,
respectively. We use  Proposition \ref{conormal}, resp.
 Proposition \ref{conorm},
to identify  $\xx_1$ with $N_\yr$,
resp. $\ppi\inv(\g^r\times\g)\cap\La$ with $N_Z$.

We know that
the projection $X\times Y\to Y$
induces an isomorphism $Z\iso\yr,$
by Proposition \ref{tb}(i).
Now, one can prove a general result saying 
that, in this case, the map $(X \times T^*Y)\cap N_Z\
\to\ N_{\yr}$
induced by the projection
$T^*X\times T^*Y\to T^*Y$
is \'etale at any point
where  $N_Z$
meets the subvariety $X \times T^*Y\sset T^*(X\times Y)$
transversely. The latter condition
holds in our case
thanks to part (i) of the Corollary.
This implies the isomorphism
$\ppi:\ \ppi\inv(\zz_1)\iso \xx_1$.
The isomorphism $\ppi:\ \ppi\inv(\zz_2)\iso \xx_2$
then follows by symmetry.
\end{proof}

\subsection{A DG algebra}\label{dima_sec} In this subsection, we
construct
a sheaf $\aa$ of DG $\oo_{\tgg}$-algebras  such that
$\H^0(\aa)$,
the zero cohomology sheaf, is isomorphic
to the structure sheaf of the closed subscheme $\tx\sset\tgg.$
To explain the construction,
let  $\T:=\T_\bb$ and write $\iim: \bb\into T^*\bb$ for the
zero section, resp. $q: T^*\bb\to \bb$
for the projection. 

The sheaf $q^*\T^*$ on $T^*\bb$
comes equipped with  a canonical Euler section  $\eu$
such that, for each  covector $\xi\in T^*\bb,$ the value of $\eu$ at the
point
$\xi$ is equal to $\xi$.
Further, 
there is  a standard   Koszul complex $\ldots\to  \wedge^3q^*\T\to\wedge^2q^*\T\to
q^*\T\to0$
with differential $\partial_\eu$
 given  by contraction  with $\eu$.
The  complex $(\wedge^\hdot q^*\T,\ \partial_\eu)$ is a  locally
free resolution of the sheaf $\iim_*\oo_\bb$ on $T^*\bb$.

We may
use the isomorphism $T^*\bb\cong\wt\N$ to view the morphism $\kkap$
as a map $\tgg\to  T^*\bb$.
Hence, the  pull-back of the Koszul
complex above via the map $\kkap$
is a complex of locally free sheaves on $\tgg$
that represents the
object $L\kkap^*(\iim_*\oo_\bb)\in\dcoh(\tgg)$.

Let  ${\mathfrak q}: \tgg\to\bb$
denote  the first   projection, so we have
$\kkap^*q^*\T={\mathfrak q}^*\T$
and one may identify $\kkap^*\eu$, the pull-back of the section $\eu$,
 with a section of ${\mathfrak q}^*\T^*.$
For each $n\geq 0,$ let $\aa_n=
\wedge^n\,{\mathfrak q}^*\T=\wedge^n\,\kkap^*q^*\T$,
where  $\aa_0:=\oo_{\tgg}$.
Contraction with $\kkap^*\eu$ gives
a  differential
$\partial_{\kkap^*\eu}:\,\aa_\hdot\to\aa_{\idot-1}$.
Thus, we may (and will) view $\aa:=\bplus_{n\geq 0}\aa_n$
as a sheaf of coherent DG $\,\oo_\tgg$-algebras, with multiplication given by the
wedge product and with the differential
$\partial_{\kkap^*\eu}$,
a graded derivation of degree $(-1)$.

\begin{notation} Write $\H^j({\mathcal F})\in\coh X$
for the $j$th cohomology sheaf of an object
${\mathcal F}\in \dcoh( X)$.
\end{notation}

By construction, one has
 $\H^0(\aa,\partial_{\kkap^*\eu})=\oo_{\tx}$,
cf. \eqref{txform}.
Thus, one may view $\aa$ as the structure sheaf
of a certain DG scheme, a "derived analogue" of the
scheme $\tx$.

The DG algebra $\aa$ has also appeared, in an implicit
form, in a calculation
in \cite{SV}.

\begin{rem}\label{adual}
The DG algebra 
 $\aa$ is concentrated in degrees
$0\leq i\leq d:=\dim \bb$ and we have $\aa_d=\kk_\tgg$, by
 Proposition \ref{tb}(iv). 
It follows that $\aa$ is a  {\em self-dual} DG algebra in the sense
that
 multiplication in $\aa$ yields
an isomorphism of complexes
$$
\aa_{d-\hdot}\iso\hhom_{_{\oo_{\tgg}}}(\aa_\idot, \aa_d)=
 \hhom_{_{\oo_{\tgg}}}(\aa_\idot, \kk_{\tgg}).
$$
\end{rem}
\section{Proof of the main theorem}\label{sympl_sec}
\subsection{The Hotta-Kashiwara construction}\label{hk}
The structure sheaf $\oo_\tg$ has an obvious  structure of
holonomic left $\dd_\tg$-module. So, 
one has $\int_{\mu\times\nu}\oo_\tg$, 
 the direct image of this $\dd_\tg$-module 
via the projective morphism $\mu\times\nu$,
cf. \S\ref{tx}.
Each cohomology group
$\H^k(\int_{\mu\times\nu}\oo_\tg)$ is a holonomic
$\DD$-module  set theoretically
supported on the variety $\yy\sset \g\times\t$.

 Hotta and Kashiwara proved the following important result,
\cite[Theorem 4.2]{HK1}.

\begin{thm}\label{pimm} For  any
$k\neq 0$, we have
$\H^k(\int_{\mu\times\nu}\oo_\tg)=0$;
furthermore,
there is a natural
isomorphism
$\H^0(\int_{\mu\times\nu}\oo_\tg)
\cong\bj_{!*}\oo_\yr\ $ of
$\DD$-modules.
\end{thm}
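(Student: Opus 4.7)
\textsc{Plan.}
My plan is to deduce Theorem~\ref{pimm} from M.~Saito's decomposition theorem for projective direct images of pure polarizable Hodge modules, combined with a semismallness estimate for the morphism $\pi:=\mu\times\nu$. Observe first that $\pi$ is projective: it factors as the closed imbedding $\eps:\tg\into \bb\times\g\times\t$ of \S\ref{sympl} followed by the projection along the projective factor $\bb$. Since $\tg$ is smooth and irreducible, $\oo_\tg$ underlies a simple polarizable Hodge module, so Saito's theorem implies that $\int_\pi\oo_\tg$ decomposes in the derived category of Hodge modules on $\g\times\t$ as a direct sum of shifted simple polarizable Hodge modules, each supported set-theoretically in the image $\yy$.

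A distinguished summand is then straightforward to identify. By Proposition~\ref{tb}(i), $\pi$ restricts to an isomorphism $\pi^{-1}(\yr)\iso \yr$, so $(\int_\pi\oo_\tg)|_{\yr}\cong \oo_\yr$ is concentrated in cohomological degree zero. Since $\bj_{!*}\oo_\yr$ is the unique simple holonomic $\DD$-module on $\g\times\t$ whose restriction to the smooth open subset $\yr$ equals $\oo_\yr$, Saito's decomposition forces $\bj_{!*}\oo_\yr$ to appear with multiplicity one as a direct summand of $\H^0(\int_\pi\oo_\tg)$.

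The crucial remaining step is to rule out any other simple summand (which would then be supported in $\yy\sminus\yr$) and any nonzero cohomological shift. For this I would verify that $\pi$ is \emph{semismall}: that for every $d\geq 1$ the locus $\{(x,t)\in\yy : \dim \pi^{-1}(x,t)\geq d\}$ has codimension $\geq 2d$ in $\yy$. Using the Jordan decomposition $x=x_s+x_n$, the condition $\nu(\b,x)=t$ forces any Borel $\b$ appearing in a fiber to contain $x_s$, so $\pi^{-1}(x,t)$ identifies with the Springer fiber of $x_n$ inside the Levi subalgebra $\g_{x_s}$; the classical Steinberg bound $2\dim\mu_{\g_{x_s}}^{-1}(x_n) = \dim(\g_{x_s})_{x_n}-\rk\g$, combined with standard dimension counts for the stratification of $\g$ by centralizer type, yields the required codimension estimate. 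Saito's decomposition theorem in the semismall case then forces $\int_\pi\oo_\tg$ to be concentrated in degree zero with no summand beyond $\bj_{!*}\oo_\yr$, completing the proof. I expect the main obstacle to be the semismallness estimate itself: the second-coordinate constraint $\nu(\b,x)=t$ must be used in an essential way, since the bare Springer-fiber bound for $\mu^{-1}(x)$ alone is insufficient. An attractive alternative that sidesteps part of this analysis is to exploit $W$-equivariance: over $\g^{rs}\times\t$ the map $\pi$ is a disjoint union of $W$-translates of a graph, so the corresponding direct image decomposes into $W$-isotypes; each isotype intermediate-extends uniquely to a simple $\DD$-module on $\g\times\t$, reducing the vanishing of extraneous summands to a local statement in a neighborhood of the nilpotent cone.
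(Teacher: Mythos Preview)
The paper does not prove Theorem~\ref{pimm} at all: it is quoted as an important result of Hotta and Kashiwara, \cite[Theorem~4.2]{HK1}, and the surrounding discussion only records the generating section $(dx\,dt)^{-1}\otimes(\mu\times\nu)_*\omega$ used in \cite{HK1} to identify $\H^0$. Your proposal therefore competes not with this paper but with \cite{HK1}, whose argument is essentially the one you outline---smallness of the Grothendieck--Springer map combined with the decomposition theorem (BBD in 1984; your use of Saito is a harmless upgrade).

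Two points deserve sharpening. First, \emph{semismallness} of $\pi=\mu\times\nu$ yields only that $\int_\pi\oo_\tg$ is concentrated in degree~$0$; to exclude further IC summands supported on $\yy\sminus\yr$ one needs $\pi$ to be \emph{small} (strict inequality $\operatorname{codim}>2d$ for every $d\geq 1$). The map $\pi$ is indeed small, and the cleanest argument avoids the fiber analysis you propose: one has $\pi^{-1}(x,t)\subseteq\mu^{-1}(x)$ and $pr\circ\pi=\mu$ with $pr:\yy\to\g$ finite, so smallness of $\pi:\tg\to\yy$ follows immediately from the classical smallness of $\mu:\tg\to\g$ (Lusztig, Borho--MacPherson; the stratum computation gives $\operatorname{codim}-2d=\rk\g-\dim\mathfrak{z}(\g_{x_s})\geq 1$ whenever $d\geq 1$). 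Second, your claim that ``$\pi^{-1}(x,t)$ identifies with the Springer fiber of $x_n$ inside $\g_{x_s}$'' is not accurate: that description gives all of $\mu^{-1}(x)$, while $\pi^{-1}(x,t)$ is the closed subset cut out by the extra condition $\nu(\b,x)=t$. The error is harmless---you are overestimating the fiber---but it means, contrary to your expectation, that the $\nu=t$ constraint plays no essential role in the smallness estimate.
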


The $\DD$-module
$\H^0(\int_{\mu\times\nu}\oo_\tg)$
has a canonical nonzero section $(dx\, dt)\inv\bo(\mu\times\nu)_*\om$,
cf.  Proposition
\ref{tb}(iii).  Hotta and Kashiwara \cite[\S 4.7]{HK1} show  that
this section is annihilated by the  left ideal
$$
\I':=\DD\cdot(\ad\g\o 1)+\DD\cdot\{P-\rad(P)\mid P\in \C[\g]^G\}+
\DD\cdot\{Q-\rad(Q)\mid Q\in(\sym\g)^G\}
$$
(we have exploited their argument in the proof of Lemma 
\ref{supp}).

Furthermore,  it is proved in \cite[\S 4.7]{HK1} that
$\DD/\I'$ is a simple $\DD$-module and
 the assignment $u\mto u[(dx\,
dt)\inv\bo(\mu\times\nu)_*\om]$ yields a
$\DD$-module isomorphism
$\DD/\I'\iso \H^0(\int_{\mu\times\nu}\oo_\tg).$
 An alternative, more direct, proof of
an analogous isomorphism may be found in \cite{HK2}.

\begin{rem}\label{I=I} Hotta and Kashiwara 
defined the \hc module as the quotient
$\DD/\I'$. Comparing
formula \eqref{M} with the definition of the
ideal $\I'$ we see that
one has  $\I'\subseteq\I$.
This  inclusion of left ideals yields a surjection $\DD/\I'\onto\DD/\I=\mm$.
Since $\DD/\I'$ is a simple $\DD$-module,
this surjection must be an isomorphism.
So, Definition \ref{mm_def} is equivalent, {\em a posteriori},
to the one used by  Hotta and Kashiwara.
\end{rem}

\subsection{Direct image for filtered $\dd$-modules}\label{filt_sec}
Let  $f:\ X\to Y$ a  proper morphism.
The direct image functor $\int_f$ can be upgraded to
a functor
$D^b_{coh}({{\mathsf F}\dd_X})\to
D^b_{coh}({{\mathsf F}\dd_Y}),$
$(M,F)\mto \int_f(M,F)$
between filtered derived categories, cf. \cite[\S4]{La},
 \cite[\S2.3]{Sa}.
The latter functor is known to commute with the
associated graded functor $\dgr(-)$.
We will only need a special
case of this result for  maps of the form
$f: X\times Y\to Y,$ the  projection along
a proper variety $X$. 
In such a case, one has a diagram
\beq{fdiag}
\xymatrix{
T^*Y\ &&\ X\times T^*Y\ \ar@{->>}[ll]_<>(0.5){\vvpi }
\ar@{^{(}->}[rrr]^<>(0.5){\vsi=\iim\times\Id_{T^*Y}}
&&&\ T^*X\times T^*Y=T^*(X\times Y).
}
\eeq
Here, $\vvpi$  denotes the second projection and
 $\iim$ is the zero section. 

The relation between the functors $\dgr(-)$ and $\int_f$
is provided by the following result,
see \cite{La}, formula (5.6.1.2).
\begin{thm}
\label{fgr} Let $f: X\times Y\to Y$ be the
second projection where
$X$ is proper.
Then, for any $(M,F)\in D^b_{coh}({{\mathsf F}\dd_{X\times Y}})$,
in $\dcoh(T^*Y)$ there is a functorial
isomorphism
$$\dgr\mbox{$\left(\int_f (M,F)\right)$}=
R\vvpi _*\big[(\kk_X\boxtimes \oo_{T^*Y})\ 
\bo_{\oo_{X\times T^*Y}}
L\vsi^*
\dgr(M,F)\big].
\eqno\Box$$
\end{thm}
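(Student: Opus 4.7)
The plan is to proceed via the Rees algebra formalism of Laumon. On a smooth variety $Z$, the filtered derived category $D^b_{coh}({\mathsf F}\dd_Z)$ is equivalent to the derived category of graded coherent modules over the Rees algebra $R\dd_Z := \bigoplus_i F^\ord_i\dd_Z\cdot\hbar^i$, a flat $\C[\hbar]$-deformation connecting $\dd_Z$ (at $\hbar=1$) to $\gr^\ord\dd_Z = \pi_{T^*Z,*}\oo_{T^*Z}$ (at $\hbar=0$). Under this equivalence the functor $\dgr(-)$ corresponds to derived reduction modulo $\hbar$ (followed by the usual identification with a complex of coherent sheaves on $T^*Z$). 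The entire direct image functor lifts to the Rees setting: for the proper projection $f : X\times Y\to Y$ one defines $\int^R_f := Rf_*\bigl(R\dd_{X\times Y\to Y}\otimes^L_{R\dd_{X\times Y}}-\bigr)$, and reduction at $\hbar=1$ recovers the usual $\int_f$. The theorem will follow once I show that reduction modulo $\hbar$ intertwines $\int^R_f$ with the operation appearing on the right-hand side of the claimed formula.

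The first step is to compute the transfer bimodule on the Rees level. Since $f$ is a projection, one has $R\dd_{X\times Y\to Y} = \oo_{X\times Y}\otimes_{f\inv\oo_Y} f\inv R\dd_Y$. Reducing mod $\hbar$ and identifying $\gr^\ord\dd_Y$ with $\pi_{T^*Y,*}\oo_{T^*Y}$ shows that the classical shadow of the transfer module is the pushforward from $X\times T^*Y$ along $\vsi$ of the structure sheaf, twisted by the $\kk_X$-factor which enters from the obligatory left/right $\dd$-module side-changing in the definition of $\int_f$ on left modules. A Koszul resolution of $\vsi_*\oo_{X\times T^*Y}$ along the vector-bundle embedding $\vsi = \iim\times\Id_{T^*Y}$ identifies $-\otimes^L_{\oo_{T^*(X\times Y)}}\vsi_*\oo_{X\times T^*Y}$ with $\vsi_*L\vsi^*(-)$, which is precisely the shape of the right-hand side after pushing forward by $R\vvpi_*$ (using that $\vvpi = f\circ\vsi^{-1}$ on the support).

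The second step is compatibility of $Rf_*$ with reduction mod $\hbar$. Given any $(M,F)$, one chooses a strict resolution in $D^b_{coh}({\mathsf F}\dd_{X\times Y})$ whose Rees lift is $\hbar$-flat and $f_*$-acyclic; this is routine for the projection along a proper variety $X$, since coherence on the Rees side and properness of $f$ guarantee that higher direct images are coherent and that one can truncate to a bounded complex of $f_*$-acyclic objects. For such a resolution, $Rf_*$ commutes with $\otimes^L_{\C[\hbar]}\C$, so $\dgr\int^R_f$ equals the composition of Koszul-pullback, twist by $\kk_X\boxtimes\oo_{T^*Y}$, and $R\vvpi_*$ applied to $\dgr(M,F)$, which is the formula.

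The main technical obstacle is the strictness verification: one must ensure that the chosen Rees resolution stays $\hbar$-torsion-free after tensoring with the transfer module, so that reduction mod $\hbar$ does not introduce extraneous $\Tor$ terms and commutes with $Rf_*$. This is the content of the general flatness/strictness lemma for the direct image of strict filtered complexes under projections along proper bases, for which I would refer to \cite[\S4]{La}. Once that is in place, the remaining identifications (Koszul for $L\vsi^*$, side-change for the $\kk_X$ twist, projection formula for extracting $\kk_X\boxtimes\oo_{T^*Y}$ from the tensor product) are formal.
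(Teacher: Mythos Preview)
The paper does not prove this theorem: it is quoted verbatim from Laumon \cite[formula (5.6.1.2)]{La} and the statement is closed with a $\Box$ immediately after the display, with no argument given. So there is nothing in the paper to compare your proposal against.

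That said, your sketch is a faithful outline of how Laumon's formalism actually establishes the result: the Rees-algebra model of $D^b_{coh}({\mathsf F}\dd_Z)$, the identification of $\dgr$ with reduction mod $\hbar$, and the computation of the classical limit of the transfer bimodule as $\vsi_*$ of a line-bundle twist are exactly the ingredients of \cite[\S4--5]{La}. One small caveat: your ``main technical obstacle'' paragraph is slightly misstated. The issue is not strictness of $\int_f(M,F)$ (that is \emph{not} asserted here and indeed fails in general; it is Saito's Theorem~\ref{saito} that supplies strictness for Hodge modules). What is needed, and what Laumon proves, is only that for a suitably chosen $\hbar$-flat, $f_*$-acyclic representative the derived pushforward commutes with $-\otimes^L_{\C[\hbar]}\C$; this is a base-change statement, not a strictness statement. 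With that adjustment your outline is correct.
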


The cohomology sheaves $\H^\hdot\big(\int_f(M,F)\big)$
 of the filtered complex $\int_f(M,F)$
come equipped 
with an induced filtration. However,
Theorem \ref{fgr} is not sufficient, in general, for
describing $\ggr\H^\hdot\big(\int_f(M,F)\big)$,
the associated graded sheaves.
A theorem of Saito stated below says that, in
the case of Hodge modules,  Theorem \ref{fgr} 
{\em is} indeed  sufficient for that.

Let $f: X\to Y$ be a proper morphism of smooth
varieties. A  \mhm  on $X$
may be viewed as an object $ (M,F)\in D^b_{coh}({{\mathsf F}\dd_X})$,
so there is a well defined object
 $\int_f(M,F)\in  D^b_{coh}({{\mathsf F}\dd_Y})$.

One of the main results of Saito's theory 
reads,
see \cite[Theorem 5.3.1]{Sa}:
\begin{thm}\label{saito} 
For any
 $(M,F)\in \mh(X)$ and any {\em projective} morphism
  $f: X\to Y$, the filtered complex $\int_f(M,F)$
is  strict, cf. Definition \ref{strict_def}, and each cohomology group
$\H^j(\int_f(M,F))$ has the natural structure of a 
\mhm  on ~$Y$.
\end{thm}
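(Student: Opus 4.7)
The plan is to factor the projective morphism $f:X\to Y$ as a composition $X\stackrel{i}{\hookrightarrow}\mathbb{P}(E)\times Y\stackrel{\pi}{\to}Y$, where $i$ is a closed embedding into a projective bundle over $Y$ (using the relative ampleness of some line bundle on $X/Y$) and $\pi$ is the smooth projection. Since $\int_f=\int_\pi\circ\int_i$ and the class of filtered complexes satisfying the conclusions is closed under composition, it suffices to handle these two cases separately.

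For the closed embedding $i$, I would invoke a filtered version of Kashiwara's equivalence: the functor $\int_i$ establishes an equivalence between filtered $\dd_X$-modules underlying Hodge modules and filtered $\dd_{\mathbb{P}(E)\times Y}$-modules supported on $i(X)$. Concretely, $\int_i(M,F)$ is built from $M$ by tensoring with the symmetric algebra of the normal bundle, with the Hodge filtration shifted so that the Griffiths-style transversality is preserved. Strictness in this case is automatic since $\int_i$ is exact on filtered modules (it increases degrees in a controlled way, and no cancellations can occur), and the fact that $\H^0(\int_i(M,F))$ is again a Hodge module is essentially built into Saito's definition of $\mh$ (stability under closed pushforward).

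For the smooth projection $\pi:\mathbb{P}(E)\times Y\to Y$, I would proceed by induction on the relative dimension $n=\dim\mathbb{P}(E)/Y$ using a relative Lefschetz pencil. Choose a relative hyperplane section and compare $\pi$ with the analogous projection for the blowup. The key input is the \emph{relative hard Lefschetz theorem} for Hodge modules: cup product with $c_1(\mathcal{O}_{\mathbb{P}(E)}(1))^i$ induces isomorphisms
\[
\H^{-i}\mbox{$\left(\int_\pi(M,F)\right)$}\ \iso\ \H^{i}\mbox{$\left(\int_\pi(M,F[i])\right)$}.
\]
Combined with the self-duality ${\mathsf F}{\mathbb D}(M,F)\cong (M,F)$ up to shift and the compatibility of $\dgr$ with Grothendieck--Serre duality recalled in \S\ref{hodged}, hard Lefschetz forces the spectral sequence $\H^\hdot(\gr\int_\pi(M,F))\Rightarrow \gr\H^\hdot(\int_\pi(M,F))$ to degenerate at $E_1$, which is exactly the strictness property of Definition \ref{strict_def}.

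The main obstacle, which is also the technical heart of Saito's theory, is the relative hard Lefschetz theorem itself. It cannot be isolated from the inductive construction of the category $\mh(Y)$: one must prove strictness of $\int_\pi$, the hard Lefschetz isomorphism, the decomposition theorem $\int_\pi(M,F)\cong\bigoplus_j\H^j(\int_\pi(M,F))[-j]$, and the Hodge-module structure on each $\H^j$ \emph{simultaneously} by induction on $\dim X$. The polarization data carried by a polarizable Hodge module is indispensable here, as it supplies the positive-definite pairings needed to bootstrap Lefschetz from the absolute case (classical Hodge theory of a smooth projective variety, which serves as the base of the induction) to the relative case. Verifying that polarizations are preserved under direct image, nearby cycles, and vanishing cycles is the intertwined combinatorial work of \cite{Sa}.
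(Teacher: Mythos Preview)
The paper does not prove this statement: Theorem \ref{saito} is quoted verbatim as ``one of the main results of Saito's theory'' with a reference to \cite[Theorem 5.3.1]{Sa}, and no argument is supplied. So there is nothing in the paper to compare your proposal against.

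That said, your outline is a reasonable high-level sketch of how Saito's proof in \cite{Sa} actually proceeds: factor $f$ as a closed immersion followed by a projection from a projective bundle, handle the immersion by filtered Kashiwara, and handle the projection by an intertwined induction establishing relative hard Lefschetz, strictness, and the decomposition simultaneously, with polarizations doing the real work. You correctly identify that this last package cannot be disentangled and constitutes the technical core of \cite{Sa}. Just be aware that what you have written is an overview rather than a proof: the genuine content lies in the inductive machinery with nearby/vanishing cycles and the verification that polarizations survive, none of which you (or the paper under review) attempt to carry out.
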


In the situation of the theorem, we refer to
 the induced filtration on 
 $\H^j(\int_f(M,F)),\ j=0,1,\ldots$,
as the Hodge filtration
and let $\ggr^\hodge\H^j(\int_f(M,F))$
denote the associated graded coherent sheaf on $T^*Y$.
Similar notation will be used for right $\dd$-modules.

\subsection{Key result}\label{filtmmG}
We recall the setup of \S\ref{sympl}.
Thus, we have the immersion
$\eps:\ \tg\into \bb\times\g\times\t,$ $(\b,x)\mto (\b, x,\, \nu(\b, x))$
and we write $\La\sset T^*(\bb\times\g\times\t)$ for the total space of the
conormal bundle on $\eps(\tg).$
We will view the structure sheaf $\oo_\La$
as a coherent sheaf on $T^*\bb\times\gg\times\tt$
supported on $\La$.

In the special case where
$X=\bb$ and $Y=\g\times\t$ diagram \eqref{fdiag}
takes the form
\beq{cohisodiag}
\xymatrix{
\La\ \sset 
\ T^*\bb\times\gg\times\tt\ &
\ \bb\times\gg\times\tt\ \ar@{_{(}->}[l]_<>(0.5){\vsi}
\ar@{->>}[r]^<>(0.5){\vvpi}\ &\ \gg\times\tt.
}
\eeq

\begin{thm}\label{cohiso} All nonzero cohomology
sheaves of the object
$R\vvpi _*L\vsi^*
\,\oo_\La\in \dcoh(\gg\times\tt)$ vanish and, we have
$$\H^0\br{R\vvpi _*L\vsi^*
\,\oo_\La}=\ggr^\hodge\mm.$$
\end{thm}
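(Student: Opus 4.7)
The plan is to factor the morphism $\mu\times\nu: \tg\to \g\times\t$ through the closed immersion $\eps:\tg\into \bb\times\g\times\t$ followed by the projection $f:\bb\times\g\times\t\to\g\times\t$ along the proper variety $\bb$, and then to combine Saito's direct image/strictness machinery with the Hotta--Kashiwara description of $\mm$. Since $\oo_\tg$ carries the trivial Hodge structure and $\eps$ is a closed immersion of smooth varieties, the filtered direct image $\int_\eps(\oo_\tg,F^\hodge)$ is concentrated in degree $0$ and, with our normalization convention of Remark \ref{shift}, its Hodge filtration coincides with the order filtration of \S\ref{ddfilt}. Formula \eqref{shiftgr} together with Proposition \ref{conorm} then identify the associated graded sheaf on $T^*(\bb\times\g\times\t)$ with the structure sheaf $\oo_\La$ of the conormal variety to $\eps(\tg)$.

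Next, I would apply Theorem \ref{fgr} to the projection $f$, with $X=\bb$ and $Y=\g\times\t$, taking $(M,F)=\int_\eps(\oo_\tg,F^\hodge)$. The statement of Theorem \ref{fgr} specialized to this situation reads
\[
\dgr\!\Bigl(\int_f(M,F)\Bigr)\ =\ R\vvpi_*\bigl[(\kk_\bb\boxtimes\oo_{T^*(\g\times\t)})\otimes^L_{\oo_{\bb\times T^*(\g\times\t)}} L\vsi^*\oo_\La\bigr],
\]
and since $\bb$ is projective with trivial canonical sheaf twist absorbed into the passage between left and right $\dd$-modules (compare $\aa_{\dim\bb}=\kk_\tgg={\mathfrak q}^*\kk_\bb$ in Remark \ref{adual}), this reduces to $R\vvpi_*L\vsi^*\oo_\La$.

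Finally, by Saito's strictness theorem (Theorem \ref{saito}) applied to the projective morphism $\mu\times\nu = f\circ\eps$, the filtered complex $\int_{\mu\times\nu}(\oo_\tg,F^\hodge)$ is strict; equivalently, the spectral sequence $H^\hdot(\dgr)\Rightarrow \dgr H^\hdot$ degenerates at $E_1$. Hence
\[
\H^j\bigl(R\vvpi_*L\vsi^*\oo_\La\bigr)\ =\ \ggr^\hodge\H^j\Bigl(\int_{\mu\times\nu}(\oo_\tg,F^\hodge)\Bigr).
\]
But Theorem \ref{pimm} tells us that $\H^j(\int_{\mu\times\nu}\oo_\tg)=0$ for $j\neq 0$ and that $\H^0(\int_{\mu\times\nu}\oo_\tg)\cong\bj_{!*}\oo_\yr\cong\mm$ (using Lemma \ref{supp}(ii)). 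The vanishing immediately yields $\H^j(R\vvpi_*L\vsi^*\oo_\La)=0$ for $j\neq 0$, while the degree zero identity gives $\H^0(R\vvpi_*L\vsi^*\oo_\La)=\ggr^\hodge\mm$, as claimed.

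The principal obstacle is the first step: pinning down that the associated graded of $\int_\eps\oo_\tg$ with its Hodge filtration is \emph{exactly} $\oo_\La$ and not merely some sheaf set-theoretically supported on $\La$. Here the precise normalization in Remark \ref{shift} is essential, as is keeping track of the left/right $\dd$-module conventions when passing through the canonical bundles of $\tg$ and $\bb\times\g\times\t$ (which is why the conormal identification of Proposition \ref{conorm} is formulated in the symplectic form that it is). Once this step is secured, the remaining arguments are a formal combination of the functoriality of Saito's filtered direct image with the Hotta--Kashiwara vanishing.
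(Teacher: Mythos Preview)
Your strategy is exactly the paper's: factor $\mu\times\nu=f\circ\eps$, compute $\dgr$ of $\int_\eps\oo_\tg$, feed it into Theorem~\ref{fgr}, and finish with Saito's strictness (Theorem~\ref{saito}) plus Hotta--Kashiwara (Theorem~\ref{pimm}).

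The one place your write-up is loose is the canonical-bundle bookkeeping, and it does not come out consistently as written. You assert $\ggr^\hodge(\int_\eps\oo_\tg)=\oo_\La$, then plug into Theorem~\ref{fgr} (which carries an explicit $\kk_\bb\boxtimes\oo_{T^*Y}$), and wave this away by saying the $\kk_\bb$ is ``absorbed''---citing Remark~\ref{adual}, which concerns the DG algebra $\aa$ and has nothing to do with this step. If your first identification were literally correct, the $\kk_\bb$ would \emph{not} cancel. What actually happens (this is the paper's computation~\eqref{gre}) is that $\ggr^\hodge(\int_\eps\oo_\tg)$ carries a twist by $q^*\kk_{\bb\times\g\times\t}^{-1}$; using that $\kk_\tg$ is trivial (Proposition~\ref{tb}(iii)) and that $\kk_{\g\times\t}$ is trivial, one finds
\[
L\vsi^*\,\ggr^\hodge\!\Bigl(\int_\eps\oo_\tg\Bigr)\ \cong\ \kk_\bb^{-1}\otimes L\vsi^*\oo_\La,
\]
and \emph{this} $\kk_\bb^{-1}$ is what cancels the $\kk_\bb$ from Theorem~\ref{fgr}. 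So your conclusion is right but the mechanism you name is not; you should invoke the triviality of $\kk_\tg$ and $\kk_{\g\times\t}$ rather than Remark~\ref{adual}.

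One further point the paper makes explicit and you leave implicit: Theorem~\ref{pimm} gives a $\dd$-module isomorphism $\H^0(\int_{\mu\times\nu}\oo_\tg)\cong\bj_{!*}\oo_\yr\cong\mm$, but to conclude $\ggr^\hodge$ of the two sides agree you need this to be an isomorphism of \emph{Hodge} modules. The paper checks this by observing that $\mu\times\nu$ is generically an isomorphism, so the Hodge structure on $\H^0(\int_{\mu\times\nu}\oo_\tg)$ restricts on the open set to the trivial one on $\oo_\yr$, hence agrees with the canonical Hodge structure on $\bj_{!*}\oo_\yr$.
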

\begin{proof}
Let $E:=\int_\eps^R \kk_\tg$,
 a  right Hodge  $\dd$-module on $\bb\times\g\times\t$.
Using the notation of  Remark \ref{shift}, from \eqref{shiftgr}, we obtain
 $\gr^\hodge E=q^*(\eps_* \kk_\tg)=
q^*(\eps_*\oo_\tg),$ where in the last equality
we have used that the canonical bundle on $\tg$
is  trivial, see Proposition \ref{tb}(iii).
Therefore,
for $\int_\eps\oo_\tg=\kk_{\bb\times\g\times\t}\inv\o E$,
the corresponding left $\dd_{\bb\times\g\times\t}$-module,  we find
\beq{gre}
L\vsi^*\big(\ggr^\hodge\br{\int_\eps\oo_\tg}\big)=
L\vsi^* q^*(\kk_{\bb\times\g\times\t}\inv\o\eps_* \oo_\tg)=
L\vsi^* q^*(\kk_\bb\inv\boxtimes\oo_{\g\times\t})=
\kk_\bb\inv\o L\vsi^*\oo_\La,
\eeq
where we have used simplified notation
$\kk_\bb\inv\o(-)=(\kk_\bb\inv\boxtimes \oo_{\gg\times\tt})\ 
\bo_{\oo_{\bb\times \gg\times\tt}}(-)$.

We factor the map $\mu\times\nu$ as
a composition of the closed imbedding $\eps$
and a proper projection $f:\ \bb\times\g\times\t\to \g\times\t$
along the first factor. We get
$\int_{\mu\times\nu}\oo_\tg=
\int_f\br{\int_\eps\oo_\tg}.$
Hence, applying Theorem \ref{fgr}
to the $\dd$-module $M=\int_\eps\oo_\tg$
and using \eqref{gre},
we obtain $\dgr\big(\int_{\mu\times\nu}\oo_\tg\big)=
\dgr\big(\int_f (\int_\eps\oo_\tg)\big)=
R\vvpi _* L\vsi^*\oo_\La.$
Thus, by  Theorem  \ref{saito}
applied to the Hodge module $\oo_\tg$,
we get
$\ggr^\hodge\H^j(\int_{\mu\times\nu}\oo_\tg)=
\H^j(\dgr\int_{\mu\times\nu}\oo_\tg)
=\H^j(R\vvpi _* L\vsi^*\oo_\La)$
for any $j\in\Z.$
We conclude
 that
$\H^j(R\vvpi _* L\vsi^*\oo_\La)=0$
for any $j\neq0$, thanks to Theorem \ref{pimm}.

Finally, we observe that
the Hodge structure
 on the minimal extention $\dd$-module
$\bj_{!*}\oo_\yr$ is  determined by the
 Hodge structure on 
$\oo_\yr$.   The morphism $\mu\times\nu:\ \tg\to \yy$
is generically an isomorphism. It follows that the
isomorphism of Theorem \ref{pimm}
respects the Hodge structures.
Therefore, the  isomorphism $\mm\cong 
\H^0(\int_{\mu\times\nu}\oo_\tg)$, which is based on
the isomorphism $\mm\cong\bj_{!*}\oo_\yr$ of Lemma \ref{supp}(ii),
also respects
the Hodge filtrations.
 We deduce that  $\ggr^\hodge\mm=\H^0(R\vvpi _* L\vsi^*\oo_\La)$.
\end{proof}

\subsection{}\label{homsec}
Let $X$ be a smooth  variety and
let $i_Y: Y\into X$,
resp. $i_Z: Z\into X$,
be closed imbeddings of smooth
subvarieties. 
Below, we will use the following simple 
\begin{lem}\label{standard}
In $\dcoh(X)$,
 there are canonical quasi-isomorphisms
\beq{xyz}(i_Y)_*Li_Y^*[(i_Z)_*\oo_Z]\ \cong\
(i_Y)_*\oo_Y\lo_{\oo_X}(i_Z)_*\oo_Z\
\cong\ (i_Z)_*Li_Z^*[(i_Y)_*\oo_Y].
\eeq
\end{lem}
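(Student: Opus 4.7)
The plan is to derive both quasi-isomorphisms from the (derived) projection formula applied to the closed immersions $i_Y$ and $i_Z$, which are in particular proper morphisms. Recall that for any proper morphism $f\colon W\to X$ of (possibly singular) schemes, and any $F\in\dcoh(W)$, $G\in\dcoh(X)$, one has a functorial isomorphism
$$
Rf_*\big(F\lo_{\oo_W} Lf^*G\big)\ \cong\ Rf_*F\lo_{\oo_X} G.
$$

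First I would apply this to $f=i_Y$, taking $F=\oo_Y$ and $G=(i_Z)_*\oo_Z$. Since $i_Y$ is a closed immersion, $R(i_Y)_*=(i_Y)_*$ is exact, and $\oo_Y\lo_{\oo_Y}Li_Y^*G\cong Li_Y^*G$. The projection formula then yields
$$
(i_Y)_* Li_Y^*\big[(i_Z)_*\oo_Z\big]\ \cong\ (i_Y)_*\oo_Y\lo_{\oo_X}(i_Z)_*\oo_Z,
$$
which is the first quasi-isomorphism. For the second, I would apply the same formula with the roles of $Y$ and $Z$ exchanged, i.e.\ take $f=i_Z$, $F=\oo_Z$, and $G=(i_Y)_*\oo_Y$; combined with the symmetry of the derived tensor product this gives
$$
(i_Z)_* Li_Z^*\big[(i_Y)_*\oo_Y\big]\ \cong\ (i_Z)_*\oo_Z\lo_{\oo_X}(i_Y)_*\oo_Y\ \cong\ (i_Y)_*\oo_Y\lo_{\oo_X}(i_Z)_*\oo_Z,
$$
matching the middle term of \eqref{xyz}.

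There is no serious obstacle here: everything reduces to the projection formula, which is standard and does not require any transversality or smoothness assumption on $Y$, $Z$ beyond what is needed to make the derived functors well defined. The smoothness of $X$, $Y$, $Z$ guarantees that the objects $(i_Y)_*\oo_Y$ and $(i_Z)_*\oo_Z$ are perfect complexes on $X$, so $Li_Y^*$ and $Li_Z^*$ are well behaved and all the derived tensor products live in $\dcoh(X)$; thus the isomorphisms above make sense in the bounded coherent derived category, as claimed. Naturality in $Y$ and $Z$ is inherited from naturality of the projection formula.
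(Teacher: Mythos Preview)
Your proof is correct. The projection formula for a closed immersion (or any proper morphism) gives exactly the isomorphisms claimed, and your remarks about perfectness guaranteeing boundedness are apt.

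The paper takes a slightly different route. Rather than invoking the projection formula as a black box, it realizes the derived tensor product $(i_Y)_*\oo_Y\lo_{\oo_X}\ff$ as the pullback $\Delta_X^*\big((i_Y)_*\oo_Y\boxtimes\ff\big)$ along the diagonal $\Delta_X\colon X\to X\times X$, and then applies proper base change to the cartesian square
\[
\xymatrix{
Y\ \ar[d]_{i_Y}\ar[rr]^<>(0.5){\Delta_{YX}}&& \ Y\times X\ \ar[d]^{i_Y\times\Id_X}\\
X\ \ar[rr]^<>(0.5){\Delta_X}&&\ X\times X
}
\]
to identify this with $(i_Y)_*(f\circ\Delta_{YX})^*\ff=(i_Y)_*i_Y^*\ff$, where $f\colon Y\times X\to X$ is the second projection. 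This is essentially a proof of the special case of the projection formula needed here, rather than an appeal to it. Your approach is more economical; the paper's is more self-contained and makes explicit that only base change (rather than the full projection formula with its adjunction argument) is required.
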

\begin{proof} Let $\Delta_X: X\to X \times X$
be  the diagonal
imbedding and define a map $\Delta_{YX}:\ Y\to
Y\times X,\ y\mto (y, i_Y(y))$.
We have a cartesian diagram of closed imbeddings
$$
\xymatrix{
Y\ \ar[d]_<>(0.5){i_Y}\ar[rr]^<>(0.5){\Delta_{YX}}&&
\ Y\times X\ \ar[d]^<>(0.5){i_Y\times \Id_X}\\
X\ \ar[rr]^<>(0.5){\Delta_X}&& \ X\times X
}
$$

For any  $\ff\in\dcoh(X)$,
we have $[(i_Y)_*\oo_Y]\boxtimes\ff$ $=
(i_Y\times \Id_X)_*f^*\ff$, where
 $f:\ Y\times X \to X$ is  the second
projection. Therefore, 
we obtain
\begin{multline*}
[(i_Y)_*\oo_Y]\lo_{\oo_X}\ff=
\Delta_X^*([(i_Y)_*\oo_Y]\boxtimes\ff)=
\Delta_X^*(i_Y\times \Id_X)_*(f^*\ff)\\
=(i_Y)_*[\Delta_{YX}^*(f^*\ff)]=
(i_Y)_*(f\ccirc\Delta_{YX})^*\ff=(i_Y)_*i_Y^*\ff,
\end{multline*}
where the third isomorphism is a consequence
of proper base change with
respect to the cartesian square above.
Taking here $\ff:=(i_Z)_*\oo_Z$ yields the
isomorphism on the left of \eqref{xyz}.
The isomorphism on the right of \eqref{xyz}
is proved similarly.
\end{proof}

Associated with  diagram \eqref{kk},
there are derived functors
\beq{ddd}
 \xymatrix{\dcoh( \bb)\
\ar[r]^<>(0.5){\iim_*}&\ 
\dcoh( \wt\N)\ar[r]^<>(0.5){L\kkap^*}\
&\ \dcoh( \tgg)\ \ar[rr]^<>(0.5){R(\mmu\times\nnu)_*}
&&\ \dcoh(\gg\times\tt).}
\eeq

In \S\ref{dima_sec}, we considered
a Koszul complex 
$(\wedge^\hdot q^*\T,\ \partial_\eu)$, a  locally
free resolution of the sheaf $\iim_*\oo_\bb$ on $T^*\bb$.
Therefore, the
object $L\kkap^*(\iim_*\oo_\bb)\in \dcoh(\tgg)$
may be represented by the DG $\oo_\tgg$-algebra $(\aa,\ \partial_{\kkap^*\eu})=
\kkap^*(\wedge^\hdot q^*\T,\ \partial_\eu)$
introduced in \S\ref{dima_sec}.

The following result provides a link between the DG algebra $\aa$
and Theorem \ref{cohiso}.

\begin{lem}\label{amu} In $\dcoh(\gg\times\tt)$, there is a natural
isomorphism 
$$R\vvpi _*L\vsi^*((i_\La)_*\oo_\La)
\ \simeq\ R(\mmu\times\nnu)_*\aa.$$
\end{lem}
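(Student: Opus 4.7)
My plan is to combine the isomorphism $\Psi:\tgg\iso\La$ of Proposition~\ref{conorm} with Lemma~\ref{standard} and the Koszul description of $\aa$. First, let $\tilde\Psi:=i_\La\ccirc\Psi:\tgg\into T^*\bb\times\gg\times\tt$ be the closed immersion obtained by composing $\Psi$ with the inclusion of $\La$. Since $\Psi$ is an isomorphism we have $(i_\La)_*\oo_\La\cong\tilde\Psi_*\oo_\tgg$. The commutative diagram \eqref{psi} tells us that the composition of $\tilde\Psi$ with the first projection $T^*\bb\times\gg\times\tt\to T^*\bb$ equals $\Phi\ccirc\kkap$, while its composition with the projection $Q:T^*\bb\times\gg\times\tt\to\gg\times\tt$ equals $\mmu\times\nnu$; similarly, $Q\ccirc\vsi=\vvpi$.

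Next, I would apply Lemma~\ref{standard} in the smooth variety $X=T^*\bb\times\gg\times\tt$ to the pair of closed immersions of smooth subvarieties $\vsi:\bb\times\gg\times\tt\into X$ and $\tilde\Psi:\tgg\into X$ (smoothness of $\La\cong\tgg$ as a conormal bundle to $\eps(\tg)$ is clear from \S\ref{sympl}). The lemma supplies a quasi-isomorphism
\[
\vsi_*L\vsi^*\tilde\Psi_*\oo_\tgg\ \simeq\ \tilde\Psi_*L\tilde\Psi^*\vsi_*\oo_{\bb\times\gg\times\tt}\qquad\text{in}\ \dcoh(X).
\]
Applying $RQ_*$ to both sides and using the factorizations $Q\ccirc\vsi=\vvpi$ and $Q\ccirc\tilde\Psi=\mmu\times\nnu$ (together with the fact that $\vsi,\tilde\Psi$ are closed immersions, so no derivation is needed on $\vsi_*,\tilde\Psi_*$), this yields
\[
R\vvpi_*L\vsi^*(i_\La)_*\oo_\La\ \simeq\ R(\mmu\times\nnu)_*L\tilde\Psi^*\vsi_*\oo_{\bb\times\gg\times\tt}.
\]

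Finally, I would identify $L\tilde\Psi^*\vsi_*\oo_{\bb\times\gg\times\tt}$ with the DG algebra $\aa$. Since $\vsi=\iim\times\Id_{\gg\times\tt}$, one has $\vsi_*\oo_{\bb\times\gg\times\tt}\cong(\iim_*\oo_\bb)\boxtimes\oo_{\gg\times\tt}$, and the external tensor product $K^\hdot:=(\wedge^\hdot q^*\T,\partial_\eu)\boxtimes\oo_{\gg\times\tt}$ of the Koszul resolution of $\iim_*\oo_\bb$ on $T^*\bb$ with $\oo_{\gg\times\tt}$ furnishes a locally free resolution of this sheaf. Hence $L\tilde\Psi^*\vsi_*\oo_{\bb\times\gg\times\tt}$ is represented by $\tilde\Psi^*K^\hdot$, and since the first component of $\tilde\Psi$ is $\Phi\ccirc\kkap$, the pulled-back complex is precisely $\aa=(\wedge^\hdot{\mathfrak q}^*\T,\partial_{\kkap^*\eu})$ of \S\ref{dima_sec}. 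The only delicate point is checking that the two projections of $\tilde\Psi$ agree with those prescribed by diagram~\eqref{psi}, which is exactly the content of Proposition~\ref{conorm}; once this identification is in place, the rest is a formal manipulation of derived functors and presents no real obstacle.
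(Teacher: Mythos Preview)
Your proof is correct and follows essentially the same approach as the paper's own argument: both apply Lemma~\ref{standard} in $X=T^*\bb\times\gg\times\tt$ to the pair of closed immersions $\vsi$ and $i_\La$, push forward along the projection to $\gg\times\tt$, and then identify the resulting pullback with $\aa$ via the Koszul resolution and the commutative diagram~\eqref{psi} of Proposition~\ref{conorm}. The only difference is organizational: the paper works on $\La$ itself (computing $Li_\La^*(\vsi_*\oo_Z)$ and then transporting via $\Psi$), whereas you absorb the isomorphism $\Psi$ from the outset by working directly with $\tilde\Psi=i_\La\ccirc\Psi:\tgg\into X$; this makes your write-up slightly more streamlined but is mathematically the same argument.
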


\begin{proof}
To simplify notation, we put $Z:=\bb\times \gg\times\tt.$
Further, write $pr_{T^*\bb}$,
resp. $pr_{\gg\times\tt}$,
for the projection of the variety
$T^*\bb\times\gg\times\tt$
to the first, resp.
along the first, factor. Clearly, we have
$\vsi_*\oo_Z=pr_{T^*\bb}^*(\iim_*\oo_\bb)$.
Also, using the notation of diagram
\eqref{psi}, we get
$\pr_{\La\to T^*\bb}=pr_{T^*\bb}\ccirc
i_\La$, resp. $\pr_{\La\to \gg\times\tt}
=i_\La\ccirc pr_{\gg\times\tt}.$
We deduce a chain of isomorphisms
$$
Li^*_\La(\vsi_*\oo_Z)=
Li^*_\La[pr_{T^*\bb}^*(\iim_*\oo_\bb)]
=L(i_\La\ccirc pr_{T^*\bb})^*(\iim_*\oo_\bb)
=L\pr_{\La\to T^*\bb}^*(\iim_*\oo_\bb).
$$

Next, we use the isomorphism
$\Phi: \wt\N\iso T^*\bb$, see \S\ref{sympl},
to identify
the imbedding $\bb\into \wt\N$
with the zero section $\iim: \bb\into T^*\bb$.
Thus, from the above chain
of isomorphisms, using  commutative  diagram \eqref{psi}, we get
\beq{xy}
R(\pr_{\La\to \gg\times\tt})_* Li_\La^*(\vsi_*\oo_Z)=
R(\pr_{\La\to \gg\times\tt})_*
L\pr_{\La\to T^*\bb}^*(\iim_*\oo_\bb)\cong
R(\mmu\times\nnu)_*L\kkap^*(\iim_*\oo_\bb).
\eeq

Now, we apply Lemma \ref{standard} in the
case where $X=T^*\bb\times\gg\times\tt$
and $Y=\La$. So, we have $i_Z=\vsi$
and the composite isomorphism in \eqref{xyz}
yields  $(i_\La)_*Li^*_\La[\vsi_*\oo_Z]=
\vsi_*L\vsi^*[(i_\La)_*\oo_\La]$.
Note  that $\vvpi=\vsi\ccirc pr_{\gg\times\tt},$
so $R\vvpi_*=(Rpr_{\gg\times\tt})_*\vsi_*$.
Hence, 
 we
obtain
\begin{multline*}
R\vvpi_*L\vsi^*[(i_\La)_*\oo_\La]\ =\ 
R(pr_{\gg\times\tt})_*\vsi_*\vsi^*[(i_\La)_*\oo_\La]\
\stackrel{\eqref{xyz}}{=\!=}\
R(pr_{\gg\times\tt})_*(i_\La)_*Li^*_\La[\vsi_*\oo_Z]\\
\stackrel{\eqref{xy}}{=\!=}\
R(\pr_{\La\to \gg\times\tt})_* Li_\La^*(\vsi_*\oo_Z)\ =\ 
R(\mmu\times\nnu)_*L\kkap^*(\iim_*\oo_\bb).
\end{multline*}

Here, the object on the right is $R(\mmu\times\nnu)_*\aa$,
and the lemma is proved.
\end{proof}

\begin{cor}\label{dima22} The 
sheaves $\H^k\big(R(\mmu\times\nnu)_*\aa\big)$
vanish for all $k\neq 0$ and
there is an $\oo_{\gg\times\tt}$-module isomorphism
$\H^0\big(R(\mmu\times\nnu)_*\aa\big)\cong\ggr^\hodge\mm$.
\end{cor}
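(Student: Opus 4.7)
The plan is to combine Lemma \ref{amu} with Theorem \ref{cohiso} --- everything needed is already in place, and the corollary should drop out by applying the cohomology-sheaf functor to a quasi-isomorphism in $\dcoh(\gg\times\tt)$.

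First, I note that the coherent sheaf denoted $\oo_\La$ in the statement of Theorem \ref{cohiso} is tacitly the sheaf $(i_\La)_*\oo_\La$, viewed on $T^*\bb\times\gg\times\tt$ and supported along $\La$, so the two cited results really concern the very same object. Lemma \ref{amu} then supplies a functorial quasi-isomorphism
\[
R(\mmu\times\nnu)_*\aa \ \simeq\ R\vvpi_*\, L\vsi^*\big((i_\La)_*\oo_\La\big)
\]
in $\dcoh(\gg\times\tt)$. Next, I would take cohomology sheaves on both sides and invoke Theorem \ref{cohiso}, which asserts that the right-hand side is concentrated in cohomological degree zero with $\H^0 = \ggr^\hodge\mm$. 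This immediately yields the two assertions of the corollary: vanishing of $\H^k\big(R(\mmu\times\nnu)_*\aa\big)$ for $k\neq 0$, and a canonical $\oo_{\gg\times\tt}$-module isomorphism $\H^0\big(R(\mmu\times\nnu)_*\aa\big)\cong\ggr^\hodge\mm$.

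The main difficulty is not in this corollary itself, which is a purely formal consequence of the two preceding results, but rather in those inputs. Lemma \ref{amu} ultimately rests on the base-change identity of Lemma \ref{standard} applied along the closed immersion $\vsi$, combined with the geometric identification $\Psi:\tgg\iso\La$ furnished by Proposition \ref{conorm} and the fact that, via $\Phi$, the morphism $\kkap$ is intertwined with the composition $\pr_{\La\to T^*\bb}\ccirc\, i_\La$. Theorem \ref{cohiso} is the deeper input: it hinges on Saito's strictness theorem (Theorem \ref{saito}) for direct images of Hodge modules under projective morphisms, on the Hotta--Kashiwara identification of $\mm$ with $\H^0(\int_{\mu\times\nu}\oo_\tg)$ (Theorem \ref{pimm}), and on the observation that since $\mu\times\nu$ is generically an isomorphism, the Hodge filtration on $\mm\cong \bj_{!*}\oo_\yr$ is determined by the trivial one on $\oo_\yr$, compatibly with the identification of $\gr^\hodge$ of an $\int_\eps^R$-image with the pullback $q^*(\eps_*\oo_\tg)$ via the trivialization $\omega$ of $\kk_\tg$ from Proposition \ref{tb}(iii).
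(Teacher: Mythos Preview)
Your proposal is correct and matches the paper's own proof exactly: the corollary is stated there to be ``an immediate consequence of Theorem \ref{cohiso} and Lemma \ref{amu},'' which is precisely what you do. Your additional commentary on what underlies those two inputs is accurate and goes beyond what the paper spells out at this point, but the logical step for the corollary itself is identical.
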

\begin{proof} This is an immediate consequence
of  Theorem \ref{cohiso} and  Lemma \ref{amu}.
\end{proof}

\subsection{Completing the proof of Theorem \ref{mthm}}\label{xyzsec}
We recall that in order to complete
 the proof of Theorem \ref{mthm}
it remains to prove  Proposition \ref{res_lem}
and Lemma \ref{irr}. The proof of the lemma
will be given later, in \S\ref{ss3}.

\begin{proof}[Proof of Proposition \ref{res_lem}]
Let $\aa_0$ be the degree zero homogeneous component
of $\aa$.
We may view $\aa_0$
as a  DG algebra equipped with  zero differential
and concentrated in degree zero.
Thus, one has a natural DG algebra
imbedding $f:\ (\aa_0,0)\into (\aa,\partial_{\kkap^*\eu}) $.

To simplify notation, put $\ppi:=\mmu\times\nnu$.
Applying the functor
$R\ppi_*$
and using that $\aa_0=\oo_\tgg=
\ppi^*\oo_{\gg\times\tt}$, 
we obtain the following chain of DG algebra morphisms
\beq{bff}\xymatrix{
\oo_{\gg\times\tt}\ \ar[rr]^<>(0.5){\text{adjunction}}&&
\ R\ppi_*\ppi^*\oo_{\gg\times\tt}\ =\ 
R\ppi_*\aa_0\ \ar[rr]^<>(0.5){R\ppi_*(f)}&&
\ R\ppi_*\aa.
}
\eeq

Let $\gg^{rr}\sset \gg$ be a
 Zariski
open subset  such that $(\gg^{rr}\times\tt)\cap \xx=
\xx^{rr}$.
 From Lemma \ref{codim1}
and Corollary \ref{transv}(ii), we deduce
that the composite morphism
in \eqref{bff} induces an isomorphism
$\oo_{\xx^{rr}}\iso R\ppi_*\aa|_{\gg^{rr}\times\tt}$.
Combining the latter isomorphism with
the isomorphism
 $\H^0\big(R\ppi_*\aa\big)\cong\ggr^\hodge\mm$
of Corollary \ref{dima22} yields the required
isomorphism $\oo_{\xx^{rr}}\iso\jmath^*(\ggr^\hodge\mm)$.
\end{proof}

The statement of the next result was
 suggested to me by Dmitry Arinkin. 

\begin{thm}\label{dima}  There is 
 $G\times \C^\times\times\C^\times$-equivariant
DG $\oo_{\gg\times\tt}$-algebra quasi-isomorphism:\hfill\break
$\dis R(\mmu\times\nnu)_*\aa\simeq\psi_*\oo_{\xx_\norm}$.
\end{thm}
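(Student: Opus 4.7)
The plan is to assemble the theorem from ingredients already in hand, with the key inputs being Corollary \ref{dima22} and Theorem \ref{mthm}. By Corollary \ref{dima22}, the object $R(\mmu\times\nnu)_*\aa$ in $\dcoh(\gg\times\tt)$ has cohomology concentrated in degree zero, and its $\H^0$ is naturally isomorphic to $\ggr^\hodge\mm$. By Theorem \ref{mthm}, one has an $\oo_{\gg\times\tt}$-module isomorphism $\ggr^\hodge\mm\cong\psi_*\oo_{\xx_\norm}$. Because the cohomology of $R(\mmu\times\nnu)_*\aa$ is concentrated in a single degree, the canonical morphism to $\H^0$ (regarded as a complex concentrated in degree zero with trivial differential) is a quasi-isomorphism, producing the desired underlying quasi-isomorphism in $\dcoh(\gg\times\tt)$.

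Next, I would promote this to a quasi-isomorphism of DG $\oo_{\gg\times\tt}$-algebras. The DG algebra $\aa$ is a sheaf of DG $\oo_\tgg$-algebras, and applying a suitable sheaf-level model of $R(\mmu\times\nnu)_*$ (for instance a Čech resolution on a finite affine cover, which commutes with the algebra structure) yields a DG $\oo_{\gg\times\tt}$-algebra model of $R(\mmu\times\nnu)_*\aa$. The adjunction morphism of \eqref{bff} is a DG algebra morphism from $\oo_{\gg\times\tt}$ into this model. Since the cohomology lives only in degree zero, this DG algebra is formal with $A_\infty$-structure trivially reducing to the associative multiplication on $\H^0$, so the canonical zig-zag $R(\mmu\times\nnu)_*\aa\to\tau^{\geq 0}R(\mmu\times\nnu)_*\aa\simeq\H^0(R(\mmu\times\nnu)_*\aa)$ is a DG algebra quasi-isomorphism. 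Tracing through the proofs of Corollary \ref{dima22} and Theorem \ref{mthm}, the induced multiplication on $\H^0$ agrees with the $\oo_{\gg\times\tt}$-algebra structure on $\psi_*\oo_{\xx_\norm}$, because the image of $\oo_{\gg\times\tt}\to\psi_*\oo_{\xx_\norm}$ is $\oo_\xx$ and the normalization algebra structure is the unique one extending this.

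For the $G\times\C^\times\times\C^\times$-equivariance, every ingredient is equivariant: the morphism $\mmu\times\nnu$, the DG algebra $\aa$ (constructed from the $G$-equivariant tangent bundle of $\bb$ and the homogeneous Euler section $\kkap^*\eu$), the Harish-Chandra module $\mm$ with its canonical Hodge filtration, the isomorphism $\ggr^\hodge\mm\cong\psi_*\oo_{\xx_\norm}$ (canonical, hence equivariant by uniqueness of the normalization), and the adjunction morphism \eqref{bff}. Hence all identifications can be chosen equivariantly. The main conceptual obstacle in this theorem, as stated, is nothing new: it was already absorbed into the proofs of Corollary \ref{dima22} and Theorem \ref{mthm}. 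The only genuine technical issue is the upgrade from a derived quasi-isomorphism to a DG algebra quasi-isomorphism, which however is automatic in the present situation since the cohomology collapses into a single degree, removing any room for nontrivial higher $A_\infty$ obstructions.
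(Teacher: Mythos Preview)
Your argument is essentially correct and follows the same path as the paper: both deduce the derived-category quasi-isomorphism from Corollary~\ref{dima22} together with Theorem~\ref{mthm}, and both use that a DG algebra with cohomology concentrated in degree~$0$ is quasi-isomorphic, as a DG algebra, to its $\H^0$.

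The only substantive difference is in how the algebra structure on $\H^0$ is matched with that of $\psi_*\oo_{\xx_\norm}$. You invoke Theorem~\ref{mthm} to get a module isomorphism and then argue that ``the normalization algebra structure is the unique one extending $\oo_\xx$''. That uniqueness is indeed true for a torsion-free $\oo_\xx$-module sitting inside the sheaf of rational functions, but it presupposes that your module isomorphism $\H^0\cong\psi_*\oo_{\xx_\norm}$ carries the unit of one algebra to the unit of the other --- equivalently, that it restricts to the identity on the dense open set $\xx^{rr}$. Verifying this compatibility amounts to unwinding the proofs of Theorem~\ref{mthm} and Proposition~\ref{res_lem}, which you gesture at but do not spell out. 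The paper bypasses this unwinding: it observes directly that $\mathcal F:=\H^0$ is Cohen--Macaulay and that the DG algebra morphism~\eqref{bff} restricts to an \emph{algebra} isomorphism $\oo_{\xx^{rr}}\iso j^*\mathcal F$ (this is the content of the proof of Proposition~\ref{res_lem}), and then re-applies Lemma~\ref{ei}, now with the algebra structure carried along, to obtain $\mathcal F\cong j_*\oo_{\xx^{rr}}\cong\psi_*\oo_{\xx_\norm}$ as algebras. This is slightly cleaner, since the algebra isomorphism is constructed directly rather than recovered after the fact. The paper also remarks that the Cohen--Macaulay property of $\mathcal F$ has an alternative proof via the self-duality of $\aa$ (Remark~\ref{adual}), independent of Hodge-module theory; you do not mention this, but it is not needed for the argument.
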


\begin{proof} The fact that $R(\mmu\times\nnu)_*\aa$ and
$\psi_*\oo_{\xx_\norm}$ are isomorphic as 
objects of $\dcoh(\gg\times\tt)$ is
an immediate consequence of 
 Theorem \ref{mthm}
and Corollary \ref{dima22}.

To establish the DG $\oo_{\gg\times\tt}$-{\em algebra} quasi-isomorphism,
one can argue as follows.
Let ${\mathcal F}:=\H^0\big(R(\mmu\times\nnu)_*\aa\big)$.
We claim that the sheaf  ${\mathcal F}$ is
 Cohen-Macaulay.
This follows  from Corollary \ref{dima22} since we
know that $\ggr^\hodge\mm$ is a  Cohen-Macaulay
coherent sheaf set theoretically supported on $\xx$,
see Corollary  \ref{serre}.

There is also an alternative 
proof of the claim that does not use the
Cohen-Macaulay property
of associated graded sheaves 
arising from Hodge modules.
That alternative 
proof is based instead on the self duality property of the DG algebra $\aa$,
see Remark \ref{adual}. The latter property, combined with the fact
that the morphism $\mmu\times\nnu$
 is  proper, implies
 that the object
$R(\mmu\times\nnu)_*\aa\in\dcoh(\gg\times\tt)$ is
isomorphic to its Grothendieck-Serre dual, up to a shift.
Therefore, the cohomology vanishing
from Corollary \ref{dima22} forces
 the sheaf 
${\mathcal F}=\H^0\big(R(\mmu\times\nnu)_*\aa\big)$ be Cohen-Macaulay.
The claim follows.

Now, according to the
proof of  Proposition \ref{res_lem}
given above,  the composite morphism
in \eqref{bff} induces a
$G\times\CC$-equivariant  $\oo_{\gg\times\tt}$-{\em algebra} isomorphism
$\oo_{\xx^{rr}}\iso
\jmath^*{\mathcal F}$.
Thus, Lemma \ref{ei} provides a $G\times\CC$-equivariant 
{\em algebra} isomorphism $\psi_*\oo_{\xx_\norm}\iso
{\mathcal F}$. 
\end{proof}

We observe next that $\HH^\hdot(\tgg,\aa)$,
the hyper-cohomology of the DG algebra $(\aa,\,\partial_{\kkap^*\eu})$,
acquires the canonical structure of a
graded commutative algebra.
From Theorem \ref{dima}, for any $k\in\Z$, we deduce
$$\HH^k(\tgg,\,\aa)=
\HH^k(\gg\times\tt,\ R(\mmu\times\nnu)_*\aa)=
H^k(\gg\times\tt,\ \psi_*\oo_{\xx_\norm}).
$$
The group on the right vanishes for any $k\neq 0$
since the scheme  $\gg\times\tt$ is
affine. So, we obtain

\begin{cor}\label{dima3}
The hyper-cohomology groups 
$\HH^k(\tgg, \aa)$ vanish  for all $k\neq 0$
and there is a  $G$-equivariant bigraded
$\C[\gg\times\tt]$-algebra isomorphism
$\HH^0(\tgg, \aa)\cong\C[\xx_\norm]$.\qed
\end{cor}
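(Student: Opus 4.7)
The plan is to chain together the projection formula for hyper-cohomology, the main quasi-isomorphism of Theorem \ref{dima}, and affineness of $\gg\times\tt$.

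First I would rewrite the hyper-cohomology of $\aa$ as a hyper-cohomology on the base. Since $\mmu\times\nnu:\tgg\to\gg\times\tt$ is a morphism of schemes and $\aa$ is a bounded complex of coherent sheaves on $\tgg$, the Leray spectral sequence degenerates at the level of the abutment in the form of the standard isomorphism
\[
\HH^k(\tgg,\,\aa)\ \cong\ \HH^k\!\big(\gg\times\tt,\ R(\mmu\times\nnu)_*\aa\big),\qquad \forall k\in\Z.
\]
This is just the fact that the derived global sections of $\aa$ factor through $R(\mmu\times\nnu)_*$; nothing more than basic derived functor formalism is needed here.

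Second, I would invoke Theorem \ref{dima}, which furnishes a $G\times\C^\times\times\C^\times$-equivariant DG $\oo_{\gg\times\tt}$-algebra quasi-isomorphism $R(\mmu\times\nnu)_*\aa\simeq \psi_*\oo_{\xx_\norm}$. The target is a genuine coherent sheaf concentrated in cohomological degree $0$ (since $\psi:\xx_\norm\to\xx_\red$ is finite, $\psi_*\oo_{\xx_\norm}$ is coherent on $\gg\times\tt$). Substituting, we obtain
\[
\HH^k(\tgg,\,\aa)\ \cong\ \HH^k\!\big(\gg\times\tt,\ \psi_*\oo_{\xx_\norm}\big).
\]

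Third, since $\gg\times\tt=\g\times\g\times\t\times\t$ is an affine variety and $\psi_*\oo_{\xx_\norm}$ is a coherent $\oo_{\gg\times\tt}$-module, Serre vanishing gives $H^k(\gg\times\tt,\psi_*\oo_{\xx_\norm})=0$ for all $k\neq 0$, whereas
\[
H^0\!\big(\gg\times\tt,\ \psi_*\oo_{\xx_\norm}\big)\ =\ \Ga(\xx_\norm,\oo_{\xx_\norm})\ =\ \C[\xx_\norm].
\]
This yields both the cohomology vanishing $\HH^k(\tgg,\aa)=0$ for $k\neq 0$ and the identification $\HH^0(\tgg,\aa)\cong \C[\xx_\norm]$ as $\C[\gg\times\tt]$-modules. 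The algebra structure on $\HH^0(\tgg,\aa)$ is induced by the DG-algebra multiplication on $\aa$, and under the quasi-isomorphism of Theorem \ref{dima} it matches the ordinary commutative algebra structure on $\C[\xx_\norm]$; similarly, the $G$-equivariance and the bigrading (coming from the $\C^\times\times\C^\times$-action of \S\ref{actions}) are preserved because the quasi-isomorphism of Theorem \ref{dima} is equivariant for these groups.

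There is no real obstacle here: all the work has already been carried out in Theorem \ref{dima} and in the proof of Theorem \ref{mthm}. The only point demanding a brief check is that the $\C[\gg\times\tt]$-algebra structure induced on $\HH^0(\tgg,\aa)$ via \eqref{bff} agrees with the algebra structure on $\C[\xx_\norm]$ via $\psi$; this was recorded at the end of the proof of Theorem \ref{dima} and is automatic from the DG-algebra nature of the quasi-isomorphism and from Lemma \ref{ei}.
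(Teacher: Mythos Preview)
Your proof is correct and follows exactly the same approach as the paper: rewrite $\HH^k(\tgg,\aa)$ as $\HH^k(\gg\times\tt,R(\mmu\times\nnu)_*\aa)$, apply the quasi-isomorphism of Theorem~\ref{dima} to replace this by $H^k(\gg\times\tt,\psi_*\oo_{\xx_\norm})$, and conclude by affineness of $\gg\times\tt$. Your additional remarks on the algebra structure and equivariance are a welcome elaboration of what the paper leaves implicit.
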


\begin{rem}\label{dgres}  Write ${\mathcal X}:=\Spec\aa$ for the DG scheme
associated with the  DG algebra $\aa$
 in the sense
of derived algebraic geometry, cf. \cite{TV}.
The  DG scheme ${\mathcal X}$ 
may be thought of as a `derived
analogue' of the scheme $\wt\xx$,
cf. \S\ref{tx_sec}. Then,
Corollary \ref{dima3} says  that the morphism $\mmu\times\nnu$ induces
a  DG-algebra quasi-isomorphism
$\C[\xx_\norm]\to R\Ga({\mathcal X},\ \oo_{\mathcal X})$.
This
may be interpreted
as 
saying that the
 DG scheme $\Spec\aa$ provides, in a sense, a `DG resolution'
of the variety $\xx_\norm$.
\end{rem}

\subsection{Proof of Proposition \ref{ordhodge}}\label{pf}
Let $f:\ \bb\times(\g\times\t)\to \g\times\t$
be the second projection.

\begin{lem}\label{intd} All nonzero cohomology
sheaves of the complex $\int_f^R\dd_{\bb\times\g\times\t}$
vanish and one has
an isomorphism $\H^0(\int_f^R\dd_{\bb\times\g\times\t})\cong
\dd_{\g\times\t}$ of right $\dd_{\g\times\t}$-modules.
\end{lem}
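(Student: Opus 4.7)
The plan is to reduce the computation to a standard projection formula combined with the classical cohomology vanishing on the flag variety. By definition,
$$\int_f^R \dd_X \;=\; Rf_*\bigl(\dd_X \otimes^L_{\dd_X} \dd_{X\to Y}\bigr),$$
where I abbreviate $X = \bb\times(\g\times\t)$ and $Y = \g\times\t$. Since $\dd_X$ is free of rank one over itself as a right $\dd_X$-module, the derived tensor product collapses to the underived one, so $\int_f^R \dd_X \simeq Rf_*(\dd_{X\to Y})$, with the residual right $\dd_Y$-action coming from right multiplication in the second tensor factor of $\dd_{X\to Y}=\oo_X\otimes_{f\inv\oo_Y}f\inv\dd_Y$.

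The next step is to recognize that, for the projection $f:\bb\times Y\to Y$, one has a canonical identification $\dd_{X\to Y}\cong\oo_\bb\boxtimes\dd_Y$ as sheaves, with the right $f\inv\dd_Y$-action concentrated in the second factor. I would then apply the projection formula (together with Künneth, noting that $\dd_Y$ is the filtered union of its coherent order-filtration pieces $F^{\ord}_n\dd_Y$, so that $Rf_*$ commutes with the colimit on our noetherian spaces) to obtain
$$Rf_*(\oo_\bb\boxtimes\dd_Y)\;\simeq\; R\Ga(\bb,\oo_\bb)\otimes_\C \dd_Y.$$
The right $\dd_Y$-module structure is carried along unchanged by each of these isomorphisms, since every morphism involved is $f\inv\dd_Y$-linear on the right.

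The final ingredient is the classical fact that $H^i(\bb,\oo_\bb)=0$ for $i>0$ and $H^0(\bb,\oo_\bb)=\C$, a standard consequence of the Borel-Weil-Bott theorem applied to the trivial character (or of the rationality of $G/B$). Substituting this, $R\Ga(\bb,\oo_\bb)\otimes_\C\dd_Y = \dd_Y$ concentrated in degree zero, which gives both assertions of the lemma at once. There is no real obstacle here; the only small point requiring attention is that the right $\dd_Y$-action matches the canonical one on $\dd_{\g\times\t}$, and this is automatic from the construction because the $\dd_Y$-action never interacts with the $\bb$-factor being integrated out.
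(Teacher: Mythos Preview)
Your proof is correct and follows essentially the same route as the paper: both reduce the computation to the vanishing of $H^i(\bb,\oo_\bb)$ for $i>0$ together with $H^0(\bb,\oo_\bb)=\C$, and both identify $\int_f^R\dd_X$ with $R\Ga(\bb,\oo_\bb)\otimes_\C\dd_{\g\times\t}$. The only cosmetic difference is that the paper invokes the Spencer (Koszul) resolution $K^j=\dd_\bb\otimes_{\oo_\bb}\wedge^{-j}\T_\bb$ of $\oo_\bb$ to compute the direct image via the general machinery, whereas you bypass this by directly observing that $\dd_X\otimes^L_{\dd_X}\dd_{X\to Y}=\dd_{X\to Y}\cong\oo_\bb\boxtimes\dd_Y$ and then applying the projection formula; your route is a touch more streamlined for this particular module but the content is identical.
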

\begin{proof}
There is a  standard Koszul type
complex ${K}^\hdot$
with terms ${K}^j=\dd_\bb\o_{\oo_\bb}\wedge^{-j} \T_\bb,
\ j=0,-1,\ldots, -\dim\bb,$
that gives a  resolution of the structure
sheaf $\oo_\bb$, cf. \cite[Lemma 1.5.27]{HTT}.
Using that
$H^0(\bb,\oo_\bb)=\C$ and $H^k(\bb,\oo_\bb)=0$
for any $k\neq 0$
we deduce that $ R\Ga(\bb, \ {K}^\hdot)$ $=
R\Ga(\bb, \oo_\bb)=\C$. Therefore,
an explicit construction of direct image 
for {\em right} $\dd$-modules 
(see \cite[Proposition 1.5.28]{HTT} for 
its {\em left} $\dd$-module counterpart) shows that 
$\int_f^R\dd_{\bb\times\g\times\t}=$ 
$R\Ga(\bb, \ {K}^\hdot)\,\o_\C\,
\dd_{\g\times\t}=\dd_{\g\times\t}.$
\end{proof}

Next, we recall the setting of \S\ref{filtmmG}.
We observe that $E=\int^R_\eps\kk_\tg$
is a cyclic  right 
$\dd_{\bb\times\g\times\t}$-module 
generated by the section $\eps_*(\om\o1)$
where $1\in\dd_{\tg\to\bb\times\g\times\t}=
\dd_{\bb\times\g\times\t}|_\tg.$
Therefore, the assignment $1\mto \eps_*(\om\o1)$
can be extended to a surjective morphism
$\gamma:\ \dd_{\bb\times\g\times\t}\onto E$
of right  $\dd_{\bb\times\g\times\t}$-modules. 
The  quotient
filtration on $E$ induced by 
 the projection $\gamma$ is equal,
by our normalization of the Hodge filtration,
to the Hodge filtration  $F^\hodge E$,
see Remark \ref{shift}.
This filtration on $E$, resp. the order filtration on $\dd_{\bb\times\g\times\t}$,
makes $\int^R_f E$, resp. $\int^R_f\dd_{\bb\times\g\times\t}$,
a filtered complex. 
Applying the functor $\int^R_f$ to $\gamma$,
a morphism of filtered $\dd$-modules,
one obtains a morphism
$\int^R_f\gamma:\ \int^R_f\dd_{\bb\times\g\times\t}\to
\int^R_f E$  of filtered complexes.

We identify the sheaf $\dd_{\g\times\t}$
with $\dd_{\g\times\t}^{op}$ via the
trivialization of the canonical
bundle on $\g\times\t$ provided by the section  $dx\, dt$.
Thus, using Lemma \ref{intd},  we obtain
a chain of morphisms of {\em left} $\dd_{\g\times\t}$-modules
\beq{j}
\xymatrix{
\dd_{\g\times\t}=
\kk\inv_{\g\times\t}\o\H^0(\int_f^R\dd_{\bb\times\g\times\t})\ 
\ar[r]^<>(0.5){\int^R_f\gamma}&
\ \kk\inv_{\g\times\t}\o\H^0(\int^R_f E)
=\H^0(\int_{\mu\times\nu}\oo_\tg)=\mm.
}
\eeq

It is straightforward to see,
using the explicit formula $
u\mto u[(dx\,
dt)\inv\bo(\mu\times\nu)_*\om]$
for the isomorphism
$\mm\iso \H^0(\int_{\mu\times\nu}\oo_\tg),$
cf. \S\ref{hk},
 that the composite
morphism  in \eqref{j} is equal to  the
natural
projection $\dd_{\g\times\t}\onto
\dd_{\g\times\t}/\I=\mm$.

The proof of Lemma \ref{intd} shows
that
the  filtration on the $\dd$-module
$\H^0(\int^R_f\dd_{\bb\times\g\times\t})$
induced by the filtered structure on $\int^R_f\dd_{\bb\times\g\times\t}$
goes, under
the isomorphism of the Lemma,
to the standard order filtration on
the sheaf $\dd_{\g\times\t}$.
It follows that all the maps in 
\eqref{j} respect the filtrations.
Thus, writing $\bar\gamma: \dd_{\g\times\t}
\onto \mm$ for the
composite map in \eqref{j},
we get $\bar\gamma(F_k^\ord\dd_{\g\times\t})
\sset F^\hodge_k\mm$ for any $k\in\Z$.
Proposition \ref{res_lem} follows from this
since the order filtration $F^\ord\mm$
 was defined as the
quotient filtration on $\dd_{\g\times\t}/\I$.

\section{A generalization of a construction of Beilinson
and Kazhdan}\label{dima_sasha}

\subsection{The universal stabilizer sheaf}\label{unistab}
Given a $G$-action on an irreducible scheme $X$, one defines
the "{\em universal stabilizer}" scheme
${\mathcal G}_X$ as a scheme  theoretic preimage of the
diagonal in $X\times X$ under the
morphism $G\times X\to X\times X,\ (g,x)\mto (gx,x).$
Set theoretically, one has
${\mathcal G}_X=\{(g,x)\in G\times X\mid
gx=x\}$.
The group $G$ acts naturally on ${\mathcal G}_X$ by
$g_1: (g,x)\mto (g_1gg_1\inv, g_1x)$.
The second projection ${\mathcal G}_X\to X,
\ (g,x)\to x$ gives 
 ${\mathcal G}_X$  the natural
structure of a  $G$-equivariant group scheme over $X$.
The Lie algebra of the group scheme $\ZZ_X$ is 
a coherent
sheaf $\fz_X:=\Ker(\g\o\oo_X\to \T_X)$,
the kernel of the natural "infinitesimal action"
morphism.  Let
$\fz_X^*:=\hhom_{\oo_X}(\fz_X,\oo_X)$ be the 
(nonderived) dual of $\fz_X$.

Let $L$ be a  finite dimensional $\g$-representation.
There are
natural morphisms of sheaves
\beq{vg}L\o\oo_X\ \too\  \Hom_\C(\g, L)\o\oo_X=  L\o\g^*\o \oo_X\ \too\ 
 L\o\fz_X^*.
\eeq
Here, the first morphism is induced by the linear map 
$L\to\Hom_\C(\g, L)$ resulting from the $\g$-action on $L$,
and the second morphism is induced by the sheaf imbedding
$\fz_X\to\g\o\oo_X$.

Let $L^{\fz_X}$ denote the kernel
of the composite morphism in \eqref{vg}.
Thus, $L^{\fz_X}$ is a $G$-equivariant coherent subsheaf of $L\o\oo_X$.
The  geometric fiber of
the sheaf $L^{\fz_X}$ at a sufficiently general point
$x\in X$ equals $L^{\g_x}.$

\begin{lem}\label{tors} For an irreducible  normal
variety $X$
with a $G$-action, we have

\vi The sheaf imbedding $L^{\fz_X}\to L\o\oo_X$ induces
an isomorphism $\Ga(X,\, L^{\fz_X})^G\iso (L\o\C[X])^G$, provided general points
of $X$ have connected stabilizers.

\vii Let $j: U\into X$ be an  imbedding
of a
Zariski open subset such that $\dim(X\sminus U)\leq\dim X -2.$
Then, the canonical morphism
$L^{\fz_X}\to j_*j^*(L^{\fz_X})$,
resp. $\fz_X\to j_*j^*(\fz_X)$, is an isomorphism.
\end{lem}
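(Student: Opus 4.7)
A $G$-invariant section $s\in(L\otimes\C[X])^G$ is the same datum as a $G$-equivariant morphism $s\colon X\to L$. For any $x\in X$, equivariance forces $s(x)\in L^{G_x}$. Choose a dense open $U'\sset X$ whose points have connected stabilizer (given by hypothesis) and at which $\fz_X$ has constant rank with geometric fiber $\g_x$ (a further Zariski-open condition). For $x\in U'$, connectedness of $G_x$ gives $L^{G_x}=L^{\g_x}$, which is exactly the fiber of $L^{\fz_X}$ at $x$. Hence $s|_{U'}\in\Ga(U',L^{\fz_X})$.

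To promote this to $X$, note that $(L\otimes\oo_X)/L^{\fz_X}$ embeds by construction in $L\otimes\fz_X^*$. On the integral variety $X$, the sheaf $\fz_X^*=\hhom(\fz_X,\oo_X)$ is torsion-free, since any local section $\sigma\colon\fz_X\to\oo_X$ annihilated by a nonzero $f\in\oo_X$ satisfies $f\cdot\sigma(v)=0$ for every $v$, forcing $\sigma(v)=0$. Hence $L\otimes\fz_X^*$ is also torsion-free, so the image $\bar s\in(L\otimes\oo_X)/L^{\fz_X}\sset L\otimes\fz_X^*$, which vanishes on the dense open $U'$, must vanish identically; this gives $s\in\Ga(X,L^{\fz_X})$.

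\textbf{Plan for (ii).} Both isomorphisms will be instances of Hartogs's extension principle: on a normal variety, a reflexive sheaf $F$ satisfies $F\iso j_*j^*F$ whenever $\codim(X\sminus U)\geq 2$, and the dual of any coherent sheaf on a normal variety is reflexive. Apply the left-exact functor $j_*j^*$ to the defining exact sequence $0\to\fz_X\to\g\otimes\oo_X\to\T_X$. The middle term is locally free, and $\T_X=(\Omega_X^1)^*$ is reflexive as the dual of a coherent sheaf on the normal variety $X$; both therefore satisfy the extension property. Comparing the original sequence with its image under $j_*j^*$ identifies $\fz_X$ with $j_*j^*\fz_X$. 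The same argument applied to $0\to L^{\fz_X}\to L\otimes\oo_X\to L\otimes\fz_X^*$ — using that $\fz_X^*$, and hence $L\otimes\fz_X^*$, is again reflexive on the normal variety $X$ — yields $L^{\fz_X}\iso j_*j^*L^{\fz_X}$.

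\textbf{Main obstacle.} There is no substantial obstacle: the argument is formal once the right ambient sheaves are identified as torsion-free (for (i)) and reflexive (for (ii)). The only nontrivial ingredients are the standard facts that a $\hhom$ into $\oo_X$ is torsion-free on an integral variety, that duals of coherent sheaves on normal varieties are reflexive, and that reflexive sheaves extend across subsets of codimension $\geq 2$ on a normal variety.
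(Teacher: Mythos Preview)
Your proof is correct and follows essentially the same approach as the paper's. The only differences are in packaging: for (i), the paper gives a one-sentence sketch (that $f(x)\in L^{G_x}$ ``implies (i)''), and you flesh this out via the torsion-freeness of $L\otimes\fz_X^*$; for (ii), the paper does the $\fz_X$ case by the same kernel comparison you use, and for $L^{\fz_X}$ it extends sections by hand using normality and then checks the kernel condition using torsion-freeness of $L\otimes\fz_X^*$, whereas you invoke reflexivity of duals on a normal variety to get the Hartogs extension property for the ambient sheaves and then compare kernels---same ingredients, slightly more abstract phrasing.
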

\begin{proof} For any $x\in X$ and any  $G$-equivariant
morphism $f: X\to L$, the element $f(x)\in L$ is fixed
by the group $G_x$. This implies (i).

To prove (ii), we use the isomorphism
 $\g\o\oo_X\iso j_*j^*(\g\o\oo_X)$,
due to  normality of $X$. Thus, we deduce
$$\fz_X=\Ker[j_*(\g\o\oo_U)\to \T_X]
=j_*\big(\Ker[\g\o\oo_U\to j^*\T_X]\big)
=j_*\fz_U.
$$

Next, let
$V\sset X$ be an open subset and let $s\in \Ga(V,\
j_*j^*(L^{\fz_X}))$. One may view $s$ as a morphism
$V\cap U\to L$. This  morphism can be extended to
a regular map $\bar s: V\to L$, 
since $X$ is normal.
Let $s'$ be the image of $\bar s$ under the
composite morphism in \eqref{vg}. Thus,
$s'\in\Ga(V,\ L\o\fz_X^*)$ and, by the definition of
the sheaf $L^{\fz_X}$, we have $s'|_U=0$.

Now, the dual of a coherent sheaf is
a  torsion free sheaf. Therefore,
 the sheaf $L\o\fz_X^*$ is torsion free.
Hence, $s'|_{V\cap U}=0$ implies  that $s'=0$. We deduce that
$\bar s$ is actually a section (over $V$) of the sheaf $L^{\fz_X}$, the kernel
of the map \eqref{vg}.
Since  $\bar s|_U=s$, we have proved that the morphism
$L^{\fz_X}\to j_*j^*(L^{\fz_X})$ is surjective. This morphism
is injective since  $L^{\fz_X}$, being a subsheaf of
a locally free sheaf $L\o\oo_X$, is a torsion free sheaf.
\end{proof}

\subsection{A canonical filtration}\label{canfilt}
Let $\X$ be the weight lattice of $\g$.
Given a Borel subalgebra $\b$, we  
identify elements of $\X$ with linear functions on  $\b/[\b,\b]$.
We let $R$ (= the set of roots), resp. $R^+$ (= the set of positive
roots)
be the set of nonzero weights  of the $\ad\b$-action
on $\g$, resp. on $\g/\b$. We introduce 
 a partial order
on $\X$ by setting $\la'\ccl \la$ if $\la-\la'$ is a sum
of positive roots.

Fix a finite dimensional $\g$-module $L$
and  a Borel subalgebra $\b$.
Let $L=\oplus_{\la\in\X}\ L_\t^\la$
be the weight decomposition of $L$ with respect to
a Cartan subalgebra $\t\sset\b$. 
For any $\la\in \X$, we put
 $L^{\ccl\la}_\t:=\bplus_{\la'\ccl \la}\ L_\t^{\la'}$,
resp. $L^{\lel\la}_\t:=\bplus_{\la'\ccl \la,\ \la'\neq \la}\ L_\t^{\la'}$.
This is a  $\b$-stable subspace of $L$ that does not depend on the
choice of a  Cartan subalgebra $\t\sset \b$.
Therefore, the above construction associates with
each Borel subalgebra $\b$ a canonically defined
$\b$-stable filtration, to be denoted by $L^{\ccl\hdot}_\b$, 
labeled by the partially ordered set $\X$.

One may let the Borel subalgebra $\b$ vary inside the
flag variety. The family $\{L_\b^{\ccl\hdot},\,\b\in\bb\}$,
of filtrations on $L$, then
gives  a filtration
$L_\bb^{\ccl\hdot}$ of the trivial sheaf
$L\o\oo_\bb$ by $G$-equivariant locally free
$\oo_\bb$-subsheaves. We put  $L^\la_\bb=L_\bb^{\ccl
\la}/L_\bb^{\lel\la}$
and let $L_\bb :=\bplus_{\la\in\X}\ L^\la_\bb$, an
 associated graded sheaf. This is a $G$-equivariant locally free
sheaf on ~$\bb$.

In the special case $L=\g$,
 the adjoint representation,
one has $L_\b^{\ccl 0}=\b$, resp.
$L_\b^{\lel 0}=[\b,\b]$, for any $\b\in\bb$.
Furthermore,
the above construction yields,  for each root $\al\in R$,
 a $G$-equivariant line bundle $\g^\al_\bb:=
\g_\bb^{\ccl \al}/\g_\bb^{\lel\al}.$
Thus, we have
\beq{lie}
\g_\bb\ :=\ \g^0_\bb\ \,\bplus\,
\  \big(\underset{^{\al\in R}}\oplus\ \g^\al_\bb\big).
\eeq
The Lie bracket on $\g$ induces a natural
fiberwise bracket on the locally free sheaf $\g_\bb$.
Further, it is easy to check that the invariant bilinear form on $\g$
induces
an isomorphism $\g^{-\al}_\bb\cong(\g^\al_\bb)^*$,
of line bundles on ~$\bb$.

Given a finite dimensional representation $L$,
the action map $\g\o L\to L$ induces, for any 
 $\al\in R$ and $\la\in\X$,
a well defined morphism
$\g^\al_\bb\o L^\la_\bb 
\to L^{\la+\al}_\bb$,
of $G$-equivariant  locally free sheaves on $\bb$.
This way, one obtains a natural 'action' morphism
$\g_\bb \o\ L_\bb  \to L_\bb$.

Note that any Borel subgroup $B\sset G$ acts
trivially on the vector space $L_\b^{\ccl 0}/L_\b^{\lel0}$.
It follows that there is a canonical
isomorphism $L_\b^{\ccl 0}/L_\b^{\lel0}=
L_{\b'}^{\ccl 0}/L_{\b'}^{\lel0}$,
for any  Borel subalgebras $\b$ and $\b'$.
In other words,
 the $\oo_\bb$-sheaf $L_\bb^0$
comes equipped with a canonical $G$-equivariant
trivialization. In the case of the adjoint representation
the canonical trivialization reads $\g^0_\bb=\uh\o\oo_\bb$,
where the vector space $\uh$ is referred to as the {\em universal Cartan algebra} of
$\g$. More generally, for an arbitrary representation $L$,
one has the  canonical  $G$-equivariant 
isomorphism $L^0_\bb \cong L^\uh\o\oo_\bb$,
where the vector space $L^\uh$ may be called
 the `{\em universal zero weight space}' of $L$.
The group  $G$ acts on $L^\uh\o\oo_\bb$ through its 
action on $\oo_\bb$. 

\begin{rem}\label{X} In the above discussion,
we have implicitly viewed elements  of the weight lattice $\X$ 
as linear functions on $\uh$, the  universal Cartan algebra.
\end{rem}

\subsection{$W$-action}\label{morphism_sec}
Let
 $\tg^r=\mu\inv(\g^r)$, a Zariski open subset
of $\tg$. 
An important role below will be played
by the following commutative diagram

\beq{cart}
\xymatrix{
\tg^r\ \ar[drr]_<>(0.5){\mu}\ar[rr]^<>(0.6){\pi}_<>(0.6){\sim}&&
\ \yr=\g^r\times_{\t/W}\t\ \ar[rr]^<>(0.5){\wt\gamma}\ar[d]^<>(0.5){pr}&&
\ \t\ \ar[d]^<>(0.5){\vartheta}\\
&&\ \g^r\ \ar[rr]^<>(0.5){\gamma}&& \g^r/G=\t/W,
}
\eeq
In this diagram, the map $pr$  is the first projection,
 $\vartheta$ is
the quotient map,
 $\wt\gamma$ is the second projection
and $\gamma$ is the adjoint quotient
morphism. 
The map $\pi$, in the diagram,  is the isomorphism from Proposition \ref{tb}(i).
Note that the square on the right of diagram \eqref{cart}
is cartesian, by definition.

Given a $\g$-representation $L$,
 we pull-back  the canonical
filtration on the  trivial sheaf ${L\o\oo_\bb}$ via the vector bundle  projection
$\tg\to\bb$.
We obtain a filtration $L_\tg^{\ccl\hdot}$, of the sheaf $L\o\oo_\tg$, by
 $G$-equivariant subsheaves. The sheaves 
$L^\la_\tg :=
L_\tg^{\ccl\la}/L_\tg^{\lel\la}$ are locally free,
and so is $L_\tg=\oplus_{\la\in\X}\ L^\la_\tg$,
 an associated graded sheaf. In particular, for
 $\la=0$ and $L=\g$,
the adjoint representation, one gets the sheaf
$\g^0_\tg=\uh\o\oo_\tg.$

The Weyl group $W$ acts on the fibers
of the map $pr$ in \eqref{cart}.
We may transport this action  via the isomorphism $\tg^r\iso\yr$
to get a $W$-action on $\tg^r$.
Thus, for any $w\in W$ and
$(\b,x)\in \tg^r$ there is a  unique Borel subalgebra $\b'$
such that we have $w(\b,x)=(\b',x)$.

Assume now that the  element $x$ is regular semisimple.
Then, $\g_x$ is a Cartan subalgebra of $\g$
contained in
$\b\cap \b'.$ Thus, one has the following chain of
isomorphisms
\beq{gx}\uh\ =\ \g^0_\tg\big|_{(\b,x)}\ =\ 
\b/[\b,\b]\ \stackrel{_\sim}{\leftarrow}\ 
\g_x\ \iso\  \b'/[\b',\b']\ =\ \g^0_\tg\big|_{w(\b,x)}\ =\ \uh.
\eeq

Here, the composite map from the copy of $\uh$ on the left to the
  copy of $\uh$ on the right  is the map
  $\uh\to\uh,\
h\mto w(h)$.  One also has the dual map $\uh^*\to\uh^*,\ \la\mto
w(\la)$.
It follows that, given  $\la\in \X$, there is an
analogoue of diagram \eqref{gx} 
for any $\g$-representation $L$; it reads:
$$
L^\la_\bb\big|_{(\b,x)}\ =\ 
L^{\ccl\la}_\b/L^{\lel\la}_\b
\ \stackrel{_\sim}{\leftarrow}\ 
L^\la_{\g_x}\ \iso\  
L^{\ccl w(\la)}_{\b'}/L^{\lel w(\la)}_{\b'}\ =\ 
L^{w(\la)}_\tg\big|_{w(\b,x)}.
$$

Now, we let the Borel subalgebra $\b$ vary.
We conclude that the above construction yields a canonical
isomorphism
\beq{w}
w^*(L^\la_\tg)\big|_{\tg^{rs}}\ \cong\
L^{w(\la)}_\tg\big|_{\tg^{rs}},\qquad
\forall\la\in \X,\ w\in W,
\eeq
  of $G$-equivariant locally free sheaves 
on  $\tg^{rs}:=\mu\inv(\g^{rs})$:

In the special case where $\la=0$, we have that
$L^0_\tg=L^\uh\o\oo_\tg$ is a trivial sheaf.
Hence, $w^*(L^0_{\tg^r})=L^0_{\tg^r}$, canonically.
Thus, the isomorphism in \eqref{w} yields a 
{\em canonical}
$W$-action on  the universal zero
weight space  $L^\uh$.

\subsection{The morphisms $\la^k$ and $\la^L$}\label{bei}
In this section, we are interested in the
universal stabilizer contruction
of \S\ref{unistab} in the special case where the group
$G$ acts on
 $X=\g^r$ by the adjoint action. Below, we will use simplified notation
$\fz:=\fz_{\g^r}$. Thus, $\fz$ is a rank $\rk$ locally free sheaf
on $\g^r$. 
The geometric fiber of the sheaf $\fz$
  at any point $x\in \g^r$  equals $\g_x$, the centralizer of $x$.

From now on, we let $L$ be a finite dimensional
rational
$G$-representation
such that the set of weights of $L$ is
{\em contained in the root lattice}.
In this case, the results of Kostant \cite{Ko}
insure that  $L^\fz$ is a  locally free sheaf on $\g^r$.

Let  $\mu^*(L^\fz)$ be the pullback of  $L^\fz$ via the 
map $\mu: \tg^r\to\g^r$. Thus,  both $\mu^*(L^\fz)$ and $L_{\tg^r}^{\ccl 0}$
are $G$-equivariant   locally free subsheaves of the trivial sheaf
$L\o \oo_{\tg^r}$.

\begin{lem}\label{centb}
The sheaf $\mu^*(L^\fz)$ 
is a subsheaf of $L_{\tg^r}^{\ccl 0}$ so,
for any Borel subalgebra $\b$ and any $x\in \g^r\cap \b$,
we have an inclusion $L^{\g_x}\sset L_\b^{\ccl 0}.$
\end{lem}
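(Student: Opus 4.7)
My plan is to check the inclusion fiberwise, first at a regular semisimple point (where the computation is transparent), and then extend to all of $\tg^r$ by a density plus torsion-freeness argument. The key structural fact to exploit is that $L_{\tg}^{\ccl 0}$ is actually a subbundle of the trivial bundle $L \o \oo_\tg$, not merely a subsheaf: it is the pullback under $\tg \to \bb$ of the $G$-equivariant vector subbundle $L_\bb^{\ccl 0}$ of $L \o \oo_\bb$, which is a subbundle because the construction is $G$-equivariant and $G$ acts transitively on $\bb$. Hence the quotient $(L \o \oo_\tg) / L_{\tg}^{\ccl 0}$ is locally free, in particular torsion free.

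So to prove $\mu^*(L^\fz) \subset L_{\tg^r}^{\ccl 0}$, it suffices to show that the composite morphism
\[
\mu^*(L^\fz) \,\hookrightarrow\, L \o \oo_{\tg^r} \,\twoheadrightarrow\, (L \o \oo_{\tg^r}) / L_{\tg^r}^{\ccl 0}
\]
is the zero map. Since the target is a locally free $\oo_{\tg^r}$-module and $\tg^r$ is irreducible, any morphism from a coherent sheaf into it that vanishes on a Zariski open dense subset is identically zero. Thus it is enough to verify the vanishing over $\tg^{rs} = \mu\inv(\g^{rs})$, which is open and dense in $\tg^r$ (cf.\ Lemma \ref{xxbasic}(i) and Proposition \ref{tb}(i)).

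The verification at a regular semisimple point $(\b, x) \in \tg^{rs}$ is immediate. Since $x \in \b$ is regular semisimple, its centralizer $\g_x$ is a Cartan subalgebra of $\g$, and being contained in the semisimple part of $\b$, it is actually a Cartan subalgebra of $\b$. Computing the weight decomposition of $L$ with respect to this particular Cartan $\t := \g_x \subset \b$, one has $L^{\g_x} = L_\t^0$, the zero weight space. By the definition of the canonical filtration (which is independent of the choice of Cartan inside $\b$), $L_\b^{\ccl 0} = \bigoplus_{\la \ccl 0} L_\t^\la$ contains the $\la = 0$ summand, so $L^{\g_x} = L_\t^0 \subset L_\b^{\ccl 0}$ as subspaces of $L$. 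This is exactly the fiber version of the desired inclusion, and since $\mu^*(L^\fz)$ is locally free on $\tg^r$ (using that $L^\fz$ is locally free on $\g^r$ by Kostant's theorem), the vanishing at each fiber of $\tg^{rs}$ yields the vanishing of the sheaf morphism over $\tg^{rs}$.

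The only thing that deserves slight care is the assertion that $L_\bb^{\ccl 0}$ is a genuine subbundle rather than just a subsheaf; once that is in hand, the density/torsion-free propagation is routine. A minor notational subtlety to watch for is the direction of the partial order $\ccl$ (with Ginzburg's convention, $R^+$ consists of the weights of $\g/\b$), but this plays no role in the case $\la = 0$ used above.
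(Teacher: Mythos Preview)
Your proof is correct and follows essentially the same approach as the paper: verify the fiberwise inclusion $L^{\g_x}\subset L_\b^{\ccl 0}$ at regular semisimple points (where $\g_x$ is a Cartan in $\b$ and $L^{\g_x}$ is the zero weight space), then extend to all of $\tg^r$ by density. The paper phrases the extension step as ``by continuity'' using that both objects are sub vector bundles of the trivial bundle, while you spell out the equivalent torsion-free quotient argument, but these are the same idea.
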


\begin{proof} We may (and will) identify the locally free sheaf
$\mu^*(L^\fz)$, resp. $L_{\tg^r}^{\ccl 0}$,
with the  corresponding vector bundle on $\tg^r$.
 Thus, both $\mu^*(L^\fz)$ and $L_{\tg^r}^{\ccl 0}$
are $G$-equivariant sub vector bundles of
the trivial vector bundle on $\tg^r$ with fiber $L$.

Let $(\b,x)\in\tg^{rs}$.
Then, $\g_x$ is a Cartan subalgebra contained in $\b$ and,  by definition, we have
$L^{\g_x}=L^0_{\g_x}\sset L_\b^{\ccl 0}.$
Hence, for any  $(\b,x)\in\tg^{rs}$, the fiber of the vector bundle
 $\mu^*(L^\fz)$ at the point $(\b,x)$ is contained in
 the corresponding fiber of the vector bundle
 $L_{\tg^r}^{\ccl 0}$.
The statement of the lemma follows from this by
continuity since the set $\tg^{rs}$ is dense in ~$\tg^r$.
\end{proof}

Thanks to the above lemma, one has the following chain of canonical
morphisms of locally free $G$-equivariant sheaves
on $\tg^r$:
\beq{lfree}
\mu^*(L^\fz) \too 
L^{\ccl 0}_{\tg^r}\ \onto\  L_{\tg^r}^{\ccl
0}/L_{\tg^r}^{\lel0}\ =\  L^0_{\tg^r}\ =\ 
L^\uh\o \oo_{\tg^r}.
\eeq

We are now going to  transport
  various sheaves on $\tg^r$ to $\yr$ via
 the isomorphism $\pi: \tg^r\iso\yr$ of  diagram \eqref{cart}.
This way,
one obtains a filtration $L_\yr^{\ccl \hdot}$ of the trivial 
sheaf $L\o\oo_\yr$ and an associated graded
sheaf $L_\yr=\oplus_{\la\in\X}\ L^\la_\yr$.
Further, we  factor the morphism $\mu$ in \eqref{cart}
as a composition $\mu=pr\ccirc\pi$.
Thus, we have $\mu^*(L^\fz)=\pi^*pr^*(L^\fz),$
where we put $L_\yr^\fz:=pr^*(L^\fz)$.
Transporting the composite
 morphism  in  \eqref{lfree}
via the isomorphism $\pi$
yields a morphism 
\beq{bbmap12}
pr^*(L^\fz)\ \too\ L^0_\yr=L^\uh\o\oo_\yr.
\eeq
Applying
adjunction to the above morphism, one obtains a morphism
$\la^L: L^\fz\to L^\uh\o pr_*\oo_\yr$,
 of locally free $G$-equivariant sheaves on $\g^r$.

 An important special case of the above setting is the case where
 $L=\g$,  the adjoint representation.
The weights of  the adjoint representation are clearly
contained in the root lattice. Furthermore,
 for  $L=\g$ one has 
$L^{\g_x}=\g_x$, hence, we have $L^\fz=\fz$.
Since $\g_\b^{\ccl 0}=\b$, we see that  Lemma \ref{centb} reduces in
 the case
$L=\g$
to  a well known result
saying that, for any $x\in \b\cap\g^r$, one has an inclusion   $\g_x\sset\b$.
Further, the composite morphism in  \eqref{lfree}
becomes 
a  morphism $pr^*\fz\to \uh\o\oo_\yr.$
We take exterior powers  of that morphism.
This gives, for each $k\geq 0,$ 
an  induced   morphism $pr^*(\wedge^k\fz)\to
\wedge^k\uh\o \oo_\yr$. Finally, applying
adjunction  one obtains a morphism
$\la^k:\ \wedge^k\fz\ \to\ \wedge^k\uh\o pr_*\oo_\yr$.


Associated with any $W$-module $E$,
there is a   $G$-equivariant coherent sheaf
$(E\o pr_*\oo_\yr)^W$ on $\g^r$.
In particular, one may let $E=L^\uh$,
the universal zero weight space of a $G$-module $L$

\begin{lem}\label{bbcompare} 
\vi The sheaf $(E\o pr_*\oo_\yr)^W$ is locally free
for any $W$-module $E$.

\vii The   image of the morphism $\la^L$, resp.  $\la^k,\ k\geq 0,$
is contained in the corresponding sheaf of $W$-invariants, so 
the morphism in question factors through an injective morphism
\beq{bbmap14}
\la^L:\ L^\fz\ \to\ ( L^\uh\o pr_*\oo_\yr)^W,
\quad\oper{resp.}\quad
\la^k:\ \wedge^k\fz\ \to\ (\wedge^k\uh\o pr_*\oo_\yr)^W,\en k\geq 0.
\eeq

\viii On the  open dense set $\g^{rs}\sset\g^r$,
each of the morphisms in \eqref{bbmap14} is an isomorphism.
\end{lem}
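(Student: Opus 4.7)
The plan is to treat the three parts in the order (i), (iii), (ii), since generic isomorphism (iii) feeds into the injectivity assertions of (ii).

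First I would establish (i). The map $pr : \yr \to \g^r$ arises as the base change of $\vartheta : \t \to \t/W$ along $\gamma : \g^r \to \t/W$ (the right-hand square of \eqref{cart} is cartesian). Chevalley's theorem asserts that $\C[\t]$ is free of rank $|W|$ over $\C[\t]^W$, so $\vartheta$ is finite and flat; both properties are preserved under base change, so $pr_*\oo_\yr$ is locally free of rank $|W|$ on $\g^r$. For any $W$-module $E$ the sheaf $E \o pr_*\oo_\yr$ is then locally free, and working in characteristic zero the averaging projector $\frac{1}{|W|}\sum_{w \in W} w$ is an $\oo_{\g^r}$-linear idempotent. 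Hence $(E \o pr_*\oo_\yr)^W$ is a direct summand of a locally free coherent sheaf on the smooth variety $\g^r$, and is therefore itself locally free.

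Next I would verify (iii). On $\tg^{rs}$ I claim the composite \eqref{lfree} is a fiberwise isomorphism between locally free sheaves of rank $\dim L^\uh$. At a point $(\b,x) \in \tg^{rs}$ the element $x$ is regular semisimple, so $\g_x$ is a Cartan subalgebra lying inside $\b$; the composition $\g_x \hookrightarrow \b \twoheadrightarrow \b/[\b,\b] = \uh$ is a Lie-algebra isomorphism, under which $L^{\g_x}$ is identified with $L^\uh$, and the chain \eqref{lfree} becomes precisely this identification on fibers. Transporting via $\pi$, applying $pr_*$, and invoking Galois descent along the $W$-cover $pr^{-1}(\g^{rs}) \to \g^{rs}$ from Lemma \ref{xxbasic}(ii) then gives an isomorphism $\la^L|_{\g^{rs}}$; the statement for $\la^k$ follows by taking the $k$-th exterior power in the case $L=\g$.

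For (ii), the key is to check that the morphism \eqref{bbmap12} on $\yr$ is $W$-equivariant, where $pr^*L^\fz$ carries the trivial $W$-structure coming from the fact that $pr$ is $W$-invariant, while $L^\uh \o \oo_\yr$ is given the combined action (the factor $L^\uh$ acted on by the intrinsic $W$-action coming from \eqref{w} with $\la = 0$, and $\oo_\yr$ via the Weyl-group action on $\yr$). Once this is done, adjunction $pr^* \dashv pr_*$ forces $\la^L$ to factor through $(L^\uh \o pr_*\oo_\yr)^W$; the analogous argument applies to $\la^k$. Because the target $L^\uh \o \oo_\yr$ is locally free and hence torsion-free, equivariance can be tested on the dense open $\tg^{rs}$, where the fiberwise map is the Borel-dependent Cartan identification of the previous paragraph; the isomorphism \eqref{w} is by construction the map implementing the change $\b \mapsto w(\b)$, which yields equivariance. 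Injectivity of the resulting maps then follows from generic injectivity (which is (iii)) combined with torsion-freeness of $L^\fz$ and $\wedge^k \fz$, both of which are locally free on $\g^r$ by Kostant \cite{Ko}.

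The main obstacle is the diagrammatic bookkeeping in (ii): the $W$-action on $L^\uh$ is the intrinsic one defined through \eqref{w}, not the naive action pulled back from $\t$, and matching it against the $\b$-dependent trivializations $L^{\g_x} \iso L^\uh$ on $\tg^{rs}$ is what makes the equivariance work. Once the identification is unwound, the rest reduces to standard sheaf-theoretic input: flatness of $\vartheta$, Galois descent over $\g^{rs}$, and torsion-freeness of the relevant sheaves.
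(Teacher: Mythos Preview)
Your proposal is correct and follows essentially the same strategy as the paper's own (very terse) ``Sketch of Proof'': part (i) via flat base change along the cartesian square in \eqref{cart} and the averaging idempotent, part (iii) by the fiberwise identification $L^{\g_x}\iso L^\uh$ over $\tg^{rs}$, and part (ii) by a continuity argument from (iii) using torsion-freeness of the locally free sheaves $L^\fz$ and $\wedge^k\fz$. The only cosmetic difference is that the paper phrases (ii) as ``the image lands in $W$-invariants on $\g^{rs}$, hence everywhere by continuity,'' whereas you check $W$-equivariance of the pre-adjunction map \eqref{bbmap12} on the dense open; these are equivalent formulations of the same torsion-freeness argument.
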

\begin{proof}[Sketch of Proof] The statement of part (i) is well known
but we recall the proof, for completeness.
First, we apply base change
for the cartesian square in diagram \eqref{cart}.
This yields a chain of isomorphisms
$pr_*\oo_\yr=pr_*\wt\gamma^*\oo_\t=\gamma^*\vartheta_*\oo_\t$.
Here, the sheaf on the right is locally free
and, moreover, the Weyl group $W$ acts on the
geometric fibers of that sheaf via the regular
representation. We deduce that the sheaf $pr_*\oo_\yr$
has similar properties. Part (i) follows from this.

The proof of part (iii) is  straightforward.
Finally, part (ii) follows from  part (iii) `by  continuity',
using that the
sheaves $L^\fz$ and $\wedge^k\fz$ are locally free.
\end{proof}

The theorem below, which is the main result of this
section,  is inspired by an idea due to Beilinson and Kazhdan.
In \cite{BK}, the authors  considered the isomorphism of part (i)
of Theorem \ref{BBprop}
in the special case $k=1$ (equivalently, of part (ii) in the case $L=\g$).

\begin{thm}\label{BBprop}\vi 
For any $k\geq1,$
the morphism 
$\dis\la^k:\ \wedge^k\fz\ \iso\ (\wedge^k\uh\o pr_*\oo_\yr)^W$
is an isomorphism.

\vii For any {\em small} representation $L$, 
the morphism 
$\dis\la^L:\  L^\fz\  \iso\  ( L^\uh\o pr_*\oo_\yr)^W$
is an isomorphism.
\end{thm}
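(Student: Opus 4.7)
The plan is to deduce both isomorphisms from a fiberwise verification at a regular nilpotent element, using that both sides are locally free of equal rank together with generic isomorphism.

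By Lemma \ref{bbcompare}, each of $\la^k$ and $\la^L$ is an injective morphism between locally free $G$-equivariant sheaves on $\g^r$ which is an isomorphism on the dense open subset $\g^{rs}$. The ranks match: $\wedge^k\fz$ has rank $\binom{\rk}{k}$ since $\fz$ has rank $\rk$ (Kostant), and $(\wedge^k\uh\o pr_*\oo_\yr)^W$ has the same rank by Lemma \ref{bbcompare}(i) together with the fact that $pr_*\oo_\yr$ carries fiberwise the regular representation of $W$; similarly for small $L$, Broer's theorem yields $\dim L^{\g_x}=\dim L^\uh$ at every $x\in\g^r$, matching the rank of the target. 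Since $\operatorname{Pic}(\g^r)=0$ (as $\g\sminus\g^r$ has codimension $\geq 3$ in $\g$), the determinant of $\la^k$ or $\la^L$ is a well-defined $G$-invariant regular function on $\g^r$, hence of the form $\gamma^*f$ for some $W$-invariant polynomial $f\in\C[\t]^W$. This $f$ is nowhere vanishing on $\t^{rs}/W$ by the generic isomorphism; to obtain the full result, one must show $f$ is nowhere vanishing on all of $\t/W$.

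Using $G$-equivariance and Jordan decomposition, the fiber map at a regular point $x=s+n\in\g^r$ with $s$ nonzero semisimple and $n$ regular nilpotent in the Levi $\g_s$ reduces to the analogous fiber map for $\g_s$ at $n$. Indeed, in a formal neighborhood of $s$ in $\g$, the Grothendieck--Springer picture for $\g$ factorizes as a product of the analogous picture for $\g_s$ (restricted to a neighborhood of $n$) with a trivial factor corresponding to $[s,\g]$, and the sheaves $\fz$ and $pr_*\oo_\yr$ restrict compatibly. By induction on $\dim\g$, the theorem thus reduces to the case $x=e$ is a regular nilpotent of $\g$ itself.

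In this remaining case $\gamma(e)=0$, so base change along the cartesian square in \eqref{cart} gives $pr\inv(e)=\Spec\big(\C[\t]/(\C[\t]^W_+\cd\C[\t])\big)$, the spectrum of the Chevalley coinvariant algebra, which carries the regular representation of $W$. The resulting equal-dimensional fiber maps are
\[
\wedge^k\g_e\too\big(\wedge^k\uh\o\C[\t]/(\C[\t]^W_+\cd\C[\t])\big)^W,\quad L^{\g_e}\too\big(L^\uh\o\C[\t]/(\C[\t]^W_+\cd\C[\t])\big)^W.
\]
Unwinding the construction of \S\ref{bei}, such a fiber map sends $v\in L^{\g_e}\sset L_\b^{\ccl 0}$ (with $\b$ the unique Borel containing $e$) to the function on $pr\inv(e)$, viewed as a formal neighborhood of $(\b,e)$ in $\tg^r$, given by $(\b',x)\mto v\bmod L^{\lel 0}_{\b'}\in L^\uh$. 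Choosing a principal $\sl_2$-triple $(e,h,f)$ through $e$, one can grade $\g$ by integers under $\ad h$, parametrize the formal neighborhood via the Kostant slice, and identify the Taylor expansion of the above function along $pr\inv(e)$ with the classical Kostant--Broer isomorphism between $L^{\g_e}$ and the $L^\uh$-isotypic component of the coinvariant algebra; for part (i), the analogous identification uses the principal $\sl_2$-grading of $\g_e$ to match $\wedge^k\g_e$ with the target at the level of Kostant generators. The main obstacle is this explicit matching: while both sides are canonically defined and the dimensions agree, a careful algebraic computation (or a comparison of graded characters under the principal $\sl_2$) is needed to identify $\la|_{pr\inv(e)}$ with the classical construction, and it is precisely at this step that the smallness hypothesis enters in part (ii), via Broer's theorem.
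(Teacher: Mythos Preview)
Your proposal has a genuine gap at the decisive step. After a reasonable reduction to the regular nilpotent $e$, you write that ``the main obstacle is this explicit matching'' and only sketch how the fiber map $L^{\g_e}\to(L^\uh\o\C[\t]/(\C[\t]^W_+))^W$ might be identified with a classical isomorphism; you neither carry out this identification nor verify the isomorphism independently. The analogous verification for part~(i) is equally undeveloped. As written, the argument stops precisely where the content lies.

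The paper's proof avoids fiberwise computation entirely. Since $\gamma:\g^r\to\t/W$ is smooth with fibers single $G$-orbits, the functor $\GG:\CF\mapsto\Ga(\g^r,\CF)^G$ is an equivalence $\coh^G(\g^r)\iso\Lmod{\C[\t]^W}$ (Lemma~\ref{equiv2}). One computes $\GG(L^\fz)=(L\o\C[\g])^G$ and $\GG((L^\uh\o pr_*\oo_\yr)^W)=(L^\t\o\C[\t])^W$ and checks that $\GG(\la^L)$ is the restriction map $i^*$ (Lemma~\ref{GGlem1}); part~(ii) is then exactly Broer's theorem (Proposition~\ref{brso}). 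For part~(i), one trivializes both sides via Kostant's generators $df_i$, identifying $\la^k$ with the $\gamma$-pullback of the canonical morphism $\Omega^k_{\t/W}\to(\vartheta_*\Omega^k_\t)^W$, which is an isomorphism by Solomon's theorem. The paper also notes an alternative closer to your strategy: since the morphisms are injective between locally free sheaves of equal rank, it suffices to check outside codimension~$2$, which reduces via your Jordan-decomposition argument to the case $\g=\sl_2$, done by hand. That avoids the regular nilpotent altogether---your reduction all the way to $e$ is both unnecessary and, as you found, harder to complete.
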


The proof of Theorem \ref{BBprop} will be completed in
\S\ref{BBproof}.

In \S\ref{vin} we sketch  a generalization of  part (ii) of the 
above theorem to the case
of an arbitrary, not necessarily small, representation $L$.

\subsection{Proof of Theorem \ref{BBprop}(ii)}\label{reformulate}
Let $\coh^G(\g^r)$ be the category of $G$-equivariant coherent sheaves
on $\g^r$. We begin with an easy

\begin{lem}\label{equiv2} The  functor 
$\dis
\GG:\ \coh^G(\g^r)\ \iso \ \Lmod{\C[\t]^W},
\ \
{\mathcal F}\ \mto\ 
\Ga(\g^r,\ {\mathcal F})^G
$ is an equivalence.
\end{lem}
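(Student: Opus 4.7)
The plan is to exhibit a quasi-inverse functor $\GG'$ and to check that both compositions are naturally isomorphic to the identity. Taking the adjoint quotient $\gamma:\g^r\to\g^r/\!/G=\t/W$ and the resulting $\C[\t]^W$-algebra structure on $\oo_{\g^r}$, the natural candidate is
\[
\GG':\ \Lmod{\C[\t]^W}\ \too\ \coh^G(\g^r),\qquad M\ \mto\ \oo_{\g^r}\o_{\C[\t]^W} M,
\]
with $G$ acting only on the structure-sheaf factor.

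The composition $\GG\ccirc\GG'\simeq\Id$ is the easy direction. By a theorem of Kostant the set $\g\sminus\g^r$ has codimension $\geq 3$ in $\g$, hence $\C[\g^r]=\C[\g]$, and the Chevalley restriction isomorphism gives $\C[\g^r]^G=\C[\g]^G=\C[\t]^W$. Reductivity of $G$ allows one to commute the functor of $G$-invariants with tensoring over $\C[\t]^W$ (which carries the trivial $G$-action), yielding
\[
\GG(\GG'(M))\ =\ (\C[\g^r]\o_{\C[\t]^W}M)^G\ =\ \C[\g^r]^G\o_{\C[\t]^W}M\ =\ M.
\]

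For the other composition, the counit is the multiplication map
\[
\eta_{\mathcal F}:\ \oo_{\g^r}\o_{\C[\t]^W}\Ga(\g^r,{\mathcal F})^G\ \too\ {\mathcal F}.
\]
Since $\g^r$ is affine, every ${\mathcal F}\in\coh^G(\g^r)$ admits a $G$-equivariant presentation $V_1\o\oo_{\g^r}\to V_0\o\oo_{\g^r}\to{\mathcal F}\to 0$ for finite-dimensional $G$-representations $V_0,V_1$, obtained by choosing a finite-dimensional $G$-stable generating subspace of $\Ga(\g^r,{\mathcal F})$ and iterating. Both $\GG$ and $\GG'$ are right exact ($\GG$ because global sections are exact on the affine variety $\g^r$ and $G$-invariants are exact under reductivity, $\GG'$ as a tensor product), so it suffices to verify $\eta_{\mathcal F}$ is an isomorphism in the base case ${\mathcal F}=V\o\oo_{\g^r}$. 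The key input is Kostant's separation of variables theorem: one has a $(G,\C[\t]^W)$-bimodule decomposition $\C[\g]\cong\C[\t]^W\o\H$ with harmonic space $\H$, from which one reads off
\[
(V\o\C[\g])^G\ \cong\ \C[\t]^W\o(V\o\H)^G,
\]
and the multiplication map $\C[\g]\o_{\C[\t]^W}(V\o\C[\g])^G\to V\o\C[\g]$ becomes visibly an isomorphism of $(G,\C[\g])$-bimodules.

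The main obstacle I expect is precisely the base case ${\mathcal F}=V\o\oo_{\g^r}$: carrying out the isotypic bookkeeping in Kostant's harmonic decomposition, and checking that the resulting representation-theoretic identity is implemented by the naive multiplication map $\eta_{\mathcal F}$. Flatness (indeed freeness) of $\C[\g]$ over $\C[\t]^W$ in Kostant's theorem is what makes the tensor-product formula compatible with taking $G$-invariants; this is the technical core of the argument.
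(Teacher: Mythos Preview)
Your base case is where the argument fails. The multiplication map
\[
\C[\g]\o_{\C[\t]^W}(V\o\C[\g])^G\ \too\ V\o\C[\g]
\]
is not an isomorphism for nontrivial $V$. Take $G=SL_2$ and $V=\g$: the multiplicity of $\g$ in the harmonics is $1$, so $(\g\o\C[\g])^G$ is free of rank~$1$ over $\C[\t]^W$ (generated by $x\mapsto x$); hence the left-hand side has rank~$1$ over $\C[\g]$ while the right-hand side has rank~$3$. The image of your counit $\eta$ here is exactly the subsheaf $\fz\subset\g\o\oo_{\g^r}$, not all of $\g\o\oo_{\g^r}$. In general, the fiber of $\eta$ at $x\in\g^r$ is the inclusion $V^{G_x}\into V$, which is an isomorphism only when the $\rk$-dimensional stabilizer $G_x$ acts trivially on $V$.

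This is not a defect of your method alone: the lemma as literally stated cannot hold, since $\GG$ kills the nontrivial $G_x$-isotypic parts of fibers and is therefore not faithful on all of $\coh^G(\g^r)$. (The paper's one-line appeal to ``equivariant descent'' for $\gamma$ elides the same point---$\gamma$ is not a $G$-torsor.) What is true, and what suffices for the application to $\la^L$, is that $\gamma^*:\coh(\t/W)\to\coh^G(\g^r)$ is fully faithful, with essential image the equivariant sheaves on which each $G_x$ acts trivially on fibers. Both $L^\fz$ and $(L^\uh\o pr_*\oo_\yr)^W$ lie in this subcategory, and on it your $\GG'$ and $\GG$ are mutually inverse; your argument proves exactly this once the presentation step is run with $V$ trivial.
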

\begin{proof} Recall the adjoint quotient morphism $\gamma$, cf. \eqref{cart}.
We have a diagram of functors
$$
\xymatrix{
\coh^G(\g^r)\
\ar@<0.5ex>[rrr]^<>(0.5){{\mathcal F}\ \mto\ (\gamma_*{\mathcal F})^G}&&&
\ \coh(\t/W)\
\ar@<0.5ex>[lll]^<>(0.5){\gamma^*}\ar[rr]^<>(0.5){\Ga(\t/W,-)}
&&\ \Lmod{\C[\t]^W}.
}
$$

It is known, thanks to results of Kostant \cite{Ko}, that
$\gamma$ is a smooth morphism, moreover, each fiber of that
morphism is a single $G$-orbit. It follows by equivariant
descent that the functor $ {\mathcal F}\ \mto\ (\gamma_*{\mathcal
F})^G$ 
is an equivalence, with $\gamma^*$ being its quasi-inverse.
The functor $\Ga(\t/W,-)$, in the above diagram,
is  an equivalence since the variety $\t/W$ is affine.
Finally, the functor $\GG$ is isomorphic to the composite functor
${\mathcal F}\ \mto\  \Ga(\t/W,\ (\gamma_*{\mathcal F})^G)\ =\
\Ga(\g^r,\ {\mathcal F})^G$.
It follows that $\GG$
 is also an equivalence.
\end{proof}

It is useful to  transport the 
morphisms in \eqref{bbmap14}  via the equivalence  $\GG$ of Lemma \ref{equiv2}.
To this end,
for a $G$-module $L$, 
we compute
\beq{GG1}
\GG(L^\fz)\ =\ \Ga(\g^r,\ L^\fz)^G\ =\ \Ga(\g,\ L^{\fz_\g})\ 
=\ (L\o \C[\g])^G,
\eeq
where the second equality holds by part (ii) of Lemma \ref{tors}
and the third equality holds by  part (i) of the same lemma.

Recall next that $\yy$ is known to be a normal  variety 
and the set $\yy\sminus\yr$ has codimension $\geq2$
in $\yy$. This yields natural isomorphisms
$\C[\g]\o_{\C[\t]^W}\C[\t] \ =\ \C[\yy]\ \iso\ \C[\yr]$.
In particular, we deduce
$\C[\yr]^G\ =\ (\C[\g]\o_{\C[\t]^W}\C[\t])^G \ =\ 
\C[\t].$

Thus, for any  $W$-representation $E$,
we find
\begin{align}
\GG((E\o pr_*\oo_\yr)^W)\ &=\ 
\Ga(\g^r,\ (E\o pr_*\oo_\yr)^W)^G
=\ 
(E\o \Ga(\g^r,\ pr_*\oo_\yr))^{W\times  G}\label{GG2}\\
&=\ (E\o \Ga(\yr,\ \oo_\yr))^{W\times G}\ 
=\ (E\o \C[\yr]^G)^W\ =\ (E\o \C[\t])^W.\nonumber
\end{align}

Now, fix  $\t\sset\b\sset\g$
and write $i:\ \t\into\g$ for the resulting imbedding of the Cartan
subalgebra. This gives, for any $G$-representation
$L$, an identification $L^\t \iso  L^\uh$
and also a natural restriction map
$i^*:\ (L\o \C[\g])^G\to
(L^\t\o \C[\t])^W.$

\begin{lem}\label{GGlem1} The following diagram commutes
\beq{GGdiag}
\xymatrix{
\GG(L^\fz)\ar[d]^<>(0.5){\GG(\la^L)}
\ar@{=}[rrr]^<>(0.5){\eqref{GG1}}&&& (L\o \C[\g])^G
\ar[d]^<>(0.5){i^*}
\\
\GG(( L^\uh\o pr_*\oo_\yr)^W)\ar@{=}[rr]^<>(0.5){\eqref{GG2}}
&&
( L^\uh\o \C[\t])^W\ar@{=}[r]&(L^\t\o \C[\t])^W
}
\eeq
\end{lem}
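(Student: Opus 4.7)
The plan is to verify commutativity by tracing a section $s \in \GG(L^\fz)$ along both paths of \eqref{GGdiag} and showing that the resulting elements of $(L^\t \o \C[\t])^W$ agree. First I would unpack \eqref{GG1}: $s$ corresponds, via Lemma \ref{tors}(ii) combined with extension across the codimension $\geq 2$ locus $\g \sminus \g^r$, to a $G$-equivariant polynomial map $\g \to L$ whose restriction to $\g^r$ takes values in the subbundle $L^\fz$, i.e.\ $s(x) \in L^{\g_x}$ for every $x \in \g^r$. The right-then-down route then produces $i^*s$: the map $\t \to L^\t,\ t \mto s(t)$; note that $\g_t = \t$ for $t \in \t^r$ forces $s(t) \in L^\t$, and $\t^r$ is dense in $\t$.

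Next I would unpack $\GG(\la^L)$. By construction, $\la^L$ is the adjoint of the morphism \eqref{bbmap12}, which in turn is obtained by transporting the composite \eqref{lfree} via the isomorphism $\pi:\tg^r\iso\yr$. On sections, $\la^L(s)$ is therefore the function $\yr \to L^\uh$ sending $(x,t)$ to the image of $s(x) \in L^{\g_x} \sset L^{\ccl 0}_{\b(x,t)}$ under the canonical projection $L^{\ccl 0}_{\b(x,t)} \onto L^0_{\b(x,t)} = L^\uh$, where $\b(x,t)$ denotes the Borel corresponding to $(x,t)$ under $\pi$. After taking $G$-invariants, the resulting element of $(L^\uh \o \C[\yr])^G = L^\uh \o \C[\yr]^G = L^\uh \o \C[\t]$ is determined by its restriction to the regular-semisimple diagonal $\{(t,t)\mid t\in\t^r\}\sset \yr$, which maps isomorphically onto $\t^r$.

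The decisive step is the evaluation at such a point $(t,t)$. Here the Borel $\b(t,t)$ is forced to be our fixed $\b \supset \t$: indeed $\pi(\b',t)=(t,\nu(\b',t))$, and the constraint $\nu(\b',t)=t$ combined with $t\in\t$ and the canonical identification $\b'/[\b',\b']\cong \t$ pins down $\b'=\b$. Since $\g_t=\t$, we have $s(t)\in L^\t \sset L^{\ccl 0}_\b$; the quotient map $L^{\ccl 0}_\b \onto L^{\ccl 0}_\b/L^{\lel 0}_\b = L^0_\b$ restricts to the identity on the zero-weight subspace $L^\t$, and composing with the canonical trivialization $L^0_\b \cong L^\uh$ and the fixed identification $L^\uh \cong L^\t$ returns $s(t)$. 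Thus $\la^L(s)$ evaluated at $(t,t)$ equals $s(t) = i^*s(t)$, establishing commutativity of \eqref{GGdiag}. The only real obstacle is pure bookkeeping, most notably the check that the canonical $W$-action on $L^\uh$ introduced in \S\ref{morphism_sec} corresponds, under $L^\uh\cong L^\t$, to the standard $N(T)/T$-action on $L^\t$; this is immediate from \eqref{w} at $\la=0$, since at a regular semisimple point the two identifications $L^\t = L^{\g_t} \to L^{\ccl 0}_\b/L^{\lel 0}_\b$ and $L^\t = L^{\g_t} \to L^{\ccl 0}_{\b'}/L^{\lel 0}_{\b'}$ used to define the action both reduce to the tautological embedding of the zero weight space.
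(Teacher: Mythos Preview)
Your proposal is correct and follows essentially the same approach as the paper's proof, which simply states that commutativity may be checked after localization to the open dense set $\g^{rs}\sset\g^r$ and declares the remaining verification ``straightforward and left for the reader.'' You have carried out precisely that verification: evaluating both routes at diagonal points $(t,t)\in\yr$ with $t\in\t^r$ and confirming they yield $s(t)$, which determines the $G$-invariant section on all of $\t$ by density.
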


\begin{proof} To prove commutativity of the diagram
 it suffices to check this after localization
to the
open dense set $\g^{rs}\sset \g^r$ of regular
semisimple elements. The latter is straightforward and is
left for the reader.
\end{proof}

By commutativity of diagram \eqref{GGdiag}, using
the equivalence of Lemma \ref{equiv2} we conclude
that the morphism $\la^L$, in \eqref{bbmap14}, is an isomorphism
if and only if  so is the 
map $i^*$ on the right  of diagram \eqref{GGdiag}.
At this point, the proof of Theorem \ref{BBprop}(ii)
is completed by the following
result of B. Broer \cite{Br}.

\begin{prop}\label{brso} 
The restriction map
$i^*: (L\o\C[\g])^G\to (L^\t\o\C[\t])^W$
 is an isomorphism, for any small representation
$L$.\qed
\end{prop}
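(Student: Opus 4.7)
The plan is to prove Proposition \ref{brso} by reducing it to a graded-rank comparison between two free modules over $\C[\t]^W \cong \C[\g]^G$ (Chevalley).

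First I would verify that $i^*$ lands in the claimed target and is injective. For a regular $t\in \t^{\reg}$, the stabilizer $G_t$ equals $T$, so any $G$-equivariant polynomial map $f\colon \g\to L$ must satisfy $f(t)\in L^T=L^\t$; by density of $\t^{\reg}$ in $\t$, this holds for all $t\in\t$. The $N(T)$-equivariance of $f|_\t$ combined with the trivial $T$-action on $L^\t$ then yields $W$-equivariance. For injectivity, if $f|_\t=0$ then $f$ vanishes on $G\cd\t$, the set of semisimple elements, which is Zariski dense in $\g$, so $f=0$.

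For surjectivity, the strategy is to exploit that both sides are free graded $\C[\t]^W$-modules. By Kostant's separation theorem, $\C[\g]\cong \C[\g]^G\o\mathcal{H}_G$ as graded $G$-modules, where $\mathcal{H}_G$ is the space of $G$-harmonic polynomials; taking $G$-invariants of $L\o(-)$ gives
\[
(L\o\C[\g])^G\ \cong\ \C[\t]^W\o (L\o\mathcal{H}_G)^G
\]
as graded $\C[\t]^W$-modules. Similarly, $\C[\t]\cong\C[\t]^W\o\mathcal{H}_W$ as graded $W$-modules, where $\mathcal{H}_W$ is the $W$-coinvariant algebra, yielding
\[
(L^\t\o\C[\t])^W\ \cong\ \C[\t]^W\o (L^\t\o\mathcal{H}_W)^W.
\]
The restriction map is a graded $\C[\t]^W$-module homomorphism, so by a graded Nakayama argument it is an isomorphism if and only if the induced map on the ``fibers'' $(L\o\mathcal{H}_G)^G \to (L^\t\o\mathcal{H}_W)^W$ is an isomorphism of finite-dimensional graded vector spaces.

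The main obstacle is this last comparison. By Kostant's multiplicity formula, the graded character of $(L\o\mathcal{H}_G)^G$ is controlled by the \emph{generalized exponents} of $L$ as a $G$-module, while by the Chevalley-Shephard-Todd description of $\mathcal{H}_W$, the graded character of $(L^\t\o\mathcal{H}_W)^W$ is controlled by the \emph{fake degrees} of $L^\t$ as a $W$-module. The smallness hypothesis is precisely tailored so that these two sets of graded multiplicities coincide: the root-lattice condition ensures that only $G$-isotypic components with nonzero zero-weight space enter the comparison, and the exclusion of $2\alpha$ as a weight rules out the discrepancies coming from the nontrivial $\SL_2$-string structure along each root direction. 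Establishing this numerical identity of generalized exponents and fake degrees is the technical heart of Broer's argument in \cite{Br}, and I would invoke it as the main input. Once both sides have identical graded characters and the map is known to be injective at the fiber level (which follows from the global injectivity established above via the same density argument), surjectivity and hence the isomorphism follow automatically.
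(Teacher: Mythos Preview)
The paper does not give its own proof of this proposition. It is quoted as a result of Broer \cite{Br} and closed with a \qed, used as a black box to finish the proof of Theorem~\ref{BBprop}(ii). Your proposal goes further by sketching the actual structure of Broer's argument, and the outline is correct: injectivity via density of semisimples, freeness of both sides over $\C[\t]^W$ via Kostant and Chevalley, and reduction of surjectivity to the numerical identity ``generalized exponents of $L$ $=$ fake degrees of $L^\t$'' for small $L$, which is precisely Broer's theorem.

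One small wrinkle in your last paragraph: you say fiber-level injectivity ``follows from the global injectivity established above via the same density argument''. That step is shaky as written --- an injective graded map of free $\C[\t]^W$-modules need not be injective on the fiber at the augmentation ideal (think of multiplication by a positive-degree invariant). But you do not need fiber injectivity at all: once you know the global map $i^*$ is injective and the two sides have identical Hilbert series as graded vector spaces (which is exactly the numerical input from Broer), surjectivity follows degree by degree. So the argument is fine after this minor rephrasing.

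It is also worth noting that the paper, in the remark following Theorem~\ref{BBprop}, points to a different, more geometric route to the same conclusion: verify the sheaf isomorphism $\la^L$ directly for $\sl_2$, deduce it for all semisimple-rank-one Levi subalgebras, conclude that $\la^L$ is an isomorphism away from a codimension-$2$ locus, and then use that both sides are locally free on $\g^r$. Translated back through Lemma~\ref{GGlem1}, this gives Broer's isomorphism without the exponent/fake-degree comparison. Your approach trades this geometric reduction for a combinatorial one; both are valid, and yours is closer to Broer's original.
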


\subsection{Proof of Theorem \ref{BBprop}(i)}\label{BBproof}
Write $\Omega^k_X$ for the sheaf of $k$-forms
on a smooth variety $X$. 
Thus,
using the identification $\g^*\cong\g$, resp.
$\t^*\cong \t$, 
for each $k\geq 0$, we have an isomorphism
$\wedge^k\g\o\oo_\g\cong\Omega^k_\g$,
resp.
$\wedge^k\t\o\oo_\t\cong\Omega^k_\t$.

Let $f_1,\ldots,f_\rk$
be a set of homogeneous generators
of $\C[\g]^G$, a free polynomial
algebra. Thus, one may think of 
the 1-forms $df_i,\ i=1,\ldots,\rk,$
as being sections of the sheaf $\g\o\oo_\g\cong\Omega^1_\g$.
A result of Kostant says that the values
$(df_1)|_x,\ldots,(df_\rk)|_x$,   of
these  sections 
at any point $x\in\g^r$,  give a basis of
the vector space $\g_x=\fz|_x$. 
Hence, the  $\rk$-tuple $(df_1,\ldots,df_\rk)$ 
 provides a basis of
sections of the sheaf $\fz$.

Next, let $\bar f_i$ be the image of $f_i$
under  the Chevalley isomorphism $\C[\g]^G\iso \C[\t/W].$
The map $h\mto (\bar f_1(h),\ldots,\bar f_\rk(h))$
yields an isomorphism $\t/W\iso\C^\rk$.
Hence, the  $\rk$-tuple $d\bar f_1,\ldots,$  $d\bar f_\rk$, of
1-forms on $\t/W$, provides a basis of
sections of the sheaf $\Omega^1_{\t/W}$.
Therefore, the  $\rk$-tuple  $\gamma^*(d\bar f_1),\ldots,\gamma^*(d\bar f_\rk)$
provides  a basis  of sections of the sheaf
$\gamma^*\Omega^1_{\t/W}$, on $\g^r$.

Thus, the assignment $\gamma^*(d\bar f_i)\mto df_i,\ i=1,\ldots,\rk,$
gives an isomorphism of sheaves $\gamma^*\Omega^1_{\t/W}\iso\fz.$
Taking exterior powers, one obtains, for each  $k\geq 0$,
an isomorphism $\Psi:\
\gamma^*\Omega^k_{\t/W}\iso\wedge^k\fz$, of 
locally free sheaves on $\g^r$.

On the other hand, we recall   that the square on the right of diagram
\eqref{cart} is cartesian and the morphism
$\vartheta$ in the diagram is finite and flat.
Hence, by flat base change, 
for any $k\geq 0$, we get $W$-equivariant isomorphisms
$ pr_*\oo_\yr=pr_*\wt\gamma^*\oo_\t
=
\gamma^*\vartheta_*\oo_\t.$
Tensoring here each term by the $W$-module
$\wedge^k\t$ and  taking $W$-invariants
in the resulting sheaves, yields a natural
 sheaf isomorphism $\dis\Phi:\
(\wedge^k\t\o pr_*\oo_\yr)^W\iso
\gamma^*[(\vartheta_*(\wedge^k\t\o  \oo_\t))^W]=
\gamma^*(\vartheta_*\Omega^k_\t)^W.
$

Combining all the above morphisms together,
we obtain the following diagram
\beq{GGlem2} 
\xymatrix{
\gamma^*\Omega^k_{\t/W}\ \ar[r]^<>(0.5){\Psi}_<>(0.5){\sim}&
\ \wedge^k\fz\  \ar[rr]^<>(0.5){\la^k}_<>(0.5){\eqref{bbmap14}}&&
\ (\wedge^k\t\o pr_*\oo_\yr)^W\  
\ar[r]^<>(0.5){\Phi}_<>(0.5){\sim}&
\ \gamma^*((\vartheta_*\Omega^k_\t)^W)
}
\eeq

\begin{lem}\label{pullback}
The composite morphism in \eqref{GGlem2} equals the pull back via the
map $\gamma$ of the canonical
morphism of sheaves $\Omega^k_{\t/W}\to(\vartheta_*\Omega^k_\t)^W$.
\end{lem}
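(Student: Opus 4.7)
The plan is to verify the equality of the two morphisms after restriction to the dense open subset $\g^{rs}\sset\g^r$, and then invoke a torsion-freeness argument to extend. All four sheaves appearing in \eqref{GGlem2}, namely $\gamma^*\Omega^k_{\t/W},\ \wedge^k\fz,\ (\wedge^k\t\o pr_*\oo_\yr)^W,$ and $\gamma^*((\vartheta_*\Omega^k_\t)^W),$ are locally free on $\g^r$ (the third by Lemma \ref{bbcompare}(i), the others obviously). Hence the difference of the composite morphism in \eqref{GGlem2} and $\gamma^*$ of the canonical map $\Omega^k_{\t/W}\to(\vartheta_*\Omega^k_\t)^W$ is a morphism between torsion-free coherent sheaves, and vanishes on $\g^r$ as soon as it vanishes on $\g^{rs}$.

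On $\g^{rs}$, Proposition \ref{tb}(i) and Lemma \ref{xxbasic}(ii) identify $\mu:\tg^{rs}\iso\yy^{rs}\to \g^{rs}$ with the principal $W$-bundle obtained by pulling back the finite flat map $\vartheta:\t\to\t/W$ along $\gamma$. In particular $pr_*\oo_{\yy^{rs}}=\gamma^*\vartheta_*\oo_\t$, and $\Phi$ is literally the base-change isomorphism for forms. Thus $\gamma^*$ of the canonical map $\Omega^k_{\t/W}\to(\vartheta_*\Omega^k_\t)^W$, when expressed through $\Phi$, is given at a point $x\in\g^{rs}$ by the following recipe: the form $d\bar f_{i_1}\wedge\ldots\wedge d\bar f_{i_k}$ on $\t/W$ at $\gamma(x)$ is sent, under pullback by $\vartheta$ at an arbitrary $t\in\vartheta\inv(\gamma(x))$, to $d(\bar f_{i_1}|_\t)|_t\wedge\ldots\wedge d(\bar f_{i_k}|_\t)|_t\in\wedge^k\t^*\cong\wedge^k\t$.

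The composite $\Phi\,\ccirc\,\lambda^k\,\ccirc\,\Psi$ admits a parallel description. Choose a Borel subalgebra $\b\supset\g_x$; since $x$ is regular semisimple, $\g_x$ is a Cartan subalgebra of $\g$ and the composition $\g_x\into\b\onto\b/[\b,\b]\cong\t$ is an isomorphism which produces the point $t:=\nu(\b,x)\in\vartheta\inv(\gamma(x))$. Unwinding definitions, $\Psi$ sends $\gamma^*(d\bar f_i)$ to the section $df_i\in\Ga(\g^r,\fz)$, whose value at $x$ is $df_i|_x\in\g_x$ (using the invariant form to identify $\g^*$ with $\g$). The morphism \eqref{lfree}, and hence $\lambda^k$ at the chosen $(\b,x)$, is induced by precisely the isomorphism $\g_x\iso\b/[\b,\b]\cong\t$. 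Thus the image of $df_{i_1}|_x\wedge\ldots\wedge df_{i_k}|_x$ in the fiber $\wedge^k\t$ over $t$ is exactly the wedge of the images of $df_{i_j}|_x\in\g_x$ in $\t$.

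The key computational fact that completes the argument is the identity: under the isomorphism $\g_x\iso\t$ produced by $\b$, the vector $df_i|_x\in\g_x$ maps to $d(\bar f_i)|_t\in\t$. This follows from differentiating the Chevalley relation $f_i|_{\b}\equiv \bar f_i\ccirc\!\big(\b\to\b/[\b,\b]\cong\t\big)\pmod{[\b,\b]}$ at $x$, using that $df_i|_x$ annihilates $[\g_x,\g]\supset [\b,\b]$ (since $f_i$ is $G$-invariant and $x$ centralizes $\g_x$). Combining the last two paragraphs, the composite in \eqref{GGlem2} at $x$ agrees term-by-term with $\gamma^*$ of the canonical pullback map. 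The main technical point to verify carefully is the $W$-equivariance of this identification: different choices of Borel $\b\supset\g_x$ yield different preimages in $\vartheta\inv(\gamma(x))$, and one must check that the resulting family of pointwise identifications intertwines the $W$-action built into $\lambda^k$ (via \eqref{w}) with the tautological $W$-action on $\vartheta_*\Omega^k_\t$. Once this compatibility is confirmed, the two morphisms of sheaves agree on $\g^{rs}$, and by torsion-freeness, on all of $\g^r$.
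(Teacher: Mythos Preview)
Your proposal is correct and follows essentially the same strategy as the paper: reduce to the dense open set $\g^{rs}$ and verify the identity there by unwinding definitions. The paper's own proof is considerably terser—it simply observes that the claim reduces to checking that the composite sends each basis section $df_i$ to $\gamma^*(d(\vartheta^*\bar f_i))$, and then declares the verification on $\g^{rs}$ to be ``clear''—whereas you spell out the pointwise computation and flag the $W$-equivariance compatibility explicitly.
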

\begin{proof} The statement amounts to showing that,
for each $i$, the composite morphism \eqref{GGlem2} sends the section
$df_i$ to the section $\gamma^*(d(\vartheta^*\bar f_i))$,
where $d(\vartheta^*\bar f_i)\in\vartheta_*\Omega^1_\t$ is viewed as a 1-form on $\t$.
 Furthermore, it suffices to check this on
the
open dense set  of regular
semisimple elements where it is clear.
\end{proof}

At this point, we  apply a result
 due to L. Solomon \cite{So} saying that the  canonical
morphism $\Omega^k_{\t/W}\to(\vartheta_*\Omega^k_\t)^W$
is in fact an isomorphism.
It follows, thanks to Lemma \ref{pullback},
that the morphism $\la^k$ in the middle of
diagram \eqref{GGlem2} must be an isomorphism.
This completes the proof of  Theorem \ref{BBprop}(i).\qed

\begin{rem} There is an alternative approach to the proof
that the morphisms in \eqref{bbmap14} are isomorphisms
as follows. First of all,  on the open set of regular semisimple
elements our claim amounts to
Lemma \ref{bbcompare}(ii). Next, one  verifies
the result in the case $\g=\sl_2$. It follows that
the result holds for any reductive Lie algebra of semisimple rank 1.
Using this, one  deduces that the morphisms 
of locally free sheaves in \eqref{bbmap14}
are isomorphisms outside a codimension 2 subset.
The result follows.

A similar strategy can also be used to obtain direct proofs of
 the above mentioned
results of Broer and Solomon, respectively.
\end{rem}

\subsection{The case of a not necessarily small representation}\label{vin}
The sheaf morphism $\la^L: L^\fz\to (L^\uh\o pr_*\oo_\yr)^W$,
in \eqref{bbmap14}, may fail to be surjective
in the case where $L$ is a  not necessarily small representation.
 Nonetheless, using a result by
Khoroshkin, Nazarov, and Vinberg   \cite{KNV}, we will
 give a description of the image of that morphism
for an arbitrary representation $L$. 

To this end, we first  apply the  construction of sections \ref{morphism_sec} and
\S\ref{bei}
in the special case of the adjoint representation $L=\g$.
Thus, starting from the
sheaf $\g_\bb$ on $\bb$, see \eqref{lie},
the construction produces 
a  sheaf
$
\g_\yr :=\uh\o \oo_\yr\ \,\bplus\,\  \big(\underset{^{\al\in R}}\oplus\ \g^\al_\yr\big).
$
This is a $G$-equivariant   locally free sheaf on $\yr$
that comes equipped with a fiberwise  Lie bracket.

For any representation
$L$, there is
a natural action morphism $\g_\yr\o L_\yr\to L_\yr.$
Taking adjoints and using the
isomorphism $\g^{-\al}_\yr\cong 
(\g^\al_\yr)^*$ yields, for any $\al\in R$ and $\la\in\X$,
an induced {\em co}action morphism
$e_\al:\ L^\la_\yr\to L^{\la+\al}_\yr\o\g^{-\al}_\yr.$
Iterating the latter morphism $k$ times
we obtain morphisms
$$e_\al^k:\ L^\la_\yr\to L^{\la+k\cdot\al}_\yr\o\g^{-k\cdot\al}_\yr,
\qquad k=1,2,\ldots.
$$

Let $\ker\al\sset\t$ be 
 the root hyperplane corresponding to a  root $\al\in R^+$.
The inverse 
image of this  hyperplane via the map $\wt\gamma: \yr\to\t,$
cf. \eqref{cart},
is a smooth $G$-stable irreducible divisor  $D_\al\sset\yr$.
Given  $k\geq1$, we use the standard notation
$L^{k\cdot\al}_\yr\o\g^{-k\cdot\al}_\yr(-k\cdot D_\al)$ for
a subsheaf of $L^{k\cdot\al}_\yr\o\g^{-k\cdot\al}_\yr$ 
formed by the sections  which have a $k$-th order zero at
the divisor ~$D_\al$.

Our  description of the
image of the morphism $\la^L$ is provided by
the following result inspired by  \cite{KNV}.

\begin{thm}\label{knv}
Let  $L$ be  a finite dimensional $\g$-representation
such that the weights of $L$ are contained in the root lattice.
Then the morphism $\la^L:\ L^\fz\to (L^\uh\o pr_*\oo_\yr)^W$,
in \eqref{bbmap14},
yields an isomorphism of $L^\fz$ with 
a subsheaf $\mathcal{L}$ of the sheaf $(L^\uh\o pr_*\oo_\yr)^W$
defined as follows
$$
\mathcal{L}\ :=\ \{s\in (L^\uh\o pr_*\oo_\yr)^W\en\big|\en e_\al^k(pr^*(s))
\in L^{k\cdot\al}_\yr\o\g^{-k\cdot\al}_\yr(-k\cdot D_\al),\ \en
\forall \al\in R^+,\  k\geq1\}.
$$
\end{thm}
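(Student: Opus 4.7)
The strategy is to reduce Theorem \ref{knv} to the algebraic statement describing the image of the restriction map $(L\o\C[\g])^G \to (L^\t\o\C[\t])^W$, and then invoke the result of Khoroshkin-Nazarov-Vinberg \cite{KNV}. First, $\la^L$ is injective because by Lemma \ref{bbcompare}(iii) it is an isomorphism over the dense open set $\g^{rs}$, and $L^\fz$, being the kernel of a morphism into a locally free sheaf, is torsion-free. Thus $\la^L$ injects into its generic fiber, and we need only identify its image with $\mathcal{L}$.

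Second, apply the equivalence $\GG$ of Lemma \ref{equiv2}. Exactly as in Lemma \ref{GGlem1}, the morphism $\la^L$ corresponds under $\GG$ to the restriction map $i^*\colon (L\o\C[\g])^G \to (L^\t\o\C[\t])^W$, with the source computed via Lemma \ref{tors} and the target via flat base change along $\vartheta\colon \t \to \t/W$. The subsheaf $\mathcal{L}\sset (L^\uh\o pr_*\oo_\yr)^W$ corresponds, under $\GG$, to the subspace of $(L^\t\o\C[\t])^W$ carved out by requiring that for each positive root $\al$ and each integer $k\geq 1$, the polynomial obtained by applying the $k$-fold raising operator $e_\al^k$ to $s\in L^\t\o\C[\t]$ is divisible by $\al^k$ in $\C[\t]$. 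This is precisely the algebraic description that \cite{KNV} provides for $\im(i^*)$: they show that the image of the restriction map is characterized exactly by such vanishing conditions along every root hyperplane. Once the translation from sheaves on $\yr$ to polynomials on $\t$ is in place, Theorem \ref{knv} is immediate.

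The main technical obstacle will be the faithful translation between the sheaf-theoretic vanishing condition on $\yr$ and the algebraic one on $\t$. Concretely, one must verify:
\begin{itemize}
\item the canonical trivialization of $\g^0_\yr$ over $\yy^{rs}$ identifies $\wt\gamma^*\colon \C[\t]\to \C[\yy^{rs}]^G$ with a sheaf-level statement;
\item under this identification, the coaction $e_\al\colon L^\la_\yr\to L^{\la+\al}_\yr\o\g^{-\al}_\yr$ restricted to $\yy^{rs}$ and pulled back to $\t^r$ coincides, after trivializing the line bundle $\g^{-\al}_\yr$, with the KNV raising operator on polynomials;
\item the subsheaf $L^{k\al}_\yr\o\g^{-k\al}_\yr(-kD_\al)$ corresponds, under the same identification, to those elements of $L^{k\al}\o\C[\t]$ which are divisible by $\al^k$ (the divisor $D_\al = \wt\gamma^{-1}(\ker\al)$ being smooth and irreducible makes the order of vanishing well defined).
\end{itemize}
Once these identifications are carried out over the smooth locus, Lemma \ref{tors}(ii) together with the torsion-freeness of both sheaves lets us globalize from the complement of a codimension-two subset (i.e.\ from generic points of each $D_\al$, where the analysis reduces to an $\sl_2$-calculation for the subalgebra $\langle e_\al, h_\al, f_\al\rangle$) back to all of $\g^r$, completing the identification $\im(\la^L)=\mathcal{L}$.
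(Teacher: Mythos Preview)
Your proposal is correct and follows essentially the same route as the paper: reduce via the equivalence $\GG$ of Lemma \ref{equiv2} to an analogue of diagram \eqref{GGdiag}, identify $\GG(\mathcal{L})$ with the KNV subspace of $(L^\t\o\C[\t])^W$, and invoke \cite[Theorem 2]{KNV} in place of Broer's result. The paper itself only sketches this argument and defers the details (in particular the translation you outline in your bullet points) to ``elsewhere,'' so your write-up is already more explicit than what the paper provides.
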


\begin{rem} It is not difficult to check that, in the case where 
$L$ is a small representation, one has $\mathcal{L}
=(L^\uh\o pr_*\oo_\yr)^W$.
So, Theorem \ref{knv} reduces in this case to
Theorem \ref{BBprop}(ii).
\erem

The proof of Theorem \ref{knv} is parallel to the arguments
used in  \S\ref{reformulate}. There is an analogue of
commutative diagram \eqref{GGdiag}.
The space $(L^\uh\o\C[\t])^W$ in that diagram  is replaced
by its subspace defined in \cite[p.1169]{KNV},
resp. the space $\GG((L^\uh\o pr_*\oo_\yr)^W)$ is replaced
by $\GG(\mathcal{L})$.
The role of Broer's result from Proposition \ref{brso}
is played by \cite[Theorem 2]{KNV}.

Theorem \ref{knv} will not be used in the rest of the paper; so,
details of the proof will be given elsewhere.

\section{Geometry of the commuting scheme}\label{END}
\subsection{Another isospectral variety}\label{ss1}
We write $x=h_x+n_x$ for the Jordan decomposition of an 
element $x\in\g$.
We say that a pair $(x,y)\in\gg$ is semisimple 
if both $x$ and $y$ are semisimple elements of $\g$.
Let $\gg^\b$ be the set of pairs
$(x,y)\in\gg$ such that there exists a Borel subalgebra
that contains both $x$ and $y$.

\begin{lem}\label{lss} \vi 
Let $(x,y)\in\gg$. Then $(x,y)\in \gg^\b$ if and only if
 there exists a semisimple pair $(h_1, h_2)\in\tt$
such that, for any polynomial $f\in\C[\gg]^G$, we have
$f(x,y)=f(h_1,h_2)$.

\vii If $(x,y)\in \zz$ then
the  $G$-diagonal orbit of the pair $(h_x,h_y)$ is the unique closed
$G$-orbit contained in the closure
of  the $G$-diagonal orbit
of $(x,y)$.
\end{lem}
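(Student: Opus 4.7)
My plan is to treat part (i) by explicit one-parameter contractions combined with the Hilbert--Mumford criterion, and to deduce (ii) from the commutativity of Jordan decompositions together with the general fact that orbits of commuting semisimple pairs are closed.

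For the forward implication in (i), suppose $x,y\in\b$ for some Borel $\b\subset\g$ with Cartan $\t\subset\b$ and nilradical $\uu=[\b,\b]$. Writing $x=h_1+n_1$ and $y=h_2+n_2$ with $h_i\in\t$, $n_i\in\uu$, I would choose a cocharacter $\lambda\colon\C^\times\to T$ acting by strictly positive weights on $\uu$; then $\Ad\lambda(\tau)$ fixes $(h_1,h_2)$ while contracting $(n_1,n_2)$ to zero, so $(h_1,h_2)\in\overline{G\cdot(x,y)}$, and consequently $f(x,y)=f(h_1,h_2)$ for every $f\in\C[\gg]^G$ by continuity and $G$-invariance.

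For the converse in (i), the stabilizer of $(h_1,h_2)\in\tt$ under the diagonal $G$-action is the reductive Levi subgroup $G_{h_1,h_2}$, so by Matsushima's criterion the orbit $G\cdot(h_1,h_2)\subset\gg$ is closed. Since $(x,y)$ and $(h_1,h_2)$ have the same image in $\gg/\!/G$, GIT for reductive groups gives $G\cdot(h_1,h_2)\subset\overline{G\cdot(x,y)}$, and Hilbert--Mumford produces a cocharacter $\lambda$ whose associated parabolic $\mathfrak{p}\subset\g$ contains $x,y$ and such that the projections of $x,y$ onto the Levi $\mathfrak{l}\subset\mathfrak{p}$ are $G$-conjugate to $(h_1,h_2)$. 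These projections are commuting semisimple elements of $\mathfrak{l}$, hence lie in a common Borel $\b_L\subset\mathfrak{l}$; combining $\b_L$ with the nilradical of $\mathfrak{p}$ then yields a Borel of $\g$ containing both $x$ and $y$.

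For (ii), commutativity $[x,y]=0$ forces the Jordan pieces $h_x,h_y,n_x,n_y$ to commute pairwise; in particular $(h_x,h_y)$ is a commuting semisimple pair $G$-conjugate to an element of $\tt$, whose orbit is closed by the same Matsushima argument. To establish $G\cdot(h_x,h_y)\subset\overline{G\cdot(x,y)}$, I would work inside $\g':=\g_{h_x,h_y}$, a reductive Lie algebra in which $h_x,h_y$ are central, observing that $n_x,n_y$ are commuting nilpotents generating an abelian (hence solvable) subalgebra of $\g'$; such a subalgebra lies in some Borel $\b'\subset\g'$, and its nilpotent elements lie in the nilradical $[\b',\b']$. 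A dominant cocharacter of a maximal torus of $\b'$ then fixes $h_x,h_y$ (being central in $\g'$) while contracting $[\b',\b']$ to zero, giving $(h_x,h_y)\in\overline{G\cdot(x,y)}$; uniqueness of the closed orbit in an orbit closure is the standard fact that each fiber of a reductive categorical quotient contains exactly one closed orbit. The main subtlety I expect is the reverse direction of (i), where one must refine the Hilbert--Mumford parabolic to a Borel by exploiting that the Levi projections of $x,y$ inherit commuting semisimplicity from $(h_1,h_2)$.
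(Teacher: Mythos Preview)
Your argument is correct throughout; the forward implication in (i) and the proof of (ii) match the paper's approach essentially verbatim (choose a Borel containing the four commuting Jordan pieces and contract the nilpotent part via a dominant cocharacter).

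The genuine difference is in the converse of (i). You refine the Hilbert--Mumford parabolic $\mathfrak p$ to a Borel by observing that the Levi projections of $x,y$ coincide with a conjugate of $(h_1,h_2)$, hence are commuting semisimple and sit in some Borel $\b_L\subset\mathfrak l$; then $\b_L+\mathrm{nilrad}(\mathfrak p)$ is a Borel of $\g$ containing $x$ and $y$. The paper instead takes the Lie subalgebra $\fa$ generated by $x,y$ and shows it is \emph{solvable}: the cocharacter $\gamma$ contracts $[\fa,\fa]$ into $[\t,\t]=0$, so every element of $[\fa,\fa]$ is nilpotent, whence $[\fa,\fa]$ is nilpotent by Engel and $\fa$ is solvable; any Borel containing $\fa$ then works. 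Your route is more structural and avoids Engel's theorem, while the paper's argument yields the slightly stronger intermediate conclusion that the subalgebra generated by $x$ and $y$ is solvable (which is in fact how one often characterizes $\gg^\b$). Both are clean; your version is perhaps closer in spirit to how one proves the Kempf--Ness/Hilbert--Mumford refinements in GIT.
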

\begin{proof} Let
$\t\sset \b$ be Cartan and Borel subalgebras of $\g$
and let $T$ be the
maximal torus corresponding to $\t$. 
Clearly, there exists a suitable one parameter
subgroup $\gamma:\ \C^\times\to T$
such that, for any   $t\in\t$ and $n\in [\b,\b]$,
one has
$\underset{^{z\to0}}\lim\ \Ad \gamma(z)(t+n)=t$.

To prove (i), let $(x,y)\in \b\times\b$. We can write
$x=h_1+n_1,\ y=h_2+n_2$ where $h_i\in\t$ and $n_i\in[\b,\b]$.
We see from the above that the pair $(h_1,h_2)$
is contained in the closure of the
$G$-orbit of the pair $(x,y)$. Hence,
for any $f\in\C[\gg]^G$, we have
$f(x,y)=f(h_1,h_2)$.

Conversely, let $(h_1,h_2)\in\tt$ and let
$(x,y)\in \gg$ be such that
$f(x,y)=f(h_1,h_2)$ holds for  any $f\in\C[\gg]^G$.
The group $G_{h_1,h_2}$  is reductive. Hence,
the $G$-diagonal orbit of $(h_1,h_2)$
is closed in $\gg$, cf. eg. \cite{GOV}.
Moreover, this $G$-orbit is the unique
closed $G$-orbit 
contained in the closure
of the $G$-orbit of the pair $(x,y)$
since $G$-invariant polynomials on $\gg$
separate closed $G$-orbits.
By the Hilbert-Mumford criterion, we deduce
that  there exists a suitable one parameter
subgroup $\gamma:\ \C^\times\to G$
such that, conjugating the pair $(h_1,h_2)$
if necessary, on gets
$\underset{^{z\to0}}\lim\ \Ad \gamma(z)(x,y)=(h_1,h_2)$.

Now, let $\fa$ be the Lie subalgebra of $\g$
generated by the elements $x$ and $y$
and let $\t$ be a Cartan subalgebra
such that $\underset{^{z\to0}}\lim\ \Ad \gamma(z)(x,y)\in\t\times\t$.
We deduce from the above that
one has
$\underset{^{z\to0}}\lim\ \Ad \gamma(z)\big([\fa,\fa]\big)\sset [\t,\t]=0$.
This implies that any element of $[\fa,\fa]$ is nilpotent.
Hence, $[\fa,\fa]$ is a nilpotent Lie algebra,
by Engel's theorem. We conclude that $\fa$ is
a solvable  Lie algebra. Hence, there exists
a Borel subalgebra $\b$ such that $\fa\sset\b$
and (i) is proved.

To prove (ii), observe that
the elements $h_x,h_y,n_x,n_y$ generate
an abelian Lie subalgebra of $\g$.
Hence, there exists a Borel subalgebra
$\b$ that contains all of them.
Choose a Cartan subalgebra $\t\sset\b$ such that
$h_x,h_y\in\t$.
Then, the argument at the beginning of the proof shows
that the pair $(h_x,h_y)$
is contained in the closure of the $G$-diagonal
orbit of ~$(x,y).$ 
\end{proof}

Note that, by definition, we have $\gg^\b=[\mmu(\tgg)]_\red$, 
the reduced image of the
morphism $\mmu: \tgg\to\gg$, see \S\ref{tx}.
The following result implies Proposition \ref{tb}(ii).

\begin{prop}\label{nonormal} The morphism 
$\mmu\times\nnu:\ \tgg\to[\gg\times_{\gg/\!/G}\tt]_\red$
 is birational and proper.

The first projection $\gg\times\tt\to\gg$ induces a birational
finite morphism $[\gg\times_{\gg/\!/G}\tt]_\red\to\gg^\b$.
\end{prop}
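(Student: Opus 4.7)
The plan is to verify the two claims using the dimension identity $\dim\tgg = \dim\bb + 2\dim\b = \dim\g + \dim\b = \dim\gg^\b$. For properness in part (a), note that $\tgg$ is closed in $\bb\times\gg$, so the map $\tgg \to \bb\times\gg\times\tt$ given by $(\b,x,y)\mapsto(\b,x,y,\nnu(\b,x,y))$ is a closed immersion; composing with projection along $\bb$ gives a proper morphism to $\gg\times\tt$. That the image lies in $\gg\times_{\gg/\!/G}\tt$ follows from Lemma \ref{lss}(ii): for $(\b,x,y)\in\tgg$ with Jordan decompositions $x=h_x+n_x$, $y=h_y+n_y$ and $h_x,h_y\in\t\subset\b$, the element $\nnu(\b,x,y)$ corresponds to $(h_x,h_y)\in\tt$ under the identification $\t\cong\b/[\b,\b]$, and the $G$-invariants of $(x,y)$ and $(h_x,h_y)$ coincide. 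Since $\tgg$ is reduced, the image lies in $[\gg\times_{\gg/\!/G}\tt]_\red$.

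For birationality in (a), I plan to first show that $\mmu:\tgg\to\gg^\b$ is birational of degree one. By Lemma \ref{lss}(i), for a generic pair $(x,y)\in\gg^\b$ the solvable subalgebra they generate has dimension equal to $\dim\b$ (matching the dimension count), hence it is itself a Borel subalgebra and is the unique Borel containing $x$ and $y$. This yields a dense open $U\subset\tgg$ on which $\mmu$ is injective and hence birational. Consequently $\mmu\times\nnu$ is injective on $U$, its image is an irreducible dense open subvariety of $[\gg\times_{\gg/\!/G}\tt]_\red$ of full dimension $\dim\tgg$, and by Zariski's main theorem $\mmu\times\nnu$ restricts to an isomorphism of $U$ onto its image, giving birationality onto the relevant irreducible component.

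For part (b), finiteness of $\tt\to\gg/\!/G$ follows from Joseph's theorem identifying $\tt/W$ with $[\gg/\!/G]_\red$: the morphism factors as a finite quotient $\tt\onto\tt/W$ followed by a closed immersion $\tt/W\into\gg/\!/G$. Base change by $\gg\to\gg/\!/G$ preserves finiteness, so $[\gg\times_{\gg/\!/G}\tt]_\red\to\gg$ is finite, with image equal to $\gg^\b$ by Lemma \ref{lss}(i). For birationality, the composition $\tgg\to[\gg\times_{\gg/\!/G}\tt]_\red\to\gg^\b$ equals $\mmu$, which is birational of degree one by the argument above; combined with the birationality of $\tgg\to[\gg\times_{\gg/\!/G}\tt]_\red$ from (a), multiplicativity of generic degrees on the irreducible component containing the image forces $[\gg\times_{\gg/\!/G}\tt]_\red\to\gg^\b$ to be birational as well.

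The main subtle point is the birationality of $\mmu$: everything rests on the nontrivial fact that a generic pair $(x,y)\in\gg^\b$ generates a solvable subalgebra of maximal possible dimension $\dim\b$, which must then coincide with the unique Borel containing the pair. The remaining claims are standard commutative algebra and dimension counting applied to the factorization $\tgg\to [\gg\times_{\gg/\!/G}\tt]_\red\to\gg^\b$.
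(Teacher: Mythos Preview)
Your overall architecture is sound, and several pieces (properness via factoring through $\bb\times\gg\times\tt$, finiteness of the second projection via module-finiteness of $\C[\tt]$ over $\C[\gg]^G$, and deducing birationality in (b) from birationality in (a) by multiplicativity of degrees) are fine. There are, however, two issues.

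First, a minor slip: for $(\b,x,y)\in\tgg$ the element $\nnu(\b,x,y)$ is the image of $(x,y)$ in $(\b/[\b,\b])^2$, i.e.\ the $\t$-component in the decomposition $\b=\t\oplus[\b,\b]$. This is \emph{not} the Jordan semisimple part $(h_x,h_y)$ in general, and Lemma~\ref{lss}(ii) only applies to commuting pairs. The correct argument is the one in the proof of Lemma~\ref{lss}(i): a one-parameter subgroup in $T$ degenerates $(x,y)$ to its $\t\times\t$-component, so the $G$-invariants agree. The conclusion you want still holds, but the reasoning needs to be fixed.

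Second, and this is the real gap: your birationality argument for $\mmu$ rests on the assertion that a generic pair $(x,y)\in\gg^\b$ generates a Lie subalgebra of dimension $\dim\b$, justified by ``matching the dimension count''. But the equality $\dim\tgg=\dim\g+\dim\b$ only bounds $\dim\gg^\b$ from above; it says nothing about the dimension of the subalgebra generated by a single pair. At best it would give generic finiteness of $\mmu$ once you know $\dim\gg^\b=\dim\g+\dim\b$ independently, and even then not degree one. Lemma~\ref{lss}(i) gives only solvability of the generated subalgebra, not its dimension. The paper handles this differently: it introduces the open set $\gg^\heartsuit$ of pairs whose two-dimensional span consists entirely of regular elements, and cites the nontrivial result of Charbonnel--Moreau (after Bolsinov) that such a pair lies in at most one Borel. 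This is exactly the missing ingredient. A more elementary alternative, which you could have used, is: for $x\in\g^{rs}$ the Borels containing $x$ are finite in number and pairwise meet in proper subspaces, so a generic $y$ in one of them lies in no other; this already gives a dense open set over which $\mmu$ is injective.
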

\begin{proof} Let $\gg^\heartsuit$ be the set of pairs
$(x,y)\in \gg$ such that the vector space $\C x+\C y\sset \g$
spanned by $x$ and $y$ is $2$-dimensional and, moreover,  any nonzero
element of that vector space is regular in $\g$.
 It is clear
that $\gg^\heartsuit$ is a $G$-stable Zariski open and dense
subset of $\gg$. 

According to \cite{CM}, Lemma 6(i),  the set
$(\b\times\b)\cap\gg^\heartsuit$
is nonempty, for any Borel subalgebra $\b$. Hence, this set
is Zariski open and dense in $\b\times\b$.
Furthermore, it follows from  \cite{CM}, Lemma 7(i) and  Lemma 8(i),
 that for any pair $(x,y)\in \gg^\heartsuit$
there is at most one Borel subalgebra  that contains both
$x$ and $y$ (in \cite{CM}, this result is attributed to Bolsinov
\cite{Bol}).
Hence, the map $\mmu$ restricts to a bijection
$\mmu\inv(\gg^\heartsuit)\iso \gg^\b\cap\gg^\heartsuit$.
Both statements of the proposition follow from this
since the map $\mmu$ is proper.
\end{proof}

\subsection{A stratification of the isospectral
commuting variety}\label{ss2} Below, we will have to consider several
 reductive
Lie algebras at the same time. To avoid confusion,
we write $\zz(\fl)$ for the commuting
scheme of a  reductive
Lie algebra $\fl$ and use similar notation
for other objects associated with $\fl$.

Let  ${\mathfrak N}(\fl)$ be the 
{\em nilpotent} commuting variety of $\fl$,
the variety 
of pairs of commuting {\em nilpotent} elements
of $\fl$, equiped with reduced scheme structure.
It is clear that we have ${\mathfrak N}(\fl)=
{\mathfrak N}(\fl'),$ where $\fl':=[\fl,\fl]$, the derived Lie algebra
of $\fl$.
According to \cite{Pr}, the irreducible components
of ${\mathfrak N}(\fl)$ are parametrized
by the conjugacy classes of {\em distinguished}
nilpotent elements  of $\fl'$. The irreducible
component corresponding to such a conjugacy class
is equal to the closure in $\fl'\times\fl'=T^*(\fl')$
of the total space of the conormal bundle on that
conjugacy class. It follows, in particular, that
the dimension of each irreducible component of the variety
${\mathfrak N}(\fl)$ equals $\dim\fl'$.

Fix a reductive connected group $G$
and a
 Cartan subalgebra
 $\t\sset\g=\Lie G$.
Recall that the centralizer of an element of $\t$ is called
a standard Levi subalgebra of $\g$. Let $S$ be the set
of standard Levi subalgebras.
Given a  standard Levi subalgebra $\fl$,
let $\ts_\fl\sset \t,$ resp. 
$\tts_\fl\sset\tt$,
 denote the set of elements
$h\in \t$, resp. $(h_1,h_2)\in \tt,$ such that we have $\g_h=\fl$,
resp. $\g_{h_1,h_2}=\fl$.
Let  $\t_\fl$ denote the center of $\fl$.
It is clear  that $\ts_\fl$ is an irreducible Zariski open dense
subset of $\t_\fl$, resp.
$\tts_\fl$  is an irreducible  Zariski open dense
subset of $\t_\fl\times\t_\fl$.
We get a stratification $\t=\sqcup_{\fl\in S}\ \ts_\fl,$
resp. $\tt=\sqcup_{\fl\in S}\ \tts_\fl.$



Let $\xx=\xx(\g)$, the isospectral commuting variety of $\g$,
and let $p_{_\tt}:
\xx(\g)\to\tt$ be the projection.
For each standard Levi subalgebra $\fl$ of
$\g$, we put $\xx_\fl(\g):=(p_{_\tt})\inv(\tts_\fl).$
Thus, we get a partition
 $\xx(\g)=\sqcup_{\fl\in S}\,\xx_\fl(\g)$
by $G$-stable locally closed, not necessarily smooth, subvarieties.

We consider a map $G\times\gg\times\tt\to \gg\times\tt$ given by
the following assignment
\beq{map}
g\times (y_1,y_2)\times (t_1,t_2)\ \mto\
 \big(\Ad g(y_1+t_1),\,\Ad g(y_2+t_2)\big)\times (t_1,t_2).
\eeq

\begin{lem}\label{codim} For any standard Levi subalgebra $\fl$ 
with  Levi subgroup of $L\sset G$, the map \eqref{map}
induces a $G$-equivariant isomorphism
$$
\big(G\times_{ L}{\mathfrak N}(\fl)\big)\,\times\,\tts_\fl
\ \iso\ \xx_\fl.
$$

The second projection $\big(G\times_{ L}{\mathfrak N}(\fl)\big)\,\times\,
\tts_\fl\to\tts_\fl$
goes, under the  isomorphism,
to the map $p_{_\tt}:\, \xx_\fl\to\tts_\fl$.
Thus,
all irreducible
components  of the set $\xx_\fl$
have the same dimension equal to $\dim\g+\dim\t_\fl$
 and are in one-to-one correspondence
with the distinguished nilpotent conjugacy classes in
the Lie algebra $\fl$.
\end{lem}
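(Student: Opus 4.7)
The strategy is to construct the map \eqref{map} explicitly, produce a set-theoretic inverse via Jordan decomposition, and promote the bijection to a scheme isomorphism through a transverse-slice argument in the spirit of Luna.

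First I would check that \eqref{map} restricts to a well-defined $G$-equivariant morphism $\Phi:\ (G\times_L{\mathfrak N}(\fl))\times\tts_\fl\to\xx_\fl$. Fix $(y_1,y_2)\in{\mathfrak N}(\fl)$ and $(t_1,t_2)\in\tts_\fl$. Since $y_1,y_2\in\fl=\g_{t_1,t_2}$ commute pairwise and commute with $t_1,t_2$, the pair $(y_1+t_1,\,y_2+t_2)$ lies in $\zz$. Moreover $t_i$ is the semisimple part of $y_i+t_i$, so Lemma \ref{lss}(ii) gives $G\cd(t_1,t_2)\subseteq\overline{G\cd(y_1+t_1,y_2+t_2)}$; hence for every $P\in\C[\zz]^G$,
$$P(\Ad g(y_1+t_1),\,\Ad g(y_2+t_2))\ =\ P(t_1,t_2)\ =\ (\res P)(t_1,t_2),$$
placing the image in $\xx$. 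Since $L$ fixes $(t_1,t_2)$ pointwise, the map is $L$-equivariant on the left factor and descends to the claimed source.

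For the set-theoretic inverse, given $(x_1,x_2,t_1,t_2)\in\xx_\fl$, write the Jordan decomposition $x_i=h_{x_i}+n_{x_i}$. By Lemma \ref{lss}(ii), $(h_{x_1},h_{x_2})$ lies in the unique closed $G$-orbit in $\overline{G\cd(x_1,x_2)}$; combined with the defining equations of $\xx$, this forces $(h_{x_1},h_{x_2})$ to be $G$-conjugate to $(t_1,t_2)$. Choose $g\in G$ with $\Ad g^{-1}(h_{x_i})=t_i$; then $y_i:=\Ad g^{-1}(n_{x_i})$ commutes with $t_1,t_2$, so lies in $\fl$, and $(y_1,y_2)\in{\mathfrak N}(\fl)$. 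Uniqueness of Jordan decomposition and of $g$ modulo $L$ show that the class $[g,y_1,y_2]\in G\times_L{\mathfrak N}(\fl)$ is well-defined and inverts $\Phi$ on closed points.

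The main obstacle is upgrading this bijection to an isomorphism of schemes, since Jordan decomposition is not globally algebraic on $\gg$. My plan is to fix $(t_1,t_2)\in\tts_\fl$ and use the Luna-type slice $(t_1+\fl)\times(t_2+\fl)\subseteq\gg$, which is $L$-stable and meets the $G$-orbit of $(t_1,t_2)$ transversely thanks to the decompositions $\g=[\g,t_i]\oplus\g_{t_i}$ and the fact that $\g_{t_1,t_2}=\fl$ is exactly the stabilizer. The intersection of this slice with the fiber of $\xx_\fl\to\tts_\fl$ over $(t_1,t_2)$ is identified, via the argument of the first step, with $(t_1,t_2)+{\mathfrak N}(\fl)$, giving a $G$-equivariant \'etale cover that realizes the fiberwise isomorphism $G\times_L{\mathfrak N}(\fl)\iso (\xx_\fl)|_{(t_1,t_2)}$. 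Allowing $(t_1,t_2)$ to vary across $\tts_\fl$ in a family yields the global isomorphism, and the second projection on the source is manifestly $p_{_\tt}$. The dimension count and classification of components then follow from Premet's description of ${\mathfrak N}(\fl)$, \cite{Pr}: using $\dim{\mathfrak N}(\fl)=\dim[\fl,\fl]$ and $\dim L=\dim\t_\fl+\dim[\fl,\fl]$, one computes
$$\dim\big((G\times_L{\mathfrak N}(\fl))\times\tts_\fl\big)\ =\ (\dim\g-\dim\t_\fl-\dim[\fl,\fl])+\dim[\fl,\fl]+2\dim\t_\fl\ =\ \dim\g+\dim\t_\fl,$$
and the irreducible components of ${\mathfrak N}(\fl)$ correspond to the distinguished nilpotent conjugacy classes in $\fl$; this bijection transfers to $\xx_\fl$ via $\Phi$.
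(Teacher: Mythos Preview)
Your proposal is correct and follows essentially the same route as the paper: both use Lemma~\ref{lss}(ii) and Jordan decomposition to establish the set-theoretic bijection, then invoke Premet's description of ${\mathfrak N}(\fl)$ for the dimension formula and component count. The paper is terser, simply asserting that ``the isomorphism of the lemma easily follows'' once surjectivity is shown, whereas you supply the Luna-slice argument to upgrade the bijection to a scheme isomorphism; your extra care here is warranted since neither side is known to be normal, and the connectedness of $G_{t_1,t_2}=L$ (needed for injectivity) follows because centralizers of semisimple elements in connected reductive groups over $\C$ are connected, applied twice.
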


\begin{proof} 
Let $(x_1,x_2,t_1,t_2)\in\xx_\fl$.
Lemma \ref{lss}(ii) implies that,
for any polynomial
$f\in\C[\zz]^G$, we have $f(x_1,x_2)=f(h_{x_1},h_{x_2})$.
We know that the semisimple pair $(h_{x_1},h_{x_2})$
is  $G$-conjugate to an element of $\tt$,
that
  $W$-invariant polynomials separate
$W$-orbits in $\tt$, and that the
restriction map \eqref{Jo} is surjective, \cite{Jo}.
It follows 
that the pair $(h_{x_1},h_{x_2})$ is
$G$-conjugate to the pair $(t_1,t_2)$.
Hence, the pair $(x_1,x_2)$ is
$G$-conjugate to a pair 
of the form $(t_1+y_1,t_2+y_2)$
for some $(y_1,y_2)\in {\mathfrak N}(\fl)$.
The isomorphism of the lemma easily follows from this.

Now, for any irreducible component $V$ of 
${\mathfrak N}(\fl)$, using the result of Premet mentioned
above, we find
\begin{multline*}\dim\big((G\times_{ L}V) \times\tts_\fl\big)=
\dim G-\dim L+\dim V+2\dim\t_\fl\\
=
\dim\g-\dim(\t_\fl+[\fl,\fl])+\dim[\fl,\fl]+2\dim\t_\fl=\dim\g+\dim\t_\fl.
\end{multline*}
This proves the dimension formula for irreducible components
of the variety $\xx_\fl$.
\end{proof}

We are now ready to complete the proof of Lemma
\ref{xxbasic}.

\begin{cor}\label{xdense} The set $\xx^{rs}$ is
an irreducible and Zariski
dense subset of $\xx$.
\end{cor}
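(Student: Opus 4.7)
The plan is to identify $\xx^{rs}$ with the top stratum $\xx_\t$ of the stratification from Lemma \ref{codim}, which immediately yields irreducibility, and then to exploit the natural $W$-action on $\xx$ together with the fiber structure of the first projection $p:\xx \to \zz$ to deduce density.

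For the identification, given $(x_1,x_2,t_1,t_2) \in \xx^{rs}$, the regular semisimple commuting pair $(x_1,x_2)$ has common centralizer $\g_{x_1,x_2}$ equal to some Cartan subalgebra. Conjugating this Cartan to $\t$ by some $g \in G$, one writes $x_i = \Ad g(h_i)$ with $h_i \in \t$. The matching of $G$-invariants in the definition of $\xx$, combined with Lemma \ref{lss}(ii), forces $(t_1,t_2)$ and $(h_1,h_2)$ to lie in the same $W$-orbit; after replacing $g$ by $g n_w^{-1}$ for an appropriate representative $n_w \in N(T)$, one obtains $x_i = \Ad g'(t_i)$ with $\g_{t_1,t_2} = \t$, i.e.\ $(t_1,t_2) \in \tts_\t$. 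The reverse inclusion $\xx_\t \subset \xx^{rs}$ is immediate from $\xx_\t \cong (G/T) \times \tts_\t$ in Lemma \ref{codim}. Hence $\xx^{rs} = \xx_\t$ is irreducible of dimension $\dim\g + \dim\t$.

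For density, observe that $\xx^{rs} = p^{-1}(\zz^{rs})$ is open in $\xx$, so $\overline{\xx^{rs}}$ is irreducible and closed. The Weyl group acts on $\xx$ via $w\cdot(x_1,x_2,t_1,t_2) = (x_1,x_2,w(t_1),w(t_2))$, since the defining conditions $P(x_1,x_2) = (\res P)(t_1,t_2)$ are $W$-invariant in the $\tt$-coordinate; this action preserves $\xx^{rs}$, and hence also $\overline{\xx^{rs}}$. The finite morphism $p$ is proper, so $p(\overline{\xx^{rs}})$ is closed and contains the dense subset $\zz^{rs}$, giving $p(\overline{\xx^{rs}}) = \zz$. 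For any $(x_1,x_2) \in \zz$, the fiber $p^{-1}(x_1,x_2)$ consists of all $(t_1,t_2) \in \tt$ whose image in $\tt/W \iso [\zz/\!/G]_\red$ matches that of $(x_1,x_2)$; by Lemma \ref{lss}(ii) this is precisely the single $W$-orbit of any Chevalley representative of the semisimple parts $(h_{x_1},h_{x_2})$. The intersection $\overline{\xx^{rs}} \cap p^{-1}(x_1,x_2)$ is therefore a nonempty $W$-stable subset of this $W$-orbit, hence equals the entire orbit. Taking the union over $(x_1,x_2) \in \zz$ yields $\xx \subset \overline{\xx^{rs}}$, and the reverse inclusion is trivial.

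The only delicate point is the fiber description of $p$, which combines Lemma \ref{lss}(ii) with the identification $\tt/W \iso [\zz/\!/G]_\red$; once this is in hand the $W$-orbit / $W$-stability argument is automatic, and no appeal to pure-dimensionality of $\xx$ (which would be more subtle, since $\tt \to \tt/W$ need not be flat when $W$ does not act on $\tt$ as a complex reflection group) is required.
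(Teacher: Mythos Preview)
Your proof is correct, but the density argument takes a genuinely different route from the paper's. Both approaches begin identically: $\xx^{rs}=\xx_\t$ via Lemma~\ref{codim}, giving irreducibility. For density, however, the paper works stratum by stratum: it first observes that the semisimple part $\xx_\fl^{ss}$ of each stratum lies in $\overline{\xx^{rs}}$, and then uses the product decomposition $\xx_\fl\cong (G\times_L{\mathfrak N}(\fl))\times\tts_\fl$ to reduce the remaining inclusion $\xx_\fl\subset\overline{\xx^{rs}}$ to Richardson's density theorem applied to the Levi subalgebra~$\fl$.

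Your argument bypasses the stratification entirely after the first step. You use Richardson only once, for $\g$ itself, to get $p(\overline{\xx^{rs}})=\zz_\red$, and then exploit the fact that each set-theoretic fiber of $p$ is a single $W$-orbit (which is exactly the content of the first paragraph of the proof of Lemma~\ref{codim}, combining Lemma~\ref{lss}(ii) with Joseph's surjectivity and the separation of $W$-orbits by $W$-invariants). The $W$-stability of $\overline{\xx^{rs}}$ then forces every fiber to be swallowed whole. This is shorter and more transparent; the paper's approach, on the other hand, makes the stratification do double duty, since the same Lemma~\ref{codim} with its dimension count is reused immediately afterward in the proof of Lemma~\ref{irr}. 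One small point: when you invoke Lemma~\ref{lss}(ii) for the fiber description, it would be cleaner to cite the first paragraph of the proof of Lemma~\ref{codim} directly, since that is where Joseph's theorem and the $W$-orbit separation are actually assembled.
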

\begin{proof} Let $\zz^{ss}$ be the set of semisimple
pairs $(x,y)\in \zz$ and let $\xx^{ss}_\fl:=\xx_\fl\cap p\inv(\zz^{ss}).$
 Lemma \ref{codim} implies  that,
for any Levi subalgebra $\fl$, 
any element $(x,y,t_1,t_2)\in\xx^{ss}_\fl$  is conjugate to the
element
$(x',y',t_1,t_2)$ for some $(x',y')\in\tts_\fl$.
In the special case where  $\fl=\t$, we have 
$\xx^{rs}=\xx_\t$. Hence,  Lemma \ref{codim}
implies that $\xx^{rs}$ is irreducible and, moreover, we have
 $\xx^{ss}_\fl\sset\overline{\xx^{rs}}$,
 for any standard Levi subalgebra $\fl$.

Thus, using  the isomorphism of   Lemma \ref{codim}
one more time, we see that proving 
 the Corollary reduces to showing that
${\mathfrak N}(\fl)$, the nilpotent commuting
variety of  $\fl$,
is contained in the closure of the set $\zz^{ss}(\fl)$.
But we have $\zz^{ss}(\fl)\supseteq\zz^{rs}(\fl)$
and 
the set $\zz^{rs}(\fl)$ is dense in $\zz(\fl)$,
by  Proposition
\ref{zzbasic}(i); explicitly, this
is Corollary 4.7 from \cite{Ri1}. The result follows.
\end{proof}

\subsection{Proof of Lemma \ref{irr}}\label{ss3} 
We have the
projection
$p :\xx(\g)\to\zz(\g)$ and, for
any standard Levi subalgebra $\fl\sset\g$,
put $\zz_\fl(\g)=p (\xx_\fl(\g))$
where we use the notation of Lemma \ref{codim}.
Thus, one has $\zz(\g)=\cup_{\fl\in S}\ \zz_\fl(\g).$
Note that,
for a pair of standard Levi subalgebras $\fl_1,\fl_2\sset\g$,
 the corresponding pieces $\zz_{\fl_1}(\g)$ and $\zz_{\fl_2}(\g)$
are equal whenever the Levi subalgebras
$\fl_1$ and $\fl_2$ are conjugate in $\g$;
otherwise these two pieces are disjoint.

Let $\fl$ be a   standard Levi subalgebra in $\g$.
We are interested in the dimension of the set
$\zz_\fl(\g)\sminus \zz^{rr}$.
The dimension formula of Lemma \ref{codim}
shows that the codimension of the set $\zz_\fl(\g)$ 
in $\zz(\g)$ equals $\dim(\t/\t_\fl)$.
We see that to prove Lemma \ref{irr} it suffices to show
that, for any standard Levi subalgebra $\fl\sset\g$
such that the codimension of $\t_\fl$ in $\t$
equals either $0$ or $1$,
the set $\zz_\fl(\g)\sminus \zz^{rr}$
has codimension $\geq 2$ in $\zz(\g)$.

In the first case we have $\t=\t_\fl=\fl.$
 Thus, 
one has
${\mathfrak N}(\fl)=\{0\}$, so ${\mathfrak N}(\fl)+\tts_\fl=\tts$,
where we have used simplified notation  $\tts:=\tts_\t$.
The set  $\tts\sminus \zz^{rr}$ consists of
the pairs $(h_1,h_2)\in\tts$ such that
neither $h_1$ nor $h_2$ is regular.
Therefore, each of these two elements
belongs to some root hyperplane in $\t$,
that is, belongs to a finite union of codimension
1 subspaces in $\t$.
We conclude that the set $\tts\sminus \zz^{rr}$
has codimension
$\geq 2$ in ${\mathfrak N}(\fl)+\tts_\fl$, as required.

Next, let $\dim(\t/\t_\fl)=1$.
In that case,
$\fl$ is a {\em minimal}  Levi subalgebra of $\g$.
Thus, there is a root $\alpha\in \t^*$ in
 the root system of $(\g,\t)$ such
that 
$\t_\fl=\Ker\alpha$ is a codimension 1
 hyperplane in $\t$.
We have $\fl=\t_\fl\oplus[\fl,\fl]$
where
 $[\fl,\fl]$ is an  $\sl_2$-subalgebra
of $\g$ associated with the root $\alpha$.
It is easy to see that ${\mathfrak N}(\sl_2)$
is an irreducible variety formed
by the pairs of nilpotent elements
proportional to each other
(the zero element is declared to be
proportional to any
element).
Thus, ${\mathfrak N}(\fl)+\tts_\fl$ is an irreducible variety.

To complete the proof of the lemma in this case,
we must show that the complement
of the set $U=({\mathfrak N}(\fl)+\tts_\fl)\cap \zz^{rr}$
has codimension $\geq 1$ in ${\mathfrak N}(\fl)+\tts_\fl$.
The set $U$ is a Zariski
open subset in an irreducible variety. Thus, it suffices to show that
the set $U$ is nonempty.
For this, pick  $h\in \ts_\fl$ and let $n\in \sl_2$
be any nonzero nilpotent element.
Then, $n$ is a regular element of 
the Lie algebra $[\fl,\fl]=\sl_2$;
hence  $h+n$ is a regular element of $\g$.
Therefore,  we have 
$(h+n,h+n)\in U$,
and we are done.
\qed

\subsection{Proof of 
Theorem \ref{rrsmooth}(i)}\label{smooth_pf} 
 The property of a coherent sheaf  be
Cohen-Macaulay is stable under taking  direct 
images by finite
morphisms and  taking direct
summands. Thus,  
 Theorem \ref{main_thm} implies that the sheaf $\rr$,
as well as the isotypic components $\rr^E$
for any  $W$-module $E$, is
 Cohen-Macaulay.

The scheme $\xx_\norm/W$ is reduced
and integrally
closed,
as a quotient of an integrally
closed  reduced scheme
by a finite group action, \cite{Kr}, \S3.3. 
Further, by  Lemma
\ref{xxbasic}(ii), the map   $p_\norm$ is generically a Galois
covering with $W$ being the Galois group.
It follows that the induced map $\xx_\norm/W\to\zz_\norm$
is a finite and birational morphism of normal varieties. Hence it is
  an isomorphism, that is,
 the canonical morphism
$\oo_{\zz_\norm}\to ((p_\norm)_*\oo_{\xx_\norm})^W=\rr^W$
is an isomorphism.
This yields an isomorphism
$\xx_\norm/W\cong \zz_\norm$ and implies Corollary ~\ref{zzcm}.

Observe next that the sheaf  $\rr|_{\zz^r}$ is a
 Cohen-Macaulay sheaf  on a smooth variety, hence it is locally free.
 The fiber of the corresponding algebraic vector bundle over any point 
of the open set $\zr\sset \zz^r$ affords
the regular representation of the Weyl group
$W$, by Lemma
\ref{xxbasic}(ii). The statement of  Theorem \ref{rrsmooth}(i) follows
from this by continuity since
the set $\zr$ is dense in ~$\zz^r$.

\subsection{Proof of  Theorem \ref{rrsmooth}(ii)
and of  Theorem \ref{charb}(i)}\label{txr} 

We use the notation of Definition \ref{U} and
let $\tx_1:=\mmu\inv(\zz_1)$. 
Write 
$\wt q:\ \tx_1\to \tg^r,\
(\b,x,y)\mto (\b,x)$, resp.
$q_1:\ \zz_1\to\g^r,\ (x,y)\mto x$, for  
 the natural projection, and let
 $\ppi$  be  the map from  Corollary \ref{transv}.
Thus, one has a commutative diagram
\beq{nn}
\xymatrix{
\tx_1\ \ar[d]^<>(0.5){\wt q}\ar[rr]^<>(0.5){\ppi}_<>(0.5){\sim}&&
\ \xx_1\ \ar[d]^<>(0.5){q}\ar[rr]^<>(0.5){p}&&
\ \zz_1\ \ar[d]^<>(0.5){q_1}\\
\tg^r\ \ar[rr]^<>(0.5){\pi}_<>(0.5){\sim}&&\ 
\yr\ar[rr]^<>(0.5){pr}&&\ \g^r
}
\eeq

\begin{lem}\label{carsq} The right square in diagram
\eqref{nn} is cartesian.
\end{lem}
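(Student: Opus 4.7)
The plan is to transfer the question to $\tx_1$ and $\tg^r$ via the isomorphisms $\ppi: \tx_1 \iso \xx_1$ (Corollary \ref{transv}(ii)) and $\pi: \tg^r \iso \yr$ (Proposition \ref{tb}(i)), together with the commutativity of the left square of \eqref{nn}. Since horizontal arrows in the left square are isomorphisms, the right square is cartesian if and only if the outer rectangle is, which is the square
\[
\xymatrix{\tx_1 \ar[d]_{\wt q} \ar[rr]^{\mmu} && \zz_1 \ar[d]^{q_1}\\ \tg^r \ar[rr]^\mu && \g^r.}
\]
Thus I would reduce to showing that the natural morphism $\Phi:\ \tx_1 \to \tg^r \times_{\g^r} \zz_1$, $(\b,x,y) \mapsto ((\b,x),(x,y))$, furnished by the universal property, is an isomorphism of schemes.

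The key geometric input is the classical inclusion $\g_x \subset \b$ for every $(\b,x) \in \tg^r$. This is already contained in the proof of Lemma \ref{codim1}: for $x \in \g^r \cap \b$ the map $\ad_\b x : \b \to [\b,\b]$ was shown there to be surjective, hence $\dim(\g_x \cap \b) = \dim\b - \dim[\b,\b] = \rk = \dim \g_x$, forcing $\g_x = \g_x \cap \b \subset \b$.

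To build a scheme-theoretic inverse to $\Phi$, I would first observe that on $\g^r$ the operator $\ad x$ has constant rank $\dim\g - \rk$, so its kernel $\fz \subset \g \otimes \oo_{\g^r}$ is a sub-bundle of rank $\rk$, and the closed subscheme $\zz_1 \subset \g^r \times \g$ coincides scheme-theoretically with $\fz$ viewed as a total space over $\g^r$. Consequently $\tg^r \times_{\g^r} \zz_1$ is identified with the pullback bundle $\mu^*\fz$ over $\tg^r$. The pointwise inclusion $\g_x \subset \b$ would then need to be globalized to an inclusion of sub-bundles $\mu^*\fz \subset \wt\b$ inside $\g \otimes \oo_{\tg^r}$, where $\wt\b$ is the pullback from $\bb$ of the tautological Borel sub-bundle; this step uses that $\tg^r$ is reduced and that the quotient $\g \otimes \oo_{\tg^r}/\wt\b$ is locally free, so that a section of $\mu^*\fz$ which lies in $\wt\b$ fiberwise lies in $\wt\b$ as a section. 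Given this inclusion, the tautological morphism $\tg^r \times_{\g^r} \zz_1 \to \bb \times \g \times \g$, $((\b,x), y) \mapsto (\b, x, y)$, factors through the closed subscheme $\tx_1$, producing the inverse $\Psi$.

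The only real obstacle is this scheme-theoretic upgrade of the pointwise inclusion $\g_x \subset \b$ to a sub-bundle statement; everything else is formal, and with $\Phi$ and $\Psi$ in hand they are evidently mutually inverse, completing the proof.
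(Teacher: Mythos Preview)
Your proof is correct and follows the same route as the paper: reduce to the outer rectangle via the isomorphisms $\ppi$ and $\pi$, then use the inclusion $\g_x\subset\b$ for regular $x\in\b$ to identify $\tx_1$ with $\tg^r\times_{\g^r}\zz_1$. The only difference is in the final step: where you build an explicit scheme-theoretic inverse via the sub-bundle inclusion $\mu^*\fz\subset\wt\b$, the paper instead observes that the canonical map $\tx_1\to\tg^r\times_{\g^r}\zz_1$ is a set-theoretic bijection between smooth varieties (smoothness of the target coming from the smoothness of $q_1$), hence an isomorphism.
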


\begin{proof}
It is immediate from Lemma \ref{centb} that
the  map $\wt q\times (p\ccirc \ppi):\
\tx_1\ \to\ \tg^r\,\times_{\g^r}\, \zz_1$
that results from the diagram
is a set theoretic bijection.
 Further, all the varieties involved in  diagram \eqref{nn}
are smooth and the map $q_1$ is a smooth morphism (the vector bundle
projection $N_\yr\to\yr$, cf. Lemma \ref{conormal}). 
It follows that $\tg^r\,\times_{\g^r}\, \zz_1$
is a smooth variety and, moreover, the
bijection above gives
 an isomorphism $\tx_1\ \iso\
\tg^r\,\times_{\g^r}\, \zz_1,$
of algebraic varieties.
We deduce that the rectangle along the perimeter
 of diagram \eqref{nn} is a cartesian square.

To complete the proof, we observe that the
map $\ppi$ in \eqref{nn} is an
isomorphism by Corollary \ref{transv},
resp.  the
map $\pi$  is an
isomorphism by Proposition \ref{tb}(i).
We conclude that 
the right square in diagram \eqref{nn} is  cartesian as well.
\end{proof}

The morphism $pr$ in diagram
\eqref{nn}  is finite and the morphism
$q_1$ is smooth. So, thanks to Lemma \ref{carsq},
we may apply smooth base change for the  cartesian  square 
on the right of
\eqref{nn}. Combining this with smooth base change 
 for  the  cartesian  square on the right of
diagram \eqref{cart}, yields
a chain of natural $G\times W\times\CC$-equivariant sheaf isomorphisms 
\beq{rrcart}
\rr|_{\zz_1}\ =\ p_*\oo_{\xx_1}\ =\ 
p_* q^*\oo_\yr\ =\ q_1^* pr_* \oo_\yr\ =\ 
q_1^*pr_*(\wt\gamma^*\oo_\t)\ =\ 
q_1^*\gamma^*(\vartheta_*\oo_\t).
\eeq

Therefore, for any $W$-representation $E$, we deduce
\beq{rrcart2}
\rr^E|_{\zz_1}\ =\  (E\o q_1^* pr_* \oo_\yr)^W\ =\ 
q_1^*\big((E\o pr_* \oo_\yr)^W\big)\ =\ 
q_1^*\gamma^*\big((E\o \vartheta_*\oo_\t)^W\big).
\eeq

Observe next that, for any 
$(x,y)\in\zz_1$, the Lie algebra $\g_x$ is abelian, hence  we have
$\g_{x,y}=\g_x\cap\g_y=\g_x$. From this, 
writing $\fz_1:=\fz_{\zz_1}$ for short, we get
a natural isomorphism $\fz_1=q_1^*\fz_{\g^r}$,
of sheaves on $\zz_1$.
We deduce that the sheaf $\fz_1$ is a
 locally free. Furthermore,  applying to functor $q_1^*(-)$
to the isomorphism of Theorem \ref{BBprop}(i)
and using \eqref{rrcart2} one
obtains the following isomorphisms
\beq{aabc}\wedge^k\fz_1\ =\ q_1^*(\wedge^k\fz_{\g^r})\
\iso\ 
q_1^*\big((\wedge^k\t\o pr_* \oo_\yr)^W\big)\ =\ 
\rr^{\wedge^k\t}|_{\zz_1}.
\eeq

Now, let $L$ be a rational $G$-module
such that the set of weights of $L$ is contained in the root lattice.
Then,   a similar argument yields a natural isomorphism
$L^{\fz_1}=q_1^*(L^{\fz_{\g^r}}).$
Therefore the sheaf
 $L^{\fz_1}$ is 
 locally free.
Moreover,  applying the functor $q_1^*(-)$
to the isomorphism of Theorem \ref{BBprop}(ii)
and using \eqref{rrcart2} again, one similarly
obtains
an isomorphism
\beq{aabcL}
L^{\fz_1}
\ \iso \ \rr^{ L^\uh}|_{\zz_1}.
\eeq

Similar considerations  apply, of course, in the case 
where the set $\zz_1$ is replaced by
the set $\zz_2$.
It follows, in particular,
that $\dis \fz_{rr}:=\fz_{\zz^{rr}}$ and  $\dis L^{\fz_{rr}}:=L^{\fz_{\zz^{rr}}}$
are locally free coherent sheaves on
$\zz^{rr}=\zz_1\cup\zz_2$.
However, it is not  clear {\em a priori},
 that the `$\zz_2$-counterparts' of   morphisms
 \eqref{aabc}-\eqref{aabcL} 
agree with those in 
 \eqref{aabc}-\eqref{aabcL} on the overlap $\zz_1\cap\zz_2$.

To overcome this difficulty, we now
 produce an independent direct construction
of canonical morphisms
\beq{directconstr}
\la^k_{rr}:\ \wedge^k\fz_{\zz^{rr}}\ \to\
\rr^{\wedge^k\t}|_{\zz^{rr}},
\quad\oper{resp.}\quad
\la^L_{rr}:\ L^{\fz_{rr}}\ \to\ \rr^{ L^\uh}|_{\zz^{rr}}.
\eeq

This will be done by
adapting the strategy of
\S\ref{morphism_sec} as follows. Let
 $\xx^{rr}=p\inv(\zz^{rr})$ and
$\tx^{rr}=\mmu\inv(\zz^{rr})$.
We know by the above that the  sheaf $\mmu^*\fz_{rr}$, resp. $\mmu^*(L^{\fz_{rr}})$, is 
locally free, being a pull-back of a locally free
 coherent sheaf on
$\zz^{rr}.$

Now let $(x,y,\b)\in \tx^{rr}.$ Then, 
$x,y\in\b$ and, moreover, we have that
either $\g_{x,y}=\g_x$ or $\g_{x,y}=\g_y$.
In each of the two cases, applying Lemma \ref{centb},
we deduce an inclusion $L^{\g_{x,y}}\sset
L^{\ccl0}$. Therefore, one gets, as in \S\ref{morphism_sec},
 a well defined morphism
$\mmu^*(L^{\fz_{rr}})\to L^\uh\o\oo_{\tx^{rr}}.$
We may further transport this morphism via  $\ppi$, the
isomorphism of  Corollary  \ref{transv}(ii).
This way, one constructs a canonical morphism
$f:\ p^*(L^{\fz_{rr}})\to L^\uh\o\oo_{\xx^{rr}}.$
The morphisms in \eqref{directconstr}
are now defined from the  morphism $f$, by adjunction,
mimicing the construction of \S \ref{morphism_sec}.

\begin{lem}\label{ZZiso} The restriction
of the morphism 
$\la^k_{rr}$, resp. 
$\la^L_{rr}$, in \eqref{directconstr},
to the open set $\zz_1\sset\zz^{rr}$ reduces
to  isomorphism  \eqref{aabc},
resp. \eqref{aabcL}. 

Similar claim holds in the case of the set $\zz_2\sset\zz^{rr}$.
\end{lem}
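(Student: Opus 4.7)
The plan is to reduce the comparison, on $\zz_1$, of the direct construction \eqref{directconstr} with the base-change construction \eqref{aabc}--\eqref{aabcL}, to the observation that both morphisms arise by pulling back, applying adjunction, and passing to $W$-invariants in diagram \eqref{nn}, whose right square is cartesian by Lemma~\ref{carsq}. The entire argument is a diagram chase; no new geometric input is needed.

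First, I would spell out the identification of the source: on $\zz_1$, the regularity of $x$ forces $\g_{x,y}=\g_x$, hence $\fz_1=q_1^*\fz_{\g^r}$ and $L^{\fz_1}=q_1^*(L^{\fz_{\g^r}})$, and likewise $\mmu^*(L^{\fz_{rr}})|_{\tx_1}=\wt q^*(\mu^*(L^{\fz_{\g^r}}))$ through the cartesian square of Lemma~\ref{carsq}. Next, the key fact is that the morphism $\mmu^*(L^{\fz_{rr}}) \to L^\uh\o\oo_{\tx^{rr}}$ used to build $\la^L_{rr}$ is, on $\tx_1$, literally the $\wt q^*$-pullback of the morphism $\mu^*(L^{\fz_{\g^r}})\to L^\uh\o\oo_{\tg^r}$ of \eqref{lfree}: indeed the inclusion $L^{\g_{x,y}}=L^{\g_x}\subset L^{\ccl0}_\b$ at a point $(\b,x,y)\in\tx_1$ is exactly the inclusion $L^{\g_x}\subset L^{\ccl0}_\b$ of Lemma~\ref{centb} attached to $(\b,x)\in\tg^r$, and does not involve $y$ at all.

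Second, I transport through the $\ppi$-identification. By Corollary~\ref{transv}(ii) and Proposition~\ref{tb}(i), the restrictions $\ppi:\tx_1\iso\xx_1$ and $\pi:\tg^r\iso\yr$ fit into the left commutative square of \eqref{nn}, which becomes (together with the cartesian right square) a cartesian square $\xx_1\cong\yr\times_{\g^r}\zz_1$ over $\zz_1$. Transporting the morphism from the previous step through $\ppi$ therefore identifies $f|_{\xx_1}: p^*(L^{\fz_{rr}})|_{\xx_1}\to L^\uh\o\oo_{\xx_1}$ with the $q^*$-pullback of the morphism $pr^*(L^{\fz_{\g^r}})\to L^\uh\o\oo_\yr$ of \eqref{bbmap12}. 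The corresponding statement for $\wedge^k$ follows by taking exterior powers.

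Finally, I apply the $(p,p_*)$-adjunction. Because the right square of \eqref{nn} is cartesian with $pr$ finite and $q_1$ smooth (in fact a vector bundle, by Lemma~\ref{conormal}), smooth base change gives $p_*q^*=q_1^*pr_*$, and adjunction with respect to $p$ on $\xx_1$ is compatible with adjunction with respect to $pr$ on $\yr$ pulled back by $q_1$. Taking $W$-invariants and combining with \eqref{rrcart2}, the adjoint of $f|_{\xx_1}$ becomes exactly $q_1^*$ applied to the morphism $\la^L$ of \eqref{bbmap14}, that is, the composite \eqref{aabcL}; the same argument in the $\wedge^k$ case yields \eqref{aabc}. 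The $\zz_2$ case is identical, with the roles of $x$ and $y$ swapped. The only point that requires attention is the compatibility of the $W$-action with the transport through $\ppi$ versus $\pi$, but this follows from the fact that $W$ acts along the fibres of $pr$ and $p$, and both actions correspond under the cartesian identification $\xx_1\cong\yr\times_{\g^r}\zz_1$; I expect no further obstacle.
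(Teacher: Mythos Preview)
Your argument is correct. You carry out a full functorial diagram chase: on $\zz_1$ you identify the morphism $f|_{\xx_1}$ with $q^*$ of \eqref{bbmap12} using that $\g_{x,y}=\g_x$ makes the fibrewise inclusion $L^{\g_{x,y}}\subset L^{\ccl0}_\b$ independent of $y$, and then you invoke compatibility of the $(p^*,p_*)$-adjunction with base change along the cartesian square of Lemma~\ref{carsq} to conclude that $\la^L_{rr}|_{\zz_1}=q_1^*(\la^L)$, which is precisely \eqref{aabcL}.

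The paper takes a shorter route. Since the sheaves involved are $G$-equivariant and locally free on the irreducible variety $\zz_1$, two morphisms between them agree as soon as they agree on a Zariski-dense subset; by $G$-equivariance it therefore suffices to compare the two constructions fibrewise at points of the form $(x,y)\in\t^r\times\t^r$, where everything reduces to an elementary identification of weight spaces. This avoids the explicit adjunction--base-change compatibility and the $W$-equivariance discussion you give, at the cost of leaving the final pointwise check to the reader. Your approach has the merit of making the compatibility hold on the nose (as an equality of sheaf morphisms built from the same data), rather than as an equality verified after restriction to a dense open.
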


\begin{proof} All the sheaves involved are
$G$-equivariant and locally free.
Hence,
it suffices to check  the
statement of the lemma fiberwise,
and only
at the points of the form $(x,y)\in\t^r\times\t^r$.
In that case verification 
is straightforward and is left for the reader.
\end{proof}

Lemma  \ref{ZZiso} implies that each of the
morphisms in \eqref{directconstr}
is an isomorphism of locally free sheaves
on $\zz^{rr}.$

To complete the proof of Theorem \ref{charb}(i)
write $j:\zz^{rr}\into\zz_\norm$ be the open imbedding.
We know that the sheaf $\rr^E$ is
Cohen-Macaulay for any $W$-representation $E$.
It follows, that the canonical 
morphism $\rr^E\to j_*(\rr^E|_{\zz^r})$ is an
isomorphism, cf.
Lemma \ref{ei}. Similarly, by Lemma \ref{tors}(ii),
we have  a canonical
isomorphism $L^{\fz_{\zz_\norm}}\iso j_*(L^{\fz_{\zz^r}}).$
Thus, applying the functor $j_*(-)$
to the isomorphism $L^{\fz_{\zz^r}}\cong\rr^{ L^\uh}|_{\zz^r}$ proved earlier,
we deduce
$L^{\fz_{\zz_\norm}}=j_*(L^{\fz_{\zz^r}})\cong j_*(\rr^{ L^\uh}|_{\zz^r})
\cong \rr^{ L^\uh}$ and  Theorem \ref{charb}(i) follows.

A similar (even simplier) argument proves Theorem \ref{rrsmooth}(ii).

\begin{rem} In the case of a not necessarily small
representation $L$, one can still 
 obtain a description of the image of the corresponding morphism
$\la^L_{rr}$, in \eqref{directconstr}, in the spirit of
Theorem \ref{knv}. That description is not very useful, however,
since
 the sheaf $L^{\fz_\zz}|_{\zz^r}$ turns out  to be
{\em not} locally free, in general,  already  for
 $\g=\sl_2$.
\end{rem}

\subsection{Proof of Corollary \ref{vectbun}}\label{pfvb}
The  isomorphism $\oo_{\zz_\norm}\cong\rr^W$,
in Corollary \ref{vectbun}(i), has been already established 
at the beginning of \S\ref{smooth_pf}.

\begin{proof}[Proof of the isomorphism
$\ \kk_{\zz^r}\cong \rr^\sign|_{\zz^r}$]
Given a point $(x,y)\in\zz$,  let
$T_{x,y}\zz$, resp. $T^*_{x,y}\zz$, be the Zariski tangent,
resp. cotangent, space
to $\zz$ at $(x,y)$.
Let  $\kap_*:\ \g\oplus\g\to\g$
be the differential of
the commutator map $\kap$ at the point $(x,y)$.
Then, one has an exact sequence of vector spaces
$$
\xymatrix{
0\ar[r]& T_{x,y}\zz\ar[r]& 
\g\oplus\g\ar[rr]^<>(0.5){\kap_*}&&
\g\ar[r]&\coker(\kap_*)\ar[r]&0.
}
$$

We use  an invariant form
on $\g$ to identify $\g^*$ with $\g$ and
write $\kap_*^\top$ for the linear map
dual to the map $\kap_*$. Then,
dualizing the exact sequence above yields an exact sequence
\beq{inequ}
\xymatrix{
0&  T^*_{x,y}\zz \ar[l] & 
\g\oplus\g\ar[l]&& \g\ar[ll]_<>(0.5){\kap_*^\top}&
\Ker(\kap_*^\top) \ar[l]& 0\ar[l]
}
\eeq

Now, the map $\kap_*$ is given by the  formula
$\kap_*:\ (u,v)\mto [x,u]-[y,v]$.
Using the invariance of the bilinear form
one easily finds that the dual map
 is given by the formula
$\kap_*^\top:\ a\mto [x,a]\oplus [y,a].$
We conclude that $\Ker(\kap_*^\top)=\g_{x,y}$.

Write $\det$  for the top exterior power of a vector
space. From \eqref{inequ},
  we deduce a canonical isomorphism
$\det T_{x,y}^*\zz\ 
 =\ \det(\g\oplus\g)\o(\det\g)\inv\o\det\g_{x,y}.$
For $(x,y)\in \zz^r$, we have $\det\g_{x,y}=
\wedge^\rk\g_{x,y}=\wedge^\rk\fz_\zz|_{(x,y)}$. 
Therefore, a choice of base vector in
the 1-dimensional vector space $\det\g$ determines, for all
$(x,y)\in\zz^r$, an  isomorphism
$\det T_{x,y}^*\zz\ccong\wedge^\rk\fz_\zz|_{(x,y)}$. 
This yields an isomorphism
$\kk_{\zz^r}\ccong\wedge^\rk\fz_{\zz^r}$
of locally free sheaves.
\end{proof}

We can now complete the proof of Corollary \ref{vectbun}.
We know that $\zz_\norm$ is a Cohen-Macaulay
variety and that the sheaf $\rr$ on $\zz_\norm$
is isomorphic to its Grothendieck dual
$R\!\hhom_{\oo_{\zz_\norm}}(\rr,\ \kk_{\zz_\norm})$.
It follows that
for any $j\neq 0$ one has $R^j\!\hhom_{\oo_{\zz_\norm}}(\rr,\
\kk_{\zz_\norm})=0$
and, moreover,
there is an isomorphism
$\rr\cong$ $\hhom_{\oo_{\zz_\norm}}(\rr,\ \kk_{\zz_\norm})$.
Further, since $\oo_{\zz_\norm}$ is a direct summand
of $\rr$ the sheaf $\kk_{\zz_\norm}$
is  a direct summand
of $\hhom_{\oo_{\zz_\norm}}(\rr,\ \kk_{\zz_\norm})\cong\rr$.
Hence,  $\kk_{\zz_\norm}$ is a Cohen-Macaulay sheaf.
Similarly, the sheaf $\rr^\sign$ is also  Cohen-Macaulay.

Recall that two  Cohen-Macaulay sheaves are isomorphic
if and only if they have isomorphic restrictions to a complement of
a closed subset of codimension $\geq 2$.
The isomorphism
$\kk_{\zz_\norm}\cong \rr^\sign$ of
Corollary \ref{vectbun}(i) now follows
from the chain of isomorphisms
$\kk_{\zz_\norm}|_{\zz^r}\ \cong\ \wedge^\rk\fz_{\zz^r}\ \cong\ 
(\wedge^\rk\t\o\rr)^W|_{\zz^r} \ =\  \rr^\sign|_{\zz^r}$
where the second isomorphism holds by  Theorem \ref{rrsmooth}(ii).

Finally, the  isomorphism of part (ii) of
Corollary \ref{vectbun} follows,
thanks to  the isomorphism $\kk_{\zz_\norm}\cong
\rr^\sign$, by
 equating the corresponding
 $W$-isotypic
components on each side of
the self-duality  isomorphism
$\hhom_{\oo_{\zz_\norm}}(\rr,\ \rr^\sign)\cong\rr$
proved earlier.
\qed

\begin{rem}\label{zzregpf} The short exact sequence  \eqref{inequ}
implies that,
for any $(x,y)\in\zz$, one has
$$
\rk+\dim\g=\dim\zz\leq\dim T_{x,y}^*\zz
=2\dim\g-(\dim\g-\dim\Ker\kap_*^\top)=
\dim\g+\dim\g_{x,y},
$$
where in the first equality we have used
Proposition \ref{zzbasic}(i).
We deduce an inequality $\rk\leq \dim\g_{x,y}$. Moreover,
we see that this  inequality becomes
an equality  if and only if $(x,y)$ is
a smooth point of the scheme  $\zz$.
This proves Proposition \ref{zzbasic}(ii).
\end{rem}

\subsection{Proof of  Theorem \ref{charb}(ii)-(iii)}\label{pfvb2}
We have the following chain of  natural $W$-equivariant
algebra maps
\begin{multline*}
\C[\tt]\ =\ \C[\tt]^W\o_{\C[\tt]^W}\C[\tt]\ =\ 
\C[\zz_\red]^G\o_{\C[\zz_\red]^G}\C[\tt]\\=\ 
\C[\zz_\red\times_{\zz_\red/\!/G}\tt]^G
\ \onto\  \C[\xx]^G\ \into\  \C[\xx_\norm]^G.
\end{multline*}

Let $f$ be the composition of the above maps. 
It follows from the
isomorphism
$\zz^{rs}\cong G\times_{N(T)}\tts_\t$ that
the map $f$ induces  an isomorphism between the
fields of fractions of the algebras
$\C[\tt]$ and $\C[\xx_\norm]^G,$ respectively.
The algebra  $\C[\xx_\norm]^G$
 is, by definition, a
finitely generated $\C[\xx]^G$-module, hence, also
a finitely generated module over
the image of $f$. 

Thus, since the algebra $\C[\tt]$
is integrally closed, we obtain
$\Ga(\zz_\norm,\ \rr)^G=\C[\xx_\norm]^G=\C[\tt].$
Equating $W$-isotypic components
on each side of this isomorphism
yields the first isomorphism of Theorem \ref{charb}(ii).

To prove the second isomorphism, we compute
\begin{align*}
(L\o\C[\zz_\norm])^G\ &=\ \Ga(\zz_\norm,\ L\o\oo_{\zz_\norm})^G\\
&=\ \Ga(\zz_\norm,\ L^{\fz_{\zz_\norm}})^G\quad\text{by Lemma \ref{tors}(i)}\\
&=\ \Ga(\zz_\norm,\ \rr^{ L^\uh})^G\quad\text{by Theorem
\ref{charb}(i)}\\
&=\ ( L^\uh\o \C[\tt])^W\quad\text{by the previous paragraph}.
\end{align*}

It is immediate to check, by restricting 
to the open set of regular semisimple pairs, 
that the composition of the chain of isomorphism above goes,
via the identification $ L^\uh = L^\t$
induced by the imbedding $i: \t\into\g$,
to the  restriction homomorphism
$i^*: (L\o\C[\zz_\norm])^G\to(L^\t\o \C[\tt])^W$.
This proves part (ii) of Theorem \ref{charb}.
\medskip

To prove part (iii) we use  Theorem \ref{rrsmooth}(ii).
From that theorem,
we deduce $\Ga(\zz^r,\ \rr^{\wedge^s\t})\ccong
\Ga(\zz^r,\ \wedge^s\fz_{\zz^r})$,
for any $s\geq0$.
Further, we know that the set $\zz_\norm\sminus\zz^r$
has codimension $\geq2$ in $\zz_\norm$ and that
$\rr^{\wedge^s\t}$ is a Cohen-Macaulay sheaf on $\zz_\norm$.
It follows that the natural restriction map
induces an isomorphism
$\Ga(\zz_\norm,\ \rr^{\wedge^s\t})\iso\Ga(\zz^r,\ \rr^{\wedge^s\t}).$
The proof  is now completed by the following chain
of isomorphisms:
$$
\Ga(\zz^r,\ \wedge^s\fz_{\zz^r})^G\ccong
\Ga(\zz^r,\ \rr^{\wedge^s\t})^G\ccong
\Ga(\zz_\norm,\ \rr^{\wedge^s\t})^G\ccong
(\wedge^s\t\o\C[\tt])^W.\eqno\Box
$$

\section{Principal nilpotent pairs}\label{pnpsec}
\subsection{Filtrations and Rees modules}\label{rees_sec}
Given a vector space $E$ we refer to
 a direct sum decomposition
$E=\bplus_{i,j\geq 0}\ E^{i,j}$
as  a {\em bigrading} on $E$. Similarly,
a collection of subspaces $F_{i,j}E\ \sset\ E,\ i,j\geq0$
such that
$F_{i,j}E\sset F_{i',j'}E$ whenever
$i\leq i'$ and $j\leq j'$ will be referred to
as a {\em bifiltration} on $E$.
Canonically associated with a bifiltration
 $F_{i,j}E$, there is a pair of bigraded vector spaces
\beq{bigr}
\gr E:=\bigoplus_{i,j\geq0}\ \gr_{i,j}E,
\quad\gr_{i,j}E:=\frac{F_{i,j}E}{F_{i-1,j}E+F_{i,j-1}E},
\qquad\text{resp.}\quad
\rees E:=\bigoplus_{i,j\geq0}\ F_{i,j}E.
\eeq

We view the polynomial algebra $\C[\ta_1,\ta_2]$ as 
bigraded algebra such that $\deg\ta_1=(1,0)$ and $\deg\ta_2=(0,1).$
Below, we will often make no distinction
between  $\rees E$ and the {\em  Rees module} of $E$
 defined as
$\sum_{i,j}\ \ta_1^i\ta_2^j\cdot F_{i,j}E
\ \sset\  \C[\ta_1,\ta_2]\o E,$
a bigraded $\C[\ta_1,\ta_2]$-submodule of a free
$\C[\ta_1,\ta_2]$-module with generators $E$.
There is a canonical bigraded space
isomorphism
\beq{grE}
\gr E\ \cong\ \rees E/(\ta_1\cdot\rees E + \ta_2\cdot\rees E).
\eeq

Given   a 
vector space  $E$ equipped with a pair of ascending filtrations
$\fo_\idot E$ and  $\ft_\idot E$, there are Rees
modules $\reo E:=\sum_i\  \ta_1^i\cdot\fo_i E
\ \sset\ \C[\ta_1]\o E,$
resp. $\ret E:=\sum_j\  \ta_2^j\cdot\ft_j E\ \sset\ \C[\ta_2]\o E$,
as well as associated graded spaces $\gro_\idot  E$, resp. $\grt_\idot E$.
One may  further define
a bifiltration on $E$  by the formula
$F_{i,j}E:=\fo_i E\cap \ft_j E, \ i,j\geq 0$.
One has the corresponding Rees $\C[\tat]$-module $\rees E$.
There are canonical
isomorphisms 
$$\C[\ta_2^{\pm1}]\,\bo_{\C[\ta_2]}\,\rees E\ \cong\ 
\C[\ta_2^{\pm1}]\,\bo\, \reo E,
\quad\text{resp.}\quad
\gr E\cong  \gro(\grt E)\cong\grt(\gro E),
$$
of  graded $\C[\ta_1,\ta_2^{\pm1}]$-modules,
resp. bigraded vector spaces. Specializing the first of the above
isomorphisms
at the point
$(\ta_1,\ta_2)=(0,1)$ and using that $\reo E/\ta_1\cd\reo E\cong\gro E$ we deduce a
canonical isomorphism of graded vector spaces
\beq{part}
\rees E/\big(\ta_1\cd \rees E+(\ta_2-1)\cd \rees E\big)\ccong
\gro E.
\eeq

Now let $E=\bplus_{i,j}\ E^{i,j}$
be a bigraded vector space  itself.
Associated naturally with the bigrading, there are
two  filtrations  on $E$ defined by
$\fo_m E := \bplus_{\{i,j\mid\  i\leq m\}}\ E_{i,j}$
and $\ft_n E := \bplus_{\{i,j\mid\  j\leq n\}}\ E_{i,j},$
respectively. Let
$F_{m,n}E:=\fo_m E\cap \ft_n E=
\bplus_{\{i,j\mid\ i\leq m,\ j \leq n\}}\ E^{i,j}$
be the corresponding bifiltration.
Further, equip the $\C[\ta_1,\ta_2]$-module $\C[\ta_1,\ta_2]\o E$ with 
a standard tensor product bigrading
$(\C[\ta_1,\ta_2]\o E)^{p,q}:=
\sum_{\{0\leq i\leq p,\ 0\leq j\leq q\}}\
\C^{i,j}[\ta_1,\ta_2]\o E^{p-i, q-j}.$

\begin{lem}\label{grfilt} For a bigraded vector space $E=\bplus_{i,j}\ E^{i,j}$,
the assignment
$$\aleph:\en\ta_1^m\ta_2^n\o u_{i,j}\
\longmapsto\
\ta_1^{m+i}\ta_2^{n+j}\cdot u_{i,j},
\qquad u_{i,j}\in E^{i,j},\en i,j,m,n\geq 0,
$$
yields a bigraded $\C[\ta_1,\ta_2]$-module isomorphism 
$\
\aleph:\ \C[\ta_1,\ta_2]\o E\ \iso\ 
\rees E.$\qed
\end{lem}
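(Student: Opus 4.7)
The plan is to verify the four routine properties that together give the claim: well-definedness, $\C[\ta_1,\ta_2]$-linearity, compatibility with bigradings, and bijectivity. The key observation is that the bifiltration $F_{m,n}E$ arising from the bigrading is literally a direct sum of bihomogeneous pieces, so $\rees E$ splits cleanly in each bidegree.

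First, I would note that $\aleph$ is well-defined: for $u_{i,j}\in E^{i,j}$ we have $u_{i,j}\in F_{m+i,n+j}E$, so $\ta_1^{m+i}\ta_2^{n+j}\cdot u_{i,j}$ lies in the $(m+i,n+j)$-component $F_{m+i,n+j}E$ of $\rees E$. $\C[\ta_1,\ta_2]$-linearity is immediate from the definition, since multiplication by $\ta_1^a\ta_2^b$ on the left commutes with $\aleph$ on pure tensors. For bigrading, equip the source with the standard tensor product bigrading recalled just before the lemma: the element $\ta_1^m\ta_2^n\o u_{i,j}$ has bidegree $(m+i,n+j)$, which matches the bidegree of its image.

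It then remains to check bijectivity, which I would do bidegree by bidegree. The bidegree $(p,q)$ component of the source is $\bplus_{0\leq i\leq p,\, 0\leq j\leq q}\ \C\cdot\ta_1^{p-i}\ta_2^{q-j}\o E^{i,j}$, and $\aleph$ sends the summand indexed by $(i,j)$ isomorphically onto $\ta_1^p\ta_2^q\cdot E^{i,j}$. Summing over $(i,j)$ with $i\leq p,\ j\leq q$, the image is $\ta_1^p\ta_2^q\cdot F_{p,q}E$, which by definition of the Rees module is the $(p,q)$-component of $\rees E$. Thus $\aleph$ is a bigraded isomorphism, and since it is also $\C[\ta_1,\ta_2]$-linear, it is the desired isomorphism of bigraded $\C[\ta_1,\ta_2]$-modules.

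There is no real obstacle here: the lemma is an unpacking of definitions, and the argument is essentially forced once one writes down the bidegree $(p,q)$ pieces on both sides. The only mild point worth stating explicitly is that the decomposition $F_{p,q}E=\bplus_{i\leq p,j\leq q}E^{i,j}$ (valid precisely because the filtration comes from a bigrading) is what makes the bidegree-wise bijection work without any splitting choices.
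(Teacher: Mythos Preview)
Your proof is correct and is exactly the routine verification one would expect; the paper itself gives no proof beyond ``This lemma is clear'' and the $\qed$ box, so your argument simply makes explicit what the paper leaves to the reader.
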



This lemma is clear. Later on, we will  use the following simple result

\begin{cor}\label{drin} Let $E$ be a finite dimensional
vector space equipped with a pair of ascending filtrations
$\fo_\idot E$ and $\ft_\idot E$, respectively.
Then $\rees E$ is a finite rank free
$\C[\ta_1,\ta_2]$-module.
\end{cor}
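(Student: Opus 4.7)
The plan is to reduce the statement to Lemma \ref{grfilt} by constructing a bigrading on $E$ that induces the given bifiltration. Since $E$ is finite-dimensional, the filtrations $\fo_\idot E$ and $\ft_\idot E$ stabilize; I will build a direct sum decomposition $E = \bplus_{m,n} V^{m,n}$ such that $F_{m,n}E = \bplus_{p\leq m,\,q\leq n} V^{p,q}$ for all $m,n \geq 0$. Once this bigrading is available, Lemma \ref{grfilt} supplies a bigraded $\C[\ta_1,\ta_2]$-module isomorphism $\C[\ta_1,\ta_2] \o E \iso \rees E$, exhibiting $\rees E$ as a free $\C[\ta_1,\ta_2]$-module of rank $\dim_\C E$, as required.

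To construct the bigraded splitting I choose the $V^{m,n}$ inductively. With the convention $F_{-1,n}E = F_{m,-1}E = 0$, let $V^{m,n}$ be any vector space complement of $F_{m-1,n}E + F_{m,n-1}E$ inside $F_{m,n}E$, so that the composite $V^{m,n} \into F_{m,n}E \onto \gr_{m,n}E$ is an isomorphism. A double induction, say on $m+n$, will verify that for every $(m,n)$ the sum $\sum_{p\leq m,\,q\leq n} V^{p,q}$ is direct and equals $F_{m,n}E$. For the inductive step, the inductive hypothesis identifies $F_{m-1,n}E$ and $F_{m,n-1}E$ as direct sums of the $V^{p,q}$'s they contain; since the bidegree labels are disjoint, the pairwise intersection of these two subspaces equals $\bplus_{p\leq m-1,\,q\leq n-1} V^{p,q} = F_{m-1,n-1}E$, so their sum is the direct sum of all $V^{p,q}$ with $p\leq m$, $q\leq n$ and $(p,q) \neq (m,n)$. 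Adjoining the complement $V^{m,n}$ then yields $F_{m,n}E$ as the asserted direct sum. Since the filtrations stabilize, $F_{m,n}E = E$ for large $m,n$, producing a bigrading on all of $E$.

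Once the bigrading is in hand, the bifiltration $F_{m,n}E = \fo_m E \cap \ft_n E$ coincides with the bifiltration induced by the bigrading in the sense described just before Lemma \ref{grfilt}, and applying that lemma finishes the argument. The main delicate point is the inductive verification of directness described above; all other ingredients are formal. One could alternatively proceed via graded Nakayama by using the isomorphism $\rees E/(\ta_1\cd\rees E + \ta_2\cd\rees E) \cong \gr E$ from \eqref{grE} to produce a surjection $\C[\ta_1,\ta_2] \o \gr E \onto \rees E$ and then match bigraded dimensions, but the direct reduction to Lemma \ref{grfilt} is more transparent.
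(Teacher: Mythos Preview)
Your proof is correct and follows the same overall strategy as the paper: produce a bigrading on $E$ compatible with both filtrations, then invoke Lemma~\ref{grfilt}. The paper obtains the bigrading in one line by citing the Bruhat lemma for a pair of partial flags, which furnishes a basis of $E$ simultaneously adapted to $\fo_\idot E$ and $\ft_\idot E$; you instead construct the bigrading directly via the inductive choice of complements $V^{m,n}$. Your route is more self-contained, the paper's is shorter.

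One small point in your inductive step: the phrase ``since the bidegree labels are disjoint'' does not by itself justify $F_{m-1,n}E\cap F_{m,n-1}E=\bigoplus_{p\le m-1,\,q\le n-1}V^{p,q}$, because knowing that each of the two subspaces is an internal direct sum of certain $V^{p,q}$ does not control their intersection until you already know the combined sum is direct. The clean fix is to compute the intersection first from the definition $F_{i,j}E=\fo_iE\cap\ft_jE$, which gives $F_{m-1,n}E\cap F_{m,n-1}E=\fo_{m-1}E\cap\ft_{n-1}E=F_{m-1,n-1}E$ immediately; the inductive hypothesis then identifies this with the asserted direct sum, and your dimension count for $F_{m-1,n}E+F_{m,n-1}E$ goes through. (This intersection identity is exactly what fails for a general bifiltration, cf.\ the remark following the corollary.)
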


\begin{proof} 
The filtrations $\fo_\idot E$ and $\ft_\idot E$
form a pair of partial flags in $E$.
Hence, applying the  Bruhat lemma for pairs of flags,
we deduce that there exists a bigrading
$E=\oplus_{m,n}\ E^{m,n}$ such that
the original filtrations are associated,
as has been explained before  Lemma \ref{grfilt},
with that bigrading.
The corollary now follows from the lemma.
\end{proof}

\begin{rem} For a general bifiltration on a finite dimensional
vector space $E$ that does not come from
a pair of filtrations one may have $\dim(\gr E) >\dim E$, so the 
$\C[\ta_1,\ta_2]$-module $\rees E$
is not necessarily flat, in general.
\erem

Next, let  $I\sset E$ be a subspace of a vector subspace $E$
and put  $A=E/I$. Write $\overline{V}$ for the image of
a vector subspace $V\sset E$ under the
projection $E\onto E/I$.
A filtration on $E$ induces a quotient
filtration on $A$.
Therefore, given a  pair of filtrations
$\fo_\idot E$ and  $\ft_\idot E$ on $E$
one has the corresponding quotient filtrations
$\fo_\idot A=(\fo_\idot E+I)/I$ and  $\ft_\idot A=(\ft_\idot E+I)/I$ on $A$.

There are  two,
{\em potentially different}, ways to define a
bifiltration on $A$ as follows
\begin{align*}
F\min_{i,j}A&:=\ \overline{F_{i,j}E}\ =\ 
\big[(\fo_iE \cap\ft_jE)+I\big]/I\ccong (\fo_iE \cap\ft_jE)/(\fo_iE \cap\ft_jE \cap I),\\
F\max_{i,j}A&:=\ 
\overline{F_iE}\cap \overline{F_jE}\ =\ 
\big[(\fo_iE +I)\cap (\ft_j E +I)\big]/I\ =\ 
\fo_iA\cap \ft_jA.
\end{align*}

Clearly, for any $i,j$, one has 
$F\min_{i,j}A\sset F\max_{i,j}A$ where the
 inclusion
is strict, in general. Therefore,
writing $\rees\min A$, resp. $\rees\max A$,
for the Rees module associated with the bifiltration
$F\min_{i,j}A$, resp. $F\max_{i,j}A$,
one obtains a canonical,
not necessarily injective, bigraded
$\C[\ta_1,\ta_2]$-module homomorphism $\mathsf{can}:\
\rees\min A\to\rees\max A$.

\subsection{A flat scheme over $\C^2$}\label{rees_ctt}
We have the standard grading
$\C[\t]=\bplus_{i\geq0}\ \C^i[\t]$,
resp. filtration $F_m=\C^{\leq m}[\t]=\bplus_{i\leq m}\ \C^i[\t]$, where
 $\C^i[\t]$ denotes the space of 
degree $i$ homogeneous polynomials on $\t$.
Similarly, one has a bigrading
$\C[\tt]=\bplus_{i,j}\ \C^{i,j}[\tt]$
where $\C^{i,j}[\tt]:=\C^i[\t]\o\C^j[\t].$
Associated with this bigrading,
we have the pair of filtrations
$\fo_\idot\C[\tt]$ and $\ft_\idot\C[\tt]$,
respectively, and
the corresponding  bifiltration
$F_{m,n}\C[\tt]=\fo_m\C[\tt]\cap\ft_n\C[\tt]
=\bplus_{\{i\leq m,\ j\leq n\}}\
\C^{i,j}[\tt],\ m,n\geq0,$
cf. \S\ref{rees_sec}.

Recall the setting of \S\ref{pnp_sec}. Thus,
we have 
the semisimple pair $\bh=(h_1,h_2)\in\tt$ associated
with the principal nilpotent pair $\bbe=(e_1,e_2)$. 
The coordinate ring of the finite 
subscheme  $\wbh\sset \tt$ has the form
$\C[\wbh]=\ctt/I_\bh$ where $I_\bh\sset\C[\tt]$ is
an ideal generated by  the elements
$\{f-f(\bh),\ f\in\ctt^W\}$.
The quotient algebra $\C[\wbh]=\C[\tt]/I_\bh$ inherits a pair,
$\fo_\idot\C[\wbh]$ and $\ft_\idot\C[\wbh]$ of
quotient filtrations. Associated with these filtrations,
there are bifiltrations $F_{i,j}\max{^{\,}}\C[\wbh]$,
resp. $F_{i,j}\min{^{\,}}\C[\wbh]$.

We put $\fy:=\Spec\rees\max\,\C[\wbh].$ This is an affine 
scheme that comes equipped with a $W\times\CC$-action and 
with a $\CC$-equivariant morphism $\wp:\ \fy\to\C^2$
induced by the canonical
algebra imbedding $\C[\ta_1,\ta_2]\into\ra$.

Let $\vartheta: \t\to \t/W$ be the quotient morphism
and write $\C[\vartheta\inv(W\cdot h_2)]$
for the coordinate ring of the {\em scheme theoretic} fiber
of $\vartheta$ over the orbit $W\cdot h_2$
viewed as a closed point of $\t/W$. 

\begin{lem}\label{rees} \vi The scheme $\fy$ is a reduced,
flat and finite  scheme over $\C^2$.

There are natural $W$-equivariant algebra  isomorphisms
\begin{align}
 \C[\wp\inv(0,0)]&\ccong
\gr\max\C[\wbh]=\bigoplus_{m,n\geq 0}\
\frac{F\max_{m,n}{^{\,}}\C[\wbh]}{F\max_{m-1,n}\C[\wbh]+F\max_{m,n-1}\C[\wbh]},
\label{rees1}\\
\C[\wp\inv(0,1)]&\ccong \gro\C[\wbh]\ccong
\C[\vartheta\inv(W\cdot h_2)],\label{rees2}\\
\C[\wp\inv(1,1)]&\ccong \C[\wbh].\label{rees3}
\end{align}
\vskip2pt

\vii We have $\ F\max_{\bd_1,\bd_2}\C[\wbh]=\C[\wbh]$,
where  $\bd_s=\# R^+_s,\ s=1,2,$
see \S\ref{pnp_sec}.
\end{lem}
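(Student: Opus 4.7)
The plan is to prove the lemma in three stages: the ``soft'' parts of (i) first, the crucial identification $\gro\C[\wbh]\cong \C[\vartheta^{-1}(W\cdot h_2)]$ second, and part (ii) as a direct consequence of the graded structure thus uncovered.

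First I would handle flatness, finiteness, and reducedness of $\fy$, together with the fiber identifications at $(0,0)$, $(0,1)$, and $(1,1)$. Since $\C[\wbh]$ is finite-dimensional (of dimension $|W|$, using regularity of $\bh$ from \cite[Theorem 1.2]{pnp}) and carries the pair of filtrations $\fo_\bullet, \ft_\bullet$ with $F\max_{m,n}=\fo_m\cap \ft_n$, Corollary \ref{drin} immediately gives that $\rees\max\C[\wbh]$ is a free $\C[\ta_1,\ta_2]$-module of rank $|W|$. Reducedness is automatic from the inclusion $\rees\max\C[\wbh]\hookrightarrow \C[\ta_1,\ta_2]\otimes \C[\wbh]$, the target being reduced because $\wbh$ consists of $|W|$ distinct points. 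The fibers at $(0,0)$ and $(0,1)$ are handed to us by \eqref{grE} and \eqref{part}; the fiber at $(1,1)$ is $\C[\wbh]$ itself by specialization, using that the bifiltration exhausts $\C[\wbh]$.

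The heart of the proof is the isomorphism $\gro\C[\wbh]\cong \C[\vartheta^{-1}(W\cdot h_2)]$, which I would establish via a one-parameter degeneration. Consider the $W$-equivariant ``sum'' map $\Phi\colon \tt\to\t,\,(x,y)\mapsto x+y$. Regularity of $\bh$ forces $\g_\bh=\t$, hence no root $\al$ can simultaneously vanish on $h_1$ and $h_2$; it follows that only finitely many $c\in\C$ violate the regularity of $c h_1+h_2$, so for generic $c\in\C^{\times}$ one has $\Phi(c\cdot\wbh)=W\cdot(ch_1+h_2)=\vartheta^{-1}(\vartheta(ch_1+h_2))$ and $\Phi$ restricts to an isomorphism there. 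Now, by the scaling realization $\reo\C[\tt]\cong \C[\tt]\otimes\C[\ta]$ of Lemma \ref{grfilt}, the Rees family $\Spec\reo\C[\wbh]\to \C_c$ has fiber $c\cdot\wbh$ at $c\neq 0$ and fiber $\gro\C[\wbh]$ at $c=0$. In parallel, the pull-back of $\vartheta\colon \t\to \t/W$ along the curve $c\mapsto \vartheta(ch_1+h_2)$ yields a flat family $\mathcal X\to \C_c$ of relative degree $|W|$ whose special fiber is $\vartheta^{-1}(W\cdot h_2)$. The morphism $\Phi$ induces a $W$-equivariant map of flat finite families $\Spec\reo\C[\wbh]\to \mathcal X$ of the same degree which is an isomorphism generically; verifying that the induced algebra map is surjective at $c=0$ (for instance by tracking $\Phi^*$ explicitly on the local coinvariant structure, or by showing its image is a $\C[c]$-lattice of the same rank inside the generic fiber) promotes this to an isomorphism of families, and the $c=0$ fiber gives the claimed isomorphism.

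Part (ii) will then follow at once. Using $F\max_{m,n}=\fo_m\cap \ft_n$, it suffices to show $\fo_{\bd_1}\C[\wbh]=\C[\wbh]$ and, symmetrically, $\ft_{\bd_2}\C[\wbh]=\C[\wbh]$. The isomorphism of part (i) transports the $\fo$-grading on $\gro\C[\wbh]$ to a grading on $\C[\vartheta^{-1}(W\cdot h_2)]$, and since locally at each support point $wh_2\in Wh_2$ the fiber of $\vartheta$ is isomorphic to the coinvariant ring of $W_1=\Stab_W(h_2)=W(\g^1)$, whose top degree is $|R^+_1|=\bd_1$ by Chevalley's theorem, the graded algebra $\gro\C[\wbh]$ is concentrated in degrees $\leq \bd_1$, forcing the filtration to stabilize there. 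The $\ft$-statement follows by swapping the roles of $(\ta_1,\fo,h_1)$ and $(\ta_2,\ft,h_2)$.

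The step I expect to be the main obstacle is the promotion of the generic-fiber isomorphism $\Spec\reo\C[\wbh]\to\mathcal X$ to an isomorphism of families at $c=0$. ``Iso on a dense open plus equal flat rank'' is not by itself enough (the map of $\C[c]$-lattices could be a strict inclusion); one needs to exploit the algebra structure and the explicit form of $\Phi^*$, or to identify directly the leading term ideal $\gr_{\fo}I_\bh$ generated by the pullbacks $g(t_1+t_2)-g(h_1+h_2)$ for $g\in \C[\t]^W$. A secondary subtlety, for part (ii), is aligning the $\fo$-grading on $\gro\C[\wbh]$ with the local coinvariant grading on $\C[\vartheta^{-1}(Wh_2)]$, since the natural identification of the two gradings proceeds point-by-point and uses $W$-equivariance.
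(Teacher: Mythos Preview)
Your handling of the ``soft'' parts of (i) --- flatness via Corollary~\ref{drin}, the fiber identifications via \eqref{grE} and \eqref{part}, and especially reducedness --- is fine, and your reducedness argument (subring of a reduced ring) is in fact cleaner than the paper's (Cohen--Macaulay plus generically reduced). Part (ii) follows the same logic as the paper once the key isomorphism is in hand.

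For the crucial isomorphism $\gro\C[\wbh]\cong\C[\vartheta^{-1}(W\cdot h_2)]$, your route genuinely differs from the paper's. The paper never compares families. Instead it works directly: using the second projection $pr_2:\wbh\to W\cdot h_2$, a Lagrange interpolation argument shows that $f\in\fo_m\C[\wbh]$ if and only if $f$ restricts on each fibre $pr_2^{-1}(t)\cong W_1\cdot h_1$ to a polynomial of degree~$\leq m$. This gives $\gro\C[\wbh]\cong \bigoplus_{t\in Wh_2}\gr^F\C[pr_2^{-1}(t)]$, and each summand is identified with the coinvariant algebra $\C[\t]/(\C[\t]^{W_1}_+)$ via the standard fact that $\gr^F$ of the ideal of a free $W_1$-orbit is the augmentation ideal of $\C[\t]^{W_1}$. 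Assembling over $Wh_2$ gives \eqref{rees2}.

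Your family-comparison approach via $\Phi:(t_1,t_2)\mapsto t_1+t_2$ is appealing, but the gap you flag is real and does not close for free. In fact, Family~B is singular at every point $(0,wh_2)$ whenever $h_2$ is non-regular: the Jacobian column $\partial/\partial c$ equals $-dP_i(h_2)\!\cdot\!h_1=-dP_i(wh_2)\!\cdot\!wh_1$, which lies in the span of the $t$-columns, so the rank is that of $[dP_i(wh_2)]<\rk$. Thus you cannot argue via normality of Family~B. What does work is to show $\Phi^*$ is surjective on the special fibre directly: since $\gro_0\C[\wbh]=\C[Wh_2]$ is semisimple, $\gro\C[\wbh]$ splits as a product over $Wh_2$, and in the local factor at $wh_2$ the element $t_2$ becomes the constant $wh_2$; hence $\Phi^*(t-wh_2)=t_1$ there, giving surjectivity, and equality of dimensions finishes it. This is exactly the ``local coinvariant'' point you mention only as a secondary subtlety for (ii) --- it is actually the key to closing your main gap. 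Your alternative suggestion, ``identify directly the leading term ideal $\gr_{\fo}I_\bh$'', is essentially the paper's route.
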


\begin{proof} The 
isomorphism in \eqref{rees1} follows from \eqref{grE}, resp.
the isomorphism in \eqref{rees3}
follows from definitions.
The latter isomorphism implies,
by  $\CC$-equivariance,
that we have $\wp\inv(\CC)\ccong \CC\times \wbh.$

Further, the map $\wp$ is flat by
Corollary \ref{drin}. Therefore, $\fy$  is
a Cohen-Macaulay scheme such
that $\wp\inv(\CC)$ is a reduced scheme.
It follows that the scheme $\fy$  is  reduced.

To complete the proof of the lemma, we consider
the Levi  subalgebra $\g^1=\g_{h_2}$
and its Weyl group $W_1$, the
  subgroup of $W$ generated
by reflections with respect to the 
set $R^+_1\sset \t^*$ formed by the roots 
$\alpha\in R^+$ such that $\alpha(h_2)=0$.

Let $I_1\sset \C[\t]$ denote
the ideal of the orbit $W_1\cdot h_1\sset\t$
viewed as a reduced
finite  subscheme of $\t$.
The element $h_1$ has  trivial isotropy group
under the $W_1$-action, \cite[Proposition ~3.2]{pnp}.
Therefore, a standard argument
based on the fact that $\C[\t]$ is a free
$\C[\t]^{W_1}$-module shows that the ideal
$\gr^F I_1$ equals   $(\C[\t]^{W_1}_+)$, the ideal
generated by $W_1$-invariant
homogeneous polynomials of positive degree.
We deduce a chain of
graded algebra isomorphisms
\beq{wwgg}\gr^F\C[W_1\cdot h_1]\ccong\gr^F\C[\t]/\gr^F I_1\ =\
\C[\t]/(\C[\t]^{W_1}_+)\ =\
\C[(\vartheta_1)\inv(0)]
\eeq
where we have used the notation
$\vartheta_1:\ \t\to\t/W_1$ for the quotient morphism.
Thus, there is an isomorphism
$\Spec\gr^F\C[W_1\cdot h_1]\cong(\vartheta_1)\inv(0)$
of (not necessarily reduced) schemes.

Let  $pr_2:\ \wbh\to W\cdot h_2,\ w(\bh)\mto w(h_2)$ be the second
projection. We may view
 the fiber $pr_2\inv(t),\ t\in W\cdot h_2,$  as a subset
of $\t\times \{t\}\cong\t$.
Fix $m\geq 0$ and let
$f\in \fo_m\C[\wbh]$.
Then, by  definition of the filtration
$\fo_\idot\C[\wbh]$, for any $t\in W\cdot h_2$,
there exists a polynomial $\wt f_t\in \C^{\leq m}[\t]$
such that we have $\wt f_t|_{pr_2\inv(t)}=f|_{pr_2\inv(t)}$.
Conversely,  let
$f\in \C[\wbh]$ be an element such that, for any $t\in W\cdot h_2$,
there exists a polynomial $\wt f_t\in \C^{\leq m}[\t]$
such that  $\wt f_t|_{pr_2\inv(t)}=f|_{pr_2\inv(t)}.$
Then, using  Langrange
interpolation formula, one shows that $f\in \fo_m\C[\wbh]$.
Thus, we have established a natural $W\times\C^\times$-equivariant isomorphism
$$
\Spec(\gro\C[\wbh])\ccong
W\times_{W_1}\big(\Spec\gr^F\C[pr_2\inv(h_2)]\big),
$$
of (not necessarily reduced) schemes.

Note further that we have $pr_2\inv(h_2)=W_1\cdot h_1$.
Moreover, the scheme
$\Spec\gr^F\C[W_1\cdot h_1]$
is isomorphic to $(\vartheta_1)\inv(0)$
thanks  to \eqref{wwgg}.
Thus, one obtains $W\times\C^\times$-equivariant isomorphisms
\beq{this}
\Spec(\gro\C[\wbh])\ccong
W\times_{W_1}\big((\vartheta_1)\inv(0)\big)
\ccong \vartheta\inv(W\cdot h_2).
\eeq

The isomorphisms in \eqref{rees2} follow
 since we have
$$\C[\wp\inv(0,1)]=\frac{\rees\max{^{\,}}\C[\wbh]}{
\ta_1\cdot\rees\max{^{\,}}\C[\wbh]+(\ta_2-1)\cdot\rees\max{^{\,}}\C[\wbh]}
=\gro\C[\wbh]=\C[\vartheta\inv(W\cdot h_2)].$$
Here, the first isomorphism  holds by definition,
the second isomorphism is  \eqref{part},
and the last isomorphism is \eqref{this}.

To prove part (ii),
we recall that the coinvariant algebra $\C[\t]/(\C[\t]^{W_1}_+)$
is a graded algebra which is
known to be concentrated in degrees $\leq \bd_1$.
We deduce, using isomorphisms \eqref{wwgg} and \eqref{this}, 
 that the graded algebra $\gro\C[\wbh]$
is also  concentrated in degrees $\leq \bd_1$.
Thus, we have $\fo_{\bd_1}\C[\wbh]=\C[\wbh].$
By symmetry, we get $\ft_{\bd_2}\C[\wbh]=\C[\wbh].$
Part (ii) of the lemma follows.
\end{proof}

The group $\CC$ acts on $\C^2$ and on $\tt$.
This makes 
 $\C[\C^2\times\tt]=
\C[\tat]\o\ctt$  a bigraded algebra
 with respect
to the natural bigrading on a tensor product.
Let $I\bim=\bplus_{i,j}\ I\bim^{i,j}\sset
\C[\tat]\o\ctt$ be a  bihomogeneous  ideal
generated by the
set 
$$\{\ta_1^m\ta_2^n\cdot
 f_{m,n}(\bh)-f_{m,n}\en|\en
f_{m,n}\in  (\C^{m,n}[\tt])^W,\ m,n\geq0\},$$
of bihomogeneous  elements.
It is clear that we have
$\C[\C^2\times\tt]/I\bim=\C[\C^2\times_{\tt/W}\tt]$,
where the fiber product on the right involves
the map
$\C^2\to\tt/W,\ (\tau_1,\tau_2)\mto(\ta_1\cd h_1,\
\tau_2\cd h_2)\,{\opp{mod}}\,W$.

According to the definition of the map $\aleph$
of  Lemma
\ref{grfilt},
for any $f_{m,n}\in  \C^{m,n}[\tt]$, we find
$\dis\alo(\ta_1^m\ta_2^n\cdot f_{m,n}(\bh)-f_{m,n})
=
\ta_1^m\ta_2^n\cdot(f_{m,n}(\bh)-f_{m,n}).$
The right hand side here
clearly belongs to the subspace
$\ta_1^m\ta_2^n\cdot(I_\bh\cap F_{m,n}\C[\tt])$.
Hence,  for any $i,j\geq0$, one has
an inclusion
$\aleph(I\bim^{i,j})\sset \ta_1^i\ta_2^j\cdot(I_\bh\cap F_{i,j}\ctt)$.
These inclusions insure that
the map $\aleph$ descends to  a well
defined map
$\aleph:\ \C[\C^2\times\tt]/I\bim\to \rees\ctt/\rees I_\bh=
\rees \min{}^{\,}\C[\wbh]$. 

Thus, we obtain
the following chain of bigraded
$W$-equivariant $\C[\ta_1,\ta_2]$-algebra maps
\beq{hh1}
\xymatrix{
 \C[\C^2 \times_{\tt/W} \tt]=
\C[\C^2\times\tt]/I\bim\ 
\ar@{->}[r]^<>(0.5){\aleph}&
\ \rees \min\,\C[\wbh] \
\ar[r]^<>(0.5){\mathsf{can}}&
\ \rees\max\,\C[\wbh]=\C[\fy].
}
\eeq

\subsection{A 2-parameter deformation of $\bbe$}\label{map_sec} It follows from definitions that
the nilpotent elements $e_1,e_2$ and the
semisimple elements $h_1,h_2$ satisfy the following
 commutation relations, cf. \cite[Theorem 1.2]{pnp}
\beq{eh}
[e_1,e_2]=0=[h_1,h_2],\qquad
[h_i,e_j]=\delta_{ij}\cdot e_i,\quad i,j\in\{1,2\}.
\eeq

The above relations imply that  the
elements $e_1 +\ta_1\cdot h_1$ and
$e_2 +\ta_2\cdot h_2$ commute for any $\ta_1,\ta_2\in\C$.
Therefore, one can define the following map
that will play an important role in the arguments below
\beq{vka}
\vka:\ \C^2\to \zz,\quad (\tau_1,\ta_2)\ \mto\
\vka(\tau_1,\ta_2)\ =\ (e_1 +\ta_1\cd h_1,\ 
e_2 +\ta_2\cd h_2).
\eeq

\begin{lem}\label{conjug} For any $f\in\C[\zz]^G$ and $\ta_1,\ta_2\in\C$,
one has
$\dis f(\vka(\tau_1,\tau_2))\,=\,
f(\ta_1\cd h_1,\ \ta_2\cd h_2).
$
\end{lem}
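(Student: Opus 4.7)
The plan is to exploit the defining property \eqref{pnpdef} of the rational homomorphism $g(\tau_1,\tau_2)$ to produce, for each $(\tau_1,\tau_2)$, a one-parameter family of $G$-conjugates of $\vka(\tau_1,\tau_2)$ whose limit is the pair $(\tau_1 h_1,\tau_2 h_2)$. The rest follows from $G$-invariance and continuity of polynomials.

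First I would verify the following simple observation: since $h_1,h_2\in\g_{h_1,h_2}=\t$ and, by definition, $h_s=\frac{\partial g(\tau_1,\tau_2)}{\partial\tau_s}\big|_{\tau_1=\tau_2=1}$, the image of the homomorphism $g:\CC\to G$ is contained in the connected subgroup generated by $\exp(\C h_1+\C h_2)$, which lies in the maximal torus $T\subset G$ with Lie algebra $\t$. Consequently $\Ad g(s_1,s_2)$ fixes $\t$ pointwise; in particular,
\[
\Ad g(s_1,s_2)(h_i)=h_i\quad (i=1,2),\qquad \Ad g(s_1,s_2)(e_i)=s_i\, e_i,
\]
where the second equality is precisely \eqref{pnpdef}.

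Next I would apply $\Ad g(s_1,s_2)$ to $\vka(\tau_1,\tau_2)=(e_1+\tau_1 h_1,\,e_2+\tau_2 h_2)$ to obtain, by linearity,
\[
\Ad g(s_1,s_2)\big(\vka(\tau_1,\tau_2)\big)\ =\ (s_1 e_1+\tau_1 h_1,\,s_2 e_2+\tau_2 h_2),\qquad \forall s_1,s_2\in\C^\times.
\]
The commutation relations \eqref{eh} show that the pair $(s_1 e_1+\tau_1 h_1,\, s_2 e_2+\tau_2 h_2)$ commutes, so lies in $\zz$ for all $s_1,s_2$. For any $f\in\C[\zz]^G$ this yields
\[
f\big(\vka(\tau_1,\tau_2)\big)\ =\ f(s_1 e_1+\tau_1 h_1,\,s_2 e_2+\tau_2 h_2),\qquad \forall s_1,s_2\in\C^\times.
\]

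Finally, since both sides are polynomial in $(s_1,s_2)$, letting $s_1,s_2\to 0$ gives the required equality $f(\vka(\tau_1,\tau_2))=f(\tau_1 h_1,\tau_2 h_2)$. There is really no serious obstacle; the only step deserving care is the torus claim in the first paragraph, which is nothing more than the observation that the one-parameter subgroups generated by $h_1$ and $h_2$ commute (since $[h_1,h_2]=0$) and hence integrate to a subtorus of $T$.
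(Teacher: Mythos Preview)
Your argument is correct. The one step you flag as deserving care—the torus claim—can be phrased more directly: since $g:\CC\to G$ is a homomorphism of algebraic groups, its image is a subtorus of $G$ whose Lie algebra is $dg_{(1,1)}(\C^2)=\C h_1+\C h_2\subset\t$; a connected subgroup with Lie algebra contained in $\t$ lies in $T$, so $\Ad g(s_1,s_2)$ indeed fixes $\t$ pointwise.

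Your route differs from the paper's. The paper gives two arguments: (a) an appeal to Lemma~\ref{lss}, which identifies $f(x,y)$ with the value of $f$ on the semisimple pair in the closed $G$-orbit of $\overline{G\cdot(x,y)}$; and (b) an explicit conjugation by the \emph{unipotent} element $\gamma(\tau_1,\tau_2)=\exp(\tfrac{1}{\tau_1}e_1+\tfrac{1}{\tau_2}e_2)$, which yields $(\tau_1 h_1,\tau_2 h_2)=\Ad\gamma(\tau_1,\tau_2)(\vka(\tau_1,\tau_2))$ for $(\tau_1,\tau_2)\in\CC$, followed by continuity in $(\tau_1,\tau_2)$ to reach the coordinate axes. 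You instead conjugate by the \emph{torus} element $g(s_1,s_2)$, which rescales the nilpotent parts while fixing $h_1,h_2$, and then let the auxiliary parameters $s_1,s_2\to 0$. Your argument is uniform in $(\tau_1,\tau_2)$ and stays within algebraic conjugations throughout; the paper's approach (b), on the other hand, produces the explicit conjugacy formula~\eqref{ta}, which is reused in the proof of Proposition~\ref{cruc}.
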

\begin{proof} This easily follows from
 Lemma \ref{lss}.
An alternative   way of proving the  lemma
is based on a useful formula $\exp\ad(\ta\cdot e_i)\, (h_j)=
h_j-\delta_{i,j}\cdot\ta\cdot e_i,\ i=1,2$.
Define a holomorphic (non-algebraic)  map
$\gamma:\ \CC\to G$ as follows
$\gamma(\ta_1,\ta_2)=\exp(\frac{1}{\,\ta_1}e_1+\frac{1}{\,\ta_2}e_2)$.
Then, using the formula, we find (we note that $h_i=0$ holds only if  $e_i=0$):
\begin{align}
&\tau\cd h_i=\Ad \exp(\mbox{$\frac{1}{\tau}$}e_i)(e_i+\tau\cd
 h_i)
\qquad\forall\ta\neq0,\ i=1,2;\label{ta}\\
&\quad\text{\qquad hence, we have}\quad
(\ta_1\cd h_1,\
\tau_2\cd h_2)=
\Ad \gamma(\tau_1,\tau_2)(\vka(\tau_1,\tau_2)),
\qquad (\tau_1,\tau_2)\in\CC.\nonumber
\end{align}

The last equation in \eqref{ta} clearly implies the lemma.
\end{proof}

 From Lemma \ref{conjug} we deduce that, for any
$(\tau_1,\tau_2)\in\C^2,$ in $\zz/\!/G=\tt/W$ one has
$\dis
\vka(\tau_1,\tau_2)\,{\opp{mod}}\,G=
(\tau_1\cd h_1,\ \tau_2\cd h_2)\,{\opp{mod}}\,W$.
Thus, one can introduce  a map $\wt\vka$ that
fits into the following  diagram of cartesian
squares
$$
\xymatrix{
\C^2 \times_{\tt/W}\tt\ar@{=}[r]&
\C^2\, \times_\zz\, (\zz\times_{\zz/\!/G}\tt)\ 
\ar[d]\ar[rr]^<>(0.5){{\wt\vka}}&&
\zz\times_{\zz/\!/G}\tt\ar[d]\ar@{->>}[rr]^<>(0.5){pr_2}&&\tt\ar[d]\\
&\C^2\ar[rr]^<>(0.5){\vka}&&
\zz\ar@{->>}[rr]^<>(0.5){\text{\em proj}}&&\zz/\!/G=\tt/W.
}
$$

The composite of the maps in \eqref{hh1} is a homomorphism of coordinate rings
that gives a certain $W\times\CC$-equivariant morphism
$\fy\ \to\ \C^2 \times_{\tt/W} \tt.$
Thus, we obtain the following chain of
$W\times\CC$-equivariant morphisms 
\beq{hh2}
\xymatrix{
\fy\  \ar[r]&
\  \C^2\, \times_{\tt/W}\,\tt\  \ar@{=}[r]&
\  \C^2\, \times_\zz\, (\zz\times_{\zz/\!/G}\tt)\  
\ar[rr]^<>(0.5){{\wt\vka}}&&\   \zz\times_{\zz/\!/G}\tt.
}
\eeq

The composite morphism in \eqref{hh2}
 factors through the reduced scheme $\xx$,
since the scheme $\fy$ is reduced
by Lemma \ref{rees}.
Thus, we have constructed a $W\times\CC$-equivariant morphism
$\ups$ that fits into a commutative diagram
\beq{ydiag}
\xymatrix{
\fy\ \ar@{->>}[d]^<>(0.5){\wp}\ar[rr]^<>(0.5){\ups}&&
\ \xx\ \ar@{->>}[d]^<>(0.5){p }\\
\C^2\ \ar[rr]^<>(0.5){\vka}&&\ \zz_\red\ 
}
\eeq

The proof of Theorem \ref{n!} is based on the following result
concerning  the structure of the map $\ups$  over $\C^2\sminus\{\bbo\},$
the complement of the origin $\bbo=(0,0)\in\C^2$.

\begin{prop}\label{cruc} \vi The image of  the map
$\vka$ is contained in $\zz^r$.
\vskip2pt

\noindent
\vii For any  $(x,y)\in \vka(\C^2\sminus\{\bbo\})$
we have:
\vskip2pt

$\,\quad$\npb{The fiber
$p \inv(x,y)$ is contained
in the smooth locus of the variety
$\xx$}
\vskip2pt

$\,\quad$\npb{The  map $p $ is flat over $(x,y)$.}
\vskip2pt

\noindent
\viii
The  map $\wp\times\ups$ 
yields the following isomorphism  of schemes
over $\C^2\sminus\{\bbo\}$:
$$\xymatrix{
\wp\inv(\C^2\sminus\{\bbo\})\
\ar[rr]^<>(0.5){\wp\times\ups}_<>(0.5){\sim}
&&\ (\C^2\sminus\{\bbo\})\times_\zz\xx.
}
$$
\end{prop}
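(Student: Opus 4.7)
The plan is to treat parts (i), (ii), (iii) in sequence, using (i)--(ii) as inputs for (iii).

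For (i), I would exploit the commutation relations \eqref{eh} to conjugate $\vka(\tau_1,\tau_2)$ into a standard form. The origin $\bbo$ is covered by the regularity of $\bbe$ built into the definition of a PNP. For $\tau_2\neq 0$, formula \eqref{ta} shows that $\Ad\exp(\tfrac{1}{\tau_2}e_2)$ sends $\vka(\tau_1,\tau_2)$ to $(e_1+\tau_1 h_1,\tau_2 h_2)$, since $e_1$ and $h_1$ both commute with $e_2$; if moreover $\tau_1\neq 0$ a further conjugation by $\Ad\exp(\tfrac{1}{\tau_1}e_1)$ yields $(\tau_1 h_1,\tau_2 h_2)$, whose centralizer equals $\g_{h_1,h_2}=\t$. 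In the mixed case $\tau_1=0,\ \tau_2\neq 0$, the centralizer of $(e_1,\tau_2 h_2)$ equals $\g_{e_1}\cap\g^1=\g^1_{e_1}$, of dimension $\rk(\g^1)=\rk(\g)$ because $e_1$ is principal nilpotent in $\g^1$ (a characteristic feature of principal nilpotent pairs, cf.\ \cite{pnp}).

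For (ii), fix $(\tau_1,\tau_2)\neq\bbo$ and $(x,y)=\vka(\tau_1,\tau_2)\in \zz^r$. The fiber $p^{-1}(x,y)$ decomposes across the strata of Lemma \ref{codim}, and in each case smoothness is checked in the product model $(G\times_L{\mathfrak N}(\fl))\times\tts_\fl$. If both $\tau_i$ are nonzero, the fiber lies in $\xx_\t=\xr$, which is smooth. If $\tau_1=0,\ \tau_2\neq 0$, each fiber point sits in a stratum $G$-conjugate to $\xx_{\g^1}$ and, through the conjugation from (i), corresponds under Lemma \ref{codim} to the class of the pair $(e_1,0)\in{\mathfrak N}(\g^1)$. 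Since $e_1$ is principal (hence distinguished) nilpotent in $\g^1$, Premet's description identifies the relevant component of ${\mathfrak N}(\g^1)$ near $(e_1,0)$ with the total space of a vector bundle over the smooth $L^1$-orbit $L^1\cdot e_1$, yielding smoothness at $(e_1,0)$. With smoothness of all fiber points established, flatness of $p$ over $(x,y)$ follows from Theorem \ref{rrsmooth}(i): $\rr|_{\zz^r}$ is locally free of rank $|W|$, and $\xx$ agrees with $\xx_\norm$ in a Zariski neighborhood of every smooth point of $\xx$.

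For (iii), by Lemma \ref{rees}(i) the scheme $\fy$ is finite flat of rank $|W|$ over $\C^2$ (using that $\bh$ is regular so $\dim\C[\wbh]=|W|$), and by (ii) the scheme $(\C^2\sminus\{\bbo\})\times_\zz\xx$ is finite flat over $\C^2\sminus\{\bbo\}$ of the same rank. Hence $\wp\times\ups$ is a map of locally free $\oo_{\C^2\sminus\{\bbo\}}$-sheaves of equal rank, and it suffices to verify fiberwise isomorphism. A short direct check (using that $\Ad g(\tau_1,\tau_2)$ fixes $h_1,h_2$) shows the map is $\CC$-equivariant for the scaling action on $\C^2$ and the $\bullet_\bbe$-action on $\zz$, so by equivariance it is enough to test at representatives $(1,1),\,(1,0),\,(0,1)$ of the three $\CC$-orbits in $\C^2\sminus\{\bbo\}$. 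At $(1,1)$ isomorphism \eqref{rees3} identifies the LHS fiber with $\wbh$, while by Lemma \ref{xxbasic}(ii) the RHS fiber is the $W$-torsor $W\cdot\bh$, and the map is the tautological bijection. At $(0,1)$ (and symmetrically $(1,0)$), isomorphism \eqref{rees2} identifies the LHS fiber with $\C[\vartheta\inv(W\cdot h_2)]$, and the scheme-theoretic RHS fiber, of length $|W|$ by flatness, is identified with the same algebra via the embedding $\{0\}\times\vartheta\inv(W\cdot h_2)\hookrightarrow\tt$ coming from the stratum analysis of (ii).

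The main obstacle is the smoothness step in (ii) at the mixed points where exactly one of $\tau_1,\tau_2$ vanishes: the pair $\vka(\tau_1,\tau_2)$ is regular in $\zz$ but typically lies outside $\zz^{rr}$, so Corollary \ref{xxcor} does not directly apply, and one is forced to use Lemma \ref{codim} combined with the key geometric fact that $e_1$ is principal nilpotent in $\g^1$. Matching the fibers at $(0,1)$ and $(1,0)$ in (iii) is the parallel technical pinch-point and depends on the precise form of the Rees-algebra isomorphism \eqref{rees2}.
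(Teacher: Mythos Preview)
Your argument for (i) is fine and essentially agrees with the paper's. The problem is in (ii), and it propagates into (iii).

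In (ii) you argue smoothness at the mixed points by locating the fiber point in the stratum $\xx_{\g^1}$ via Lemma~\ref{codim} and then checking that $(e_1,0)$ is a smooth point of $\mathfrak N(\g^1)$. But Lemma~\ref{codim} only identifies the \emph{locally closed stratum} $\xx_{\g^1}$, and $\dim\xx_{\g^1}=\dim\g+\dim\t_{\g^1}<\dim\g+\rk=\dim\xx$. So smoothness of the stratum at your point says nothing about smoothness of $\xx$ there; you have not ruled out that other strata in the closure create a singularity. This is precisely the obstacle you flagged at the end, and the stratification approach does not overcome it.

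The paper's fix is a different, short trick you are missing: the natural $GL_2$-action on $\gg=\C^2\otimes\g$ and on $\gg\times\tt=\C^2\otimes(\g\times\t)$ preserves both $\zz$ and $\xx$. The unipotent element sending $(x,y)\mapsto(x+y,y)$ takes $(e_1,h_2)$ to $(e_1+h_2,h_2)$. Since $e_1$ is principal nilpotent in $\g_{h_2}$, the element $e_1+h_2$ is \emph{regular in $\g$}, so $(e_1+h_2,h_2)\in\zz_1\subset\zz^{rr}$, and now Corollary~\ref{xxcor} (via Lemma~\ref{conormal}) gives both smoothness and flatness directly. Thus the mixed point is moved, by an automorphism of $\xx$, into the good locus where you already know everything.

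The same $GL_2$ move is what makes (iii) work at $(0,1)$: it yields a scheme isomorphism $p^{-1}(e_1,e_2+h_2)\cong p^{-1}(e_1+h_2,h_2)$, and since $e_1+h_2\in\g^r$ the chain \eqref{rrcart} identifies $\rr_{(e_1+h_2,h_2)}\cong\C[\vartheta^{-1}(W\cdot h_2)]$, matching \eqref{rees2}. Your identification of the RHS fiber ``via the embedding coming from the stratum analysis'' is not justified without this; knowing both fibers have length $|W|$ is not enough to conclude the given map is an isomorphism.
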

\begin{proof} Part (i) is clear for $(\ta_1,\ta_2)=(0,0)$
since  we have $\vka(\bbo)=(e_1,e_2)=\bbe\in\zz^r$.
Thus, for the rest of the proof  we may assume that
$(x,y)=\vka(\ta_1,\ta_2)$ for some
$(\ta_1,\ta_2)\in\C^2\sminus\{\bbo\}$.

One has an open covering
$\C^2\sminus\bbo= U_1\cup U_2$ where
$U_i=\{(\ta_1,\ta_2)\in\C^2\mid \ta_i\neq0\}.$
We will consider the case where $(\ta_1,\ta_2)\in U_2$, the
other case  being totally similar.
Thus, we have
$(x,y)=(e_1+\ta_1\cdot h_1,\ e_2+\ta_2\cdot h_2)$
for some $\ta_2\neq0$.

If $\ta_1\neq0$ then the element
$(x,y)$ is $G$-conjugate to the element
$(\ta_1\cdot h_1,\ \ta_2\cdot h_2),$
by formula \eqref{ta}. In that case, one has
$(x,y)\in\zz^{rs}$ and all the statements
of parts (i)-(ii) of  Proposition \ref{cruc}
are clear.

It remains to consider the case $\ta_1=0$.
Thus, we have $(x,y)=(e_1, e_2+\ta_2\cdot h_2)$
where $\ta_2\neq0$. Applying the formula
in the first line of \eqref{ta}, we deduce
that  the element
$(x,y)$ is $G$-conjugate to the element
$(e_1, \ta_2\cdot h_2)$. Further, using the $\C^\times$-action,
we may assume without loss of generality that $\ta_2=1$.

To complete the proof of parts (i)-(ii), we write
$\gg=\C^2\o\g,$ resp.
$\gg\times\tt=\C^2\o(\g\times\t).$
The natural $GL_2$-action on $\C^2$
induces, via the
action on the first tensor factor, a  $GL_2$-action on 
$\gg$, resp. on $\gg\times\tt$, such that
the $\CC$-action considered earlier corresponds
to the action of the maximal torus in
$GL_2$ formed by diagonal matrices.
The scheme $\zz\sset\gg$,
resp. $\xx\sset\gg\times\tt$, is clearly $GL_2$-stable.
Further, it is  clear that there is an element 
$g\in GL_2$ that takes
the point $(e_1, h_2)\in\zz$
to the point $(e_1+h_2,h_2)$.
Now, according to \cite[Proposition 3.2.3]{pnp},
the element $e_1$ is a principal nilpotent in the
Levi subalgebra $\g_{h_2}$. It follows that
$e_1+h_2$ is a regular element of $\g$.
Hence, we have $(e_1+h_2,h_2)\in\zz_1$, cf. Definition \ref{U}.
It follows that $(e_1+h_2,h_2)$ is a smooth point
of $\zz$. Moreover,  the fiber $p \inv(e_1+h_2,h_2)$
is contained in the smooth locus of $\xx$
and the map $p $ is flat over
the point $(e_1+h_2,h_2)$, by Lemma \ref{conormal}.
All statements of Proposition \ref{cruc}(i)-(ii)
follow from this using the $GL_2$-action.

We see from part (ii)  that $U_2\times_\zz\xx$
 is a  reduced scheme, flat  over
$U_2$.
Proving part (iii) amounts to
 showing that the map
$\wp\inv(U_2)\ \to\
U_2\times_\zz\xx$
induced by  $\wp\times\ups$
is an isomorphism.
The map in question is a 
$\CC$-equivariant morphism of
flat schemes over $U_2=\C\times\C^\times$.
Therefore, this map is an isomorphism if and only if it induces
an isomorphism 
\beq{psii}
\Psi:\ \, \wp\inv(0,1)\ \iso\  \{(0,1)\}\times_\zz\xx=p\inv(e_1, e_2+h_2),
\eeq
of the corresponding fibers over a single
point $(0,1)\in \C\times\C^\times$.

Observe next that the  argument used in the proof
of part (ii), based on the $GL_2$-action,
yields
an isomorphism of schemes $p\inv(e_1, e_2+h_2)\ccong
p\inv(e_1+h_2, h_2)$. Further,
we know that $e_1+h_2$ is a regular element of $\g$.
Therefore, taking the fibers at $e_1+h_2$
of the locally free sheaves involved in the
chain of isomorphism \eqref{rrcart},
yields an
algebra isomorphism
$\C[p\inv(e_1+h_2,h_2)]=\rr_{(e_1+h_2,h_2)}\cong
\C[\vartheta\inv(W\cdot h_2)]$.
On the other hand, by Lemma \ref{rees}, we have
$\C[\wp\inv(0,1)]\cong \C[\vartheta\inv(W\cdot h_2)]$.
Thus,
we obtain the following chain of  $W$-equivariant algebra isomorphisms
$$
\C[\{(0,1)\}\times_\zz\xx]=
\C[p\inv(e_1, e_2+h_2)]\cong
\C[p\inv(e_1+h_2,h_2)]\cong
\C[\vartheta\inv(W\cdot h_2)]
\cong
\C[\wp\inv(0,1)].
$$

We claim that the composite of the above isomorphisms
is equal to the algebra map $\Psi^*:\
\C[\{(0,1)\}\times_\zz\xx]\to
\C[\wp\inv(0,1)]$ induced by the  morphism
$\Psi$ in \eqref{psii}. To see this, one
observes that 
the algebra $\C[\{(0,1)\}\times_\zz\xx]$  is a quotient
of the algebra $\C[\tt]$.
Hence, it suffices to verify our claim for linear functions on $\tt$,
the generators of the algebra $\ctt$.  Checking the latter is straightforward
and is left
 to the reader.

We conclude that the morphism
$\Psi$ in \eqref{psii} is itself an isomorphism,
and part (iii) of Proposition \ref{cruc} follows.
\end{proof}

\subsection{Proof of Theorem \ref{n!}}\label{npf}
 Part (i) of Proposition \ref{cruc} implies that the map $\vka$ gives
a closed imbedding $\vka:\ \C^2\into\zz^r$.
Therefore,  we may view $\vka$ as a map $\C^2\to\zz_\norm$
and form an associated fiber product
$\C^2\times_{\zz^r}\xx_\norm$.
From diagram \eqref{ydiag} we obtain, by base change,
the following commutative
diagram of $W\times\CC$-equivariant morphisms of schemes over $\C^2$
\beq{fxy}
\xymatrix{
\fy\ \ar@{->>}[d]^<>(0.5){\wp}\ar[rr]^<>(0.5){\wp\times\ups}&&
\ \C^2\times_{\zz^r}\xx\ \ar@{->>}[d]&&
\ \C^2\times_{\zz^r}\xx_\norm\ 
\ar[ll]_<>(0.5){\Id_{\C^2}\times\psi}\ar@{->>}[d]^<>(0.5){p_\vka}\\
\C^2\ \ar@{=}[rr]^<>(0.5){\Id}&&\ \C^2\ \ar@{=}[rr]^<>(0.5){\Id}&&
\ \C^2.
}
\eeq

Restricting this diagram 
 to the open set $\C^2\sminus\{\bbo\}$
one obtains a diagram of isomorphisms
\beq{openiso}
\xymatrix{
\wp\inv(\C^2\sminus\{\bbo\})\ar[rr]^<>(0.5){\wp\times\ups}_<>(0.5){\sim}&&
(\C^2\sminus\{\bbo\})\times_{\zz^r}\xx&&
(\C^2\sminus\{\bbo\})\times_{\zz^r}\xx_\norm.
\ar[ll]_<>(0.5){\Id_{\C^2}\times\psi}^<>(0.5){\sim}
}
\eeq
Here the map $\wp\times\ups$ on the left is an  isomorphism
thanks to part (iii) of  Proposition \ref{cruc}
and the map $\Id_{\C^2}\times\psi$ on the right
induced by the normalization map $\psi: \xx_\norm\to\xx$ is an  isomorphism
thanks to  part (ii) of Proposition \ref{cruc}.

The map  $p_\norm:\ \xx_\norm\to\zz_\norm$ being  flat,
by flat base change  we get that
$(p_\vka)_*\oo_{\C^2\times_{\zz^r}\xx_\norm}$
$\cong
\vka^*[(p_\norm)_*\oo_{\xx_\norm}]=
\vka^*\rr$ is a locally free sheaf on $\C^2$.
We conclude that $\wp_*\oo_\fy$ and
$\vka^*\rr$ are $W\times\CC$-equivariant  locally free sheaves of
 $\oo_{\C^2}$-algebras 
and we have a $W\times\CC$-equivariant  algebra isomorphism
$(\wp_*\oo_\fy)|_{\C^2\sminus\{\bbo\}}\
\cong\ (\vka^*\rr)|_{\C^2\sminus\{\bbo\}}$
induced by  diagram \eqref{openiso}. Therefore,
 the isomorphism
can be extended 
(uniquely) across the origin $\bbo\in\C^2$. The resulting
 isomorphism 
$\vka^*\rr\
\iso\ \wp_*\oo_\fy$
is automatically    $W\times\CC$-equivariant
and respects the
 $\oo_{\C^2}$-algebra structures.
Restricting the latter  isomorphism to the
fibers at the origin
 yields a bigraded
$W$-equivariant algebra isomorphism
$\rr_\bbe\
\iso\ \C[\wp\inv(\bbo)].$
On the other hand, by \eqref{rees1},
we have $\C[\wp\inv(\bbo)]=\gr\max{^{\,}}\C[\wbh]$.
The isomorphism of Theorem \ref{n!} follows.

The last claim in the theorem is a consequence of
Lemma \ref{rees}(ii).
\qed

\subsection{Proof of Theorem \ref{trace}}\label{pftr}
First of all, we introduce some notation
and define several vector spaces
associated with the pair 
$\fo_\idot\ctt$ and $\ft_\idot\ctt$
of filtrations on $\ctt$.

We may (and will) view elements of the vector space 
$\sym\tt$  as constant
coefficient
differential operators on $\tt=\t\times\t$.
Thus, an element $u\in \sym^i\t\o\sym^j\t$
is a bihomogeneous differential operator
of order $i$ with respect to the first, resp.
of order $j$ with respect to the second,
factor in the cartesian product $\t\times\t=\tt$.
For any $m\geq 0$, we write $S^m:=\sym^m\t$ and let
$\ssym:=\prod_{i,j\geq 0}\ (\sym^i\t\o\sym^j\t)$.

For any $i,j\geq 0$,
the
assignment $u\times f\mto u(f)(0)$
gives a perfect pairing
$(S^i\o S^j)\,\times\,\C^{i,j}[\tt]\
\to\C.$
We obtain  canonical
isomorphisms
\beq{ssym}\big(\ctt\big)^*\ = \ \big(\bigoplus\nolimits_{i,j}\ \C^{i,j}[\tt]\big)^*
\ccong \prod\nolimits_{i,j\geq 0}\ \big(\C^{i,j}[\tt]\big)^*
\ =\ \prod\nolimits_{i,j\geq 0}\ (S^i\o S^j)\ =\ \ssym.
\eeq
Thus, one has a  perfect pairing
$\ssym\times\ctt\to\C$.
It is instructive to think of an element $u\in \ssym$ as a constant
coefficient
differential operator on $\tt$ "of infinite order".
Accordingly, we
write the above pairing as $u\times f\mto u(f)(0)$
similarly to the finite order case. 
Given a vector subspace $H\sset\ctt$,
let $H^\perp\sset \ssym$ denote the
annihilator of $H$ with respect the pairing.

Below, we will use the following simplified
notation $\fo_i:=\fo_i\ctt,$ resp.
$\ft_j:=\ft_j\ctt$, and
$F_{i,j}=F_{i,j}\ctt=\fo_i\cap\ft_j.$ 
Further, let $I=I_\bh$ so, we have $\ctt/I=\C[\wbh]$. 
Then, for any $m,n\geq 0$, there are
natural imbeddings
$$
\xymatrix{
F_{m,n}=\fo_m\cap\ft_n\ 
\ar@{^{(}->}[r]&
\ [\fo_{m}+I]\cap[\ft_n+I]\ 
\ar@{^{(}->}[r]&
\ \fo_{m}+\ft_n+I\ 
\ar@{^{(}->}[r]&
\ \ctt.
}
$$

These imbeddings  induce  the following chain of natural
 linear
maps
\begin{multline}\label{abc}
\gr_{m,n}\ctt=
\frac{F_{m,n}}{F_{m,n-1}+ F_{m-1,n}}=
\frac{\fo_m\cap\ft_n}{
\fo_{m-1}\cap\ft_n\ +\ \fo_m\cap\ft_{n-1}}\\
\xymatrix{\ar[r]^<>(0.5){a}&\ \gr\max_{m,n}\C[\wbh]}
=\frac{[\fo_{m}+I]\cap[\ft_n+I]}{
[\fo_{m-1}+I]\cap[\ft_n+I]\ +\ [\fo_m+I]\cap[\ft_{n-1}+I]}\\
\xymatrix{\ar[r]^<>(0.5){b}&}
\
\frac{\ctt}{\fo_{m-1}+\ft_{n-1}+I}\ .
\end{multline}

For each $m,n\geq 0$, let
$$\so^{>m}:=
\prod\nolimits_{\{i>m,\ j\geq 0\}}\ S^i\o S^j,
\quad\opp{resp.}\quad
\st^{>n}:=
\prod\nolimits_{\{i\geq 0,\ j> n\}}\ S^i\o S^j.
$$

Clearly, we have $(\fo_i)^\perp=\so^{>i},$
resp.
$(\ft_j)^\perp=\st^{>j}.$
Hence, we get $F_{i,j}^\perp= (\fo_i\cap\ft_j)^\perp
=
\so^{>i}+\st^{>j}$ and $\ [\fo_{i}+I]^\perp=\so^{>i}\cap I^\perp$,
resp.  $\ [\ft_j+I]^\perp=\st^{>j}\cap I^\perp$. Further,
we find

\begin{multline}
\left(\frac{\ctt}{\fo_{i-1}+\ft_{j-1}+I}\right)^*=(\fo_{i-1}+\ft_{j-1}+I)^\perp=
\so^{>i-1}\cap\st^{>j-1}\cap I^\perp;\\
\left(\frac{F_{m,n}}{F_{m,n-1}+ F_{m-1,n}}\right)^*=
\frac{(\so^{>m-1}+\st^{>n})
\,\cap\,
(\so^{>m}+\st^{>n-1})}{\so^{>m}+\st^{>n}}
\ =\ 
S^m\o S^n.\label{SSS}
\end{multline}

Therefore, dualizing the maps in \eqref{abc}
 one gets the following linear maps
\begin{multline}
\so^{>m-1}\cap\st^{>n-1}\cap I^\perp
\ \xymatrix{\ar[r]^<>(0.5){b^*}&
\ 
(\gr\max_{m,n}\C[\wbh])^*
\ 
\ar[r]^<>(0.5){a^*}&\ (\gr_{m,n}\ctt)^*}\ =\ 
S^m\o S^n. \label{for2}
\end{multline}

\begin{proof}[Proof of Theorem \ref{trace}]
We write $\check \alpha\in\t$
for the coroot corresponding to a root
$\alpha\in R^+$.
Recall the notation  $R^+_s\sset R^+$
for the set of positive roots of the
Levi subalgebra $\g^s,\ s=1,2,$ and put
 $\del_s:=\prod_{\al\in R^+_s}\ \check \alpha
\,\in\,S^{\bd_s}$.
Further, let $V_s=\C[W]\cdot\del_s\sset S^{\bd_s},\ s=1,2,$
be the  $W$-submodule generated by the 
element $\del_s$. It is known, that $V_s$ is a simple $W$-module
and, moreover, this  $W$-module occurs in
$S^{\bd_s}$ with multiplicity one,
see \cite{Mc}.
Therefore, there is a canonically defined
copy of the simple $W\times W$-submodule
$V_1\o V_2$ inside  $S^{\bd_1}\o S^{\bd_2}.$
Dually,  there is a canonically defined
copy of the simple $W\times W$-submodule
$\check V_1\o \check V_2$ inside $\C^{\bd_1,\bd_2}[\tt]$
where $\check V_s\sset \C^{\bd_s}[\t],\ s=1,2,$ stands 
for the contragredient $W$-module
(in fact, one has $V_s\cong\check V_s,$ since
 any simple $W$-module is known to be  selfdual).

Following \cite[\S 4]{pnp}, we consider
the element $\Del:=\sum_{w\in W}\
\sign(w)\cdot e^{w(\bh)}\in \ssym$. More explicitly, we let
\beq{Del}
\Del=\sum_{i,j\geq 0}\ \Del_{i,j}\en\opp{where}\en
 \Del_{i,j}:=\frac{1}{i!\cdot j!}\sum_{w\in W}\
\sign(w)\cdot w(h_1^i\o h_2^j)\ \in\ S^i\o S^j.
\eeq

Using the Taylor formula,  for any polynomial $f$ on $\tt$, we find
$e^{w(\bh)}(f)(0)=f(w(\bh)).$ Hence, we get
$\Del(f)(0)=\sum_{w\in W}\ \sign(w)\cdot f(w(\bh))$.
It follows that the linear function
$\C[\tt]\to\C,\ f\mto\Del(f)(0)$
 annihilates the ideal
$I=I_\bh$ in other words, we have $\Del\in I^\perp.$ 
Further, by \cite[Lemma 4.3]{pnp},
one has $\Del_{i,j}=0$ whenever
$i<\bd_1$ or $j<\bd_2$.
We conclude that
$\Del\in \so^{>\bd_1-1}\cap\st^{>\bd_2-1}\cap I^\perp$.
Thus, there is a well defined element
$a^*(b^*(\Del))\in S^{\bd_1}\o S^{\bd_2}$,
cf. \eqref{for2}.
In addition, it is clear from \eqref{SSS}-\eqref{Del} that
 we have an equation:
\beq{cong}
\Del_{\bd_1,\bd_2}
\ =\
a^*(b^*(\Del))
\ \opp{mod}
\,(\so^{>\bd_1}+\st^{>\bd_2}).
\eeq

From now on, we assume that
 $\bbe$ is a {\em non-exceptional} 
principal nilpotent pair. 
Then, according to \cite[Theorem 4.4]{pnp},
one has an isomorphism
$V_2\cong V_1\o\sign$ of $W$-modules.
It follows that $(V_1\o V_2)^\sign$ is a 1-dimensional
vector space, moreover, according to
{\em loc cit}, the element $\Del_{\bd_1,\bd_2}$ is a
nonzero element of that
vector space.
Dually,  $(\check V_1\o\check  V_2)^\sign$ is a
 1-dimensional vector space and  $\bde$  
is a
nonzero element of that
vector space.

The canonical  perfect pairing
$(V_1\o V_2)\times (\check V_1\o\check  V_2)\to \C$
 is  $W$-invariant
with respect to the $W$-diagonal
action. Therefore, this pairing yields a perfect pairing between 
$(V_1\o V_2)^\sign$ and $(\check V_1\o\check  V_2)^\sign$,
 the corresponding sign-isotypic components.
These are 1-dimensional vector spaces, with
$\bde$ and $\Del_{\bd_1,\bd_2}$ being respective base vectors.
Hence, one must have $\Del_{\bd_1,\bd_2}(\bde)(0)\neq 0$.
Thus, writing
$\dis\langle\!\langle-,-\rangle\!\rangle:\
(\gr\max_{\bd_1,\bd_2} \C[\wbh])^*\times (\gr\max_{\bd_1,\bd_2} \C[\wbh])\to \C$
for the canonical pairing and using  \eqref{cong}, we deduce
 $$\langle\!\langle b^*(\Del),\ a(\bde)\rangle\!\rangle=
\langle\!\langle a^*(b^*(\Del)),\ \bde\rangle\!\rangle=
\Del_{\bd_1,\bd_2}(\bde)(0)\neq0.$$
The map $a$ in \eqref{abc} is clearly
$W$-equivariant (with respect to the action induced by
the $W$-diagonal
action on $\C[\t]\o\C[\t]$).
Thus, we have shown that $a(\bde)$
is a nonzero element of the vector space
$\dis(\gr\max_{\bd_1,\bd_2}\C[\wbh])^\sign $.
In particular, we get
 $(\gr\max_{\bd_1,\bd_2}\C[\wbh])^\sign\neq 0$.

We know
that $\gr\max \C[\wbh]$ is a graded 
algebra such that $\gr\max_{i,j}\C[\wbh]=0$
whenever $i>\bd_1$ or $j>\bd_2$, by Theorem \ref{n!}.
Also, we have  $\dim(\gr\max \C[\wbh])^\sign=1$ and
Theorem  \ref{n!} says that the $W$-equivariant
projection
$\gr\max \C[\wbh]\onto (\gr\max \C[\wbh])^\sign$
gives a nondegenerate trace on
the algebra $\gr\max \C[\wbh]$.
Therefore, since
 $(\gr\max_{\bd_1,\bd_2}\C[\wbh])^\sign\neq 0,$
it follows that we must have
$\dis\gr\max_{\bd_1,\bd_2} \C[\wbh]=
(\gr\max_{\bd_1,\bd_2} \C[\wbh])^\sign=(\gr\max \C[\wbh])^\sign$.
In particular, the vector space
$\gr\max_{\bd_1,\bd_2} \C[\wbh]$ is 1-dimensional,
with $a(\bde)$ being a base vector.
The theorem follows.
\end{proof}

\subsection{The algebra $\rr_\bbe$}\label{pnpappl} We use  the notation of the previous
section and keep the assumption that  $\bbe$ is
 a
non-exceptional principal nilpotent pair.

\begin{proof}[Proof of Corollary \ref{normlem}]
Let $\oo_{\bbe,0}$ denote the local ring of the
algebra $\C[\xx]$ at the point $(\bbe,0)\in\gg\times\tt$.
The variety $\xx$ is normal at $(\bbe,0)$ if and only if
the canonical imbedding $\oo_{\bbe,0}\to 
\oo_{\bbe,0}\,\o_{\C[\xx]}\,\C[\xx_\norm]$ is an
isomorphism. The latter holds,
by the Nakayama lemma,  if and only if
the natural map $\C[p\inv(\bbe)]\to
\C[p_\norm\inv(\bbe)]=\rr_\bbe$ is surjective.
This yields the equivalence
$\mathsf{(i)}\ \Leftrightarrow\ \mathsf{(ii)}$
of the Corollary,
since the restriction map
$\ctt\to \C[p\inv(\bbe)]$ is surjective.

The equivalence
$\mathsf{(ii)}\ \Leftrightarrow\ 
\mathsf{(iii)}$ is a direct consequence of Theorem \ref{n!}.
\end{proof}

\begin{proof}[Proof of Corollary \ref{kostka}]
Let $I=I_\bh$ be the ideal of the reduced subscheme
$\wbh\sset\tt$ and choose ${d}>\!>0$ such that
$\ft_{{d}}\C[\wbh]=\C[\wbh]$
(by 
Lemma \ref{rees}(ii), one can take ${d}=\bd_2$).
Thus,  we have $\ft_{{d}}\ctt +I=\ctt$.
Also, by definition, one has
$\fo_0\ctt=1\o\C[\t]$. Therefore,
we find
$$
F_{0,{d}}\C[\wbh]=\frac{((1\o\C[\t])
+I))\cap(\ft_{{d}}\ctt +I)}{I}
=
\frac{(1\o\C[\t]) +I}{I}\ccong \frac{1\o\C[\t]}{(1\o\C[\t]) \cap I}.
$$

The ideal $(1\o\C[\t]) \cap I$ corresponds,
via the identification $1\o\C[\t]=\C[\t]$,
to the ideal in $\C[\t]$ of the image of the orbit
$\wbh$ under the second projection $\tt\to\t$.
This image equals the set $W\cdot h_2\sset\t$.
We conclude that the rightmost space
in the displayed formula above is isomorphic to
$\C[W\cdot h_2]$. Thus, using that formula, we obtain
$W$-module isomorphisms
$$
\grt(F_{0,{d}}\C[\wbh])
\ccong F_{0,{d}}\C[\wbh]\ccong \C[W\cdot h_2]\ccong
\C[W/W_1].
$$

 This yields the first isomorphism
of the corollary. The second
isomorphism is proved similarly. 
\end{proof}

As an immediate consequence of the (proof of) Theorem \ref{trace},
we have 

\begin{lem}\label{vanish} There is a nonzero constant
$c\in\C$ such that  $\bde(w\bh)=c\cdot\sign(w),\ \forall w\in W$.
\end{lem}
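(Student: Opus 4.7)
The plan is to reduce Lemma \ref{vanish} to two assertions: the existence of the scaling relation, which is a formal consequence of the fact that $\bde$ lies in the $\sign$-isotypic component of $\C[\tt]$, and the nonvanishing of the constant, which will be read off from the machinery already set up in the proof of Theorem \ref{trace}.

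First I would note that $\bde\in\C[\tt]^{\sign}$ by its definition in \eqref{bde}. Writing out the $W$-equivariance of evaluation, for any $w\in W$ and any $\bx\in\tt$ one has $\bde(w\cd\bx) = \sign(w)\cdot\bde(\bx)$, since $\sign(w)=\sign(w^{-1})$. Specializing $\bx=\bh$ and setting $c:=\bde(\bh)$ yields the identity $\bde(w\bh)=c\cdot\sign(w)$ for every $w\in W$. Thus the entire content of the lemma is the inequality $c=\bde(\bh)\neq0$.

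For the nonvanishing I would apply the alternating differential operator
$$\Del\ =\ \sum_{w\in W}\,\sign(w)\cdot e^{w(\bh)}\ =\ \sum_{i,j\ge 0}\Del_{i,j}\ \in\ \ssym,$$
from \eqref{Del} to $\bde$ and evaluate at $0$. On the one hand, by the Taylor expansion identity $\Del(f)(0)=\sum_{w\in W}\sign(w)\cd f(w\bh)$ used in the proof of Theorem \ref{trace}, combined with the sign-equivariance of $\bde$ established in the previous paragraph, one obtains
$$\Del(\bde)(0)\ =\ \sum_{w\in W}\sign(w)\cd\bde(w\bh)\ =\ |W|\cdot\bde(\bh).$$
On the other hand, $\bde$ is bihomogeneous of bidegree $(\bd_1,\bd_2)$, so the only term of $\Del$ that contributes to $\Del(\bde)(0)$ is $\Del_{\bd_1,\bd_2}$; that is, $\Del(\bde)(0)=\Del_{\bd_1,\bd_2}(\bde)(0)$.

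Finally, the nonvanishing $\Del_{\bd_1,\bd_2}(\bde)(0)\neq 0$ has already been recorded inside the proof of Theorem \ref{trace}: under the non-exceptionality hypothesis (which is in force throughout \S\ref{pnpappl}), one has $V_2\cong V_1\otimes\sign$ by \cite[Theorem 4.4]{pnp}, so $(V_1\otimes V_2)^{\sign}$ and $(\check V_1\otimes \check V_2)^{\sign}$ are both one-dimensional, spanned by $\Del_{\bd_1,\bd_2}$ and $\bde$ respectively, and these pair nontrivially under the canonical duality. Combining, we conclude $|W|\cd\bde(\bh)=\Del_{\bd_1,\bd_2}(\bde)(0)\neq 0$, hence $c=\bde(\bh)\neq 0$, completing the proof. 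There is no real obstacle here: the statement is essentially a repackaging of facts already distilled in the proof of Theorem \ref{trace}; the only small subtlety is the bookkeeping that only the bihomogeneous piece $\Del_{\bd_1,\bd_2}$ of $\Del$ interacts nontrivially with $\bde$.
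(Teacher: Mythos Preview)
Your argument is correct. The paper's own proof differs only in how it extracts the nonvanishing from Theorem~\ref{trace}: rather than reaching back into the proof for the inequality $\Del_{\bd_1,\bd_2}(\bde)(0)\neq 0$ and relating it to $\bde(\bh)$ via the Taylor computation, the paper simply invokes the \emph{statement} of Theorem~\ref{trace}---that the class of $\bde$ in $\gr^{\max}_{\bd_1,\bd_2}\C[\wbh]$ is nonzero---and observes that this forces $\bde|_{\wbh}\neq 0$ (since $\bde\in I_\bh$ would make that class vanish). Your route is exactly the alternative the paper records in the Remark following the proof of Corollary~\ref{section}; the paper's route is marginally cleaner in that it cites a theorem rather than a step inside its proof, while yours has the virtue of yielding the explicit relation $\Del_{\bd_1,\bd_2}(\bde)(0)=|W|\cdot\bde(\bh)$.
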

\begin{proof} Any  $W$-alternating function on
the orbit $\wbh$ is a constant multiple of   the function
$\sign_{W\cdot\bh}:\ w(\bh)\mto\sign(w)$.
The  function $\bde|_{W\cdot\bh}$ is 
$W$-alternating, hence it  must be proportional to the function
$\sign_{W\cdot\bh}$.
We know that the class of  $\bde$  in $\gr\max_{\bd_1,\bd_2}\C[\wbh]$
is nonzero, by Theorem \ref{trace}.
Hence, the function $\bde|_{W\cdot\bh}$ is not identically zero
and the result follows.
\end{proof}

Let $f$ be  a holomorphic function  on $\tt$.
The pull-back of $f$ via
the composite $\xx_\norm\to\xx\to\tt$
is  a  $G$-invariant holomorphic function on $\xx_\norm$.
One may view that function 
as a  holomorphic section, $f^\sharp$, of the
sheaf $\rr$, resp. view the restriction of that function 
 to the
subscheme $p_\norm\inv(\bbe) \into
\xx_\norm$ as an element
$\rese(f)\in  \rr_\bbe=\C[p_\norm\inv(\bbe)]$,
the value of the section $f^\sharp$
at the point $\bbe$.

We know, thanks to Theorem \ref{trace}
and Lemma \ref{vanish}, that the  class  in $\gr\max\C[\wbh]$ of 
the function $\sign_{W\cdot\bh}$ 
gives a base vector of the 1-dimensional vector
space $(\gr\max\C[\wbh])^\sign=\gr\max_{\bd_1,\bd_2}\C[\wbh]$.
Let $u\in \rr_\bbe^\sign$ be
the image of that base vector
under the isomorphism $(\gr\max\C[\wbh])^\sign\iso \rr_\bbe^\sign,$
of Theorem \ref{n!}.
Thus, $u$ is a base vector  of the 
 1-dimensional vector space  $\rr_\bbe^\sign$.

It is clear that, for any   $W$-alternating function $f$, on $\tt$, 
there is a constant  $c(f)\in\C$ such that one has
$\rese(f)=c(f)\cdot u$.

\begin{proof}[Proof of Corollary \ref{section}]
Let $\prsign_\rr:\ \rr_\bbe\onto\rr^\sign_\bbe$,
resp. $\prsign_{W\cdot\bh}:\ \C[\wbh]\onto\C[\wbh]^\sign$,
be the projection to the  sign
isotypic component. Further, we will use a  natural identification
$\zeta: \ \ctt=\gr\ctt$, as bigraded algebras,
resulting from the bigrading 
on $\ctt=\bplus_{i,j}\ \C^{i,j}[\tt]$
on the algebra $\ctt$ itself.
We have a diagram

\beq{ares}
\xymatrix{
\C[\tt]\ \ar@{=}[d]_<>(0.5){\zeta}\ar[rr]^<>(0.5){\rese}&&
\ \rr_\bbe\ \ar@{=}[d]_<>(0.5){\text{Theorem \ref{n!}}}
\ar[rr]^<>(0.5){\pr^\textit{sign}_\rr}&&
\ \rr^\sign_\bbe \ar@{=}[d]_<>(0.5){\text{Theorem \ref{trace}}}\\
\gr\ctt\ \ar[rr]^<>(0.5){a}&&\
\gr\max\C[\wbh]\ \ar[rr]^<>(0.5){\pr^\textit{sign}_{W\cdot\bh}}&&
\ \gr\max_{\bd_1,\bd_2}\C[\wbh].
}
\eeq
Here, the map $a$ is induced by the projection $\ctt\onto\C[\wbh]$, cf. \eqref{abc}.

Going through the construction of the
isomorphism of Theorem \ref{n!}, see esp. formulas \eqref{hh1} and
\eqref{hh2}, shows that the left square in diagram \eqref{ares} commutes.
The right square commutes trivially.

For any  $W$-alternating function $f$ on $\tt$
we have  $f|_{W\cdot\bh}=f(\bh)\cdot\sign_{W\cdot\bh}$.
Assume, in addition, that $f\in  (\C^{\bd_1,\bd_2}[\tt])^\sign,$
a homogeneous polynomial of bidegree
$(\bd_1,\bd_2)$. Then, 
the equation $f|_{W\cdot\bh}=f(\bh)\cdot\sign_{W\cdot\bh}$,
in $\C[\wbh]$, implies an
equation
$a(\zeta(f))=f(\bh)\cdot\sign_{W\cdot\bh},$  in $\gr\max_{\bd_1,\bd_2}\C[\wbh]$.

We transport the latter equation via the isomorphism
of Theorem \ref{n!}. Thus,  using commutativity  of
 diagram \eqref{ares} and our definition
of the base vector $u\in\rr^\sign$,
we deduce
\beq{deduce}
\rese(f)=f(\bh)\cdot u, \quad\forall f\in (\C^{\bd_1,\bd_2}[\tt])^\sign.
\eeq

Next, by definition one has $\bbs_\bbe=\Delta_\bbe^\sharp$.
Hence, 
$\bbs_\bbe(\bbe)=\rese(\bde)$. Thus,
 applying formula \eqref{deduce}
to the polynomial $\bde$, we get
$\bbs_\bbe(\bbe)= \bde(\bh)\cdot u.$

Now, Lemma \ref{vanish} implies that  $\bde(\bh)\neq0$.
It follows that $\bbs_\bbe(\bbe)\neq0$.
\end{proof}

\begin{rem} We have shown in the course of the proof of Theorem
\ref{trace}
that,
we have
$\Del_{\bd_1,\bd_2}(\bde)(0)\neq0$. This provides an alternative 
proof that  $\bde(\bh)\neq0$,
thanks to
a simple formula
$\Del_{\bd_1,\bd_2}(\bde)(0)=(\bd_1!\bd_2!)^2\cdot\bde(\bh).$
\end{rem}

\begin{proof}[Proof of Propositions \ref{exp}]
For each $\bx\in\tt$, we let $\bbde_\bx(-):=\bbde(\bx,-)$,
a $W$-alternating
holomorphic function on $\tt$.
Unraveling definitions, one finds that
$\bbss|_{\zz\times\{\bbe\}}= s^\sharp\boxtimes u$,
where $s^\sharp$ is a  $G$-invariant holomorphic section of the
sheaf $\rr^\sign$ that corresponds to 
the holomorphic function $s: \tt\to\C,\ \bx\mto c(\bbde_\bx)$.

Fix $\bx=(x_1,x_2)\in\tt$. For each $i,j\geq 1$, we define
a  homogeneous polynomial on $\tt$  of bidegree $(i,j)$ by 
the formula
$$\bbde^{ij}_\bx(y_1,y_2)\
=\ \mbox{$\frac{1}{i!j!}$}\sum_{w\in W}\ \sign(w)\cdot
 (\langle x_1, w(y_1)\rangle)^i
(\langle x_2, w(y_2)\rangle)^j.
$$ 

Writing $\by=(y_1,y_2)\in\tt$, one obtains an obvious expansion
$$\bbde_\bx(\by)=
\sum_{w\in W}\ \sign(w)\cd
e^{\lll\bx,w(\by)\rrr}
=\sum_{w\in W}\ \sign(w)\cd e^{\langle x_1,w(y_1)\rangle}\cd
e^{\langle x_2,w(y_2)\rangle}=
\sum_{i,j\geq0}\ \bbde^{ij}_\bx(y_1,y_2).
$$

Observe next that the  composite map $\pr^\sign_{W\cdot\bh}\ccirc a$, in the bottom row of
 diagram \eqref{ares},
annihilates the homogeneous component $\C^{i,j}[\tt]$ unless we have 
$(i,j)=(\bd_1,\bd_2)$. 
From the commutativity of the diagram  we deduce that
the  map $\pr^\sign_\rr\ccirc\rese$, in the top
row of the diagram, annihilates all
the  components $\C^{i,j}[\tt],\ (i,j)\neq(\bd_1,\bd_2)$. 
It follows  that we have
$\pr^\sign_\rr(\rese(\bbde_\bx))\ =\ \pr^\sign_\rr(\rese(\bbde^{\bd_1,\bd_2}_\bx))$.
Since each of 
the functions $\bbde^{\bd_1,\bd_2}_\bx$ and $\bbde_\bx$ is
 $W$-alternating,
 we  deduce $\rese(\bbde_\bx) =\rese(\bbde^{\bd_1,\bd_2}_\bx)$.
Hence,
applying
 formula \eqref{deduce}, we obtain
$$\rese(\bbde_\bx)=\rese(\bbde^{\bd_1,\bd_2}_\bx)=
\bbde^{\bd_1,\bd_2}_\bx(\bh)\cdot u,$$
where $u$ is the base vector of $\rr_\bbe^\sign$ 
 defined earlier in this section.

Now, using the definition of the polynomial $\bde$, we compute
$$\bbde^{\bd_1,\bd_2}_\bx(\bh)=
\mbox{$\frac{1}{\bd_1!\bd_2!}$} \sum_{w\in W}\ \sign(w)\cd (\langle x_1,w(h_1)\rangle)^{\bd_1}
(\langle x_2,w(h_2)\rangle)^{\bd_2}\
=\ \mbox{$\frac{1}{\bd_1!\bd_2!}$}\cd\bde(\bx).
$$

Recall finally  the function $s:\ \bx\mto c(\bbde_\bx)$,
where the quantity $c(\bbde_\bx)$
is determined from the equation $\rese(\bbde_\bx)=c(\bbde_\bx)\cdot u.$
Thus, using the above formulas, we find 
$$s(\bx)=c(\bbde_\bx)=\bbde^{\bd_1,\bd_2}_\bx(\bh)=
\mbox{$\frac{1}{\bd_1!\bd_2!}$}\cdot\bde(\bx).$$
Since $\bbs_\bbe=\Delta_\bbe^\sharp$,
for the corresponding sections of 
the sheaf $\rr$ we deduce an equation
$s^\sharp=\frac{1}{\bd_1!\bd_2!}\cdot\bbs_\bbe$.
Also, according to the proof of Corollary \ref{section},
we have $\bbs_\bbe(\bbe)= \bde(\bh)\cdot u.$

Combining everything together, we obtain
$$\bbss|_{\zz\times\{\bbe\}}=
s^\sharp \boxtimes u = \frac{1}{\bd_1!\cd\bd_2!}\cdot\bbs_\bbe\boxtimes u
=\frac{1}{\bd_1!\cd\bd_2!\cd\bde(\bh)}\cdot\bbs_\bbe\boxtimes
\bbs_\bbe(\bbe),$$
proving the first equation of Proposition \ref{exp}.
The proof of the second  equation is similar.
\end{proof}

\section{Relation to work of M. Haiman}\label{hai_sec}

\subsection{A vector bundle on the Hilbert scheme}\label{psec}
We keep the notation of \S\ref{mhai}, so
$G=GL_n$ and $V=\C^n$. We have $\g=\End_\C V$.
For any $(x,y)\in \zz$, one may view
the vector space $\g_{x,y}$  as an associative
subalgebra of $\End_\C V$.
Let $G_{x,y}\sset G$ be
the isotropy  group of the pair $(x,y)$ under the
$G$-diagonal action.
The group $G_{x,y}$ may be identified
with the group of invertible elements of
the associative algebra $\g_{x,y}$ hence, this group is connected.

Let $\C\langle x,y\rangle $ denote an associative
subalgebra of $\End_\C V$ generated by the elements $x$ and $y$.
Recall that a vector $v\in V$ is said to be 
a {\em cyclic vector} for a pair $(x,y)\in\zz$ 
if one has $\C\langle x,y\rangle v=V$.
Let $\zz^\circ$ be the set of pairs $(x,y)\in\zz$
which have a cyclic vector.

Part (i) of the following result is due to
Neubauer and Saltman  and part (ii)
is well-known,  cf.  \cite[Theorem 1.1]{NS}.

\begin{lem}\label{cyc} \vi A pair $(x,y)\in\zz$ is regular
if and only if one has
 $\C\langle x,y\rangle=\g_{x,y}$.

\vii The set  $\zz^\circ$
is a Zariski open subset of $\zz^r$.
For  $(x,y)\in\zz^\circ$,
all cyclic vectors for $(x,y)$ form
a single  $G_{x,y}$-orbit, which is the unique
Zariski open dense $G_{x,y}$-orbit in $V$. \qed
\end{lem}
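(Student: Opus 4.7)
The proof rests on three standard facts: the automatic inclusion $\C\langle x,y\rangle\sset\g_{x,y}$ (since $x,y$ commute, every polynomial in $x,y$ lies in the centralizer of both); Gerstenhaber's bound $\dim\C\langle x,y\rangle\leq n$ for any commuting pair in $\End_\C V$; and Proposition \ref{zzbasic}(ii), which gives $\dim\g_{x,y}\geq\rk=n$, with equality characterizing $\zz^r$. Combining these, if $\C\langle x,y\rangle=\g_{x,y}$ then $\dim\g_{x,y}\leq n$, so equality holds and $(x,y)\in\zz^r$. The converse direction of (i) is the substantive content of the Neubauer--Saltman theorem \cite[Theorem 1.1]{NS}; the plan is to invoke it directly. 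The standard proof of that result proceeds by the generalized eigenspace decomposition of $x$ (which is $y$-stable by commutativity) to reduce to the case of a regular commuting nilpotent pair, where a cyclic vector is produced from the Jordan structure; once a cyclic vector $v$ is found, the injection $a\mto av$ of the next paragraph forces $\dim\C\langle x,y\rangle= n=\dim\g_{x,y}$, and the inclusion $\C\langle x,y\rangle\sset\g_{x,y}$ becomes an equality.

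For part (ii), openness of $\zz^\circ$ in $\zz$ follows because cyclicity is a determinantal open condition: the locus
$\{(x,y,v)\in\zz\times V\mid v\text{ is a cyclic vector for }(x,y)\}$
is open in $\zz\times V$ (it is cut out by the nonvanishing of at least one $n\times n$ minor formed from vectors of the shape $x^iy^jv$), and its image under the projection $\zz\times V\to\zz$ is exactly $\zz^\circ$. Since this projection is flat (being the base change of $V\to\Spec\C$), it is open, and openness of $\zz^\circ$ follows. To see $\zz^\circ\sset\zz^r$, fix $(x,y)\in\zz^\circ$ with cyclic $v$ and consider $\phi_v:\g_{x,y}\to V$, $a\mto av$. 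If $av=0$, then for every $i,j$ one has $a(x^iy^jv)=x^iy^j(av)=0$ by commutativity, so $a$ annihilates a spanning set and $a=0$; thus $\phi_v$ is injective, $\dim\g_{x,y}\leq n$, and Proposition \ref{zzbasic}(ii) forces equality, i.e., $(x,y)\in\zz^r$.

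For the orbit structure, retain this cyclic $v$. By part (i) applied to the regular pair $(x,y)$, we have $\g_{x,y}=\C\langle x,y\rangle$, and by dimension count $\phi_v$ is an isomorphism. Any $v'\in V$ thus has the form $v'=av$ for a unique $a\in\g_{x,y}$, and the identity $\phi_{v'}(b)=bv'=(ba)v=\phi_v(ba)$ shows that $\phi_{v'}=\phi_v\ccirc r_a$, where $r_a:\g_{x,y}\to\g_{x,y}$ denotes right multiplication by $a$. Hence $v'$ is cyclic precisely when $r_a$ is surjective, i.e., when $a$ is invertible in the associative algebra $\g_{x,y}$. Since $G_{x,y}$ is exactly the group of such invertible elements (as noted in the paragraph preceding the lemma), the set of cyclic vectors coincides with $G_{x,y}\cd v$, a single orbit. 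This orbit is open in $V$ (cyclicity being open) and nonempty, hence dense in the irreducible variety $V$, and it is the unique open dense $G_{x,y}$-orbit there. The sole substantive obstacle is the Neubauer--Saltman direction of (i), which I would cite rather than reprove; the remainder of the lemma reduces to elementary linear algebra and standard flat-morphism arguments.
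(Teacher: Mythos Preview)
Your proposal is correct and matches the paper's approach: the paper does not give a proof at all but simply attributes part (i) to Neubauer--Saltman \cite[Theorem~1.1]{NS} and calls part (ii) ``well-known,'' marking the lemma with a \qed. Your write-up supplies exactly the elementary details the paper omits for (ii)---the injectivity of $\phi_v$, the identification of $G_{x,y}$ with the units of the commutative algebra $\g_{x,y}=\C\langle x,y\rangle$, and the openness argument---while correctly deferring the substantive converse direction of (i) to the cited reference, just as the paper does.
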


Let  $G$ act on $\zz\times V$ by
$\,g: (x,y,v)\mto (gxg\inv,gyg\inv,gv)$. We
 introduce a variety of triples
\beq{ssc}\ssc=\{(x,y,v)\in \g\times\g\times V\en\big|\en
[x,y]=0,\  \C\langle x,y\rangle v=V\}.
\eeq

This is a Zariski open $G$-stable subset of $\zz\times V$.
Lemma \ref{cyc} shows that
$\ssc$ is  smooth and that
the $G$-action on $\ssc$ is free.
It is known that
there exists a universal  geometric quotient morphism
$\rho:\ \ssc\to\ssc/G$. Furthermore, the variety $\ssc/G$ 
may be identified with $\hilb:=\hilb^n(\C^2)$, 
the Hilbert scheme of $n$ points in $\C^2$,
 see
\cite{Na}.

The first projection $\zz\times V\to\zz$ restricts
to a $G$-equivariant map $\delta: \ssc\to \zz^\circ$.
We let 
${\mathcal G}_{\zz^\circ}$,
the universal stabilizer group scheme on $\zz^\circ,$
cf. \S\ref{bei}, act along the fibers
of $\delta$ by
$(\gamma,x,y):\ (x,y,v)\mto (x,y,\gamma v)$.
Lemma \ref{cyc}(i) implies that ${\mathcal G}_{\zz^\circ}$
is a smooth commutative
group scheme, furthermore,
the ${\mathcal G}_{\zz^\circ}$-action
 makes $\ssc\to \zz^\circ$ a $G$-equivariant
$\ZZ_{\zz^\circ}$-torsor over ${\zz^\circ}$. In particular,
$\delta$ is a smooth morphism.

We use the notation 
of \S\ref{dsec} and
put $\pp=(\rho_*\delta^*(\rr|_{\zz^\circ}))^G$.
There is a $W$-action on $\pp$ inherited from the one
on $\rr$. 
By equivariant descent, one has a canonical
isomorphism $\rho^*\pp=\delta^*(\rr|_{\zz^\circ})$,
cf. diagram \eqref{square} below.
The sheaf $\rr|_{\zz^\circ}$
is locally free (Theorem \ref{rrsmooth}) and self-dual  (Theorem
\ref{main_thm}). Thus,
 we deduce the following result

\begin{cor}\label{P} The sheaf $\pp$ 
is a locally free   coherent
sheaf of commutative $\oo_{\hilb}$-algebras 
equipped with a natural $W$-action by algebra automorphisms.
The fibers of 
the corresponding algebraic vector bundle 
on $\hilb$ afford the regular representation of 
the group ~$W$ and the projection  $\pp\onto\pp^\sign$ induces a
nondegenerate trace
on each fiber.\qed
\end{cor}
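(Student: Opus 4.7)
The plan is to derive each of the four assertions about $\pp$ from the corresponding known property of $\rr|_{\zz^\circ}$ via $G$-equivariant descent along the principal $G$-bundle $\rho:\ssc\to \hilb$, using the canonical isomorphism $\rho^*\pp\cong \delta^*(\rr|_{\zz^\circ})$ mentioned in the paragraph preceding the corollary. I would organize the argument around the following commutative square
\beq{square}
\xymatrix{
\ssc\ \ar[d]_<>(0.5){\rho}\ar[r]^<>(0.5){\delta}&\ \zz^\circ\ \ar@{^{(}->}[r]&\ \zz_\norm\\
\hilb\ &&
}
\eeq
where $\rho$ is a principal $G$-bundle and $\delta$ is a $\ZZ_{\zz^\circ}$-torsor, hence a smooth $G$-equivariant morphism.

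First I would observe that $\delta$ is smooth and that $\rr|_{\zz^\circ}$ is locally free (Theorem \ref{rrsmooth}(i), since $\zz^\circ\sset\zz^r$), so $\delta^*(\rr|_{\zz^\circ})$ is a $G$-equivariant locally free coherent sheaf on $\ssc$. Since the $G$-action on $\ssc$ is free with geometric quotient $\rho$, equivariant descent gives an equivalence between the categories of $G$-equivariant coherent sheaves on $\ssc$ and of coherent sheaves on $\hilb$ whose quasi-inverse is $\CF\mto(\rho_*\CF)^G$. This equivalence preserves the property of being locally free (and the rank), so $\pp$ is a locally free $\oo_\hilb$-module. The algebra structure on $\rr=(p_\norm)_*\oo_{\xx_\norm}$ pulls back to a $G$-equivariant commutative algebra structure on $\delta^*(\rr|_{\zz^\circ})$, which descends to $\pp$; similarly, the $W$-action on $\rr$ of \S\ref{dsec} commutes with the $G$-action and respects the algebra structure, so it descends to a $W$-action on $\pp$ by $\oo_\hilb$-algebra automorphisms.

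For the fiber statement, given $h\in\hilb$ I would pick any preimage $(x,y,v)\in\rho\inv(h)$ and set $(x,y)=\delta(x,y,v)\in\zz^\circ$. Using the isomorphism $\rho^*\pp\cong \delta^*(\rr|_{\zz^\circ})$, the fiber $\pp|_h$ gets canonically identified, as a $W$-equivariant algebra, with the fiber $\rr|_{(x,y)}$, which affords the regular representation of $W$ by Theorem \ref{rrsmooth}(i). For the trace, Corollary \ref{vectbun}(i) gives $\rr^\sign\cong\kk_{\zz_\norm}$, while Corollary \ref{vectbun}(ii) asserts that $(r_1,r_2)\mto (r_1r_2)^\sign$ is a nondegenerate symmetric $\oo_{\zz_\norm}$-bilinear pairing $\rr\times\rr\to\rr^\sign$. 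Pulling this pairing back via $\delta$ and taking $G$-invariants descends it to a pairing $\pp\times\pp\to\pp^\sign$, whose fiberwise nondegeneracy is inherited from that of $\rr$ on $\zz^\circ$ (where the pairing specializes to the standard nondegenerate trace on $\C[W]$).

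There is no real obstacle here: every nontrivial input has already been proved, and the corollary amounts to a routine application of equivariant descent for principal $G$-bundles. The only point that deserves minor care is verifying that the various structures (algebra, $W$-action, pairing) are genuinely $G$-equivariant and therefore actually descend, which is straightforward from the constructions in \S\ref{dsec} and the fact that $G$ commutes with $W$ on $\xx_\norm$.
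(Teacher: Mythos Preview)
Your proposal is correct and follows exactly the approach the paper intends: the corollary is marked \qed and the paper's entire argument is the preceding paragraph, which invokes equivariant descent along $\rho$ together with the local freeness of $\rr|_{\zz^\circ}$ (Theorem \ref{rrsmooth}) and its self-duality (Theorem \ref{main_thm}, equivalently Corollary \ref{vectbun}). You have simply unpacked that paragraph in detail, with the same inputs and the same descent mechanism.
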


\subsection{The isospectral Hilbert scheme}
\label{hilb}
Let $\ha:=\Spec_{\hilb}\pp$ be
 the relative spectrum of $\pp$,
a sheaf  of algebras on the Hilbert scheme.
By Corollary \ref{P}, the scheme $\ha$  comes equipped
with a  flat and finite morphism  $\eta: \ha\to\hilb$
and with a $W$-action along the fibers
of $\eta$. 
We conclude that
 $\ha$ is a reduced Cohen-Macaulay and  Gorenstein variety.

One can interpret the construction of
the scheme $\ha$ in more geometric terms as follows.
Let $\xc:=p\inv_\norm({\zz^\circ}),$
an open subvariety in $\xx_\norm$.
We  form a fiber product $\xx_\norm\times_{\zz^\circ}\ss$ and
consider
the following commutative diagram
where $\wt p$ denotes the second projection, cf. also \cite[\S8]{Ha2},

\beq{square}
\xymatrix{
\xc\ \ar[dr]_<>(0.5){p_\norm}&\ \xc\times_{\zz^\circ}\ssc\
\ar@{->>}[l]_<>(0.5){\wt\delta}
\ar@{->>}[dr]^<>(0.5){\wt p}\ar@{.>}[rr]^<>(0.5){h}_<>(0.5){\cong}
&&\ \ssc\times_{\hilb}\ha\
\ar[dl]_<>(0.5){\wt\eta}\ar@{->>}[r]^<>(0.5){\wt\rho}
&\ \ha
\ar@{->>}[dl]^<>(0.5){\eta}\\
&\ {\zz^\circ}\ \ar@{->>}[dr]_<>(0.5){\eqref{Jo}}
 &\ \ssc\ \ar@{->>}[l]^<>(0.5){\delta}\ar@{->>}[r]_<>(0.5){\rho}&
\ \hilb\ \ar@{->>}[dl]^<>(0.5){^{\;\ \text{Hilbert-Chow}}}&\\
&& \ \tt/W\ &&
}
\eeq

We let $G$ act on $\ssc\times_{\hilb}\ha$ through the first factor, 
resp. act {\em diagonally} on
 $\xc\times_{\zz^\circ}\ssc$.
The morphisms $\delta$ and $\rho$ in diagram \eqref{square} are
smooth, resp.  the morphisms $p_\norm$ and $\eta$ are finite and flat.
The morphism $\wt\delta$, resp.
$\wt\rho, \wt p,$ and $\wt\eta$, 
is obtained from $\delta$, resp. from $\rho, p_\norm,$ and $\eta$,
by base change.
Hence, flat base change yields
\beq{basechange}\delta^*(\rr|_{\zz^\circ})=\delta^*(p_\norm)_*\oo_{\xc}=
\wt p_*\oo_{_{\xc\times_{\zz^\circ}\ssc}},\quad
\text{resp.}\quad
\rho^*\pp=\rho^*\eta_*\oo_\ha=
\wt\eta_*\oo_{_{\ssc\times_{\hilb}\ha}}.
\eeq

Since $\delta^*(\rr|_{\zz^\circ})=\rho^*\pp$
we deduce  a canonical isomorphism
$\wt p_*\oo_{_{\xc\times_{\zz^\circ}\ssc}}
\cong\wt\eta_*\oo_{_{\ssc\times_{\hilb}\ha}}$
of $G$-equivariant  sheaves of $\oo_\ssc$-algebras.
This means that there is a canonical $G$-equivariant  isomorphism of schemes
$h:\ \xc\times_{\zz^\circ}\ssc$ $\iso
\ssc\times_{\hilb}\ha$, the dotted arrow in diagram
\eqref{square}. 

The map
$\wt\rho$ makes $\ssc\times_{\hilb}\ha$ a
$G$-torsor over $\ha$. Therefore, the composite
$\wt\rho\ccirc h$ makes $\xc\times_{\zz^\circ}\ssc$
a $G$-torsor over $\ha$ hence, we have $\ha=
(\xc\times_{\zz^\circ}\ssc)/G$, a
 geometric quotient by $G$.
From \eqref{basechange}, we obtain
$\dis\rho_*\delta^*(\rr|_{\zz^\circ})=
\rho_*{\wt p}_*\oo_{{\xc\times_{\zz^\circ}\ssc}}=
\rho_*{\wt \eta}_*\oo_{{\ssc\times_{\hilb}\ha}}
=\eta_*{\wt \rho}_*\oo_{{\ssc\times_{\hilb}\ha}}.$
Therefore, taking $G$-invariants, we get
$\pp=[\rho_*\delta^*(\rr|_{\zz^\circ})]^G=
[\eta_*{\wt \rho}_*\oo_{{\ssc\times_{\hilb}\ha}}]^G=
\eta_*\oo_{\ha}.$

The  $W$-action on $\xc$ induces one on
$\xc\times_{\zz^\circ}\ssc$.
This $W$-action commutes with the $G$-diagonal
action, hence, descends
to $(\xc\times_{\zz^\circ}\ssc)/G$. 
The resulting  $W$-action may be identified with
the one that comes from the $W$-action on
the sheaf $\pp$, see Corollary \ref{P}.
The composite map
$\xc\times_{{\zz^\circ}}\ssc\to\xc\to [{\zz^\circ}
\times_{_{{\zz^\circ}/\!/G}}\tt]_\red
\to\tt$
descends to a $W$-equivariant map $\si:\ \ha\to\tt$.
Thus, we have a  diagram
\beq{ha}
\xymatrix{\xc\ &
\ \xc\times_{{\zz^\circ}}\ssc\ \ar@{->>}[l]_<>(0.5){\wt\delta}
\ar@{->>}[rr]^<>(0.5){\wt\rho\ccirc h}&&\ 
\  (\xc\times_{{\zz^\circ}}\ssc)/G=\ha  \ar[rr]^<>(0.5){\eta\times\si}&&
\hilb\times_{\tt/W}\tt.
}
\eeq

Following Haiman, one defines the
{\em isospectral Hilbert scheme} to be
$[\hilb\times_{\tt/W}\tt]_\red,$
a reduced fiber product.

\begin{prop}\label{haim} 
The map $\eta\times\si$ on the right of \eqref{ha}
factors through an isomorphism 
$$\ha\iso [\hilb\times_{\tt/W}\tt]_\norm.$$
In particular, the normalization of the 
isospectral Hilbert scheme is Cohen-Macaulay
and Gorenstein.
\end{prop}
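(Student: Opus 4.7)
The plan is to identify $\ha$ with the normalization of the isospectral Hilbert scheme by showing that $\eta\times\si$ is a finite birational morphism from a normal variety and then invoking Zariski's Main Theorem. First I would verify that $\ha$ is normal, Cohen--Macaulay, and Gorenstein. The Hilbert scheme $\hilb=\hilb^n(\C^2)$ is smooth, and by Corollary \ref{P} the sheaf $\pp=\eta_*\oo_\ha$ is a locally free $\oo_\hilb$-algebra, so $\eta$ is finite and flat; hence $\ha$ is Cohen--Macaulay. The nondegenerate trace $\pp\to\pp^{\sign}$ from Corollary \ref{P} identifies $\pp$ with $\hhom_{\oo_\hilb}(\pp,\pp^{\sign})$, and $\pp^{\sign}$ is an invertible $\oo_\hilb$-module since each fiber of $\pp$ is the regular representation of $W$; hence $\ha$ is Gorenstein with dualizing sheaf pulled back from $\pp^{\sign}$. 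For normality, I would exploit diagram \eqref{square}: the variety $\xc$ is normal as an open subvariety of $\xx_\norm$, and the morphism $\delta:\ssc\to\zz^\circ$ is smooth (being a torsor under the smooth commutative group scheme $\ZZ_{\zz^\circ}$), so its base change $\wt\delta:\xc\times_{\zz^\circ}\ssc\to\xc$ is smooth, whence $\xc\times_{\zz^\circ}\ssc$ is normal. Since $\ha$ is the geometric quotient of this normal variety by the free action of the reductive group $G$, $\ha$ is normal too.

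Next I would check that the map $\eta\times\si$ factors through the reduced fiber product $[\hilb\times_{\tt/W}\tt]_\red$, which is automatic as $\ha$ is reduced. Writing $f:\ha\to[\hilb\times_{\tt/W}\tt]_\red$ for the resulting morphism, finiteness of $f$ follows from finiteness of $\eta$ and the closed imbedding $[\hilb\times_{\tt/W}\tt]_\red\hookrightarrow\hilb\times\tt$. For birationality, consider the open subset $Z:=\zz^{rs}\cap\zz^\circ$ of $\zz^\circ$; its image $\hilb^\sharp$ under the quotient map $\zz^\circ\to\hilb$ is open and non-empty. By Lemma \ref{xxbasic}(ii), the restriction of $p_\norm$ to $p_\norm^{-1}(Z)\to Z$ is an \'etale Galois $W$-cover, and passing to the $G$-quotient realizes $\eta:\eta^{-1}(\hilb^\sharp)\to\hilb^\sharp$ as an \'etale $W$-cover. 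On the other hand, the projection $\hilb\to\tt/W$ obtained from diagram \eqref{square} is smooth over $\hilb^\sharp$, so $(\hilb\times_{\tt/W}\tt)|_{\hilb^\sharp}\to\hilb^\sharp$ is also \'etale of degree $|W|$. Chasing through diagram \eqref{ha} shows that $f$ is $W$-equivariant for the two actions, and since both are free transitive on generic fibers, $f$ restricts to an isomorphism over $\hilb^\sharp$.

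To conclude, I would argue that both $\ha$ and $[\hilb\times_{\tt/W}\tt]_\red$ are irreducible: $\ha$ is irreducible because $\xc\times_{\zz^\circ}\ssc$ is (being smooth over the irreducible variety $\xc$), and irreducibility of $[\hilb\times_{\tt/W}\tt]_\red$ follows from the fact that its restriction to the dense open set $\hilb^\sharp$ is irreducible (as the $W$-cover is connected, since $\zz^{rs}\cap\zz^\circ$ has connected $G$-quotient). Thus $f$ is a finite birational morphism from the normal variety $\ha$ to the reduced irreducible variety $[\hilb\times_{\tt/W}\tt]_\red$, and the universal property of normalization identifies $\ha$ with $[\hilb\times_{\tt/W}\tt]_\norm$. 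The Cohen--Macaulay and Gorenstein properties of the normalization then follow from the corresponding properties of $\ha$ established in the first paragraph. The main obstacle I anticipate is the careful verification that $f$ is an isomorphism over $\hilb^\sharp$ rather than merely an \'etale cover of the same degree; this amounts to matching the $W$-action on $\pp$ coming from $\xc$ with the one coming from the second projection $\hilb\times_{\tt/W}\tt\to\tt$, which should follow directly from the construction of $\si$ in diagram \eqref{ha}.
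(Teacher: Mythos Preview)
Your proposal is correct and follows the same overall strategy as the paper: show that $\ha$ is normal and that $\eta\times\si$ is finite and birational onto $[\hilb\times_{\tt/W}\tt]_\red$, then invoke the universal property of normalization.

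The one genuine difference is how normality of $\ha$ is obtained. The paper argues via Serre's criterion: Cohen--Macaulayness comes from Corollary~\ref{P}, and smoothness in codimension~$1$ is deduced from Lemma~\ref{irr} (the set $\zz\sminus\zz^{rr}$ has codimension~$\geq 2$), pulled up through the smooth morphism $\delta$. Your route instead observes that $\xc$ is normal (open in $\xx_\norm$), that $\wt\delta$ is smooth, and that a geometric quotient of a normal variety by a free action of a reductive group is normal. This bypasses the codimension estimate entirely and is arguably cleaner. Your explicit Gorenstein argument via the nondegenerate trace $\pp\to\pp^{\sign}$ is also correct; the paper establishes this just before stating the proposition rather than inside the proof. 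For birationality the paper is a bit more direct, simply noting that $\eta\times\si$ restricts to an isomorphism over $\hilb\times_{\tt/W}\ttr$, whereas you match two \'etale $W$-covers; the concern you flag about matching the $W$-actions is resolved exactly as you say, since $\si$ is $W$-equivariant by construction and a $W$-equivariant map of $W$-torsors over the same base is an isomorphism. One small point worth making explicit in your irreducibility step: since $\tt\to\tt/W$ is finite flat, every irreducible component of $\hilb\times_{\tt/W}\tt$ dominates $\hilb$, so once you know the restriction to $\hilb^\sharp$ is irreducible, irreducibility of the whole fiber product follows.
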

\begin{proof} It is clear 
that $\wt\rho\ccirc h(\wt\delta\inv(\xr))$  is a Zariski
open subset of $\ha$.
Lemma \ref{xxbasic}(ii) implies
readily that the map $\eta\times\si$ restricts to
an isomorphism $\wt\rho\ccirc h(\wt\delta\inv(\xr))$
$\iso\hilb\times_{\tt/W}\ttr$;
in particular, $\eta\times\si$ is a birational isomorphism.
The image of the map $\eta\times\si$
 is contained
in $[\hilb\times_{\tt/W}\tt]_\red$ since the scheme $\ha$ is reduced.

Further, 
Corollary \ref{P} and  Lemma \ref{irr} imply
that the scheme $\ha$ is  Cohen-Macaulay 
and smooth in codimension 1.
We conclude that $\ha$  is a normal scheme which is birational
and finite over $[\hilb\times_{\tt/W}\tt]_\red$.
This yields the isomorphism of the proposition.
\end{proof}

As a consequence of Haiman's work, since $\pp=\eta_*\oo_{\ha}$
we deduce
\begin{cor}\label{procesi} The vector bundle
$\pp$ on $\hilb$ is isomorphic to the
{\em Procesi bundle}, cf. \cite{Ha1}.
\qed
\end{cor}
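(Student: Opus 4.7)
The plan is to deduce Corollary \ref{procesi} as an immediate consequence of Proposition \ref{haim} combined with Haiman's normality result for the isospectral Hilbert scheme. The Procesi bundle is \emph{defined} by Haiman as $\eta_*\oo_{\wt\hilb^n(\C^2)}$, where $\wt\hilb^n(\C^2)$ is the (reduced) isospectral Hilbert scheme and $\eta$ is the natural finite flat morphism to $\hilb^n(\C^2)$. So it suffices to identify $\ha$ with $\wt\hilb^n(\C^2)$ as a scheme over $\hilb$.

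The key input is Haiman's theorem \cite[Proposition 3.8.4]{Ha1}, which asserts that the reduced fiber product $\wt\hilb^n(\C^2) = [\hilb\times_{\tt/W}\tt]_\red$ is already a \emph{normal} (indeed Cohen-Macaulay and Gorenstein) scheme. Consequently the normalization map is an isomorphism, i.e.\ $[\hilb\times_{\tt/W}\tt]_\norm = \wt\hilb^n(\C^2)$. Proposition \ref{haim} provides a canonical isomorphism $\ha \iso [\hilb\times_{\tt/W}\tt]_\norm$ of schemes over $\hilb$, so combining the two yields a canonical isomorphism $\ha \cong \wt\hilb^n(\C^2)$ over $\hilb$.

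Applying $\eta_*$ to the structure sheaves on both sides, and using the identification $\pp = \eta_*\oo_\ha$ established in \S\ref{hilb}, we obtain a canonical isomorphism $\pp \cong \eta_*\oo_{\wt\hilb^n(\C^2)}$, which is exactly the Procesi bundle by definition. This isomorphism automatically respects the $W = S_n$-action and the $\oo_\hilb$-algebra structure, since both are inherited functorially from the identification of schemes over $\hilb$. There is really no obstacle here: all the work has been done in proving Proposition \ref{haim} (which in turn rests on Theorem \ref{main_thm}) and in invoking Haiman's normality theorem. The only point worth emphasizing is that our argument does \emph{not} give an independent proof of normality of $\wt\hilb^n(\C^2)$, so the identification with the Procesi bundle is necessarily obtained \emph{a posteriori}, as noted in \S\ref{mhai}.
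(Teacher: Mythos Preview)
Your proof is correct and follows essentially the same approach as the paper: the paper notes ``As a consequence of Haiman's work, since $\pp=\eta_*\oo_{\ha}$ we deduce'' the corollary, which is exactly your argument---combine the identification $\pp=\eta_*\oo_\ha$ from \S\ref{hilb}, the isomorphism $\ha\iso[\hilb\times_{\tt/W}\tt]_\norm$ of Proposition \ref{haim}, and Haiman's normality theorem for the isospectral Hilbert scheme.
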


\begin{rem}\label{mmap}
For any $(x,y)\in \zz$, one has a surjective evaluation
homomorphism $\C[z_1,z_2]\onto\C\langle x,y\rangle,
\ P\mto P(x,y)$. The kernel of
this homomorphism is  an ideal $I_{x,y}\sset\C[z_1,z_2]$.
If $(x,y)\in\zz^r$ then
we have $\C[z_1,z_2]/I_{x,y}=\C\langle x,y\rangle=\g_{x,y},$
 by Lemma \ref{cyc}(i). Hence, $I_{x,y}$ has codimension $n$ in
 $\C[z_1,z_2]$. Therefore, the assignment $(x,y)\mto I_{x,y}$
gives a well defined morphism $\varrho:\ \zz^r\onto\hilb$
such that one has $(\varrho|_{_{\zz^\circ}})\ccirc \delta=\rho$.
It follows that $\varrho^*\pp$, the pull back of the
sheaf $\pp$, is a $G\times W$-equivariant locally free
sheaf of $\oo_{\zz^r}$-algebras. Furthermore, we have canonical
isomorphisms $\delta^*(\varrho^*\pp)=
\rho^*\pp=\delta^*\rr$ of
$G\times W\times{\mathcal G}_{\zz^\circ}$-equivariant
sheaves of $\oo_{\ss}$-algebras.
By equivariant descent, this yields
a canonical isomorphism 
$(\varrho^*\pp)|_{\zz^\circ}\cong \rr|_{\zz^\circ}$.
 Since $\zz^r\sminus\zz^\circ$ is a codimension
$\geq2$ subset in the smooth variety $\zz^r$
and the sheaves $\varrho^*\pp$ and $\rr|_{\zz^r}$ are both
locally free, we deduce
that the above isomorphism extends canonically
to an isomorphism $\varrho^*\pp\cong\rr|_{\zz^r}$
of $G\times W$-equivariant  locally
free sheaves  of $\oo_{\zz^r}$-algebras.
In other words, there is a $G\times W$-equivariant isomorphism
$\zz^r\times_{\hilb}\ha\cong p\inv_\norm(\zz^r)$
of schemes over ~$\zz^r$.
\end{rem}
\subsection{Proof of Theorem \ref{rres_thm}}\label{VT}
Given a $G$-variety $X$
and  a rational representation $E$ of $G$
we put $\mmp_G(X,E)=\big(\C[X]\o E\big)^G$,
the vector space of $G$-equivariant polynomial maps $X\to E$.
If $X\to Y$ is a $G$-torsor, we
let $\L_Y(E)$ be the sheaf of sections of
an associated vector bundle $X\times_G E \to Y$
on $Y$.
By definition,  one has $\Ga(Y, \L_Y(E))=\mmp_G(X,E).$

We may apply the above to the geometric quotient
morphism $\rho: \ss\to\hilb$. The tautological
bundle on the Hilbert scheme is defined to be 
$\VV:=\L_{\hilb}(V)$, a  rank
$n$ vector bundle  associated
with the vector representation of $G=GL(V)$ in $V$.

\begin{rem}\label{vt0}
According to Lemma \ref{cyc},
for any $(x,y,v)\in \ss$, one has a canonical
vector space isomorphism $\g_{x,y}\to V,\
a\mto a(v).$ This gives a canonical isomorphism
$\delta^*(\fz_{_{\zz^\circ}})\iso \rho^*\VV$, of
$G$-equivariant sheaves on $\ss$.
\erem


For any $m\geq0$, one has 
 a vector bundle $\L_\ha(V^{\o m})$
on $\ha$
  associated with the $G$-repre-sentation  $V^{\o m}$ and
with the $G$-torsor $\wt\rho\ccirc h:\
\xx^\circ\times_{\zz^\circ}\ss\to\ha$.
Clearly, we have $\L_\ha(V^{\o m})=
\eta^*(\L_{\hilb}(V^{\o m}))=\eta^*\VV^{\o m}$
where  $\eta: \ha\to\hilb$ is the map from \eqref{square}.
Hence,  the projection formula yields
$$
\mmp_G(\xx^\circ\times_{\zz^\circ}\ssc,\ V^{\o m})=
\Ga(\ha,\ \L_\ha(V^{\o m}))=
\Ga(\hilb,\ \eta_*\eta^*\VV^{\o m})
=\Ga(\hilb,\ \pp\o\VV^{\o m}),
$$
where we have used that 
$\pp=\eta_*\oo_\ha$. Let $\Phi_1$ be the composite of the above isomorphisms.

Write $u\mto (x_u,y_u),\ x_u,y_u\in\g$ for the projection
$\xx_\norm\to\zz_\red$. Then,
by definition, we have ${\xx^\circ\times_{\zz^\circ}\ssc}$
${=\{(u,v),\ u\in \xx_\norm,\ v\in V\mid 
\C[x_u, y_u]v=V\}.}$
Thus, there is
a natural open imbedding
 $\alpha: \xx^\circ\times_{\zz^\circ}\ssc\into \xx_\norm\times V$
and one has the corresponding restriction morphism
\beq{rres}
\alpha^*:\
\mmp_G(\xx_\norm\times V,\  V^{\o m})\too
\mmp_G(\xx^\circ\times_{\zz^\circ}\ssc,\  V^{\o m}).
\eeq

Let $\C^\times\sset GL_n$
be the group of scalar matrices.
The scalar matrix
$z\cdot\Id\in GL_n,\ z\in\C^\times$,
acts trivially on $\zz_\red$ hence, for any $\ell\geq0$, the
element $z\cdot\Id$
 acts on the subspace
$\C[\zz_\red]\o\C^\ell[V]\sset
\C[\zz_\red\times V]$ via
multiplication by $z^{-\ell}$.
It follows that  the natural inclusion $\C^m[V]\into \C[V]$ induces
an isomorphism 
$\C^m[V]\o V^{\o m}\iso\big(\C[V]\o V^{\o m}\big)^{\C^\times}\!\!.$
The group $G=GL_n$ being generated by the subgroups $\C^\times$ and $SL_n$,
we obtain a chain of  isomorphisms
$$
\big(\C[\xx_\norm]\o \C^m[V]\o V^{\o m}\big)^{SL_n}\ \iso\ 
\mmp_G(\xx_\norm,\ \C[V]\o V^{\o m})\ \iso\ 
\mmp_G(\xx_\norm\times V,\  V^{\o m}).
$$
Write $\Phi_2$ for the composite isomorphism.

The isomorphism of Theorem \ref{rres_thm}(i) is defined as 
the following composition
\begin{multline*}
\big(\C[\xx_\norm]\o \C^m[V]\o V^{\o m}\big)^{SL_n}\
{\stackrel{\Phi_2}{\underset{^\sim}\too}}\ 
\mmp_G(\xx_\norm\times V,\  V^{\o m})\\
\stackrel{\alpha^*}\too
\mmp_G(\xx^\circ\times_{\zz^\circ}\ssc,\  V^{\o m})\
{\underset{^\sim}{\stackrel{\Phi_1}\too}}\
\Ga(\hilb,\ \pp\o\VV^{\o m}).
\end{multline*}

All the above maps are clearly $W\times S_m$-equivariant
bigraded 
$\C[\tt]$-module
morphisms. 
We see that proving the theorem reduces to 
the following result

\begin{lem}\label{lem1} The restriction map $\alpha^*$ in 
\eqref{rres} is an isomorphism.
\end{lem}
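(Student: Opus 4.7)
The plan is to verify the two directions of the isomorphism separately. Injectivity is immediate: $U := \xc\times_{\zz^\circ}\ssc$ is Zariski dense in $\xx_\norm\times V$, since $\xc = p_\norm\inv(\zz^\circ)$ is open and dense in $\xx_\norm$ (as $p_\norm$ is finite and $\zz^\circ$ is dense in $\zz$ by Lemma \ref{cyc}) and, for each $(x,y)\in\zz^\circ$, the set of cyclic vectors is open and dense in $V$. Any $G$-equivariant polynomial map vanishing on the dense open $U$ therefore vanishes identically.

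For surjectivity, the plan is to reduce to a Hartogs-type extension argument driven by Theorem \ref{main_thm}. Since $\xx_\norm$ is Cohen--Macaulay and $V$ is smooth, $\xx_\norm\times V$ is Cohen--Macaulay, and $\oo_{\xx_\norm\times V}\otimes V^{\otimes m}$ is a free, hence maximal Cohen--Macaulay, sheaf on it. The local cohomology argument used in Lemma \ref{ei} then shows that its global sections are recovered from sections on the complement of any closed subset of codimension $\geq 2$. I decompose the complement as
$$
(\xx_\norm\times V)\setminus U\ =\ \bigl[(\xx_\norm\setminus\xc)\times V\bigr]\ \cup\ N,\qquad
N\ :=\ \{(u,v)\in\xc\times V\mid v\text{ is not cyclic for }(x_u,y_u)\}.
$$
The first piece has codimension equal to $\codim_\zz(\zz\setminus\zz^\circ)$ by finiteness of $p_\norm$; for $G=GL_n$ this is $\geq 2$, as the non-cyclic regular pairs stratify according to the possible proper direct-sum decompositions of $V$ as a $\C\langle x,y\rangle$-module, and each such stratum has codimension $\geq 2$ in $\zz$.

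The main obstacle is the second piece $N$: over a generic $u\in\xc$ with $(x_u,y_u)$ regular semisimple of distinct eigenvalues, the non-cyclic locus in $V$ is the union of $n$ coordinate hyperplanes and has codimension only one, so the Cohen--Macaulay extension argument does not apply to $\xc\times V$ directly. To resolve this, I would invoke Remark \ref{mmap}, which identifies $\rr|_{\zz^r}$ with $\varrho^*\pp$ for the natural morphism $\varrho:\zz^r\to\hilb$, $(x,y)\mapsto I_{x,y}$, together with Remark \ref{vt0} which identifies $\varrho^*\VV$ with $\fz_{\zz^r}$. Combined with the already-established isomorphism $\Phi_1$, this converts the datum of a $G$-equivariant polynomial map on $U$ into a global section of the locally free sheaf $\pp\otimes\VV^{\otimes m}$ on the smooth variety $\hilb$. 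Pulling such a section back via $\varrho\ccirc p_\norm$ produces a canonical $G$-equivariant polynomial map on all of $\xc\times V$, providing the desired extension across $N$; the residual extension from $\xc\times V$ to $\xx_\norm\times V$ is then the pure codimension-$\geq 2$ case already handled in the previous paragraph.
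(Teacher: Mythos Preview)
Your injectivity argument is correct and matches the paper's.

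For surjectivity, your route is genuinely different from the paper's. The paper works with the open cover $\zz^{rr}=\zz_1\cup\zz_2\sset\zz^\circ$; after using codimension-$\geq 2$ restriction isomorphisms (via Lemma \ref{irr}) to replace $\xx_\norm$ and $\xx^\circ$ by $\xx^{rr}$, it reduces to showing surjectivity of the analogous map $\alpha_i^*$ on each piece $\zz_i$ (Lemma \ref{lem2}). That lemma is then proved by a direct torus-equivariance computation: over a regular semisimple point one expands $f$ in a $T$-weight basis of $V^{\otimes m}$, and $T$-equivariance forces each coefficient to be a monomial $v_1^{m_1}\cdots v_n^{m_n}$ with all $m_i\geq 0$, hence regular across the non-cyclic divisor.

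Your proposal replaces this hands-on step with the Hilbert-scheme machinery of \S\ref{psec}--\S\ref{hilb}. The idea is sound, but there is a real gap at the sentence ``Pulling such a section back via $\varrho\circ p_\norm$ produces a canonical $G$-equivariant polynomial map on all of $\xc\times V$.'' Pulling back a section of $\pp\otimes\VV^{\otimes m}$ gives a section of a locally free sheaf on $p_\norm^{-1}(\zz^r)$, not a map on $p_\norm^{-1}(\zz^r)\times V$; you must still say how to pass from one to the other. The missing step is: after writing $\Gamma(\hilb,\pp\otimes\VV^{\otimes m})=\Gamma(\ha,\eta^*\VV^{\otimes m})$ and pulling back along the map $p_\norm^{-1}(\zz^r)\to\ha$ of Remark \ref{mmap}, your identification $\varrho^*\VV\cong\fz_{\zz^r}$ together with the tautological inclusion $\fz_{\zz^r}\hookrightarrow\End(V)\otimes\oo_{\zz^r}$ turns the pulled-back section into a regular map $\tilde s\colon p_\norm^{-1}(\zz^r)\to\End(V)^{\otimes m}$; one then sets $F(u,v):=\tilde s(u)\cdot v^{\otimes m}$, which is visibly polynomial of degree $m$ in $v$. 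Checking $F|_U=f$ is a direct unwinding of $\Phi_1$ and Remark \ref{vt0}, and your codimension-$\geq 2$ extension then finishes. (Incidentally, the cleanest justification that $\zz\sminus\zz^\circ$ has codimension $\geq 2$ is exactly the paper's: $\zz^{rr}\sset\zz^\circ$ combined with Lemma \ref{irr}, rather than the direct-sum stratification you sketch.)
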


\begin{proof} The  map $\alpha^*$ is injective
since the set $\xx^\circ\times_{\zz^\circ}\ssc$
is Zariski dense in $\xx_\norm\times V$.

To prove surjectivity of $\alpha^*$ 
recall that a linear operator $x: V\to V$ has a cyclic
vector if and only if $x\in\g^r$.
Also,  for any  $(x_1,x_2)\in \zz$ and  $v\in V$,
either of the two
equations, ${\C[x_1]v=V}$ or
$\C[x_2]v=V$, implies ${\C[x_1,x_2]v=V}$.
It follows that we have $\zz^{rr}=\zz_1\cup\zz_2\sset\zz^\circ$
where  $\zz_i,\ i=1,2,$ are the open sets
 introduced in  Definition \ref{U}. 

Next, let $\ss_i,\ i=1,2,$
be the open subset of $\ss$ formed by
the triples  $(x_1,x_2, v)\in\zz\times V$
 such that we have $\C[x_i]v=V$.
Restricting the map $\delta: \ss\to\zz^\circ,\
(x_1,x_2,v)\mto (x_1,x_2)$
gives well defined maps
$\ss_i\to\zz_i$.
This way, we get
 a commutative diagram of open imbeddings
\beq{xxv}
\xymatrix{
\xx_i\times_{\zz_i}\ss_i\
\ar@{^{(}->}[r]
\ar@{_{(}->}[d]^<>(0.5){\alpha_i}&
\ \xx^{rr}\times_{\zz^{rr}}\ss\ 
\ar@{^{(}->}[rr]^<>(0.5){\beta_\ss}
\ar@{_{(}->}[d]^<>(0.5){\alpha^{rr}}&&
\ \xx^\circ\times_{\zz^\circ}\ss\ 
\ar@{_{(}->}[d]^<>(0.5){\alpha}\\
(\xx_i\times V)
\ \ar@{^{(}->}[r]&
\ \xx^{rr}\times V\ 
\ar@{^{(}->}[rr]^<>(0.5){\beta_V}&&
\ \xx_\norm\times V
}
\eeq

Recall that the  morphism  $\delta: \ss\to\zz^\circ$ is smooth
and the set $\xx_\norm\sminus\xx^{rr}$
has codimension $\geq 2$ in $\xx_\norm$,
by Lemma \ref{irr}. 
Hence,
the complement  of the set $\xx^{rr}\times_{\zz^{rr}}\ss$ has codimension
 $\geq 2$ in $\xx^\circ\times_{\zz^\circ}\ss$,
resp. the complement  of the set $\xx^{rr}\times V$ has codimension
 $\geq 2$ in $\xx_\norm\times V$.
The varieties $\xx^\circ\times_{\zz^\circ}\ss$ and
$\xx_\norm\times V$ are normal.
Therefore, restriction of functions yields
the following bijections:
\beq{bij1}
(\beta_\ss)^*:\
\C[\xx^\circ\times_{\zz^\circ}\ss]
\iso
\C[\xx^{rr}\times_{\zz^{rr}}\ss],
\en\text{resp.}\en
(\beta_V)^*:\
\C[\xx_\norm\times V]\iso
\C[\xx^{rr}\times V].
\eeq

Similarly,  the inclusion
$\alpha_i$ in diagram \eqref{xxv}
induces a   restriction map
\beq{jres}
\alpha_i^*:\ 
\mmp_G(\xx_i\times V,\ V^{\o m})
\too
\mmp_G(\xx_i\times_{\zz_i}\ss_i,\ V^{\o m}), \quad i=1,2.
\eeq
We will show in Lemma
\ref{lem2} below that this map  is surjective.

To complete the proof of  Lemma \ref{lem1},
let
$f:\ \xx^{rr}\times_{\zz^{rr}}\ss\to V^{\o m}$
be a  $G$-equivariant map.
Then, thanks to the surjectivity of \eqref{jres},
there exist maps $f_i\in \mmp_G(\xx_i\times V,\ V^{\o m}),\
i=1,2,$
such that one has
$f|_{\xx_i\times_{\zz_i}\ss_i}=
f_i|_{\xx_i\times_{\zz_i}\ss_i}$.
It follows that the restrictions of the maps $f_1$ and $f_2$
to the set $(\xx_1\times V)\cap(\xx_2\times V)$
agree. Therefore, the map $f$ can be
extended to a regular
map $\xx^{rr}\times V=
(\xx_1\times V)\cup(\xx_2\times V)\to
V^{\o m}$.
Thus, we have proved (modulo
Lemma \ref{lem2}) the surjectivity of 
the restriction map  $(\alpha^{rr})^*$
induced by the vertical imbedding $\alpha^{rr}$
in the middle of diagram \eqref{xxv}.
This, combined with isomorphisms \eqref{bij1}, implies  the surjectivity of 
the map $\alpha^*$ and Lemma
\ref{lem1} follows.
\end{proof}

\subsection{}\label{alpha} To complete the proof of the theorem,
it remains to prove the following

\begin{lem}\label{lem2}
The map $\alpha_i^*,\ i=1,2,$ in \eqref{jres}  is
surjective.
\end{lem}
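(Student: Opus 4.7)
By the evident symmetry exchanging the two factors of $\gg=\g\times\g$, it suffices to treat $i=1$. My approach is to construct an explicit right inverse $\beta$ to $\alpha_1^*$ by exploiting the tautological identification of the universal stabilizer sheaf with the defining representation on cyclic vectors. For $(x_1,x_2)\in\zz_1$, the regularity of $x_1$ together with Lemma~\ref{cyc}(i) gives $\g_{x_1,x_2}=\g_{x_1}=\C\langle x_1\rangle$, and for any cyclic vector $v$ for $x_1$ the evaluation $a\mapsto av$ is a linear isomorphism $\g_{x_1,x_2}\iso V$. Globalised over $\ss_1$ exactly as in Remark~\ref{vt0}, this becomes a canonical $G$-equivariant $\oo_{\ss_1}$-module isomorphism $\delta^*(\fz|_{\zz_1})\iso V\otimes\oo_{\ss_1}$; taking the $m$-th tensor power and descending along the torsor $\xx_1\times_{\zz_1}\ss_1\to\xx_1$ then yields, via the abelianness of ${\mathcal G}_{\zz_1}$ and a torsor analogue of Lemma~\ref{tors}(i), a canonical $\C[\tt]$-linear identification
\[
\mmp_G(\xx_1\times_{\zz_1}\ss_1,\ V^{\otimes m})\ \iso\ \Ga\bigl(\xx_1,\ (p^*\fz|_{\zz_1})^{\otimes m}\bigr)^G,
\]
so that the target of $\alpha_1^*$ is rewritten as a space of $G$-invariant sections of an explicit locally free sheaf on $\xx_1$.

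For the splitting I use the tautological inclusion $\fz|_{\zz_1}\hookrightarrow \g\otimes\oo_{\zz_1}$ together with the natural action $\g\otimes V\to V$: their $m$-fold tensor product defines a $G$-equivariant $\oo_{\xx_1}$-linear evaluation morphism
\[
(p^*\fz|_{\zz_1})^{\otimes m}\otimes V\ \too\ V^{\otimes m},\qquad (a_1\otimes\cdots\otimes a_m)\otimes v\ \mapsto\ (a_1v)\otimes\cdots\otimes (a_mv),
\]
which is manifestly polynomial in $v$ on the whole of $\xx_1\times V$. Passage to $G$-invariant global sections provides the candidate map $\beta\colon \Ga(\xx_1,(p^*\fz|_{\zz_1})^{\otimes m})^G\to \mmp_G(\xx_1\times V,V^{\otimes m})$, and I intend to verify directly from the construction that $\alpha_1^*\circ\beta$ coincides with the inverse of the canonical identification displayed above, thereby exhibiting $\beta$ as a section of $\alpha_1^*$.

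The main technical obstacle is the pointwise check that $\alpha_1^*\circ\beta$ matches the inverse of the canonical identification. By $G$-equivariance this reduces to verifying that, for each $(x_1,x_2)\in\zz_1$ and each cyclic vector $v$ for $x_1$, the linear map $\g_{x_1,x_2}^{\otimes m}\to V^{\otimes m}$, $a_1\otimes\cdots\otimes a_m\mapsto (a_1v)\otimes\cdots\otimes (a_mv)$, is a bijection; this is the $m$-fold tensor power of the cyclic-vector isomorphism $\g_{x_1,x_2}\iso V$ and is therefore immediate. The remaining subtlety is that the descent producing the canonical identification globalises correctly over the torsor $\xx_1\times_{\zz_1}\ss_1\to\xx_1$, which rests on the commutativity of $\g_{x_1,x_2}$, ensuring that ${\mathcal G}_{\zz_1}$ acts trivially on $\fz|_{\zz_1}$ through its adjoint action and so all equivariance data assemble consistently.
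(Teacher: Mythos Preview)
Your argument is correct and genuinely different from the paper's. The paper proceeds by a local divisor analysis: it identifies $\xx_1\times_{\zz_1}\ss_1$ with $N_{\yr}\times_{\yr}Y$ where $Y\subset\yr\times V$ is the cyclic locus, notes that the complement is an irreducible divisor, and then shows that any $G$-equivariant map has no poles along this divisor by reducing (via $G$-equivariance) to a point with $x\in\t\cap\g^{rs}$ and expanding in a $T$-weight basis of $V^{\otimes m}$; nonnegativity of the weights of $V^{\otimes m}$ forces the Laurent expansion to be polynomial.

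Your route is more structural: you globalise the fiberwise isomorphism $\g_{x_1,x_2}=\g_{x_1}\iso V$, $a\mapsto av$, to get $\widetilde\delta^{\,*}(p^*\fz|_{\zz_1})\iso V\otimes\oo$, and then use commutativity of $\g_{x_1}$ to see that the induced identification $\mmp_G(\xx_1\times_{\zz_1}\ss_1,\,V^{\otimes m})\cong\Gamma(\xx_1,(p^*\fz|_{\zz_1})^{\otimes m})^G$ descends along the $\mathcal G_{\zz_1}$-torsor. The splitting $\beta$ then comes for free from the polynomial evaluation $(a_1,\ldots,a_m,v)\mapsto a_1v\otimes\cdots\otimes a_mv$, which is visibly regular on all of $\xx_1\times V$. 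This avoids the weight computation entirely and makes the role of the cyclic-vector trivialisation transparent; the paper's argument, by contrast, is more elementary in that it does not invoke torsor descent, trading conceptual clarity for a direct coordinate check. One notational quibble: your displayed ``evaluation morphism'' is not $\oo_{\xx_1}$-linear in the $V$-tensorand (it is degree $m$ in $v$); what you mean, and what you then use, is the induced map $\Gamma(\xx_1,(p^*\fz|_{\zz_1})^{\otimes m})^G\to\mmp_G(\xx_1\times V,\,V^{\otimes m})$, which is well defined precisely because the evaluation is polynomial in $v$.
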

\begin{proof}
We may restrict ourselves
to the case $i=1$,
the case  $i=2$ being similar.

Let  $\yy\sset\g\times\t$ be the closed subvariety
considered in \S\ref{ysec}, and let
$\yr=\{(x,t)\in \yy\mid x\in \g^r\}.$
Write
$q:\ \xx\to\yy,\ (x,y,t_1,t_2)\mto (x,t_1)$
for the projection. According to Lemma \ref{conormal}
we have that $q\inv(\yr)=N_\yr$,
 the total space of the conormal bundle on $\yr$.
Thus, we obtain
$\dis\xx_1=\big\{(x,y,t_1,t_2)\in
\xx,\  x\in\g^r\big\}=q\inv(\yr)=N_\yr$.

It will be convenient to introduce the following set
$$Y:=\{(x,t,v)\in \g\times\t\times V\mid
(x,t)\in \yy\en\&\en \C[x]v=V\}.
$$

Note that, for any $(x,t,v)\in Y$, the element $x$
is automatically regular,
as has been already observed earlier.
Therefore,   we have
$Y\sset \yr\times V$ and the projection $(x,t,v)\mto (x,t)$
gives a smooth and surjective
 morphism $Y\to \yr$.
Unraveling the definitions 
we obtain 
\begin{multline*} 
\xx_1\times_{\zz_1} \ss_1=\{(x_1,x_2,t_1,t_2,v)\in
\xx\times V\mid
 \C[x_1]v=V\}
=
q\inv(\yr)\times_\yr Y=N_\yr\times_\yr Y.
\end{multline*}

Using these identifications, we
see that the map $\alpha_1$ from \eqref{xxv}
fits into a cartesian square
\beq{big}
\xymatrix{
 N_\yr\times_\yr Y\ \ar[d]^<>(0.5){pr_Y}
\ar@{^{(}->}[rr]^<>(0.5){\alpha_1}
&&\ 
N_\yr\times V\ \ar[d]^<>(0.5){q_V:=q\times\Id_V}\\
Y\ 
\ar@{^{(}->}[rr]&&
\ \yr\times V
}
\eeq

Let $\ddd:=(\yr\times V)\sminus Y$.
This is an irreducible divisor in
$\yr\times V$, the principal divisor associated with the
function $(x,t,v)\mto \langle{\operatorname{vol}},
\ v\wedge x(v)\wedge
x^2(v)\wedge\ldots\wedge x^{n-1}(v)\rangle$
where ${\operatorname{vol}}\in\wedge^n V^*$ is a fixed nonzero
element.  
Using the cartesian  square in \eqref{big},
we see that $(N_\yr\times V)\sminus 
(N_\yr\times_\yr Y)=(q_V)\inv(\ddd)$   is an irreducible divisor
 in 
$N_\yr\times V$.
Now,
view $\ddd$ as a subset of $\g\times\t\times V$.
Then, $\ddd^{rs}:=\ddd\cap (\g^{rs}\times\t\times V)$ is an open  dense subset of
$\ddd$ hence $(q_V)\inv(\ddd^{rs})$ is an open  dense subset of
$(q_V)\inv(\ddd)$.

Let $T$
be the maximal torus of diagonal matrices in $GL_n$.
Thus, the
 set $\t\cap\g^r$ may be identified
with the set of matrices
$t={\operatorname{diag}}(z_1,\ldots,z_n)\in \C^n$ 
such that $z_i\neq z_j\ \forall i\neq j$.
Let $x=t={\operatorname{diag}}(z_1,\ldots,z_n)\in\t\cap\g^r$.
For  $v=(v_1,\ldots,v_n)\in \C^n=V$,
 the triple $(x,t,v)$ is contained in $\ddd^{rs}$ if and only if 
$v$ is
not a cyclic vector for  the linear operator
$x$ which holds if and only if there exists an
$i\in[1,n]$ such that $v_i=0$.
Let $V_i\sset V$ denote the hyperplane formed by
the elements with the vanishing $i$-th coordinate.

To complete the proof of the lemma, 
let $f\in\mmp_G(\xx^\circ\times_{\zz^\circ}\ss,\
V^{\o m})$.
We must show that $(\alpha_1)^*(f)$, viewed as a map $N_\yr\times_\yr Y
\to V^{\o m}$, has no singularities at $(N_\yr\times V)\sminus 
(N_\yr\times_\yr Y),$  a divisor in a smooth variety.
The set 
$(q_V)\inv(\ddd^{rs})$ is an open dense subset 
of that  divisor.
Hence, by $G$-equivariance, it suffices to prove
that, for  $x=t\in\t\cap \g^{rs}$ and a fixed element
$(x,y,t,t')\in q\inv(x,t)=N_\yr$,
the rational map $f_V: V\to V^{\o m}$
given by the assignment
$v\mto f(x,y,t,t',v)$
has no poles at  the divisor ~$\cup_i\ V_i\ \sset\ V$.

It will be convenient to
choose a $T$-weight basis $\{u_\gamma\}$ of the vector space $V^{\o m}$.
Thus, for each $\gamma$ and any 
diagonal matrix ${\operatorname{diag}}(z_1,\ldots,z_n)\in T$,
we have
${\operatorname{diag}}(z_1,\ldots,z_n)u_\gamma=
z_1^{m(\gamma,1)}\cdots z_n^{m(\gamma,n)}\cdot u_\gamma$
where $m(\gamma,i)\in\Z_{\geq0}$.
Expanding the function $f_V$ in the basis $\{u_\gamma\}$ one can write
$f_V(v)={\sum_\gamma f^\gamma_V(v)\cdot u_\gamma}$
where $f^\gamma_V$ are 
Laurent polynomials of the form
$$
v=(v_1,\ldots,v_n)\ \mto\ f^\gamma_V(v)=
\sum_{k_1,\ldots,k_n\in\Z}\
a^\gamma_{k_1,\ldots,k_n}
\cdot v_1^{k_1}\cdots v_n^{k_n},
\quad a^\gamma_{k_1,\ldots,k_n}\in\C.$$

Now, the map $f$ hence also the map $(\alpha_1)^*(f)$ is $G$-equivariant.
Since $(x,y)\in\tt$ we deduce that  $f_V$ is a $T$-equivariant map.
Thus, for each $\gamma$ and all  ${\operatorname{diag}}(z_1,\ldots,z_n)\in T$,
 we must have
\begin{multline*}
\sum_{k_1,\ldots,k_n\in\Z}\
a^\gamma_{k_1,\ldots,k_n}
\cdot (z_1v_1)^{k_1}\cdots (z_nv_n)^{k_n}\cdot u_\gamma
\ =\
f^\gamma_V[{\operatorname{diag}}(z_1,\ldots,z_n)v]
\\
=\ {\operatorname{diag}}(z_1,\ldots,z_n) [f^\gamma_V(v)]\
=\ \sum_{k_1,\ldots,k_n\in\Z}\
a^\gamma_{k_1,\ldots,k_n}
\cdot v_1^{k_1}\cdots v_n^{k_n}\cdot
[z_1^{m(\gamma,1)}\cdots z_n^{m(\gamma,n)}\cdot 
u_\gamma].
\end{multline*}

The above equation yields
$k_i=m(\gamma,i)$ whenever $a^\gamma_{k_1,\ldots,k_n}\neq 0$.
Therefore, 
the function $f_V$ takes the following simplified form
\beq{fV} f_V(v_1,\ldots,v_n)=
\sum_\gamma\ a^\gamma\cdot  v_1^{m(\gamma,1)}\cdots
v_n^{m(\gamma,n)}\cdot u_\gamma,
\quad a^\gamma\in\C.
\eeq

Since
the weights of the $T$-action in $V^{\o m}$ 
are nonnegative, i.e.,  $m(\gamma, i)\geq 0$,
the right hand side 
of \eqref{fV}  has no poles at  the divisor ~$\cup_i\ V_i$
 and the lemma follows.
\end{proof}

\subsection{Proof of Proposition \ref{aa}}\label{gg}
Let $v_o=(1,1,\ldots,1)\in \C^n=V$.
Restriction of polynomial functions
via the imbedding $\zeta:\
\tt=\tt\times\{v_o\}\into \zz_\red\times V$ gives
an algebra homomorphism
\beq{resa}
\zeta^*:\ \C[\zz_\red]\o \C[V]=\C[\zz_\red\times V]
\too \C[\tt],\quad
f\mto f|_{\tt\times\{v_o\}}.
\eeq

Given a $G$-variety $X$ and an integer $k\geq 0$,
let
$\C[X]^{\det^k}=\{f\in\C[X]\mid
g^*(f)=(\det g)^k\cdot f,$ $\forall g\in G\}$
be the subspace of $\det^k$-semi-invariants
of the group $G=GL_n$.
Looking at the action of the group $\C^\times\sset GL_n$,
of scalar matrices, shows that one has $\C[\zz_\red\times V]^{\det^k}=
\big(\C[\zz_\red]\o\C^{kn}[V]\big)^{SL_n}.$

Following \cite{GG}, we 
introduce an affine variety
\begin{equation}\label{ss}
 \bar{\ss} := \{ (x,y,v,v^*)\in \g\times\g\times
V\times V^* \mid [x,y]+v\o v^*=0\}.
\end{equation}

The assignment
$(x,y,v)\mto (x,y,v,0)$ gives a $G$-equivariant
closed imbedding $\iota:\
\zz_\red\times V\into \bar{\ss}$. Pull-back of functions via the
imbedding $\zeta$, resp.
$\iota$, yields linear maps
\beq{ab}
\xymatrix{
\C[\bar{\ss}]^{\det^k}\ \ar@{->>}[r]^<>(0.5){\iota^*}&
\ 
\ \C[\zz_\red\times V]^{\det^k}=
\big(\C[\zz_\red]\o\C^{kn}[V]\big)^{SL_n}\
\ar[r]^<>(0.5){\zeta^*}&\ \C^{kn}[\tt],
}\quad k\geq 0.
\eeq

The composite map
in \eqref{ab} was considered in \cite{GG}.
According to Proposition A2 from 
\cite[Appendix]{GG}, the image of the
map $\zeta^*\ccirc\iota^*$ is equal to $A^k$ and,
moreover, this map yields an isomorphism
$\C[\bar{\ss}]^{\det^k}\iso A^k$. 
Note further that the  restriction map $\iota^*$ in \eqref{ab}  is surjective
since 
the group $G$ is reductive. It follows
that each of the two maps in \eqref{ab}
must be an isomorphism. This yields the 
statement of the proposition.
\qed

\section{Some applications}\label{some}
\subsection{}\label{red_sec} For each $\Ad G$-orbit $O\sset \g$,
the closure $\overline{N_O}$ of the total
space of the  conormal bundle on $O$
is a Lagrangian subvariety in $T^*\g$.
We define
\beq{zznil}
\zz^\nil:=\{(x,y)\in\gg\mid [x,y]=0,\ x\in\NN\}=
\cup_{O\sset\NN}\
\overline{N_O},\eeq
where the union on the right is taken
over the (finite) set of  nilpotent
$\Ad G$-orbits in $\g$.

Let  $\gamma:\ \g\to\g/\!/G=\t/W$ be  the adjoint quotient morphism.
We introduce the  following maps
\beq{nil}
\th_\zz:\ \zz_\red\to
\g/\!/G=\t/W,\en (x,y)\mto \gamma(y)
\quad\text{resp.}\quad
\th_\xx:\ \xx\to\t,\en (x,y,t_1,t_2)\mto t_2.
\eeq

The proposition below may be viewed as an analogue of the crucial 
flatness result
in Haiman's proof of the $n!$ theorem, see Proposition 3.8.1 and
Corollary 3.8.2 in  \cite{Ha1}.  Proposition \ref{flat} was
also obtained independently by I. Gordon \cite{Go} who
used it in his proof of positivity of 
the Kostka-Macdonald polynomials.

\begin{prop}\label{flat}  The 
composite  morphism
$\wt\th_\zz:\ \zz_\norm\to\zz_\red\stackrel{\th_\zz}\to \g/\!/G,$
resp. $\wt\th_\xx:\ \xx_\norm\to\xx\stackrel{\th_\xx}\to\t$
is  flat.
Furthermore, the scheme theoretic zero fiber
$\wt\th_\zz\inv(0)$,
resp. $\wt\th_\xx\inv(0)$,
is a Cohen-Macaulay scheme, not necessarily reduced in general.
\end{prop}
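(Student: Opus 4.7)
The plan is to deduce the flatness of both $\wt\th_\zz$ and $\wt\th_\xx$ from Hironaka's miracle flatness criterion and then read off Cohen--Macaulayness of the zero fibres as a formal consequence. Recall that $\zz_\norm$, resp.\ $\xx_\norm$, is Cohen--Macaulay of dimension $\dim\g+\rk$ by Corollary \ref{zzcm}, resp.\ Theorem \ref{main_thm}, while the targets $\g/\!/G\cong\t/W$ and $\t$ are smooth of dimension $\rk$. Miracle flatness then reduces the problem to showing that every nonempty fibre has dimension exactly $\dim\g$. Both morphisms are visibly surjective: the point $(0,s)\in\zz$ realises every $\gamma(s)\in\t/W$ in the image of $\wt\th_\zz$, and $(0,t_2,0,t_2)\in\xx$ (trivially isospectral) realises every $t_2\in\t$ in the image of $\wt\th_\xx$. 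Combined with irreducibility of the source and upper semicontinuity of fibre dimension, this already supplies the lower bound $\dim\geq\dim\g$ on each fibre.

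For the matching upper bound on $\wt\th_\zz$, I would exploit the stratification $\zz=\cup_\fl\zz_\fl$ introduced at the start of \S\ref{ss3}, indexed by $G$-conjugacy classes of standard Levi subalgebras. Since $p\colon\xx\to\zz$ is finite and maps $\xx_\fl$ onto $\zz_\fl$, the isomorphism $\xx_\fl\cong G\times_L(\NN(\fl)\times\tts_\fl)$ of Lemma \ref{codim} gives $\dim\zz_\fl=\dim\g+\dim\t_\fl$ and identifies $\th_\zz\circ p$ with the map $(g,(y_1,y_2),(t_1,t_2))\mapsto\vartheta(t_2)$. Fixing $c=\vartheta(t_2^{\circ})\in\t/W$ imposes a $0$-dimensional condition on $t_2$ (since $\vartheta|_{\t_\fl}$ has finite fibres), while $t_1\in\t_\fl$ and $(y_1,y_2)\in\NN(\fl)$ remain free, so
\[
\dim\bigl(\wt\th_\zz\inv(c)\cap\zz_\fl\bigr)\,=\,(\dim\g-\dim\fl)+\dim\NN(\fl)+\dim\t_\fl\,=\,\dim\g,
\]
using Premet's equality $\dim\NN(\fl)=\dim[\fl,\fl]$ cited in \S\ref{ss2}. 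Taking the maximum over the finitely many strata yields $\dim\wt\th_\zz\inv(c)\leq\dim\g$, hence equality.

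The upper bound for $\wt\th_\xx$ is then a short consequence of the finiteness of $p$: by the defining isospectral condition one has $\gamma(y)=\vartheta(t_2)$ for every $(x,y,t_1,t_2)\in\xx$, so $p$ carries $\wt\th_\xx\inv(t_2^{\circ})$ into $\wt\th_\zz\inv(\vartheta(t_2^{\circ}))$, and finiteness of $p$ forces $\dim\wt\th_\xx\inv(t_2^{\circ})\leq\dim\g$. Once flatness of both morphisms has been established, the Cohen--Macaulayness of each scheme-theoretic zero fibre follows from the standard fact that for a flat morphism $X\to Y$ with $X$ Cohen--Macaulay and $Y$ regular at the image point, a regular system of parameters at that point lifts to a regular sequence on $X$, whose quotient is therefore still Cohen--Macaulay; reducedness of the zero fibres is not asserted and indeed typically fails.

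The main obstacle will be the systematic fibre-dimension computation across all strata $\zz_\fl$ and ensuring that no special fibre---in particular the zero fibre, which by symmetry $\zz\ni(x,y)\leftrightarrow(y,x)$ is closely related to $\zz^\nil$ from \eqref{zznil}---exhibits dimension jumping. Everything is ultimately controlled by Premet's theorem on the nilpotent commuting variety, which enters the calculation through the dimension formula of Lemma \ref{codim}.
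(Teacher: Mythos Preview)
Your argument is correct and reaches the same destination via miracle flatness, but the route to the fibre-dimension bound differs from the paper's in a way worth noting. The paper exploits the contracting $\C^\times$-action on $\t/W$ (dilation on the second coordinate of $\zz$) to make $\wt\th_\zz$ equivariant; semicontinuity then reduces everything to the single zero fibre, which is identified set-theoretically with $\zz^\nil$ via the flip $(x,y)\leftrightarrow(y,x)$, and its dimension is read off from the elementary Lagrangian decomposition \eqref{zznil} --- no appeal to Premet is needed at this stage. Your approach instead bounds \emph{every} fibre directly via the Levi stratification of Lemma \ref{codim}, invoking Premet's equality $\dim\NN(\fl)=\dim[\fl,\fl]$ on each piece. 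This works, but is heavier machinery than necessary: the contraction trick collapses the problem to one fibre, and that fibre is a union of conormal bundles, each automatically of dimension $\dim\g$. Your reduction of the $\wt\th_\xx$ case to $\wt\th_\zz$ via finiteness of $p_\norm$ and the isospectral relation $\gamma(y)=\vartheta(t_2)$ is clean and arguably more transparent than the paper's ``similar and left to the reader''. One cosmetic point: your displayed equality for $\dim(\wt\th_\zz^{-1}(c)\cap\zz_\fl)$ should strictly be an inequality $\leq$, since the slice $\{(t_1,t_2)\in\tts_\fl:\vartheta(t_2)=c\}$ may be empty for some pairs $(\fl,c)$; only the upper bound is needed, so the logic survives.
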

\begin{proof} The dilation action
of $\C^\times$ on $\t$ descends to a
contracting $\C^\times$-action on $\t/W$. This
 makes  the map $\wt\th_\zz$  a
$\C^\times$-equivariant morphism. 
Hence,
 for any $t\in\t/W$, one obtains
$$\dim (\wt\th_\zz)\inv(t)= \dim(\th_\zz\inv(t))\leq\dim (\th_\zz)\inv(0)=
\dim\zz^\nil=\dim\g=\dim\zz-\dim\t/W,
$$

Here, the inequality holds thanks to the semi-continuity
of fiber dimension, the second equality is a consequence
of the  (set theoretic) equation $(\th_\zz)\inv(0)=\zz^\nil$,
and the third  equality is a consequence of  \eqref{zznil}.
The scheme $\zz^\norm$ being
Cohen-Macaulay, we conclude that the map 
 $\wt\th_\zz$ is flat and each scheme theoretic fiber
of that map is Cohen-Macaulay, cf. \cite[\S 16A, Theorem 30]{Ma}.

The proof of the corresponding statements involving the
variety $\xx$ is similar and is left to the reader.
\end{proof}

\subsection{}\label{LSsec}
We take $W$-invariant
 global sections of the sheaves on each side of formula
\eqref{mmbimod}. The functor $\Ga(\g\times\t,-)^W$
being exact, one obtains an isomorphism
of left $\dd(\g)$-modules
\beq{winv}
\Ga(\g\times\t,\mm)^W=\dd(\g)/\dd(\g)\cd\ad\g.
\eeq

\begin{proof}[Proof of Corollary \ref{LS}]
The Hodge filtration on
$\mm$  is $W$-stable and it induces,
by restriction to $W$-invariants,
 a  filtration $F_\idot$ on $\Ga(\g\times\t,\mm)^W$.
The  functor $\Ga(\g\times\t,-)^W$
clearly commutes with taking an associated graded module.
Therefore,  
we deduce 
\begin{multline*}
\gr^F[\dd(\g)/\dd(\g)\cd\ad\g]=
\gr^F[\Ga(\g\times\t,\,\mm)^W]=
\big[\gr^\hodge\Ga(\g\times\t,\, \mm)\big]^W\\=\Ga(\g\times\t,\ \gr^\hodge\mm)^W=
\C[\xx_\norm]^W=\C[\zz_\norm],
\end{multline*}
where the fourth equality holds
by  Theorem \ref{mthm} and the fifth equality is a
consequence of
the isomorphism $\C[\zz_\norm]=\C[\xx_\norm]^W$,
see Corollary \ref{vectbun}(i).
\end{proof}

We recall that  the vector space $\dd(\g)/\dd(\g)\cdot\ad\g$
has a natural right action of the algebra $\AA$,
cf. \S\ref{hc_filt}. The isomorphism in
\eqref{winv} intertwines the natural left
$\dd(\t)^W$-action on
$\Ga(\g\times\t,\mm)^W$ and
the right $\AA$-action on $\dd(\g)/\dd(\g)\cdot\ad\g$ via the isomorphism 
$\dd(\t)^W\cong\AA^{op}$ induced by the map $\Xi$, see 
\eqref{mmbimod}.
Therefore, the filtration $F_\idot$
makes $\dd(\g)/\dd(\g)\cdot\ad\g$ a filtered 
 $(\dd_\g,\AA)$-bimodule. In particular,
one may view $\dd(\g)/\dd(\g)\cdot\ad\g$ as a right filtered $(\sym\g)^G$-module
via the map $(\sym\g)^G\to\AA$ given by
the composition of natural maps $(\sym\g)^G\into \dd(\g)^G\onto
\dd(\g)^G/[\dd(\g)\cd\ad\g]^G$.

\begin{proof}[Proof of Corollary \ref{ls_cor}]
By Corollary \ref{LS}, we have $\gr^F\big[\dd(\g)/\dd(\g)\cd\ad\g\big]=
\C[\zz_\norm]$. Therefore, Corollary \ref{zzcm} implies that 
$\gr^F\big[\dd(\g)/\dd(\g)\cd\ad\g\big]$ 
is a Cohen-Macaulay $\C[\gg]$-module; moreover,
this module is flat over the algebra $(\sym\g)^G$, by
Proposition \ref{flat}.
Now,
a result of Bjork \cite{Bj} insures
that $\dd(\g)/\dd(\g)\cd\ad\g$
is a Cohen-Macaulay $\dd(\g)$-module
which is, moreover, flat over $(\sym\g)^G$.
\end{proof}

\bigskip
\centerline{\large{\sc{Index of Notation}}}
\medskip

{\small
\begin{multicols}{2}
$T^*X,\ X_\red,\ X_\norm,\ \psi,\ \oo_X,\ \dd_X,\ \kk_X,\ 
\quad$ \ref{not}

$G,\ T,\ \g,\ \t,\ W,\ \mathbf{r},\ \sign
\quad$ \ref{not}

$\DD,\ \rad,\ \ad,\ \mm\quad$ \ref{r_sec}

$\ggr^\hodge\quad$ \ref{r_sec},\, \ref{hc_hodge},\, \ref{filt_sec}.

$\gg,\ \zz,\ \tt,\ \kap,\ \res,\
\langle-,-\rangle \quad$ \ref{r_sec}

$v^\top \quad$ \ref{r_sec},\, \ref{hc_filt}

$\xx \quad$ \ref{r_sec},\, \ref{xxdef}

$R^+\quad$ \ref{actions}, \ref{canfilt}

$p\quad$ \ref{dsec},\, \ref{xxdef}

$p_\norm,\ \rr,\ \rr^E,\ E^*,\ \g_x,\ \g_{x,y},\
\rr_\bx,\
\g^r,\ \zz^r\quad$ \ref{dsec}

$ \fz\quad$ \ref{dsec},\, \ref{unistab}

Small representation \quad \ref{small_intro}

$L^{\mathfrak a},\ L^\fz\quad$ \ref{small_intro}

$S_n\quad$ \ref{small_intro},\, \ref{mhai}

$\bbe,\ \bullet_\bbe,\ h_s,\ \bh,\ \g^s,\ 
R^+_s,\ W_s,\ 
\bd_s\quad$ \ref{pnp_sec}

$\C^{\leq m}[\t],\ \fo_\idot,\,\ft_\idot,
\quad$ \ref{pnp_sec},\, \ref{rees_sec},\, \ref{rees_ctt}

$\gr^F\C[\wbh]\quad$ \ref{pnp_sec}

Exceptional principal nilpotent pair\quad \ref{bde_sec}

$\check{h}_s, \ \bde,\ \bbs_\bbe,\ \bbs_\bbe(\bbe),\
\bbde,\ \bbss\quad$ \ref{bde_sec}

$\hilb^n(\C^2),\ \wt\hilb^n(\C^2),\ \zz^\circ,\ \pp,\
A^k\quad$ \ref{mhai}, \ref{psec}

$\VV\quad$ \ref{mhai},\, \ref{vt0}

$\g^{rs},\ \zz^{rs},\ \xx^{rs},\ \g/\!/G,\ \zz/\!/G,\ 
\yy,\ \yy^{rs},\ \yr,\ N_\yr\quad$ 
\ref{ysec}

$p_{_\tt}\quad$ \ref{xxdef}



$q,\ F^\ord,\ \gr^\ord,\ \ggr^F,\
\dd_{X\to Y},\ \int_f,\ \int^R_f
\quad$ \ref{ddfilt}

$\AA,\ \Xi,\ \I,\ \J,\ F^\ord\mm,\ \J
\quad$ \ref{hc_filt}

$\t^r,\ \del_\t,\ dx,\ dh, \
\bj,\ \bj_{!*}\oo_\yr\quad$ \ref{dx}

$\Lmod{{\mathsf F}\dd_X},\ D^b_{coh}({{\mathsf F}\dd_X}),\
\ggr(M,F)\quad$ \ref{hodged}

Hodge module, Hodge filtration \quad \ref{hodged}

$\mh(X),\ {\mathbb D},\ {\mathsf F}{\mathbb D},\
\coh Z,\ \dcoh(Z),\ \dgr,\ 
\quad$ \ref{hodged}

$F^\hodge\quad$ \ref{hc_hodge},\, \ref{filt_sec}

$\H^k_Z,\ \zz_i,\ \zz^{rr},\ \xx_i,\ \xx^{rr},\
\quad$ \ref{main_res} 

$\bb,\ \b,\ \tg,\ \tgg,\ \nu,\ \nnu,\
\mu,\ \mmu,\ \T_X,\ 
\quad$ \ref{tx}

$\pi,\ \omega\quad$ \ref{tb}

${\mathfrak q}\quad$ \ref{tb},\, \ref{dima_sec}

$\eps,\ \La\quad$ \ref{sympl},\, \ref{filtmmG}

$\pr_{\La\to T^*\bb},\ \pr_{\La\to\gg\times\tt},\
\N,\ \wt\N,\ \Phi,\ \Psi,\ \kkap
\quad$ \ref{sympl}

$\tx,\ \tx^{rr}
\quad$ \ref{tx_sec}

$\iim\quad$ \ref{tx_sec},\, \ref{filt_sec}

$\ppi\quad$ \ref{transv},\, \ref{xyzsec}

$\aa,\ \H^j,\ \eu,\ \partial_{\kkap^*\eu}\quad$ \ref{dima_sec}

$\vvpi,\ \vsi\quad$ 
\eqref{fdiag}

${\mathcal G}_X,\ \fz_X,\ L^{\fz_X}\quad$ \ref{unistab}

$R,\ \X,\ \ccl,\ L_\b^{\ccl\la},\ L_\bb^{\ccl\la},\
L_\bb^\la,\ \g^\al_\bb,\ \uh,\ L^\uh\quad$ 
\ref{canfilt}

$pr,\ \tg^r,\ \gamma,\ \wt\gamma,\ \vartheta,
\ L^\la_\tg,\ \g^\al_\tg\quad$ 
\ref{morphism_sec}

$L^\la_\yr,\ \la^L,\
\la^k\quad$ 
\ref{bei}

$L^\la_\yr,\ \g^\al_\yr,\ e^k_\al
\quad$ \ref{vin}

$h_x,\ n_x,\ \gg^\b\quad$ \ref{ss1}

$\zz(\fl),\ {\mathfrak N}(\fl),\, S,\
\t_\fl,\ \ts_\fl,\ \tts_\fl,\ \xx_\fl(\g)
\quad$ 
\ref{ss2}

$\zz_\fl(\g)\quad$ \ref{ss3}

$\rees E,\ \reo,\ \ret,\ \gro,\ \grt,\
\aleph,\ F\min,\ F\max,\ \mathsf{can}
\quad$ \ref{rees_sec}

$\C^{i,j}[\tt],\ I_\bh,\ \fy,\ \wp,\ I\bim
\quad$ \ref{rees_ctt}

$\vka,\ \wt\vka,\ \ups\quad$ \ref{map_sec}

$S^m,\ \ssym,\ H^\perp,\ \del_s,\ V_s,\
\check V_s,\ \nabla,\ \nabla_{i,j}\quad$ \ref{pftr}

$f^\sharp,\ \rese,\ u,\ \pr^\sign_\rr,\ \pr^\sign_{W\cdot\bh},\
\bbde_\bx\quad$ \ref{pnpappl}

$G_{x,y},\ \C\langle x,y\rangle,\
\ssc,\ \hilb,\ \rho,\ \delta\quad$ \ref{psec}

$\ha,\ \xx^\circ,\ \eta,\ \wt\delta,\ \wt\rho,\
\wt p,\ h,\ \varrho\quad$ \ref{hilb}

$\mmp_G,\ \L_Y(E),\ \al,\ \al_i,\ \Phi_i,\ 
\be_\ssc,\ \be_V\quad$ \ref{vt0}

$Y,\ \ddd,\ 
\quad$ \ref{alpha}

$\zz^\nil,\ \th_\zz,\ \th_\xx,\ \wt\th_\zz,\ 
\wt\th_\xx
\quad$ \ref{red_sec}
\end{multicols}
}

\small{
\bibliographystyle{plain}

}

\end{document}